\definecolor{yuting}{RGB}{255,69,0}
\definecolor{gen}{RGB}{199,21,133}
\DeclareMathOperator{\ind}{\mathds{1}}
\newcommand{\second}{\prime\prime}
\newcommand{\third}{\prime\prime\prime}
\newcommand{\vstar}{v^\star}
\newcommand{\lt}{\left}
\newcommand{\rt}{\right}
\newcommand{\dx}{\mathrm{d} x}
\newcommand{\myE}{\mathbb{E}}
\newcommand{\Xbayes}{\widehat{X}^{\textrm{bayes}}}
\newcommand{\power}{s}
\newcommand{\vseed}{\widetilde{v}}
\newtheorem{assumption}{\textbf{Assumption}}
\newtheorem{claim}{\textbf{Claim}}
\newtheorem{remark}{\textbf{Remark}}
\theoremstyle{plain}
\newtheorem{theo}{Theorem}[section]
\newtheorem{lem}{Lemma}[section]
\newtheorem{prop}{Proposition}[section]
\newtheorem{cor}{Corollary}[section]
\theoremstyle{definition} 
\newtheorem{nota}{Notation}[section]
\newtheorem{de}{Definition}[section]
\newtheorem{exa}{Example}[section]
\newtheorem{as}{Assumption}[section]
\newtheorem{alg}{Algorithm}[section]
\newcommand{\btheo}{\begin{theo}}
\newcommand{\bde}{\begin{de}}
\newcommand{\ble}{\begin{lem}}
\newcommand{\bpr}{\begin{prop}}
\newcommand{\bno}{\begin{nota}}
\newcommand{\bex}{\begin{exa}}
\newcommand{\bcor}{\begin{cor}}
\newcommand{\spro}{\begin{proof}}
\newcommand{\bas}{\begin{as}}
\newcommand{\balg}{\begin{alg}}
\newcommand{\etheo}{\end{theo}}
\newcommand{\ede}{\end{de}}
\newcommand{\ele}{\end{lem}}
\newcommand{\epr}{\end{prop}}
\newcommand{\eno}{\end{nota}}
\newcommand{\eex}{\end{exa}}
\newcommand{\ecor}{\end{cor}}
\newcommand{\fpro}{\end{proof}}
\newcommand{\eas}{\end{as}}
\newcommand{\ealg}{\end{alg}}
\theoremstyle{plain}
\newtheorem{theos}{Theorem}
\newtheorem{props}{Proposition}
\newtheorem{lems}{Lemma}
\newtheorem{cors}{Corollary}
\theoremstyle{definition}
\newtheorem{exas}{Example}
\newtheorem{algs}{Algorithm}
\newtheorem{asss}{Assumption}
\newtheorem{defns}{Definition}
\newcommand{\btheos}{\begin{theos}}
\newcommand{\etheos}{\end{theos}}
\newcommand{\bprops}{\begin{props}}
\newcommand{\eprops}{\end{props}}
\newcommand{\bdes}{\begin{defns}}
\newcommand{\edes}{\end{defns}}
\newcommand{\blems}{\begin{lems}}
\newcommand{\elems}{\end{lems}}
\newcommand{\bcors}{\begin{cors}}
\newcommand{\ecors}{\end{cors}}
\newcommand{\bexs}{\begin{exas}}
\newcommand{\eexs}{\end{exas}}
\newcommand{\balgs}{\begin{algs}}
\newcommand{\ealgs}{\end{algs}}
\newcommand{\bass}{\begin{asss}}
\newcommand{\eass}{\end{asss}}
\newcommand{\ltwo}[1]{\|#1\|_2}
\newcommand{\red}[1]{\textcolor{red}{#1}}
\newcommand{\taustar}{{\tau^\star}}
\newcommand{\real}{\ensuremath{\mathbb{R}}}
\newcommand{\inprod}[2]{\ensuremath{\langle #1 , \, #2 \rangle}}
\newcommand{\thetahat}{\ensuremath{\widehat{\theta}}}
\newcommand{\mprob}{\ensuremath{\mathbb{P}}}
\newcommand{\defn}{\coloneqq}
\newcommand{\argmax}{\arg\!\max}
\newcommand{\Exs}{\ensuremath{\mathbb{E}}}
\long\def\comment#1{}
\newcommand{\HACKPROOF}{\begin{proof}}
\newcommand{\HACKENDPROOF}{\end{proof}}
\newlength{\widebarargwidth}
\newlength{\widebarargheight}
\newlength{\widebarargdepth}
\long\def\@makecaption#1#2{
        \vskip 0.8ex
        \setbox\@tempboxa\hbox{\small {\bf #1:} #2}
        \parindent 1.5em  
        \dimen0=\hsize
        \advance\dimen0 by -3em
        \ifdim \wd\@tempboxa >\dimen0
                \hbox to \hsize{
                        \parindent 0em
                        \hfil 
                        \parbox{\dimen0}{\def\baselinestretch{0.96}\small
                                {\bf #1.} #2
                                } 
                        \hfil}
        \else \hbox to \hsize{\hfil \box\@tempboxa \hfil}
        \fi
        }
\definecolor{myred}{RGB}{0,0,0} 
\renewcommand{\red}[1]{\textcolor{myred}{#1}}
\begin{document}

\title{A Non-Asymptotic Framework for Approximate Message Passing in Spiked Models\footnotetext{Corresponding author: Yuting Wei (email: \texttt{ytwei@wharton.upenn.edu}).}}

\author{Gen Li \hspace*{0.8cm} Yuting Wei  \\[.2in]
	Department of Statistics and Data Science, the Wharton School \\
	University of Pennsylvania, Philadelphia, PA
}
\maketitle

\begin{abstract}
Approximate message passing (AMP) emerges as an effective iterative paradigm for solving high-dimensional statistical problems. However, prior AMP theory --- which focused mostly on high-dimensional asymptotics --- fell short of predicting the AMP dynamics when the number of iterations surpasses $o\big( \frac{\log n}{\log\log n}\big)$ (with $n$ the problem dimension). To address this inadequacy, this paper develops a non-asymptotic framework for understanding AMP in spiked matrix estimation. Built upon new decomposition of AMP updates and controllable residual terms, we lay out an analysis recipe to characterize the finite-sample behavior of AMP in the presence of an independent initialization, which is further generalized to allow for spectral initialization. As two concrete consequences of the proposed analysis recipe: (i) when solving $\mathbb{Z}_2$ synchronization,  we predict the behavior of spectrally initialized AMP for up to $O\big( \frac{n}{\mathrm{poly}\log n}\big)$ iterations, showing that the algorithm succeeds without the need of a subsequent refinement stage (as conjectured recently by \citet{celentano2021local}); (ii) we characterize the non-asymptotic behavior of AMP in sparse PCA (in the spiked Wigner model) for a broad range of signal-to-noise ratio. 
\end{abstract}


\medskip

\noindent \textbf{Keywords:} Approximate message passing, non-asymptotic analysis, spiked Wigner model, spectral initialization, $\mathbb{Z}_{2}$ synchronization, sparse PCA

\setcounter{tocdepth}{2}
\tableofcontents

\section{Introduction}


Approximate Message Passing (AMP) refers to a class of iterative algorithms that has received 
considerable attention over the past two decades,  
partly due to its versatility in solving a diverse array of science and engineering problems (\cite{schniter2011message,fletcher2014scalable,rush2017capacity,borgerding2016onsager}) as well as its capability in approaching the theoretical limits of many of these problems. 
Originally introduced in the context of compressed sensing as a family of low-complexity iterative algorithms \citep{donoho2009message}, AMP lends itself well to a wide spectrum of high-dimensional statistical problems, both as a class of efficient estimation algorithms and as a powerful theoretical machinery. 
Examples of this kind abound, including robust M-estimators \citep{donoho2016high,donoho2015variance}, 
sparse linear regression \citep{bayati2011lasso,donoho2013information,bu2020algorithmic,li2021minimum}, generalized linear models \citep{sur2019likelihood,sur2019modern,venkataramanan2021estimation,barbier2019optimal},  phase retrieval \citep{ma2018optimization,schniter2014compressive,aubin2020exact}, 
community detection \citep{deshpande2017asymptotic,ma2021community}, 
structured matrix estimation and principal component analysis (PCA) \citep{rangan2012iterative,montanari2021estimation,deshpande2014information,mondelli2021pca}, 
mean-field spin glass models \citep{sellke2021optimizing,fan2022tap,fan2021replica}, to name just a few.     
The interested reader is referred to \cite{feng2021unifying} for a recent overview of AMP and its wide applicability.

\subsection{Asymptotic vs.~non-asymptotic AMP theory}

\paragraph{High-dimensional asymptotics and state evolution.}

A key appealing feature of AMP lies in its effectiveness in analyzing estimators under high-dimensional asymptotics or large-system limits (for instance, in robust M-estimation, this might refer to the regime where the number of observations scales proportionally with the number of unknowns \citep{bayati2011dynamics,javanmard2013state}).  
In such challenging regimes, the limiting behavior of AMP (as the problem dimension diverges) can often be accurately predicted by the so-called \emph{state evolution (SE)},  a recurrence formula that tracks how a small number of key parameters evolve from one iteration to the next. 
For various estimation problems, 
an algorithmic design paradigm is to construct a general class of AMP instances, 
and then identify the optimal choice by inspecting their state-evolution characterizations (which can often be done given that state evolution might only involve very few (e.g., 2) key parameters).


\paragraph{Non-asymptotic theory for AMP?} 

Despite the predicting power of state evolution in high-dimensional asymptotics, 
most existing AMP theory exhibited an asymptotic flavor (often stated in a weak convergence sense as problem dimension tends to infinity), 
which fell short if the number of iterations grows with the problem dimension. 
In light of this, there are two main limitations that are pronounced in current understanding of AMP:
\begin{itemize}
	\item[(i)]
When AMP is deployed as an analysis device, the theoretical guarantees obtained based on existing state-evolution predictions are asymptotic in nature. 
		For this reason, it might sometimes lose advantages over alternative machineries such as the convex Gaussian min-max theorem \citep{thrampoulidis2018precise,celentano2020lasso} and the leave-one-out analysis framework \citep{el2018impact} when the goal is to understand non-asymptotic fine-grained statistical behavior of the estimators; 

	\item[(ii)] When AMP is employed as an optimization algorithm of its own,  most prior AMP theory could only accommodate a non-growing number of iterations, thereby significantly limiting the optimization accuracy AMP can achieve (e.g., such asymptotic AMP theory cannot yield an optimization error that is $o_n(1)$). This stands in stark contrast to other non-asymptotic analysis of optimization-based algorithms (e.g., gradient descent), which deliver characterization of iteration complexity for arbitrary optimization accuracy levels (e.g., \citet{keshavan2010matrix,candes2015phase,ma2020implicit}).

%
\end{itemize}




\noindent 
In order to address the aforementioned limitations of asymptotic theory, 
\citet{rush2018finite} developed a finite-sample analysis of AMP (for noisy linear models) 
that permits the number of iterations to reach $o\big( \frac{\log n }{ \log\log n}\big)$. 
However, $o\big( \frac{\log n }{ \log\log n}\big)$ iterations of AMP are, for the most part, 
unable to yield a (relative) convergence error of $O(n^{-\varepsilon})$ for even an arbitrarily small constant $\varepsilon>0$. 
Another recent work  \cite{celentano2021local} considered the use of spectrally initialized AMP for $\mathbb{Z}_2$ synchronization, and appended it with another gradient-type algorithm 
in order to allow for a growing number of iterations; this, however, did not reveal non-asymptotic behavior of AMP either.   
All this motivates the following question that we would like to study in this paper:  
\begin{center}
	\emph{Is it possible to develop non-asymptotic analysis of AMP beyond $o\big( \frac{\log n}{ \log \log n} \big)$ iterations?} 
\end{center}
On a technical level, the challenge lies in understanding the complicated dependence structures of AMP iterates across iterations. 
In prior analysis, the bounds on certain residual terms (e.g., the difference between the behavior of the AMP and what state evolution predicts) blow up dramatically fast in the iteration number, 
thus calling for new analysis ideas to enable tighter controls of such residual terms.

\subsection{AMP for spiked Wigner models}

In this paper, we attempt to answer the question posed above in the affirmative, focusing on the context of estimation in spiked matrix models as detailed below.  
To facilitate concrete discussions, let us first set the stage by introducing the model and algorithm studied herein, 
before moving on to describe our main results in the next subsection. 

\paragraph{Spiked Wigner models.} 
%
The spiked matrix model refers to a class of data matrices that can be decomposed into a rank-one signal and a random noise matrix, 
which was proposed by \cite{johnstone2001distribution} as a way to study PCA in high dimension and has inspired substantial subsequent works in both statistics and random matrix theory \citep{peche2006largest,baik2005phase,bai2008central,johnstone2009consistency,johnstone2018pca}. 
This paper assumes access to a rank-one deformation of a Wigner matrix $W=[W_{ij}]_{1\leq i,j\leq n}$ as follows: 
\begin{align}
\label{eqn:wigner}
	M = \lambda v^\star v^{\star\top} + W \in \mathbb{R}^{n\times n},
\end{align}
where the spiked vector $v^{\star}=[v^{\star}_i]_{1\leq i\leq n} \in \real^n$ obeys $\|\vstar\|_2=1$ and represents the signal to be estimated, 
$\lambda>0$ determines the signal-to-noise ratio (SNR), 
and the $W_{ij}$'s ($i\geq j$) are independently generated such that
\begin{align}
	W_{ij} = W_{ji} \overset{\mathrm{i.i.d.}}{\sim} \mathcal{N}\Big(0, \frac{1}{n} \Big) \qquad \text{and} \qquad W_{ii} \overset{\mathrm{i.i.d.}}{\sim}  \mathcal{N}\Big(0,\frac{2}{n} \Big) .
\end{align}
As has been shown in prior literature \citep{peche2006largest,feral2007largest,capitaine2009largest}, 
the leading eigenvalue of $M$ stands out from the semicircular bulk under the condition $\lambda > 1$;  
in contrast, it is information-theoretically infeasible to detect the planted signal if $\lambda < 1$, unless additional structural information about $\vstar$ is available.  
Prominent examples of such structural information include sparsity \citep{johnstone2009consistency,berthet2013optimal}, non-negativity \citep{montanari2015non}, cone constraints \citep{deshpande2014cone,lesieur2017constrained}, synchronization over finite groups \citep{perry2018message,javanmard2016phase}, among others. 
Nevertheless, finding the maximum likelihood estimates or Bayes-optimal estimates is often computationally intractable (due to nonconvexity), 
thus complicating the computational/statistical analyses of the iterative estimators in use.


\paragraph{AMP for spiked Wigner models.}

The AMP algorithm tailored to estimating the spiked Wigner model adopts the following update rule: 
\begin{align}
\label{eqn:AMP-updates}
	x_{t+1} = M\eta_t(x_{t}) - \big\langle\eta_t^{\prime}(x_{t}) \big\rangle \cdot \eta_{t-1}(x_{t-1}), 
	\qquad \text{ for } t\geq 1. 
\end{align}
where $\langle z \rangle \defn \frac{1}{n} \sum_{i=1}^n z_i$ for any vector $z=[z_i]_{1\leq i \leq n}\in \real^n$.  
Here, the key elements are described as follows: 
\begin{itemize}
	\item $x_t\in \real^n$ denotes the AMP iterate in the $t$-th iteration, where the initialization $x_0$ and $x_1$ can sometimes be selected in a problem-specific manner. 
	\item The scalar function $\eta_t: \real \to \real$ stands for the denoising function adopted in the $t$-th iteration, with  $\eta_t'(\cdot)$ denoting the derivative of $\eta_t(\cdot)$; 
		when applied to a vector $x$, it is understood that $\eta_t(\cdot)$ (resp.~$\eta'(\cdot)$) is applied entry-by-entry. 
	\item The first term $M\eta_t(x_{t})$ on the right-hand side of \eqref{eqn:AMP-updates} performs a power iteration to the denoised iterate $\eta_t(x_{t})$, 
		while the second term $\big\langle\eta_t^{\prime}(x_{t}) \big\rangle \cdot \eta_{t-1}(x_{t-1})$ --- often referred to as the ``Onsager term'' --- plays a crucial role in cancelling out certain correlation across iterations. 
\end{itemize}
%



\paragraph{State evolution.} 

As alluded to previously, the limiting behavior of the AMP sequence can be  pinned down through a small-dimensional recurrence termed the {\em state evolution (SE)}.  
More precisely, 
assuming that the empirical distribution of\footnote{Here, we adopt the factor $\sqrt{n}$ to be consistent with the scaling of this paper, given that $\ltwo{\vstar} = 1$.} $\{\sqrt{n} v_i^{\star}\}_{i=1}^{n}$ 
converges weakly to a distribution $\mu_{V}$ on $\mathbb{R}$ with unit second moment, the SE associated with \eqref{eqn:AMP-updates} 
is the following recurrence involving two scalar sequences $\{\alpha^\star_t\}$ and $\{\beta^\star_t\}$: 
\begin{subequations}
\label{eq:SE-Montanari}
\begin{align}
	\alpha^\star_{t+1} &= \lambda \myE\big[V \eta_t(\alpha^\star_t V + \beta^\star_t G)\big] \\
	\beta^{\star 2}_{t+1} &= \myE\big[\eta_t^2(\alpha^\star_t V + \beta^\star_t G)\big]
\end{align}
\end{subequations}
for any $t\geq 1$, 
where ${V \sim \mu_{V}}$ and ${G\sim \mathcal{N}(0,1)}$ are independent random variables. 
The SE \eqref{eq:SE-Montanari} has been studied by \cite{fletcher2018iterative} in the presence of an independent initialization, 
and by \cite{montanari2021estimation} under spectral initialization. 
As shown in \citet{montanari2021estimation}, for any fixed $t$ and any pseudo-Lipschitz function $\Psi: \mathbb{R} \times \mathbb{R} \to \mathbb{R}$, it holds almost surely that 
\begin{align}
	\lim_{n\to \infty} \frac{1}{n}\sum_{i=1}^n \Psi(\sqrt{n}v^\star_i, \sqrt{n} x_{t,i}) 
	=
	\myE \Big[\Psi(V, \alpha^\star_tV + \beta^\star_t G)\Big] 
	\label{eq:AMP-prediction-SE}
\end{align} 
when the AMP sequence $\{x_t\}$ is initialized by spectral methods. 
Informally, this result \eqref{eq:AMP-prediction-SE} uncovers that each coordinate of the AMP iterate behaves like $\alpha^\star_tV + \beta^\star_t G$ (after proper rescaling), 
containing an extra source of Gaussian-type randomness that is crucial in explaining the AMP dynamics under high-dimensional asymptotics. 
Moreover, property~\eqref{eq:AMP-prediction-SE} also suggests that 
the denoising functions $\{\eta_t\}$ can be optimally selected  
\citep{bayati2011dynamics,montanari2021estimation} as   
the minimum mean square error (MMSE) estimator (or Bayes-optimal estimator if given $\nu_V$), namely,  
\begin{align}
\label{eqn:eta-principle}
	\eta_t(x) \,=\, \Exs[V \mid \alpha^\star_t V + \beta^\star_t G = x].
\end{align}
%

Note, however, that the validity of this SE-based prediction has only been verified when $t$ is fixed and $n\rightarrow \infty$. 
It remains to see whether the SE can track the AMP behavior in a non-asymptotic manner in the presence of a possibly large number of iterations.

\subsection{A glimpse of main contributions}

The main contributions of this paper are the development of a non-asymptotic analysis framework that helps to understand the AMP behavior when the number of iterations is chosen polynomial in $n$. Our main findings are summarized as follows. 
\begin{itemize}
\item {\bf A key decomposition of AMP iterates with tractable residual terms.}
We develop in Theorem~\ref{thm:recursion} a general decomposition of the $t+1$-th iterate of AMP  as follows:
\begin{align}
\label{eqn:general-decomp}
	x_{t+1} = \alpha_{t+1} \vstar + \sum_{k = 1}^{t} \beta_{t}^k\phi_k + \xi_{t}, \qquad \text{for } t\geq 1.
\end{align}
Here, $\vstar$ is the underlying signal, $\{\phi_k\}_{k=1}^{t}$ stands for a collection of independent Gaussian vectors, 
$\alpha_{t+1}$ and $\beta_t=[\beta_t^1,\ldots,\beta_t^{t}]\in \real^{t}$ are a set of weights, 
and $\xi_{t} \in \real^{n}$ is a residual term that lies in a $t$-dimensional subspace determined by the previous iterates.  
This decomposition is fairly general with little assumption imposed on either the denoising function, the number of iterations, or $\vstar$.   
Our analysis reveals that the residual terms $\{\xi_{t}\}$ can often be bounded in a recursive yet tractable manner without blowing up rapidly. 

\item {\bf Finite-sample analysis beyond $o\big(\log n/\log \log n\big)$ iterations.}
Leveraging upon the decomposition in~\eqref{eqn:general-decomp},  
in Theorem~\ref{thm:main}, we develop an analysis framework to track $\alpha_{t+1}$ and $\beta_{t}$ in a non-asymptotic fashion,  
which intimately connects with the state evolution recurrence \eqref{eq:SE-Montanari}.
In fact, our analysis idea could yield non-asymptotic characterizations of AMP iterates for a certain polynomial number of iterations, 
which go far beyond the $o\big(\frac{\log n}{\log \log n}\big)$ iterations covered in prior art. 
All this is largely enabled due to our ability to control the residual size $\|\xi_{t}\|_2$ --- often to the order of $O\big(\sqrt{\frac{t\mathrm{poly}\log(n)}{n}} \big)$.  

\item {\bf Non-asymptotic theory for AMP with spectral initialization.} 
A widely used scheme to initialize AMP for spiked models is the spectral method, 
which often provides an informative initial estimate with non-vanishing correlation with the truth. 
Motivated by its widespread adoption in practice, 
we extend the above analysis framework to study non-asymptotic behavior of spectrally initialized AMP.
As it turns out, our AMP analysis recipe can be tightly integrated with the analysis of spectral initialization, with the aid of two auxiliary AMP sequences and 
a similar decomposition as of \eqref{eqn:general-decomp} is established for such sequences. 
Details can be found in Section~\ref{sec:spectral}.


\item {\bf Concrete consequences: non-asymptotic theory for $\mathbb{Z}_2$ synchronization and sparse PCA.}
In Section~\ref{sec:examples}, we apply our general recipe to two widely studied models
that are very different in nature: the problem of $\mathbb{Z}_{2}$ synchronization and that of sparse PCA (in the context of the sparse spiked Wigner model).
For  $\mathbb{Z}_{2}$ synchronization, 
we focus on the most challenging scenario where the spectral gap $\lambda - 1$ approaches 0, 
and characterize the non-asymptotic behavior of spectrally initialized AMP all the way up to $O\big(\frac{n}{\mathrm{poly}\log (n)}\big)$ iterations (in addition to other dependency on $\lambda-1$). 
This helps address a conjecture in \cite{celentano2021local} regarding the finite-sample behavior of spectrally initialized AMP. 
When it comes to the sparse spiked Wigner model, 
our general recipe leads to non-asymptotic characterizations of the AMP iterates as well. 
If an independent yet informative initialization is provided, then our theory allows the SNR to approach the order of the information-theoretic limit; 
otherwise, our AMP theory can be combined with two initialization schemes in order to accommodate the regime above the computational limit.


\end{itemize}







\subsection{Other related works}

The studies of the spiked Wigner model --- also under the names of deformed Wigner models or matrix denoising --- have received much attention from multiple domains, including but not limited to statistics, random matrix theory, and information theory 
(e.g., \citet{knowles2013isotropic,cheng2021tackling,el2020fundamental,bao2021singular,yan2021inference,fan2022asymptotic,lee2016bulk,perry2018optimality,zhou2023deflated,peng2012eigenvalues,simchowitz2018tight}). 
Subsuming multiple problems as special cases (e.g., phase synchronization, sparse estimation in Wigner models), 
the spiked Wigner model serves as a stylized model that helps uncover various phenomena in high dimensions, such as universality, computational-to-statistical gaps, phase transition, unreasonable effectiveness of nonconvex optimization, etc. We briefly highlight some of these aspects below.

While a large fraction of AMP theory, including the current paper, focuses on the case with i.i.d.~Gaussian noise and/or i.i.d.~Gaussian designs, 
certain \emph{universality} phenomena beyond i.i.d.~Gaussian noise have been empirically observed and theoretically established in the context of AMP 
\citep{bayati2015universality,chen2021universality,wang2022universality,dudeja2022universality}  
and in broader scenarios \citep{lee2016bulk,hu2020universality,oymak2018universality}. 
For instance, \citet{bayati2015universality} and \cite{chen2021universality} studied a random design matrix with i.i.d.~sub-Gaussian entries, 
and \citet{fan2022approximate} was able to accommodate the family of rotationally invariant designs, thus allowing for a spectral distribution that differs from the semicircle or Marcenko-Pastur law.

Additionally, for many structured estimation problems, empirical evidence suggests the potential existence of a gap between the fundamental statistical limit and what can be done computationally efficiently. 
This has inspired considerable theoretical interest towards 
solidifying such computational-to-statistical gaps; see \cite{bandeira2018notes} for a tutorial and also \cite{zdeborova2016statistical} for a connection to statistical physics. The spiked Wigner model forms an idealized model to study such gaps, for multiple structured problems like sparse PCA and non-negative PCA. 
It is also worth noting that AMP, in various settings, is able to achieve the optimal performance among polynomial-time estimators \citep{donoho2009message,celentano2022fundamental}. 
It has also been employed as a machinery to characterize the information-theoretic limits of several high-dimensional problems \citep{deshpande2014information,barbier2016mutual,reeves2019replica}.

Further, estimating the underlying signal from a spiked Wigner model 
is, for the most part, concerned with solving a highly nonconvex problem, particularly in the presence of additional structural constraints. In such cases, the  initialization schemes exert considerable influences on the subsequent AMP dynamics. In fact, a large body of existing AMP theory assumes availability of an informative initialization. 
For instance, in a special case where each entry of $\vstar$ has positive mean, 
it might be sufficient to initialize AMP with an all-one vector \citep{deshpande2014information,montanari2015non}; 
when the SNR is large enough such that $\lambda > 1$, an estimate returned by the spectral method is known to achieve strictly positive correlation with the ground-truth spike, which therefore serves as a common initialization scheme for AMP as well \citep{montanari2021estimation,fan2021tap}.

\subsection{Organization and notation}

\paragraph{Paper organization.} 
The remainder of this paper is organized as follows. 
Sections~\ref{sec:decomposition-thm-1}-\ref{sec:decomposition-thm-2} develop a general recipe that enables  non-asymptotic characterizations of the AMP in spiked models, 
assuming independent initialization. This framework is further extended in Section~\ref{sec:spectral} for the case when AMP is used along with spectral initialization. 
 Sections~\ref{sec:z2-main} and \ref{sec:sparse-main} instantiate our analysis framework to $\mathbb{Z}_{2}$ synchronization and sparse PCA, respectively, 
confirming the utility of our non-asymptotic theory. 
The proof ideas of two master theorems are presented in Section~\ref{sec:main-analysis}, with other technical details deferred to the appendices.
Section~\ref{sec:discussion} concludes the paper by pointing out several future directions.

%

\paragraph{Notation.} 
We often use 0 (resp.~1) to denote the all-zero (resp.~all-one) vector, and let $I_n$ (or simply $I$) denote the $n\times n$ identity matrix. 
For any $w\in \real$, we denote $w_+ \defn \max\{w, 0\}$.   
We denote by $\varphi(\cdot)$ (resp.~$\varphi_n(\cdot)$) the probability density function (p.d.f.) of a standard Gaussian random variable (resp.~a Gaussian random vector $\mathcal{N}(0,I_n)$). 
For any positive integer $k$, we say a function $f: \real^k \to \real$ is $L$-Lipschitz continuous for some quantity $L>0$ if, for every $z_{1}$ and $z_{2}$, one has
$|f(z_1) - f(z_2)| \leq L \cdot \ltwo{z_1 - z_2}$. 
When a function is applied to a vector, it should be understood as being applied in a component-wise manner; 
for instance,  for any vector $x=[x_i]_{1\leq i\leq n}$, we let $|x|\coloneqq [|x_i|]_{1\leq i\leq n}$ and $x_+ \defn [ \max\{x_i, 0\} ]_{1\leq i\leq n}$.
For any two vectors $x,y\in \real^n$, we write $x \circ y $ for their Kronecker product, namely, $x \circ y = (x_1y_1,\ldots, x_ny_n)^{\top} \in \real^{n}.$
For two functions $f(n)$ and $g(n)$, we write $f(n)\lesssim g(n)$ to indicate that $f(n)\leq c_1 g(n)$ for some constant $c_1>0$ that does not depend on $n$, and similarly, $f(n)\gtrsim g(n)$ means that $f(n)\geq c_2 g(n)$ for some constant $c_2>0$ independent of $n$. 
We also adopt the notation $f(n)\asymp g(n)$ to indicate that both $f(n)\lesssim g(n)$ and $f(n)\gtrsim g(n)$ hold simultaneously. 
In addition, we write $f(n) \ll g(n)$ or $f(n)=o(g(n))$ if $f(n)/g(n)\to 0$ as $n\to \infty$ and $f(n) \gg g(n)$ if $g(n)/f(n)\to 0$. 
For any matrix $M$, we let $\|M\|$ and $\|M\|_{\mathrm{F}}$ denote the spectral norm and the Frobenius norm of $M$, respectively. 
For any integer $n>0$, we let $[n]\coloneqq \{1,\cdots, n\}$. 
Also, for any vector $x\in [x_i]_{1\leq i\leq n} \in \real^n$, 
we denote by $|x|_{(i)}$ the $i$-th largest element within $\{|x_i|\}_{1\leq i\leq n}$.

In addition, given two probability measures $\mu$ and $\nu$ on $\real^{n}$, the Wasserstein distance of order $p$ between them is defined and denoted by 
\begin{align}
\label{eqn:wasserstein-p}
	W_p(\mu, \nu) \defn \bigg(\inf_{\gamma \in \mathcal{C}(\mu,\nu)}\int \|x - y\|_2^p \, \mathrm{d}\gamma (x,y)\bigg)^{1/p},
\end{align}
where  $\mathcal{C}(\mu,\nu)$ is the set comprising all couplings of $\mu$ and $\nu$ (i.e., all joint distributions $\gamma(x,y)$ whose marginal distributions are $\mu$ and $\nu$, respectively). 
We let $\mathcal{S}^{d-1}=\{x\in \real^d \mid \|x\|_2=1\}$ represent the unit sphere in $\real^d$, and
denote by $\mathbb{B}^d(r)=\{\theta \in \real^d \mid \|\theta\|_2\leq r\}$ the $d$-dimensional ball of radius $r$ centered at 0.




\section{A general recipe for non-asymptotic analysis of AMP}
\label{sec:main}

In this section, 
we develop a general recipe that leads to a non-asymptotic analysis framework for the AMP algorithm \eqref{eqn:AMP-updates}. 
This constitutes two master theorems (i.e., Theorems~\ref{thm:recursion} and \ref{thm:main}) 
that uncover the key decomposition for the AMP iterates and single out several key quantities to be controlled in order to bound the deviation between the true AMP behavior and the state evolution recurrence. 
Our analysis framework is further extended in Section~\ref{sec:spectral} to accommodate spectrally initialized AMP.


\subsection{A crucial decomposition of AMP iterates}
\label{sec:decomposition-thm-1}


We begin by presenting a key decomposition of the AMP iterates in the following theorem, 
which lies at the core of the non-asymptotic theory developed in this paper. The proof of this result is postponed to Section~\ref{sec:pf-thm-recursion}.
\begin{theos} 
\label{thm:recursion}
Suppose that the AMP algorithm~\eqref{eqn:AMP-updates} is initialized with some vector $x_0 $ obeying $\eta_0(x_0)=0$ and some vector $x_1$ independent of $W$.  
\red{Assume $\{\eta_t(\cdot)\}$ are differentiable except at finite number of points.}
Then for every $1\le t< n$,   
the AMP iterates admit the following decomposition:
\begin{align}
\label{eqn:xt-decomposition}
	x_{t+1} = \alpha_{t+1} \vstar + \sum_{k = 1}^{t} \beta_{t}^k\phi_k + \xi_{t},
\end{align}
where 
\begin{itemize}
	\item[(i)] the coefficient $\alpha_{t+1} \in \real$ obeys $\alpha_{t+1} = \lambda v^{\star\top} \eta_{t}(x_{t})$; 
	\item[(ii)] $\{\phi_k\}_{1\leq k\leq t}$ are independently generated obeying $\phi_k \overset{\mathrm{i.i.d.}}{\sim} \mathcal{N}(0, \frac{1}{n}I_n)$;
	\item[(iii)] the coefficient vector $\beta_t\coloneqq (\beta_t^1,\beta_t^2,\ldots,\beta_t^t) \in \real^t$ obeys $\|\beta_t\|_2  = \lt\|\eta_{t}(x_t)\rt\|_2$; 
	\item[(iv)] $\xi_t \in \mathbb{R}^{n}$  is some residual vector such that, with probability at least $1-O(n^{-11})$, 
\begin{align}
\label{eqn:xi-norm-main}
	\|\xi_{t}\|_2 = \Big\langle \sum_{k = 1}^{t-1} \mu_t^k\phi_k, \delta_{t}\Big\rangle - \langle\delta_{t}^{\prime}\rangle \sum_{k = 1}^{t-1} \mu_t^k\beta_{t-1}^k + \Delta_t + O\Big(\sqrt{\frac{t\log n}{n}}\|\beta_{t}\|_2 \Big)
\end{align}
holds for some some unit vector $\mu_t = [\mu_t^k]_{1\leq k\leq t-1} \in \mathbb{R}^{t-1}$, 			
where we define
\begin{subequations}
\label{eqn:delta-chorus}
\begin{align}
	v_t &\defn \alpha_t\vstar + \sum_{k = 1}^{t-1} \beta_{t-1}^k\phi_k , \label{defn:v-t-thm1}\\
	\label{defn:delta-t} \delta_{t} &\defn \eta_{t}(x_t) - \eta_{t}(v_t), \\
	\label{defn:delta-prime-t} \delta_{t}^{\prime} &\defn \eta_{t}^{\prime}(x_t) - \eta_{t}^{\prime}(v_t),\\
	\label{defn:Delta-t} \Delta_t &\defn 
	\sum_{k = 1}^{t-1} \mu_t^k \Big[\big\langle \phi_k, \eta_{t}(v_t)\big\rangle - \big\langle\eta_t^{\prime}(v_t)\big\rangle \beta_{t-1}^k\Big].
\end{align}
\end{subequations}
\end{itemize}
\end{theos}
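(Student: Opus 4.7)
The plan is to establish \eqref{eqn:xt-decomposition} by induction on $t$, sequentially constructing the Gaussian vectors $\{\phi_k\}$ through a Bolthausen-style conditioning argument applied to the symmetric Wigner matrix $W$. For the base case $t=1$, since $\eta_0(x_0)=0$ and $x_1$ is independent of $W$, the update collapses to $x_2 = \lambda(v^{\star\top}\eta_1(x_1))v^\star + W\eta_1(x_1)$; conditional on $x_1$, $W\eta_1(x_1)$ is Gaussian with covariance $(\|\eta_1(x_1)\|_2^2 I + \eta_1(x_1)\eta_1(x_1)^\top)/n$, so I define $\phi_1$ by normalizing its isotropic component, set $\alpha_2=\lambda v^{\star\top}\eta_1(x_1)$ and $\beta_1^1=\|\eta_1(x_1)\|_2$, and absorb the $O(1/\sqrt{n})$ rank-one correction into $\xi_1$.

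For the inductive step, I substitute $x_t = v_t + \xi_{t-1}$ with $v_t$ as in \eqref{defn:v-t-thm1} into \eqref{eqn:AMP-updates} and split
\[
W\eta_t(x_t) = W\eta_t(v_t) + W\delta_t.
\]
Since $\eta_t(v_t)$ is measurable with respect to the $\sigma$-algebra generated by $v^\star$ and $\phi_1,\ldots,\phi_{t-1}$ -- each of which is itself a known linear image of $W$ by induction -- I condition on this $\sigma$-algebra and apply the Gaussian conditioning formula, which decomposes $W\eta_t(v_t)$ into (i) a conditional mean lying in the span of $Wv^\star,W\phi_1,\ldots,W\phi_{t-1}$ plus a symmetric reflection along $\eta_t(v_t)$ itself (from the Wigner symmetry), and (ii) a conditional fluctuation in the orthogonal complement whose normalization defines the fresh Gaussian $\phi_t$ and coefficient $\beta_t^t$; rotational invariance of the orthogonal-complement Gaussian then delivers the i.i.d.\,law of $\{\phi_k\}$. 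The crux is the \emph{Onsager cancellation}: by Gaussian integration by parts, $\mathbb{E}[\langle\phi_k,\eta_t(v_t)\rangle]=\beta_{t-1}^k\mathbb{E}[\langle\eta_t'(v_t)\rangle]$, so the correction term $-\langle\eta_t'(x_t)\rangle\eta_{t-1}(x_{t-1})$ in the AMP update is calibrated to eliminate the leading-order piece of the conditional mean of $W\eta_t(v_t)$ pointing along $\eta_{t-1}(v_{t-1})$. Collecting what survives yields $\alpha_{t+1}v^\star + \sum_{k=1}^t\beta_t^k\phi_k + \xi_t$, with $\xi_t$ capturing the pieces of $W\delta_t$ projected onto the residual direction (producing the $\langle\sum\mu_t^k\phi_k,\delta_t\rangle$ term of \eqref{eqn:xi-norm-main}), the Onsager mismatch $\langle\delta_t'\rangle\sum\mu_t^k\beta_{t-1}^k$ arising from substituting $\eta_t'(x_t)$ for $\eta_t'(v_t)$, and the finite-sample Stein fluctuation $\Delta_t$.

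The main technical obstacle is controlling the terminal $O(\sqrt{t\log n/n}\,\|\beta_t\|_2)$ residual uniformly in $t$, which is precisely what enables the analysis to extend beyond the $o(\log n/\log\log n)$ regime of prior art. This rests on: (a) subgaussian concentration of $Wu$ along a family of $O(t)$ data-dependent directions, together with a union bound yielding the $\sqrt{t\log n}$ factor; (b) unit-norm control on the projection coefficients $\mu_t\in\mathcal{S}^{t-2}$ by construction; and (c) careful accounting for the rank-one covariance correction $uu^\top/n$ of $\mathrm{Cov}(Wu)$, whose contribution is $O(1/\sqrt{n})$ per iteration and does not compound. The delicate point is that $\|\delta_t\|_2$ and $\|\delta_t'\|_\infty$, which enter \eqref{eqn:xi-norm-main} linearly, are themselves controlled by previous $\|\xi_{s-1}\|_2$'s via Lipschitz regularity of $\eta_t$ (guaranteed by differentiability off a finite set), so the recursion is self-stabilizing as long as the state-evolution parameters $\|\beta_s\|_2$ remain bounded -- an assumption to be verified per application in Sections~\ref{sec:z2-main} and~\ref{sec:sparse-main}.
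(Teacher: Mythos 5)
Your proposal follows the classical Bolthausen/Bayati--Montanari conditioning route (condition on the past, split $W\eta_t$ into conditional mean plus fluctuation, extract a fresh Gaussian), whereas the paper deliberately avoids that route: it recursively deflates the noise matrix, $W_k=(I-z_{k-1}z_{k-1}^{\top})W_{k-1}(I-z_{k-1}z_{k-1}^{\top})$ for an orthonormalized basis $z_k$ of the denoised iterates, and shows that $W_kz_k$, once augmented by an explicit correction $\zeta_k$, is \emph{exactly} $\mathcal{N}(0,\tfrac{1}{n}I_n)$ and exactly independent across $k$, with the corrections subtracted back into a residual $\xi_t$ that lies \emph{exactly} in the $t$-dimensional span of $z_1,\dots,z_t$. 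Two concrete gaps in your sketch prevent it from delivering the theorem as stated. First, claim (ii) — full-dimensional, exactly i.i.d.\ $\phi_k\sim\mathcal{N}(0,\tfrac1n I_n)$ — cannot be obtained by "normalizing the isotropic component" of the conditional fluctuation: that fluctuation is supported on the orthogonal complement of the conditioned directions (dimension roughly $n-t$) and, because of the $2/n$ diagonal variance of the Wigner matrix, has the wrong variance along the current direction. The paper repairs both defects by rescaling the self-overlap by $\sqrt{2}/2$ and injecting \emph{fresh} independent Gaussians $g_i^k$ along the previously used directions; these injected terms must then be subtracted out and they are precisely what produce, after concentration, the $O\big(\sqrt{t\log n/n}\,\|\beta_t\|_2\big)$ term in \eqref{eqn:xi-norm-main}. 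Your sketch contains no such augmentation, and it is also internally inconsistent with your statement that each $\phi_k$ is "a known linear image of $W$": exactly i.i.d.\ full-dimensional $\phi_k$ cannot all be measurable with respect to $W$ alone, and the measurability/adaptedness structure you would need for the conditioning formula (the paper's Claim on independence, showing the randomness of $x_k,z_k$ comes only from $\{W_iz_i\}_{i<k}$ and $x_1$) is never verified.

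Second, the identity \eqref{eqn:xi-norm-main} is an \emph{equality} for $\|\xi_t\|_2$ whose derivation hinges on showing that $\xi_t$ lies exactly in the span of the orthonormal vectors $z_1,\dots,z_t$, so that $\|\xi_t\|_2=\sum_k\mu_t^k\langle z_k,\xi_t\rangle$ for a unit vector $\mu_t$ and each coordinate can be written out explicitly (the $\langle\sum_k\mu_t^k\phi_k,\delta_t\rangle$ term comes from comparing $\langle\phi_k,\eta_t(x_t)\rangle$ with $\langle\phi_k,\eta_t(v_t)\rangle$, not from projecting $W\delta_t$). Your sketch never establishes this exact low-dimensional structure of $\xi_t$, and without it you only get an upper bound of a looser form. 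Relatedly, the error-accumulation issue you defer ("self-stabilizing as long as $\|\beta_s\|_2$ stays bounded") is exactly the failure mode the paper attributes to the conditioning approach you adopt: writing $x_{t+1}$ in terms of the past iterates forces the error of every previous iterate to be re-propagated, which in prior work leads to residuals growing rapidly in $t$. The whole point of the paper's construction — expressing $x_{t+1}$ against the fixed signal direction and the exact Gaussian frame $\{\phi_k\}$, with $\delta_t,\delta_t'$ measured only relative to $v_t$ and the residual confined to $U_t$ — is to avoid that compounding; your proposal as written does not explain how the conditional-mean bookkeeping would achieve the same, so the key quantitative conclusion (the non-exploding recursion for $\|\xi_t\|_2$) remains unproven.
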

\begin{remark}
	 The auxiliary vector $v_t$ defined in \eqref{defn:v-t-thm1}, 
	 which is a linear combination of $\vstar$ and  the Gaussian vectors $\{\phi_{k}\}$,  
can be viewed as $x_t$ with the residual term $\xi_{t-1}$ dropped (see \eqref{eqn:xt-decomposition}). As we shall see momentarily, 
$v_t$ often serves as a fairly tight and informative approximation of $x_t$. 
\end{remark}

In a nutshell, Theorem~\ref{thm:recursion} decomposes the $t+1$-th iterate of the AMP algorithm $x_{t+1}$ into three components: 
 {\em $(1)$ a signal component $\alpha_{t+1}\vstar$:} which is perfectly aligned with $\vstar$, whose strength is captured by $\alpha_{t+1}$; 
$(2)$ {\em a random noise component $\sum_{k = 1}^{t} \beta_{t}^k\phi_k$:} which behaves as a weighted superposition of $t$ i.i.d.~Gaussian vectors, although the weights $\beta_t$ might be statistically dependent on the $\phi_k$'s; 
$(3)$ {\em a residual term $\xi_{t}$:} which hopefully can be well controlled. 
%
%
This decomposition, which holds all the way up to the $n$-th iteration, is fairly general and 
plays a crucial role in obtaining non-asymptotic characterizations of $x_{t+1}$. 
In particular, it imposes little assumption (resp.~no assumption) on the denoising function (resp.~the underlying signal $\vstar$). 
In what follows, we single out several important remarks about the three components in \eqref{eqn:xt-decomposition}. 


\begin{itemize}
\item Let us first look at the random noise component $\sum_{k = 1}^{t} \beta_{t}^k\phi_k$. 
	Clearly, if $\beta_{t}$ were statistically independent from  the i.i.d.~Gaussian vectors $\{\phi_k\}$, then $\frac{1}{\|\beta_{t}\|_2}\sum_{k = 1}^{t} \beta_{t}^k\phi_k$ 
		would exhibit an ideal Gaussian distribution $\mathcal{N}\big(0, \frac{1}{n}I_n\big)$. 
	In general, however, $\beta_{t}$ exhibits delicate dependency on $\{\phi_k\}$, thus complicating matters. 
		Fortunately, the 1-Wasserstein distance between $\frac{1}{\|\beta_{t}\|_2}\sum_{k = 1}^{t} \beta_{t}^k\phi_k$ and the desired $\mathcal{N}\big(0, \frac{1}{n}I_n\big)$ remains small as long as $t$ is not too large; that is, 
\begin{align}
	W_{1}\Bigg(\mu\bigg(\frac{1}{\ltwo{\beta_t}}\sum_{i=1}^{t}\beta_{k}\phi_{k}\bigg), \,\mathcal{N}\bigg(0,\frac{1}{n}I_{n}\bigg)\Bigg) \lesssim 
	\sqrt{\frac{t\log n}{n}},
\end{align}
as asserted by Lemma~\ref{lem:wasserstein}, where $\mu(X)$ denotes the law of the random variable $X$. 
This reveals that this random noise component almost resembles an ideal Gaussian vector $\mathcal{N}\big(0, \frac{1}{n}I_n\big)$ for a wide range of $t$.

\item Next, as shall be made clear momentarily, quantities $\alpha_{t+1}$ and $\ltwo{\beta_{t}}$ in \eqref{eqn:xt-decomposition} are intimately related to the primary quantities in the state evolution formula~\eqref{eq:SE-Montanari}, although their evolutions are now described in a non-asymptotic fashion. This paves the path for a non-asymptotic characterization of its convergence behavior towards a stationary point.

%

\item The residual term $\xi_{t}$ is fairly complicated, depending heavily on the previous iterations of AMP as well as the specific choices of the denoising functions $\eta_{t}$. 
In truth, with different choices of $\eta_{t}$, the residual term $\xi_t$ might exhibit very different dependence on the salient parameters. 
		In Theorem~\ref{thm:recursion} and its analysis, we provide a recursive characterization of $\|\xi_t\|_2$ using several quantities in the preceding iteration,  
and unveil certain low-dimensional structure of the residual term $\xi_t$. 
\red{These important observations pave the way to a more systematic control of these residual terms and offer a key knob to control the non-asymptotic error for the final Gaussian approximation. }


\end{itemize}

\red{
Finally, it is worth noting that prior AMP theory often hinges upon an ingenious Gaussian conditioning technique (e.g., \cite{bayati2011dynamics,bolthausen2009high,rush2018finite}). 
The key construction therein is to write each $x_{t+1}$ as a linear combination of the past iterates $\{x_{i}\}_{i\le t}$ plus a new Gaussian vector and a new error term, as shown in \cite[Lemma 1]{bayati2011dynamics}. 
Such a linear combination together with the new Gaussian vector leads to the final Gaussian approximation. 
However, since $x_1, \ldots, x_t$ are not exactly Gaussian vectors (as the Gaussian property holds only asymptotically); when arguing about the error in $x_{t+1}$ via an inductive method, one inevitably has to deal with the accumulated error inherited from each error term in $\{x_{i}\}_{i\leq t}$. 
Following this argument and directly taking the union bound over the errors of every $x_{t}$ will result in a residual term that grows exponentially fast in the number of iterations $t$ as in \cite{rush2018finite}.
Addressing this issue calls for a more refined and effective manner to track error accumulation so as to avoid the exponential blow-up.  This inspires the development of Theorem~\ref{thm:recursion} and the ensuing theory. 
}


\subsection{Non-asymptotic error characterizations}
\label{sec:decomposition-thm-2}

Thus far, we have identified a general decomposition of the AMP iterates in Theorem~\ref{thm:recursion}, 
accompanied by a recursive formula \eqref{eqn:xi-norm-main} to describe how the size $\|\xi_{t}\|_2$ of the residual term evolves. 
Nevertheless, the formula \eqref{eqn:xi-norm-main} might remain elusive at first glance, 
as it is built upon multiple different objects in the previous iteration. 
In order to better understand the advantages of the recursive relations in Theorem~\ref{thm:recursion}, 
we single out several additional quantities, which --- if easily controllable --- help further simplify the recurrence. 
These taken collectively constitute our general recipe for non-asymptotic analysis of AMP, whose utility will be brought to light via two concrete applications in Section~\ref{sec:examples}.


\paragraph{Assumptions and key quantities.}

Let us first impose the following basic assumptions on the denoising function $\eta_{t}: \real \to \real$. 
Here and throughout, we let $\eta_t^{\prime}(\cdot)$, $\eta_t^{\second}(\cdot)$ and $\eta_t^{\third}(\cdot)$ denote respectively the first-order, second-order, and third-order derivatives of $\eta_{t}$; 
when we apply $\eta_t^{\prime}(\cdot)$, $\eta_t^{\second}(\cdot)$ and $\eta_t^{\third}(\cdot)$ to vectors, it is understood that they are applied entry-by-entry. 
\begin{assumption} 
\label{assump:eta}
For every $1\leq t\leq n$, 
it is assumed that:
\begin{itemize}
	\item $\eta_t(\cdot)$ is  continuous everywhere, and is differentiable up to the 3rd order everywhere except for a finite set $\mathcal{M}_{\mathsf{dc}}$ of points with $\big|\mathcal{M}_{\mathsf{dc}}\big|=O(1)$;
	\item $|\eta_{t}^{\prime}(w)|\leq \rho^{t}$ for any differentiable point $w$ of $\eta_t(\cdot)$; 
	\item $|\eta_{t}^{\second}(w)|\leq \rho^t_1$ for any differentiable point $w$ of $\eta_t^{\prime}(\cdot)$;  
	\item $|\eta_{t}^{\third}(w)|\leq \rho^t_2$ for any differentiable point $w$ of $\eta_t^{\second}(\cdot)$.
\end{itemize}
\red{We take $\rho \defn \max_{1\leq t\leq n} \rho^{t}$, $\rho_1 \defn \max_{1\leq t\leq n} \rho_1^{t}$ and $\rho_2 \defn \max_{1\leq t\leq n} \rho_2^{t}$.} 
%
\end{assumption}
\noindent 
For notational simplicity, we shall --- unless otherwise noted --- take $\eta_t^{\prime}(w)=\eta_t^{\second}(w)=\eta_t^{\third}(w)=0$ for any non-differentiable point, 
with the impact of these singular points explicitly taken into account in the quantity $E_t$ to be defined in Assumption~\ref{assump:A-H-eta}.



In the next assumption, we would like to isolate a few additional quantities that can often be bounded separately. 
We shall formally state this assumption after defining the following additional quantity: 
\begin{align}
\label{defi:kappa}
\notag \kappa_t^2 \defn 
\max\Bigg\{
	\Bigg\langle\int\Big[x\eta_{t}^{\prime}\Big(\alpha_t\vstar + \frac{\|\beta_{t-1}\|_2}{\sqrt{n}}x\Big)  - \frac{\ltwo{\beta_{t-1}}}{\sqrt{n}}&\eta_{t}^{\second}\Big(\alpha_t\vstar + \frac{\|\beta_{t-1}\|_2}{\sqrt{n}}x\Big)\Big]^2 \varphi_n(\dx)\Bigg\rangle, ~\\
&\bigg\langle
\int\Big[\eta_{t}^{\prime}\Big(\alpha_t\vstar + \frac{\|\beta_{t-1}\|_2}{\sqrt{n}}x\Big)\Big]^2\varphi_n(\dx)
\bigg\rangle\Bigg\},
\end{align}
where we recall that $\varphi_n(\cdot)$ is the pdf~of $\mathcal{N}(0,I_n)$ and $\langle x\rangle \defn \frac{1}{n} \sum_{i=1}^{n} x_{i}$.

\begin{assumption}
\label{assump:A-H-eta}
	For any $1\leq t \leq n$, consider arbitrary vectors $\mu_t \in \mathcal{S}^{t-1}$, $\xi_{t-1}\in \real^n$, and coefficients $(\alpha_t, \beta_{t-1}) \in \real \times \real^{t-1}$ that might all be statistically dependent on $\phi_{k}$, and define $v_t$ as in \eqref{defn:v-t-thm1} accordingly. 
In addition to imposing Assumption~\ref{assump:eta}, we assume the existence of (possibly random) quantities $A_{t}, \cdots, G_{t}$ \red{only depending on $n, t, \eta_t, \alpha_t$} such that with probability at least $1-O(n^{-11})$, the following inequalities hold
\begin{subequations}
\begin{align}
	\label{defi:A}
	\Big|\sum_{k = 1}^{t-1} \mu_t^k\Big[\big\langle \phi_k, \eta_{t}(v_t)\big\rangle - \big\langle\eta_t^{\prime}(v_t)\big\rangle \beta_{t-1}^k\Big]\Big| &\,\le\, A_t, \\
	\Big|v^{\star\top}\eta_{t}(v_t) - v^{\star\top}\int\eta_t\Big(\alpha_t \vstar + \frac{\|\beta_{t-1}\|_2}{\sqrt{n}}x\Big)\varphi_n(\dx)\Big| &\,\le\, B_t, \label{defi:B}\\
	\Big|\big\|\eta_{t}(v_t)\big\|_2^2 - \int\Big\|\eta_t\Big(\alpha_t \vstar + \frac{\|\beta_{t-1}\|_2}{\sqrt{n}}x\Big)\Big\|_2^2\varphi_n(\dx)\Big| &\,\le\, C_t, \label{defi:C}\\
	\Big\|\sum_{k = 1}^{t-1} \mu_t^k\phi_k \circ \eta_{t}^{\prime}(v_t) - \frac{1}{n}\sum_{k = 1}^{t-1} \mu_t^k\beta_{t-1}^k\eta_{t}^{\second}(v_t)\Big\|_2^2 - \kappa_t^2 &\,\le\, D_t, \label{defi:D}\\
%
%
	\big\|\eta_{t}(v_t) \circ \eta_{t}^{\prime}(v_t)\big\|_2 &\,\le\, F_t, \label{defi:F}\\
	\big\|\eta_{t}(v_t)\big\|_{\infty} &\,\le\, G_t. \label{defi:G}
%
%
\end{align}
In addition, for any non-differentiable point $m\in \mathcal{M}_{\mathsf{dc}}$, define $\theta(m)\in \real$ as 
\begin{align}
\label{defi:theta}
\theta(m)\coloneqq\sup\left\{ \theta: \,\sum_{j=1}^{n}\bigg|m-\alpha_{t}\vstar_{j}-\sum_{k=1}^{t-1}\beta_{t-1}^{k}\phi_{k,j}\bigg|^{2}\ind\bigg(\bigg|m-\alpha_{t}\vstar_{j}-\sum_{k=1}^{t-1}\beta_{t-1}^{k}\phi_{k,j}\bigg|\le\theta\bigg) \le \|\xi_{t-1}\|_{2}^{2}\right\} , 
\end{align}
and we assume the existence of some quantity $E_t$ such that, with probability at least $1-O(n^{-11})$, 
%
\begin{align}
\label{defi:E}
\sum_{m\in\mathcal{M}_{\mathsf{dc}}}\sum_{j=1}^{n}\ind\Big(\Big|m-\alpha_{t}\vstar_{j}-\sum_{k=1}^{t-1}\beta_{t-1}^{k}\phi_{k,j}\Big|\le\theta(m)\Big)\,\le\,E_{t} .
\end{align}
%
\end{subequations}
\end{assumption}



\paragraph{Error control and state evolution.} 

Armed with the above two assumptions, we are positioned to control the magnitude of the residual term $\xi_{t}$ as well as quantities $\alpha_{t+1}$ and $\beta_{t}$. 
Our result is summarized in the following theorem, with the proof deferred to Section~\ref{sec:pf-thm-main}.
\begin{theos} 
\label{thm:main}
Consider the settings of Theorem~\ref{thm:recursion}, and impose Assumptions~\ref{assump:eta}-\ref{assump:A-H-eta}.  
Then with probability at least $1-O(n^{-11})$, 
the AMP iterates \eqref{eqn:AMP-updates} satisfy the decomposition~\eqref{eqn:xt-decomposition} with 
\begin{subequations}
\label{eqn:state-evolution-finite}
\begin{align}
\label{eqn:alpha-t-genearl}
	\alpha_{t+1} &= \lambda v^{\star \top} \int{\eta}_{t}\lt(\alpha_t \vstar + \frac{\ltwo{\beta_{t-1}}}{\sqrt{n}}x\rt)\varphi_n(\dx) + \lambda\Delta_{\alpha,t} \\
	\|\beta_t\|_2^2 &= n\bigg\langle\int{\eta}_{t}^2\lt(\alpha_t \vstar + \frac{\ltwo{\beta_{t-1}}}{\sqrt{n}}x\rt)\varphi_n(\dx)\bigg\rangle + \Delta_{\beta,t}
	\label{eqn:beta-t-genearl}
\end{align}
\end{subequations}
for any $t\leq n$, where the residual terms obey
\begin{subequations}
\label{eqn:para-general}
\begin{align}
\label{eqn:delta-alpha-general}
|\Delta_{\alpha,t}| &\,\lesssim\, B_t +\rho \|\xi_{t-1}\|_2, \\
\label{eqn:delta-beta-general}
|\Delta_{\beta,t}| &\lesssim C_t + \lt(F_t + \rho_1G_t\|\xi_{t-1}\|_2 + \rho\sqrt{E_t}G_t + \rho^2\|\xi_{t-1}\|_2 \rt)\|\xi_{t-1}\|_2, \\
\label{eqn:xi-t-general}
	\|\xi_{t}\|_{2}&\le\sqrt{\kappa_{t}^{2}+D_{t}}\,\|\xi_{t-1}\|_{2}+O\Bigg(\sqrt{\frac{t\log n}{n}}\|\beta_{t}\|_{2}+A_{t}+\lt[\sqrt{\frac{t+\log n}{n}}\rho_{1}+\frac{\rho_{2}\|\beta_{t-1}\|_{2}}{n}\rt]\|\xi_{t-1}\|_{2}^{2}\nonumber\\
 & \qquad\qquad\qquad\qquad\qquad\qquad+\rho\sqrt{\frac{{E_{t}+t\log n}}{n}}\|\xi_{t-1}\|_{2}+\frac{(\rho+\rho_{1}\big\|\xi_{t-1}\big\|_{\infty})E_{t}\|\beta_{t-1}\|_{2}}{n}\Bigg) .
\end{align}
%
\end{subequations}
\end{theos}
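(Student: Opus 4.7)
\medskip
\noindent\textbf{Proof plan.}
The plan is to start from the recursive identity supplied by Theorem~\ref{thm:recursion}, substitute $x_t = v_t + \xi_{t-1}$ everywhere (cf.\ the decomposition \eqref{eqn:xt-decomposition} at step $t$), and then Taylor expand $\eta_t$ and its derivatives around the ``cleaned'' vector $v_t$. This expansion converts each AMP ingredient into a population-level ``Gaussian-integral'' piece, which is controlled by the quantities $A_t,\ldots,G_t$ of Assumption~\ref{assump:A-H-eta}, plus explicit remainder terms scaling with powers of $\|\xi_{t-1}\|_2$ via the derivative bounds $\rho,\rho_1,\rho_2$. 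Non-differentiable points of $\eta_t$ contribute an additional class of error terms that I isolate using the quantity $E_t$ defined in \eqref{defi:E}.

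For the coefficient identity \eqref{eqn:alpha-t-genearl}, Theorem~\ref{thm:recursion}(i) gives $\alpha_{t+1} = \lambda v^{\star\top}\eta_t(x_t)$; I would split $\eta_t(x_t) = \eta_t(v_t) + [\eta_t(v_t+\xi_{t-1}) - \eta_t(v_t)]$, estimate $v^{\star\top}\eta_t(v_t)$ by its Gaussian integral via assumption \eqref{defi:B} (contributing $B_t$), and bound the second piece by Cauchy--Schwarz and the Lipschitz constant $\rho$, which together with the non-differentiable contribution produces the $\rho\|\xi_{t-1}\|_2$ term in \eqref{eqn:delta-alpha-general}. For \eqref{eqn:beta-t-genearl}, I expand $\|\eta_t(v_t+\xi_{t-1})\|_2^2 = \|\eta_t(v_t)\|_2^2 + 2\langle\eta_t(v_t), \eta_t(v_t+\xi_{t-1}) - \eta_t(v_t)\rangle + \|\eta_t(v_t+\xi_{t-1}) - \eta_t(v_t)\|_2^2$: the first term is compared to its Gaussian expectation by \eqref{defi:C} yielding error $C_t$; the cross term becomes to leading Taylor order $2\langle\eta_t(v_t)\circ\eta_t'(v_t), \xi_{t-1}\rangle$, bounded by $F_t\|\xi_{t-1}\|_2$ via \eqref{defi:F}, with the second-order Taylor remainder bounded by $\rho_1 G_t\|\xi_{t-1}\|_2^2$ using $\|\eta_t(v_t)\|_\infty\le G_t$; the quadratic term is at most $\rho^2\|\xi_{t-1}\|_2^2$; the non-differentiable contribution yields the $\rho\sqrt{E_t}\,G_t\|\xi_{t-1}\|_2$ summand in \eqref{eqn:delta-beta-general}.

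The heart of the argument is the residual bound \eqref{eqn:xi-t-general}, starting from the representation \eqref{eqn:xi-norm-main}. I would Taylor expand $\delta_t = \eta_t(v_t+\xi_{t-1}) - \eta_t(v_t) = \eta_t'(v_t)\circ\xi_{t-1} + \tfrac{1}{2}\eta_t''(v_t)\circ(\xi_{t-1}\circ\xi_{t-1}) + O(\rho_2\|\xi_{t-1}\|_\infty\|\xi_{t-1}\|_2^2)$ and similarly $\delta_t' \approx \eta_t''(v_t)\circ\xi_{t-1}$. The decisive algebraic observation is that the leading-order contribution of $\langle\sum_{k=1}^{t-1}\mu_t^k\phi_k,\delta_t\rangle - \langle\delta_t'\rangle\sum_{k=1}^{t-1}\mu_t^k\beta_{t-1}^k$ collapses to $\bigl\langle\sum_k\mu_t^k\phi_k\circ\eta_t'(v_t) - \tfrac{1}{n}\sum_k\mu_t^k\beta_{t-1}^k\,\eta_t''(v_t),\;\xi_{t-1}\bigr\rangle$, which by Cauchy--Schwarz and assumption \eqref{defi:D} is at most $\sqrt{\kappa_t^2 + D_t}\,\|\xi_{t-1}\|_2$. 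The higher-order Taylor pieces give the $(\sqrt{(t+\log n)/n}\,\rho_1 + \rho_2\|\beta_{t-1}\|_2/n)\|\xi_{t-1}\|_2^2$ contribution after standard concentration bounds on $\langle\sum_k\mu_t^k\phi_k,\eta_t''(v_t)\circ(\xi_{t-1}\circ\xi_{t-1})\rangle$; the term $\Delta_t$ is absorbed into $A_t$ by \eqref{defi:A}; and the $O(\sqrt{t\log n/n}\,\|\beta_t\|_2)$ residual is inherited directly from Theorem~\ref{thm:recursion}(iv). Non-differentiable points require an additional truncation argument exploiting \eqref{defi:E}: by construction of $\theta(m)$ in \eqref{defi:theta}, the sum of $|\xi_{t-1,j}|^2$ over indices $j$ near any discontinuity $m\in\mathcal{M}_{\mathsf{dc}}$ is dominated by $\|\xi_{t-1}\|_2^2$ while the number of such indices is at most $E_t$, and this trade-off produces the remaining terms $\rho\sqrt{(E_t+t\log n)/n}\,\|\xi_{t-1}\|_2$ and $(\rho + \rho_1\|\xi_{t-1}\|_\infty)E_t\|\beta_{t-1}\|_2/n$.

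The main obstacles I anticipate are twofold. First, exposing the cancellation that produces $\sqrt{\kappa_t^2 + D_t}$ is subtle: the Onsager-type term $\langle\delta_t'\rangle\sum_k\mu_t^k\beta_{t-1}^k$ must combine exactly with the Gaussian-divergence contribution from $\langle\sum_k\mu_t^k\phi_k,\eta_t'(v_t)\circ\xi_{t-1}\rangle$, and it is precisely assumption \eqref{defi:D} and the particular definition \eqref{defi:kappa} of $\kappa_t$ that make this work; any looser bookkeeping would inflate the prefactor and destroy the ability to iterate for $\mathrm{poly}(n)$ steps. Second, handling non-differentiable points where Taylor remainder estimates formally break down requires the tailor-made truncation device \eqref{defi:theta}--\eqref{defi:E}, and threading each of $A_t,\ldots,G_t,E_t$ through the various Cauchy--Schwarz applications while matching the precise powers of $\rho,\rho_1,\rho_2$ in \eqref{eqn:para-general} demands careful accounting.
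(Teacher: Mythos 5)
Your proposal is correct and follows essentially the same route as the paper's proof: the splitting $\eta_t(x_t)=\eta_t(v_t)+\delta_t$ with assumptions \eqref{defi:B}--\eqref{defi:C} handling the Gaussian-integral comparison, the first-order expansion of $\delta_t,\delta_t^{\prime}$ with remainders controlled by $\rho_1,\rho_2$ and the discontinuity vector $\Gamma$, the Cauchy--Schwarz step pairing $\sum_k\mu_t^k\phi_k\circ\eta_t^{\prime}(v_t)-\tfrac{1}{n}\sum_k\mu_t^k\beta_{t-1}^k\eta_t^{\second}(v_t)$ with $\xi_{t-1}$ against assumption \eqref{defi:D} to produce $\sqrt{\kappa_t^2+D_t}$, absorbing $\Delta_t$ into $A_t$, and the $\theta(m)$/$E_t$ truncation for non-differentiable points are exactly the paper's Steps 1--4 (Lemmas~\ref{lem:recursion} and \ref{lem:recursion2}). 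The only cosmetic difference is that you write a second-order Taylor expansion of $\delta_t$, whereas the paper uses a first-order expansion with a $\rho_1|\xi_{t-1}|^2$ remainder and obtains the $\rho_1$- and $\rho_2$-weighted quadratic terms from $\delta_t$ and $\delta_t^{\prime}$ separately, which yields the same bound.
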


 
Theorem~\ref{thm:main} offers an explicit and recursive way to control the quantities $\alpha_{t+1}, \ltwo{\beta_t}, \xi_{t}$, 
assuming that the quantities $A_t,\ldots,G_t$ isolated in Assumption~\ref{assump:A-H-eta} can be bounded effectively. 
Crucially, the results in \eqref{eqn:state-evolution-finite} can be viewed as the non-asymptotic analog of the asymptotic state evolution recurrence~\eqref{eq:SE-Montanari}.  
To be more precise, note that if  we assume the empirical distribution of $\{\sqrt{n}\vstar_{i}\}_{1\leq i\leq n}$ converges to some distribution $\mu_{V}$ and generate $V\sim \mu_V, G\sim \mathcal{N}(0,1)$ independently, 
then \eqref{eqn:state-evolution-finite} can be alternatively interpreted as 
\begin{subequations}
\begin{align}
	\frac{\alpha_{t+1}}{\sqrt{n}} &\approx \lambda \Exs \bigg[V \eta_t\Big( \frac{\alpha_t}{\sqrt{n}} V + \frac{\ltwo{\beta_{t-1}}}{\sqrt{n}} G \Big)\bigg] ,  \\
	\frac{\ltwo{\beta_{t}}^2}{n} &\approx \Exs \bigg[\eta_t^2 \Big(\frac{\alpha_t}{\sqrt{n}} V + \frac{\ltwo{\beta_{t-1}}}{\sqrt{n}} G \Big)\bigg] ,
\end{align}
\end{subequations}
which  --- upon proper rescaling ---  is consistent with \eqref{eq:SE-Montanari} as long as $\Delta_{\alpha}$ and $\Delta_{\beta}$ are negligible.




The basic idea of Theorem~\ref{thm:main} is to divide the ultimate goal into multiple sub-tasks, 
motivating us to bound the derivatives stated in Assumption~\ref{assump:eta} and each of the quantities $A_t,\ldots,G_t$ separately.   
This framework is fully non-asymptotic, provided that $A_t,\ldots,G_t$ admits some non-asymptotic bounds as well. 
\red{Given the generality of this result, a natural question arises as to whether it is feasible to control the parameters $\kappa_t, A_t,\ldots,G_t$ to the desired order, and to ensure that the residual term $\xi_{t}$ is sufficiently small for a broad range of iterations. 
In order to demonstrate the efficacy of this framework,   
we carry out the analysis details for two examples in the sequel: $\mathbb{Z}_2$ synchronization and sparse PCA. 
In both of these examples, we are able to demonstrate that such residual terms are exceedingly small.
More details can be seen in Section~\ref{sec:examples} when we embark on the discussion of these two concrete applications. 
}

\begin{itemize}
	\item 
	\red{Let us take a closer inspection on the left-hand side of \eqref{defi:A} concerning $A_t$. 
	Heuristically, consider the idealistic case where  $\mu_{t}$, $\alpha_t$ and $\beta_{t-1}$ are independent of $\{\phi_k\}_{1\leq k\leq t-1}$. 
	By virtue of the celebrated Stein lemma, 
	we can easily show that the quantity $\sum_{k = 1}^{t-1} \mu_t^k\big[\big\langle \phi_k, \eta_{t}(v_t)\big\rangle - \big\langle\eta_t^{\prime}(v_t)\big\rangle \beta_{t-1}^k\big]$ has zero mean.  In addition, this quantity can be viewed as a Lipschitz function of an i.i.d.~Gaussian vector,
	which is expected to concentrate sharply around its mean \citep{massart2007concentration}. 
	Such concentration results can then be extended to accommodate statistically dependent $\mu_{t}$ and $\beta_{t-1}$ via standard covering arguments (see, e.g., the uniform concentration results in Section~\ref{sec:Gaussian-concentration}). 
	Similar ideas can be applied to control $D_t$ (cf.~\eqref{defi:D}), 
	although the expression of $D_t$ is more complicated with non-zero mean. 
	From a technical point of view, the properties of the denoising function, such as being differentiable and Lipschitz continuous, are only mildly exploited in bounding these quantities. 
	As a result, having a specific form of $\eta_{t}$ may not necessarily be useful in obtaining more succinct expressions for Assumption~\ref{assump:A-H-eta}. }

	\item Similarly, the target quantities (excluding the absolute value symbols) that define $B_t$ (cf.~\eqref{defi:B}) and $C_t$ (cf.~\eqref{defi:C}) 
		are also zero-mean Lipschitz functions of i.i.d.~Gaussian vectors, 
		if we take $\alpha_t$ and $\beta_{t-1}$ to be independent of $\{\phi_k\}_{1\leq k\leq t-1}$. 
		As a result, we expect $B_t$ and $C_t$ to be controllable again using uniform Gaussian concentration results.

	\item In terms of quantity $E_t$, which captures the influence of non-differentiable points of the denoising function $\eta_{t}(\cdot)$.
		In those problems with smooth $\eta_t(\cdot)$ (e.g., $\mathbb{Z}_2$ synchronization to be explored in Section~\ref{sec:z2-main}), 
		we have $E_{t} = 0$, which allows for significant simplification of \eqref{eqn:para-general}. 
		Nonetheless, it plays a crucial role in problems with non-differentiable denoising functions, as shall be seen in the example of sparse PCA (in Section~\ref{sec:sparse-main}). 

\end{itemize}

Finally, the signal-to-noise ratio in decomposition~\eqref{eqn:xt-decomposition} is captured by $\frac{\alpha_{t+1}}{\ltwo{\beta_t}}.$ 
Clearly, if throughout the execution of AMP, each $\eta_t$ is properly normalized such that $\ltwo{\eta_t(x_t)} = \|\beta_{t}\|_2 = 1$, then $\Delta_{\beta,t} = 0$ for every $1 \leq t < n$. 
Therefore it is sufficient to focus on the dynamics of $\{\alpha_{t}\}$. 
In such case, the application of Theorem~\ref{thm:main} is further simplified by controlling quantities $A_t, B_t$, $D_t$ and $E_t$.

\subsection{Non-asymptotic analysis of spectrally initialized AMP}
\label{sec:spectral}

Theorems~\ref{thm:recursion}-\ref{thm:main} are concerned with AMP iterates when initialized at a point independent of $W.$  
Caution needs to be exercised, however, when these results are used to accommodate random initialization; 
in fact, when AMP is initialized randomly, while the decomposition still holds true, the error terms $\Delta_{\alpha,t}$ and  $\|\xi_t\|_2$  might not be negligible compared to the signal component, 
thereby calling into question the validity of the asymptotic state evolution formula. 
Alternatively, one might consider AMP with a warm start --- that is, initializing AMP at some informative point. 
Along this line, a common approach to initialize a nonconvex iterative algorithm is the spectral method \citep{chen2021spectral,chi2019nonconvex,montanari2021estimation,keshavan2010matrix}, 
which attempts estimation by computing the leading eigenvector of the data matrix and has proved effective for various low-rank estimation problems. 
Spectrally initialized AMP has previously been analyzed when $t$ is fixed and $n$ approaches infinity \citep{montanari2021estimation,celentano2021local}.


Motivated by the wide use of spectral initialization in practice, 
we pursue an extension of our non-asymptotic analysis framework to accommodate AMP with spectral initialization.  
Recognizing that the leading eigenvector of a large matrix is often computed by means of an iterative power method, 
we consider the following spectral estimate: 
\begin{itemize}
	\item[1)] generate an initial vector $\vseed \in \mathbb{R}^{n}$ uniformly at random on the $n$-dimensional sphere $\mathcal{S}^{n-1}$; 
	\item[2)] run power iteration for $s$ steps (with $s$ to be specified shortly), and yield an estimate 
\begin{align}
\label{eqn:power-initialization}
	x_1 \defn a_s M^s \vseed 
	\qquad \text{with }a_s \defn \frac{1}{\ltwo{M^s \vseed}} 
\end{align}
with $a_s$ the normalization factor. 
\end{itemize}
The reason we study this concrete power method is two fold: (i) it corresponds to the method widely implemented in practice to compute the leading eigenvector in an exceedingly accurate manner; (ii) it is iterative in nature, thus facilitating integration into the AMP analysis framework.

When we employ $x_{1}$ (cf.~\eqref{eqn:power-initialization}) to initialize the AMP algorithm \eqref{eqn:AMP-updates}, 
Theorems~\ref{thm:recursion}-\ref{thm:main} remain valid after slight modification, with an initial signal strength $\alpha_1$ 
that can be characterized accurately using the property of spectral methods. 
Our result is formally stated below;  its proof can be found in Section~\ref{sec:pf-thm-spectral}.


\begin{theos} 
\label{thm:recursion-spectral}
Suppose that the AMP algorithm \eqref{eqn:AMP-updates} is initialized with $x_0, x_1\in \real^n$, 
where $x_1$ is obtained via \eqref{eqn:power-initialization} with $s = \frac{C_v\log n}{(\lambda - 1)^2}$ for some large enough constant $C_v>0$, 
and $x_0$ obeys $\eta_0(x_0) = \frac{1}{\lambda}x_1$.  
Suppose that $1 + C_{\lambda}\big(\frac{\log n }{n}\big)^{1/9} \leq \lambda= O(1)$ for some large enough constant $C_{\lambda }>0$.  
Then for every $0\le t< n - 2s -1 $, the AMP iterates \eqref{eqn:AMP-updates} admit the following decomposition: 
\begin{align}
\label{eqn:xt-decomposition-spectral}
	x_{t+1} = \alpha_{t+1} \vstar + \sum_{k = -2\power}^{t} \beta_{t}^k\phi_k + \xi_{t},
\end{align}
where the $\phi_k$'s are independent obeying 
$\phi_k \overset{\mathrm{i.i.d.}}{\sim} \mathcal{N}(0, \frac{1}{n}I_n)$,  the $\xi_{k}$'s denote some residual vectors, 
and  
\begin{subequations}
\label{eq:alpha-beta-recursion-spect}
\begin{align}
	\alpha_{1} &=\sqrt{1-\frac{1}{\lambda^{2}}}, \qquad
	\alpha_{t+1} = \lambda v^{\star\top} \eta_{t}(x_t), \\
	\|\beta_t\|_2 &\defn \big\| \big(\beta_t^{-2\power},\ldots,\beta_t^{0},\beta_t^1,\beta_t^2,\ldots,\beta_t^t \big) \big\|_2 
	= \lt\|\eta_{t}(x_t)\rt\|_2. 
\end{align}
\end{subequations}
In particular,   
there exist some unit vectors $\{\mu_t\}$ with $\mu_t = [\mu_t^{-2s},\ldots, \mu_t^{t}]\in \mathbb{R}^{t+2\power+1}$ obeying
\begin{align}
\label{eqn:xi-norm-main-spectral}
	\|\xi_{0}\|_2 &\lesssim \frac{\log^{3.5} n}{\sqrt{(\lambda - 1)^9n}} \\
	\|\xi_{t}\|_2 &= \Big\langle \sum_{k = -2\power}^{t-1} \mu_t^k\phi_k, \delta_{t}\Big\rangle - \langle\delta_{t}^{\prime}\rangle \sum_{k = -2\power}^{t-1} \mu_t^k\beta_{t-1}^k + \Delta_t 
	+ O\Big(\sqrt{\frac{t\log n}{n}}\|\beta_{t}\|_2\Big),
	\qquad  1 \leq t < n-2s-1 
\end{align}
with probability at least $1-O(n^{-11})$, 
where we define
\begin{subequations}
\label{eqn:delta-chorus-spectral}
\begin{align}
	v_t &\defn \alpha_t\vstar + \sum_{k = -2\power}^{t-1} \beta_{t-1}^k\phi_k \\
	\label{defn:Delta-t-spectral} \Delta_t &\defn 
	\sum_{k = -2\power}^{t-1} \mu_t^k \Big[\big\langle \phi_k, \eta_{t}(v_t)\big\rangle - \big\langle\eta_t^{\prime}(v_t)\big\rangle \beta_{t-1}^k\Big], \\
	\label{defn:delta-t-spectral} 
	\delta_{t} &\defn \eta_{t}(x_t) - \eta_{t}(v_t), \\
	\label{defn:delta-prime-t-spectral} 
	\delta_{t}^{\prime} &\defn \eta_{t}^{\prime}(x_t) - \eta_{t}^{\prime}(v_t).
\end{align}
\end{subequations}
\end{theos}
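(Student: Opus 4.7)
The plan is to treat the spectral initialization $x_{1}=a_{s}M^{s}\widetilde{v}$ as the outcome of $2s$ extra ``power-method iterations'' that are folded into the same Gaussian-conditioning scheme used for the proof of Theorem~\ref{thm:recursion}. Each multiplication by $M=\lambda v^{\star}v^{\star\top}+W$ splits into a signal piece along $v^{\star}$ (of size controlled by the current correlation with the spike) and a piece $W\cdot(\text{current iterate})$ that, conditionally on all information generated so far, has a Gaussian law after subtracting its conditional mean and projecting out the previously revealed directions. Reading off this conditional Gaussian part as a new $\phi_{k}$ (indexed by $k\in\{-2s,\ldots,0\}$) and collecting the signal contributions into $\alpha_{1}v^{\star}$ then produces exactly the decomposition \eqref{eqn:xt-decomposition-spectral} at $t=0$. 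The somewhat peculiar stipulation $\eta_{0}(x_{0})=x_{1}/\lambda$ is an Onsager-type bookkeeping device: with this choice, the first genuine AMP update
\[
x_{2}=M\eta_{1}(x_{1})-\langle\eta_{1}'(x_{1})\rangle\, x_{1}/\lambda
\]
produces exactly the Onsager correction demanded by the next conditioning step, so the entire sequence $\{x_{t}\}_{t\ge 0}$ slots into the Theorem~\ref{thm:recursion} framework, only now with the enlarged Gaussian basis $\{\phi_{k}\}_{k=-2s}^{t}$.

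The initial value $\alpha_{1}=\sqrt{1-1/\lambda^{2}}$ is obtained from the classical BBP phenomenon for spiked Wigner matrices: when $\lambda>1$, the top eigenvector of $M$ has squared correlation $1-1/\lambda^{2}$ with $v^{\star}$, and with $s\asymp \log n/(\lambda-1)^{2}$ power steps the normalized vector $a_{s}M^{s}\widetilde{v}$ is aligned with this top eigenvector up to a residual that is geometrically small in the spectral gap times $\log n$. Non-asymptotic tools --- isotropic semicircle-law estimates, Hanson--Wright-type concentration for quadratic forms $\widetilde{v}^{\top}M^{k}\widetilde{v}$, and rigidity of the top eigenvalue of $M$ --- then pin down $\alpha_{1}=v^{\star\top}x_{1}$ with the stated accuracy. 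For $t\ge 1$, the recursion \eqref{eqn:xi-norm-main-spectral} is inherited essentially verbatim from Theorem~\ref{thm:recursion}: once the Gaussian conditioning has been set up to include indices $-2s,\ldots,0$ on equal footing with $1,\ldots,t-1$, every step of that proof (Gaussian decomposition of $W\eta_{t}(x_{t})$, Stein-type computation of the conditional mean, $\ell_{2}$-norm bound on the leftover) goes through unchanged because neither Gaussianity nor mutual independence of the $\phi_{k}$'s is disturbed.

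The main obstacle is controlling $\|\xi_{0}\|_{2}$ at the claimed $\log^{3.5}n/\sqrt{(\lambda-1)^{9}n}$ rate. The difficulty is that the spectral initialization performs $2s=O(\log n/(\lambda-1)^{2})$ conditioning steps \emph{before} any AMP iteration has begun, and at each such step the residual both accumulates a fresh Gaussian-concentration error of order $\sqrt{\log n/n}$ and gets amplified by a factor depending on $\|M\|$ and on how poorly the current iterate is aligned with the top eigenvector. Tracking how these errors compound through the power method, and showing that the total accumulated power of $(\lambda-1)^{-1}$ is only $9/2$ rather than exponentially larger, requires a dedicated analysis of the power iteration that simultaneously exploits the geometric contraction of the off-spike eigencomponents (governed by the spectral gap $\lambda-1$), rigidity of $\|W\|$ and of $\|Wv^{\star}\|$, and the near-uniform distribution of $\widetilde{v}$ on the sphere. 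This is precisely where the assumption $\lambda\ge 1+C_{\lambda}(\log n/n)^{1/9}$ enters: it is what ensures $\|\xi_{0}\|_{2}=o(1)$, so that the decomposition \eqref{eqn:xt-decomposition-spectral} at $t=0$ is genuinely informative and can seed the recursion for subsequent $t$.
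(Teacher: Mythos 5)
Your high-level plan — enlarge the Gaussian basis so the power iterations are absorbed into the conditioning scheme, use the BBP correlation to pin down $\alpha_1=\sqrt{1-1/\lambda^2}$, and read $\eta_0(x_0)=x_1/\lambda$ as Onsager bookkeeping so that $t\geq 1$ reduces to Theorem~\ref{thm:recursion} — is consistent with the paper's intent, but two essential pieces are missing, and they are precisely where the theorem lives. First, you propose to condition $W$ directly on the $M$-power iterates, reading off a fresh $\phi_k$ at each of the $2s$ steps. The paper deliberately avoids this: it builds two auxiliary AMP sequences driven by $W$ alone with identity denoisers, $\omega_{t+1}=W\omega_t-\omega_{t-1}$ seeded at $v^{\star}$ and $u_{t+1}=Wu_t-u_{t-1}$ seeded at $\widetilde v$, and then uses the algebraic fact (Lemma~\ref{lem:linear-combination-power-method-AMP}) that $W^t v^{\star}$ and $W^s\widetilde v$ are exact linear combinations of these iterates, so that via the expansion $x_1=\sum_{i=0}^{s-1}a_iW^iv^{\star}+a_sW^s\widetilde v$ the spectral estimate lies \emph{exactly} in the span of the resulting $2s+1$ orthonormal vectors (Lemma~\ref{lem:alt-x1}). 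This exactness is not cosmetic: as the remark after the theorem stresses, any leakage outside the constructed subspace, however tiny, destroys the independence structure of the later $\phi_k$'s. Your scheme would have to show that the conditional-Gaussian readouts at each $M$-power step (where the spike is injected at every step and the diagonal of $W$ has inflated variance $2/n$, requiring the $\zeta_k$-type correction of Lemma~\ref{lem:distribution}) produce i.i.d.\ $\mathcal{N}(0,\tfrac1n I_n)$ vectors that remain independent of the directions conditioned on in all subsequent AMP steps, and that $x_1$ has no component outside their span; none of this is argued.

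Second, and more decisively, the bound $\|\xi_0\|_2\lesssim \log^{3.5}n/\sqrt{(\lambda-1)^9 n}$ is not proved in your proposal — you correctly identify it as the main obstacle and then list ingredients (gap contraction, rigidity of $\|W\|$, uniformity of $\widetilde v$) without an argument that produces the exponents. In the paper this estimate is the technical core: one expands $\widehat v^{\star}$ by the Neumann series $\widehat v^{\star}=\widetilde c_0\sum_t\lambda_{\max}^{-t}W^tv^{\star}$, converts it into $\sum_i c_i\omega_i$, shows via the characteristic recursion $c_{i+1}+c_{i-1}=\lambda_{\max}c_i$ (and a boundedness argument killing the growing root) that $c_i=\widetilde\lambda^{-1}c_{i-1}$ decays geometrically, truncates after $O\big(\tfrac{\log n}{\lambda-1}\big)$ terms, and combines this with the induction bounds $\|\omega_t-y_t\|_2\lesssim\sqrt{t^5\log n/n}$ and $\|\omega_t-\psi_{t-1}\|_2\lesssim\sqrt{t^5\log n/n}$ (claim \eqref{eqn:spec-finale}); summing $\sum_{i\lesssim\log n/(\lambda-1)} i^5\log n/n$ is exactly what yields the $\log^{3.5}n$ and the $(\lambda-1)^{-3}$ factor, with the remaining $(\lambda-1)$ powers coming from Lemma~\ref{lem:cor-evector} and the $\widetilde\lambda$-vs-$\lambda$ comparison. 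Without a substitute for this chain (or for the eigenvector-correlation estimate of Lemma~\ref{lem:cor-evector}, which you only gesture at via "isotropic semicircle-law estimates and Hanson--Wright"), the claim that the error accumulated over $2s\asymp\log n/(\lambda-1)^2$ conditioning steps stays at the stated rate, rather than growing with an uncontrolled power of $(\lambda-1)^{-1}$ or of $s$, is unsupported.
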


%
%

Akin to Theorem~\ref{thm:recursion}, 
Theorem~\ref{thm:recursion-spectral} provides a non-asymptotic characterization for each AMP iterate in the presence of spectral initialization. 
Even though the power method does not resemble the AMP update rule, 
spectrally initialized AMP shares the same decomposition structure as in Theorem~\ref{thm:recursion}, 
except that many summations therein include $2s+1$ more vectors/coefficients in order to incorporate the influence of spectral methods.  
As can be anticipated, one can immediately derive a counterpart of Theorem~\ref{thm:main} in the presence of spectral initialization by properly modifying Assumption~\ref{assump:A-H-eta}.


\begin{cors}
\label{cor:recursion-spectral}
Consider the setting of Theorem~\ref{thm:recursion-spectral}. 
Suppose Assumptions~\ref{assump:eta}-\ref{assump:A-H-eta} are valid, 
except that the summations $\sum_{k=1}^{t-1}$ are replaced by $\sum_{k=-2s}^{t-1}$.  
With probability at least $1-O(n^{-11})$, 
the AMP iterates admit the decomposition~\eqref{eqn:xt-decomposition-spectral} with $\alpha_{t+1}$ and $\ltwo{\beta_t}$ obeying \eqref{eqn:state-evolution-finite}  and the error terms satisfying \eqref{eqn:para-general}.
\end{cors}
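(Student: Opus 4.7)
The plan is to inherit the decomposition provided by Theorem~\ref{thm:recursion-spectral} and then re-run the argument of Theorem~\ref{thm:main} essentially verbatim, since the only structural difference between the independent-initialization decomposition~\eqref{eqn:xt-decomposition} and its spectral counterpart~\eqref{eqn:xt-decomposition-spectral} is the range of summation over the Gaussian vectors (namely $k\in\{-2s,\ldots,t-1\}$ instead of $k\in\{1,\ldots,t-1\}$). Because the $\phi_k$'s for $k\in\{-2s,\ldots,0\}$ are still i.i.d.\ $\mathcal{N}(0,\frac{1}{n}I_n)$, every probabilistic tool used for $\{\phi_1,\ldots,\phi_{t-1}\}$ in the proof of Theorem~\ref{thm:main} applies without alteration to the enlarged collection.

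First, I would verify the recurrence for $\alpha_{t+1}$. Starting from $\alpha_{t+1}=\lambda v^{\star\top}\eta_t(x_t)$ (cf.~\eqref{eq:alpha-beta-recursion-spect}), I decompose $\eta_t(x_t)=\eta_t(v_t)+\delta_t$ using \eqref{defn:delta-t-spectral}, so that
\[
\alpha_{t+1}=\lambda v^{\star\top}\eta_t(v_t)+\lambda v^{\star\top}\delta_t .
\]
The first term is handled by the modified bound~\eqref{defi:B}, yielding an approximation error of order $B_t$; the second is bounded by $|\lambda v^{\star\top}\delta_t|\le \lambda\|\delta_t\|_2\le\lambda\rho\|\xi_{t-1}\|_2$ via Lipschitzness of $\eta_t$ (Assumption~\ref{assump:eta}) and the decomposition $x_t - v_t = \xi_{t-1}$ from~\eqref{eqn:xt-decomposition-spectral}. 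This reproduces \eqref{eqn:alpha-t-genearl}--\eqref{eqn:delta-alpha-general}. The argument for $\|\beta_t\|_2^2=\|\eta_t(x_t)\|_2^2$ proceeds analogously by writing $\|\eta_t(x_t)\|_2^2=\|\eta_t(v_t)\|_2^2+2\langle\eta_t(v_t),\delta_t\rangle+\|\delta_t\|_2^2$, invoking \eqref{defi:C}, \eqref{defi:F}, \eqref{defi:G} for the individual pieces, and then using a second-order Taylor expansion of $\eta_t$ around $v_t$ to pay the extra cost controlled by $E_t$ and the derivatives $\rho_1,\rho_2$, arriving at \eqref{eqn:beta-t-genearl}--\eqref{eqn:delta-beta-general}.

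For the residual bound~\eqref{eqn:xi-t-general}, I start from the recursive identity~\eqref{eqn:xi-norm-main-spectral} supplied by Theorem~\ref{thm:recursion-spectral} and expand $\delta_t$, $\delta_t'$ via Taylor's theorem centered at $v_t$, writing $\delta_t \approx \eta_t'(v_t)\circ\xi_{t-1}+\tfrac12\eta_t''(v_t)\circ\xi_{t-1}^{\circ 2}$ (with a third-order remainder controlled by $\rho_2$), and similarly for $\delta_t'$. The inner product $\langle\sum_k\mu_t^k\phi_k,\eta_t'(v_t)\circ\xi_{t-1}\rangle$ rearranges into the quantity whose squared norm $\kappa_t^2$ is defined in \eqref{defi:kappa}, plus the fluctuation controlled by $D_t$ in \eqref{defi:D}; this delivers the leading $\sqrt{\kappa_t^2+D_t}\,\|\xi_{t-1}\|_2$ term. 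The Onsager-type correction $\langle\delta_t'\rangle\sum_k\mu_t^k\beta_{t-1}^k$ combines with the first-order Taylor piece so that the Stein-type cancellation captured by \eqref{defi:A} applies, contributing $A_t$ to the bound. The remaining contributions are the higher-order Taylor remainders controlled by $\rho_1, \rho_2, \|\xi_{t-1}\|_\infty$ and the $E_t$ correction accounting for the non-differentiable points $\mathcal{M}_{\mathsf{dc}}$ (cf.~\eqref{defi:theta}--\eqref{defi:E}), together with the $O(\sqrt{t\log n/n}\,\|\beta_t\|_2)$ Wasserstein-type error already present in~\eqref{eqn:xi-norm-main-spectral}.

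The only genuinely new ingredient compared with the proof of Theorem~\ref{thm:main} is bookkeeping for the additional $2s+1$ Gaussian vectors inherited from the power-method initialization; but since Assumption~\ref{assump:A-H-eta} is explicitly taken with the extended summation range, each Gaussian concentration / Stein-lemma invocation (for $A_t$, $B_t$, $C_t$, $D_t$) simply applies in dimension $t+2s$ rather than $t-1$. No step in the argument depends on the starting index of the sum, only on the fact that $(\phi_k)$ are i.i.d.\ and that $\alpha_t,\beta_{t-1},\mu_t,\xi_{t-1}$ have the same statistical relationship to them as before. The union bound over the $O(n^{-11})$ events is therefore unaffected, and the conclusions~\eqref{eqn:state-evolution-finite}--\eqref{eqn:para-general} follow. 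The main obstacle, if any, is purely notational: keeping the indexing straight so that the $\mu_t$ from Theorem~\ref{thm:recursion-spectral} plays exactly the role of the $\mu_t$ in Assumption~\ref{assump:A-H-eta}, which is why the corollary is stated only after the assumption is re-interpreted with the widened summation range.
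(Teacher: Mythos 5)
Your proposal is correct and is essentially the paper's own argument: the paper gives no separate proof of Corollary~\ref{cor:recursion-spectral}, noting only that the proof of Theorem~\ref{thm:main} carries over once the decomposition of Theorem~\ref{thm:recursion-spectral} is in place and the summation ranges in Assumption~\ref{assump:A-H-eta} are widened to $\sum_{k=-2s}^{t-1}$, which is precisely your reasoning (the extra $2s+1$ vectors are i.i.d.\ $\mathcal{N}(0,\tfrac1n I_n)$, so every concentration/Stein step applies unchanged). The only cosmetic deviation is that you expand $\delta_t$ to second order with a $\rho_2$-controlled remainder, whereas the paper expands $\delta_t$ to first order (remainder via $\rho_1$ and the discontinuity term) and reserves $\rho_2$ for the expansion of $\delta_t'$; this does not affect the validity of the argument.
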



On a technical level, the main step towards proving Theorem~\ref{thm:recursion-spectral} consists of showing that the spectral initialization admits a similar decomposition
\begin{align}
\label{eqn:spectral-x1-gist}
	x_1 = \alpha_1 v^{\star} + \sum_{i = -2s}^{0} \beta_0^k\phi_k + O\Big(\frac{\log^{3.5} n}{\sqrt{(\lambda - 1)^7n}}\Big), 
\end{align}
for a set of $2s+1$ i.i.d.~Gaussian vectors $\{\phi_k\}_{-2s\leq k\leq 0}$. 
The primary challenge in establishing this result stems from the fact that $x_{1}$ relies heavily on $W$, 
which cannot be easily decoupled from $W$ as in Theorem~\ref{thm:recursion} with an independent initialization.  
Informally, a key observation that helps overcome this challenge (as shall be detailed in Section~\ref{sec:rep-spec}) 
is the following decomposition owing to power iterations: 
\begin{align}
	x_1 = \sum_{i = 0}^{s-1} a_iW^{i}v^{\star} + \frac{1}{\ltwo{M^s \vseed}}W^s\vseed, 
\end{align}
for certain coefficients $a_{0},\ldots,a_{s-1}\in \real$,
where  $\vseed$ is the initial vector for power iterations.  
Inspired by this decomposition, we attempt to construct an orthonormal basis of $2s+1$ dimension 
that covers $x_1$ perfectly, which can be accomplished by some AMP-style algorithms. 
These auxiliary AMP sequences can then be merged with the subsequent AMP updates, 
providing a sensible way to invoke Theorem~\ref{thm:recursion}. 

\begin{remark}
	It is worth pointing out that in our non-asymptotic analysis, it is critical to ensure that $x_{1}$ lies perfectly within the constructed $(2s+1)$-dimensional subspace; otherwise, the leakage term --- albeit of tiny magnitude --- might ruin the key independence structures that underlie our theory (to be made precise in Lemma~\ref{lem:distribution}). This issue, however, does not manifest itself if one only aims for an asymptotic characterization, making our incorporation of spectral initialization more intricate  compared to the asymptotic counterpart in \cite{montanari2021estimation}. 
\end{remark}

\section{Consequences for specific models}
\label{sec:examples}

Focusing on two important models (i.e., $\mathbb{Z}_{2}$ synchronization and sparse spiked Wigner models), 
this section develops concrete consequences of our general recipe presented in Section~\ref{sec:main},  
aimed at illustrating the effectiveness of our non-asymptotic theory.



\subsection{$\mathbb{Z}_{2}$ synchronization}
\label{sec:z2-main}

The first concrete model considered here is $\mathbb{Z}_{2}$ synchronization, 
which augments \eqref{eqn:wigner} with some binary-valued signal structure as follows:
\begin{align}
\label{eqn:wigner-Z2}
	M = \lambda v^\star v^{\star\top} + W \in \mathbb{R}^{n\times n},
	\qquad \text{where }v_i^{\star}\in \Big\{\frac{1}{\sqrt{n}}, -\frac{1}{\sqrt{n}} \Big\}, 
	~ 1\leq i\leq n.
\end{align}
It can be viewed as a special example of synchronization over compact groups \citep{singer2011angular,perry2018message,zhong2018near,gao2022sdp}. 
Given this observation matrix and a signal prior (e.g., $ \vstar_i \stackrel{\textrm{i.i.d.}}{\sim} \textsf{Unif}\big\{\frac{1}{\sqrt{n}}, -\frac{1}{\sqrt{n}}\big\}$), the Bayes-optimal estimate for the rank-one matrix $\vstar v^{\star\top}$ takes the following form:
\begin{align}
	\Xbayes \defn \myE[v^\star v^{\star\top} \mid M].
\end{align}
Computing the Bayes-optimal solution is, however, computationally infeasible due to the combinatorial nature of the underlying optimization problem. 
A recent line of research searched for nearly tight yet tractable approximation to the Bayes-optimal estimator 
\citep{peche2006largest,baik2005phase,javanmard2016phase,fan2021tap,montanari2016semidefinite}, 
with AMP being one natural choice  \citep{deshpande2017asymptotic,celentano2021local,lelarge2019fundamental}.

Recall that the majority of AMP analysis for $\mathbb{Z}_{2}$ synchronization 
operates under the assumption that $n\rightarrow \infty$ and $t$ stays fixed.  
In order to obtain an optimal estimator with finite-sample guarantees in the most challenging regime $\lambda >1$,  
the recent work \cite{celentano2021local} proposed a three-stage hybrid algorithm: 
(i) starting with a spectral initialization, (ii) running AMP updates for constant number of steps, (iii) refining by running, say, natural gradient descent method, until convergence.   
This procedure yields a polynomial-time algorithm that converges to a local minimizer $m_{\star}$ of the so-called TAP free energy (which obeys $\|m_{\star}m_{\star}^\top - \Xbayes\|_{\mathrm{F}}\to 0$ in probability)\footnote{Note here, to be consistent with other parts of the paper, we adopt a different scaling by taking $\ltwo{m_{\star}} = 1$}. 
\cite{celentano2021local} further conjectured based on numerical experiments that a spectrally initialized AMP might be actually sufficient (in the absence of a third refinement stage). This raises a natural theoretical question:  
\begin{center}
	\emph{How does spectrally initialized AMP perform when $t$ far exceeds a constant or even $o\big(\frac{\log n}{\log \log n}\big)$? }
 \end{center} 
As discussed in \cite{celentano2021local}, existing state-evolution-based arguments fell short in answering this question due to their asymptotic nature. 
In the following, we aim to answer the question positively, with the aid of our non-asymptotic framework developed in this paper.



\paragraph{Spectrally initialized AMP for $\mathbb{Z}_{2}$ synchronization.}

Let us begin by formalizing the AMP procedure to be studied herein.  
Specifically, the AMP updates take the following form for each $t\geq 1$: 
\begin{subequations}
\label{eq:AMP-z2}
\begin{align}
\label{eqn:AMP-updates-Z2}
	x_{t+1} = M\eta_t(x_{t}) - \big\langle\eta_t^{\prime}(x_{t}) \big\rangle \cdot \eta_{t-1}(x_{t-1}) 
	\qquad \text{with } \eta_t(x) = \gamma_t\tanh\lt(\pi_tx\rt),  
\end{align}
where 
\begin{align}
\label{eqn:eta-z2-new}
	\pi_t \defn \sqrt{n(\|x_t\|_2^2-1)}\qquad\text{and}
	\qquad\gamma_t \defn \lt\|\tanh\lt(\pi_tx_t\rt)\rt\|_2^{-1}. 
\end{align} 
Here, the pre-factor $\gamma_{t}$ is chosen to ensure $\ltwo{\eta_{t}(x_t)} = 1$ for normalization purpose
(note that this differs from the pre-factor adopted in \cite{celentano2021local}). 
	As already recognized in prior work, a properly rescaled $\tanh(\cdot)$ function is capable of approaching the Bayes-optimal estimator. 
The first iterate $x_1$ is obtain via the spectral method, or more precisely, the power method, that is,  
\begin{align}
\label{eqn:Z2-initialization}
	x_1 \defn \lambda  a_s M^s \vseed\qquad\text{with }~
	s \asymp \frac{\lambda^2 \log n}{(\lambda - 1)^2} ~~\text{and}~~ a_s = \frac{1}{\ltwo{M^s \vseed}}. 
\end{align} 
\end{subequations}
We shall also choose $x_0$ such that $\eta_0(x_0) = x_1/\lambda$ to be consistent with Theorem~\ref{thm:recursion-spectral}. 
Given that it is infeasible to distinguish $\vstar$ and $-\vstar$ given only the observation $M$, 
we shall assume --- without loss of generality --- $x_1^\top \vstar \geq 0$ throughout the rest of the paper.

\paragraph{Non-asymptotic theoretical guarantees.} 
We now invoke our general recipe to analyze the non-asymptotic performance of \eqref{eq:AMP-z2}. 
In order to do so, we find it helpful to first introduce the (limiting version of) state evolution (SE) tailored to the denoising function $\eta_t(\cdot)\propto \tanh(\cdot)$. 
Specifically, let us produce a scalar sequence $\{\tau_t\}$ recursively as follows:
\begin{align}
\label{eqn:tau-t-z2}
	\tau_{1} \defn \lambda^{2}-1 \qquad \text{and} \qquad
	\tau_{t+1} \defn 
	\lambda^2 \int \tanh(\tau_t + \sqrt{\tau_t}x)\varphi(\dx) ,
	\quad t\geq 1,
\end{align}
where $\varphi(\cdot)$ represents the pdf of  $\mathcal{N}(0,1)$. 
Note that this SE recurrence is consistent with what has been derived in the prior work \cite{celentano2021local}. 
With this in mind, we state in Theorem~\ref{thm:Z2} our non-asymptotic characterization for the AMP algorithm,  
whose proof can be found in Section~\ref{sec:pf-thm-Z2}.

\begin{theos} 
\label{thm:Z2}
Consider the model \eqref{eqn:wigner-Z2} with $1+ \frac{\log n}{n^{1/16}} < \lambda \leq 1.2$, and recall the scalar sequence $\{\tau_t\}$ in \eqref{eqn:tau-t-z2}.
With probability at least $1 - O(n^{-11})$, the spectrally initialized AMP \eqref{eq:AMP-z2} admits the following decomposition: 
\begin{align}
\label{eqn:z2-decomposition}
	x_{t+1} = \alpha_{t+1} \vstar + \sum_{k = -2s}^{t} \beta_{t}^k\phi_k + \xi_{t} 
	\qquad \text{for all } 0\leq t = o\lt(  \frac{n(\lambda - 1)^{10}}{\log^{7} n} \rt),
\end{align}
where the $\phi_k$'s are i.i.d.~random vectors drawn from $\mathcal{N}\big(0, \frac{1}{n}I_n\big)$, and  the parameters satisfy 
\begin{subequations}
\label{eqn:z2-final}
\begin{align}
	%
	\alpha_1^2 &= \lambda^2 - 1, \\
	\alpha_{t+1}^2 &= \lambda^2 \big( v^{\star\top} \eta_{t}(x_{t}) \big)^2= \left(1+O\bigg(\sqrt{\frac{t\log n}{(\lambda-1)^{8}n}} + 
	\frac{\log^{3.5}n}{\sqrt{(\lambda-1)^{14}n}}\bigg)\right)\tau_{t+1}, 
	\qquad t \geq 1, \label{eqn:z2-delta-alpha-final}\\
	\|\beta_{t}\|_2 &= \big\| \big[ \beta_{t}^{-2s},\cdots, \beta_{t}^{t} \big] \big\|_2  = 1,\\
	\|\xi_{t}\|_2 &\lesssim \sqrt{\frac{t\log n}{(\lambda-1)^{3}n}}+\frac{\log^{3.5}n}{\sqrt{(\lambda-1)^{9}n}} . 
\end{align}
%
%
\end{subequations}
\end{theos}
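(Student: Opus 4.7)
The plan is to specialize Corollary~\ref{cor:recursion-spectral} to the $\tanh$-type denoiser defined in~\eqref{eqn:AMP-updates-Z2} and Theorem~\ref{thm:recursion-spectral} for the spectral initialization step. Because $\gamma_t$ normalizes $\eta_t(x_t)$ to unit norm by~\eqref{eqn:eta-z2-new}, the identity~\eqref{eq:alpha-beta-recursion-spect} immediately gives $\|\beta_t\|_2 = 1$ and hence $\Delta_{\beta,t}\equiv 0$; only the signal strength $\alpha_{t+1}$ and the residual $\|\xi_t\|_2$ need to be tracked. Theorem~\ref{thm:recursion-spectral} supplies the base case $\alpha_1^2 = \lambda^2 - 1$ and $\|\xi_0\|_2 \lesssim \log^{3.5}n/\sqrt{(\lambda-1)^9 n}$, which matches the form of~\eqref{eqn:z2-final} at $t=0$.

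Next I would verify Assumptions~\ref{assump:eta} and~\ref{assump:A-H-eta} for $\eta_t(x)=\gamma_t\tanh(\pi_t x)$. Since $\tanh$ is entire with uniformly bounded derivatives of every order, $\mathcal{M}_{\mathsf{dc}}=\emptyset$, so $E_t=0$ and all $E_t$-dependent terms in~\eqref{eqn:para-general} disappear. Using the inductive hypothesis $\|x_t\|_2^2 \approx 1+\tau_t$ (to be closed at the end of the argument via the $\alpha_{t+1}$ recursion), one derives $\pi_t\asymp\sqrt{n\tau_t}$ and $\gamma_t\asymp 1/\sqrt{n}$, yielding $\rho\lesssim 1$ together with polynomial-in-$n$ control of $\rho_1,\rho_2$. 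For the concentration quantities $A_t,B_t,C_t,D_t,F_t,G_t$ of Assumption~\ref{assump:A-H-eta}, I would apply a uniform Gaussian concentration argument (as suggested in the remarks after Theorem~\ref{thm:main}) over a covering of the feasible range of $(\mu_t,\alpha_t)$: for each frozen choice, the target is a Lipschitz function of the i.i.d.~Gaussians $\{\phi_k\}$ with benign Lipschitz constant, so a union bound delivers $A_t,B_t,C_t,D_t\lesssim\sqrt{\mathrm{poly}\log(n)/n}$, while $F_t,G_t$ are bounded deterministically from $|\tanh|\le 1$ and $\gamma_t\asymp 1/\sqrt{n}$.

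The main obstacle will be controlling the contraction factor $\sqrt{\kappa_t^2+D_t}$ in the residual recursion~\eqref{eqn:xi-t-general} and showing it stays strictly below one uniformly in $t$, even as $\lambda\downarrow 1$. Direct evaluation of $\kappa_t^2$ from~\eqref{defi:kappa} reduces, after Stein-type integration by parts tailored to $\tanh'$, to a scalar integral in the single variable $\alpha_t v^\star+G/\sqrt{n}$; exploiting $v_i^\star\in\{\pm 1/\sqrt n\}$ together with the SE recursion~\eqref{eqn:tau-t-z2}, this integral can be matched with an expression of the form $1-c_0(\lambda-1)+o(1)$ along the entire trajectory, providing the strict contraction needed to prevent geometric blow-up of $\|\xi_t\|_2$ in~\eqref{eqn:xi-t-general}. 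Iterating the resulting one-step bound, together with $\|\xi_0\|_2$ from Theorem~\ref{thm:recursion-spectral}, yields the claimed estimate $\|\xi_t\|_2\lesssim \sqrt{t\log n/((\lambda-1)^3 n)}+\log^{3.5}n/\sqrt{(\lambda-1)^9 n}$ all the way up to the polynomial cutoff $t=o(n(\lambda-1)^{10}/\log^7 n)$.

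Finally, to convert~\eqref{eqn:alpha-t-genearl} into the clean SE identification~\eqref{eqn:z2-delta-alpha-final}, I would substitute the decomposition~\eqref{eqn:z2-decomposition} into the Gaussian integral defining $\alpha_{t+1}$, use $\|\beta_{t-1}\|_2=1$ and the $\mathbb{Z}_2$ structure of $v^\star$ to reduce it to the one-dimensional expectation $\int\tanh(\tau_t+\sqrt{\tau_t}x)\varphi(\dx)$, and identify with $\tau_{t+1}/\lambda^2$ by induction on $t$ using $\alpha_1^2=\tau_1$. The per-step error is bounded by $\Delta_{\alpha,t}\lesssim B_t+\rho\|\xi_{t-1}\|_2$; propagating this through $t$ iterations and combining with the spectral-initialization residual gives the two-term bound in~\eqref{eqn:z2-delta-alpha-final}, where the inflated powers of $(\lambda-1)$ in the denominators track the cumulative cost of both the power-method warm-up and the $t$-fold AMP iteration.
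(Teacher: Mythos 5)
Your overall strategy is the same as the paper's (specialize Theorem~\ref{thm:recursion-spectral} and the framework of Theorem~\ref{thm:main}/Corollary~\ref{cor:recursion-spectral} to the $\tanh$ denoiser, exploit $\|\beta_t\|_2=1$, bound $A_t,B_t,D_t$ by uniform Gaussian concentration, and close an induction via a contraction factor $\kappa_t\le 1-c(\lambda-1)$), but there is a genuine gap in the residual analysis. Plugging the $\mathbb{Z}_2$ derivative bounds into the generic recursion~\eqref{eqn:xi-t-general} does not give the claimed $\|\xi_t\|_2\lesssim\sqrt{t\log n/((\lambda-1)^3 n)}$ over the full range $t=o\big(n(\lambda-1)^{10}/\log^7 n\big)$. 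The correct scales here are $\pi_t\asymp\alpha_t\sqrt n$, $\gamma_t\asymp 1/(\alpha_t\sqrt n)$ (not $1/\sqrt n$ as you state; $\alpha_t\asymp\sqrt{\lambda-1}$ can be $o(1)$), hence $\rho_1\asymp\alpha_t\sqrt n$ and $\rho_2\asymp\alpha_t^2 n$. The quadratic term $\sqrt{\tfrac{t+\log n}{n}}\,\rho_1\|\xi_{t-1}\|_2^2\asymp\sqrt{t(\lambda-1)}\,\|\xi_{t-1}\|_2^2$ then dominates: with the target $\|\xi_{t-1}\|_2\asymp\sqrt{t\log n/((\lambda-1)^3n)}$ it is of order $t^{3/2}\log n/((\lambda-1)^{5/2}n)$, which exceeds the allowed per-step increment $\sqrt{(t+s)\log n/((\lambda-1)n)}$ as soon as $t\gg(\lambda-1)^2\sqrt{n/\log n}$, so the induction cannot close by direct application of the master theorem. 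The paper's proof circumvents this by a problem-specific refinement of Step~3 of Theorem~\ref{thm:main}: using $\|\eta_t\|_\infty\lesssim 1/(\alpha_t\sqrt n)$ and $\|\eta_t'\|_\infty\lesssim 1$, it splits coordinates into the (at most $t+s$) indices where $\big|\sum_k\mu_t^k\phi_k\big|$ exceeds $C\sqrt{\log n/n}$ and the rest, replacing the offending term by $\sqrt{\log n}\,\|\xi_{t-1}\|_2^2+(t+s)\sqrt{\log n}/(\alpha_t n)$; this refinement is exactly what permits the $\sqrt{t}$-scaling up to nearly $n$ iterations, and it is missing from your plan.

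Two further points where your outline is thinner than what the result requires. First, for the state-evolution identification~\eqref{eqn:z2-delta-alpha-final}, "propagating the per-step error through $t$ iterations" naively produces a factor of $t$; the paper instead shows the scalar SE map has derivative at most $1-(\lambda-1)$ on the relevant interval (verified numerically), so the deviations $\alpha_t^2-\tau_t$ accumulate geometrically and the total error is the per-step error divided by $(\lambda-1)$ --- this is where the exponents $(\lambda-1)^{-8}$ and $(\lambda-1)^{-14}$ come from, and you would need an analogous contraction argument for the SE recursion, not just for $\xi_t$. Second, the claim $\kappa_t\le 1-c(\lambda-1)$ is not a soft consequence of Stein's identity; it hinges on the sharp estimates $\pi_t=(1+o(1))\alpha_t\sqrt n$ and $\gamma_t^2\pi_t^2\le(1+o(\lambda-1))\lambda^2$ together with a scalar inequality in $(\lambda,\tau)$ that the paper validates numerically, so your assertion that the integral "can be matched" with $1-c_0(\lambda-1)+o(1)$ needs to be substantiated at that level of precision, and your mis-scaled $\gamma_t$ would corrupt exactly these constants.
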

\begin{remark}
	Note that the assumption $\lambda \leq 1.2$ is not necessary and can be safely eliminated. 
	We assume $\lambda \leq 1.2$ for two reasons: (i) it represents the most challenging regime for $\mathbb{Z}_2$ synchronization; 
	(ii) assuming $\lambda \leq 1.2$ allows us to streamline some (non-critical) part of the proof. 
\end{remark}

In words, Theorem~\ref{thm:Z2} captures the finite-sample dynamics of the AMP \eqref{eq:AMP-z2} up to 
$o\big( \frac{n(\lambda - 1)^{10}}{\log^7 n} \big)$ iterations.  
Each iterate is very well approximated by a superposition of a signal component and a Gaussian component, up to a small error at most on the order of $\sqrt{\frac{t\log n}{(\lambda-1)^{3}n}}+\frac{\log^{3.5}n}{\sqrt{(\lambda-1)^{9}n}}$. Recognizing that $\|\beta_{t-1}\|_2=1$, one arrives at the following heuristic approximation: 
\begin{equation}
	x_t \approx \alpha_t \vstar + \mathcal{N}\Big( 0, \frac{1}{n}I_n\Big), 
\end{equation}
which can be rigorized under 1-Wasserstein using standard Gaussian concentration results (see, e.g., Lemma~\ref{lem:wasserstein}); 
this is consistent with the prediction of prior works (e.g., \citet{deshpande2017asymptotic}) under high-dimensional asymptotics (up to proper rescaling). 
To the best of our knowledge, Theorem~\ref{thm:Z2} delivers the first finite-sample characterization of AMP in the $\mathbb{Z}_{2}$ synchronization setting beyond $O_n(1)$ iterations. 
As asserted by the result \eqref{eqn:z2-delta-alpha-final} in Theorem~\ref{thm:Z2},  
the strength of the signal component in $x_t$ remains fairly close to the prediction of state evolution \eqref{eqn:tau-t-z2}, that is, 
\begin{equation}
	\frac{\big( \big\langle \vstar, \, \eta_t(x_t) \big\rangle \big)^2}{ \|\vstar\|_2^2 \| \eta_t(x_t) \|_2^2   } = \frac{\tau_{t+1}}{\lambda^2} 
	\left(1+O\bigg(\sqrt{\frac{t\log n}{(\lambda-1)^{8}n}} + 
	\frac{\log^{3.5}n}{\sqrt{(\lambda-1)^{14}n}}\bigg)\right) 
\end{equation}
up to $o\big( \frac{n(\lambda - 1)^{10}}{\log^7 n} \big)$ iterations, where we recall that $\| \eta_t(x_t) \|_2=\|\vstar\|_2=1$.

We also make note of a phase transition phenomenon that has been established in \citet{deshpande2017asymptotic}. Namely,  when $\lambda < 1$, the Bayes-optimal estimate converges to the zero estimator, 
meaning that no estimator whatsoever is able to obtain non-trivial estimation; in contrast, when $\lambda > 1$, it is possible to achieve non-trivial correlation with the underlying signal. 
Therefore, it suffices to focus on the scenario where $\lambda>1$. 
It is worth emphasizing that our result is fully non-asymptotic in terms of the spectral gap $\lambda - 1$ as well. 
In fact, our theory allows $\lambda$ to be exceedingly close to 1 (i.e., $\lambda - 1 = o_n(1)$), 
which is in sharp contrast to prior works that all required $\lambda\geq 1+\epsilon$ for some strictly positive constant $\epsilon$. 
Note that we have made no efforts to obtain the sharpest constant  
in the assumption $\lambda > 1+ \frac{\log n}{n^{1/16}}$, the $1/16$ herein is likely to be improved with more careful book-keeping.

\begin{remark}
As pointed out by \citet[Lemma A.7]{celentano2021local}, in the large $n$ limit, the AMP algorithm yields matching asymptotic performances as that of the Bayes-optimal estimator, in the sense that 
\begin{align*}
	\lim_{t\to \infty}\lim_{n \to \infty} \big\|\vstar v^{\star\top} - c_t \eta_t(x_t) \eta_t(x_t)^\top \big\|^2_{\mathrm{F}}
	=\lim_{t\to \infty}\lim_{n \to \infty} \big\|\vstar v^{\star\top} - \widehat{X}^{\mathrm{bayes}} \big\|^2_{\mathrm{F}}
\end{align*}
for some constant $c_t >0.$ This further implies that the minimum mean square estimation error is dictated by the (unique) fixed point of the state evolution recursion \eqref{eqn:tau-t-z2}. 
In addition, our proof of Theorem~\ref{thm:Z2} also makes explicit the convergence rate of $\tau_t$ to $\tau^{\star}$. 
To be more precise, as we shall demonstrate in Section~\ref{sec:main-recursion-z2} (see, e.g., discussions around \eqref{eq:SE-induction} and \eqref{eqn:middle}), we have
\begin{align*}
	\ltwo{\tau_{t+1} - \taustar} \leq \big(1 - (\lambda-1) \big)\ltwo{\tau_t - \taustar}. 
\end{align*}
%
This taken collectively with Theorem~\ref{thm:Z2} leads to
\begin{align}
	\alpha_t^2 - \tau^{\star} = (\lambda^2-1)\big(1 - (\lambda-1) \big)^t + 
	 O\bigg(\sqrt{\frac{t\log n}{(\lambda-1)^{8}n}} + \frac{\log^{3.5}n}{\sqrt{(\lambda-1)^{14}n}}\bigg),
\end{align}
which captures how far $\alpha_t^2$ deviates from the asymptotic limit as the iteration number $t$ increases. 
This helps answer a natural question regarding the finite-sample convergence property of spectrally initialized AMP. 
%

\end{remark}

\subsection{Sparse PCA (sparse spiked Wigner matrix)}
\label{sec:sparse-main}

Another specific model of interest is concerned with sparse PCA. 
In the statistics literature,  spiked models with sparsity constraints have been a main-stay  
for studying sparse PCA \citep{johnstone2009consistency}, 
inspiring various algorithms including regression-type methods \citep{zou2006sparse}, 
convex relaxation \citep{amini2008high,d2004direct,vu2013fantope}, iterative thresholding \citep{ma2013sparse,krauthgamer2015semidefinite,deshpande2014sparse}, 
sum of squares hierarchy \citep{hopkins2017power}, among many others. 
This paper contributes to this growing literature by studying the effectiveness of AMP for sparse PCA (see also,~\cite{deshpande2014information,montanari2021estimation}).

More specifically, this subsection considers sparse estimation in the spiked Wigner model\footnote{Note that another popular model for sparse PCA is the sparse spike Wishart model \cite{johnstone2009consistency}. We choose the spiked Wigner model as it is closer to the context studied in this paper.}, where  
we seek to estimate a $k$-sparse eigenvector $\vstar \in \mathcal{S}^{n-1}$ from the following data matrix: 
\begin{align}
\label{eqn:wigner-sparse}
	M = \lambda v^\star v^{\star\top} + W \in \mathbb{R}^{n\times n},
	\qquad \text{where } \|\vstar\|_0=k.
\end{align}
We would like to leverage upon our analysis framework to track the non-asymptotic performance of AMP in the face of the sparsity structure.


\paragraph{AMP for sparse spiked Wigner models.} 

For each $t\geq 1$, the AMP update rule takes the following form: 
\begin{subequations}
\label{eq:AMP-sparse}
\begin{align}
\label{eqn:AMP-updates-sparse}
	x_{t+1} = M\eta_t(x_{t}) - \big\langle\eta_t^{\prime}(x_{t}) \big\rangle \cdot \eta_{t-1}(x_{t-1}) 
	\qquad \text{with } \eta_t(x) = \gamma_t\mathrm{sign}(x) \circ (|x| - \tau_t 1 )_{+},  
\end{align}
where the denoising function  $\eta_t(\cdot)$ is taken to be the soft thresholding function (applied entry-by-entry) with a threshold $\tau_t$ 
	and a rescaling pre-factor to ensure $\|\eta_t(x_t)\|_2 = 1$: 
\begin{align}
\label{eqn:eta-sparse}
	\gamma_t \defn \big\|\mathrm{sign}(x_t) \circ (|x_t| - \tau_t 1)_{+} \big\|_2^{-1}.
\end{align}
\end{subequations}
It is worth noting that $\eta_{t}$ is differentiable almost everywhere except for two points (i.e., $\pm \tau_t$), 
with $\eta_{t}^{\prime}(x) = \gamma_t\mathds{1}(|x| > \tau_t).$ In addition, the threshold $\tau_t$ shall be selected to be $\tau_t\asymp\sqrt{\frac{\log n}{n}}$, 
to be specified shortly.

\subsubsection{Non-asymptotic AMP theory with an independent initialization}

To begin with, we characterize the performances of AMP when an informative yet independent initialization is available.  
For notational simplicity, we define the following function:
\begin{align}
\label{eqn:franck}
f(\alpha) \defn \frac{\lambda v^{\star \top} \displaystyle \int \mathsf{ST}_{\tau_t}\left(\alpha \vstar + \frac{x}{\sqrt{n}} \right)\varphi_n(\dx)}{\sqrt{\displaystyle \int\Big\|\mathsf{ST}_{\tau_t}\left(\alpha \vstar + \frac{x}{\sqrt{n}} \right)\Big\|_{2}^{2}\varphi_{n}(\dx)}},
\end{align}
where $\mathsf{ST}_{\tau_t}(x) \defn \mathsf{sign}(x)(|x| - \tau_t)_{+}$ for any $x\in \real$ and $\varphi_n(\cdot)$ is the pdf of $\mathcal{N}\big(0, I_n\big)$.
Let us introduce the state evolution recursion as follows (which depends only on $\lambda$ and $\vstar$):
\begin{align}
\label{eqn:alpha-star-sparse}
	\alpha_{t+1}^{\star} = f(\alpha_t^\star), 
\end{align}
with the initial condition obeying $\alpha^{\star}_2 \asymp \lambda.$ 
%
%
Our non-asymptotic theory for sparse PCA is stated below, 
 with its proof deferred to Section~\ref{sec:pf-sparse}.
\begin{theos}
\label{thm:sparse}
Consider the model \eqref{eqn:wigner-sparse} where $0<\lambda \lesssim 1$. 
Given an independent initial point $x_{1}$ obeying $\inprod{\vstar}{\eta_1(x_1)} \asymp 1$ and $\eta_0(x_0) = 0$, 
the AMP algorithm~\eqref{eq:AMP-sparse} satisfies the following decomposition:
\begin{align}
\label{eqn:sparse-decomp-dvorak}
	&x_{t+1} = \alpha_{t+1} \vstar + \sum_{k = 1}^{t} \beta_{t}^k\phi_k + \xi_{t},
	\qquad\text{for }t\geq 1, \\
\label{eqn:sparse-SE}
	\text{with }~
	\alpha_{t+1} = & \lambda v^{\star \top} \int{\eta}_{t}\left(\alpha_{t} \vstar + \frac{x}{\sqrt{n}} \right)\varphi(\dx) + \lambda\Delta_{\alpha,t},
\qquad
\|\beta_{t-1}\|_2 = 1, 
\end{align}
where it holds with probability at least $1-O(n^{-11})$ that
\begin{subequations}
\label{eqn:soccer}
\begin{align}
\lambda |\Delta_{\alpha, t}| &\lesssim \sqrt{\frac{k+t\log^3 n}{n}}, 
\qquad \|\xi_{t}\|_2 \lesssim \sqrt{\frac{k+t\log^3 n}{n}}, 
\label{eq:residual-sparse} \\
\label{eqn:se-alpha-sparse}
	&\big|\alpha_{t+1} - \alpha_{t+1}^{\star}\big| \lesssim \sqrt{\frac{k \log n + t\log^3 n}{n}},   
\end{align}
\end{subequations}
provided that 
\begin{align}
	t \lesssim \frac{n\lambda^2}{\log^3 n}\qquad \text{and} \qquad \frac{k\log n}{n\lambda^2} \lesssim 1. \label{cond:t-k}
\end{align}
\end{theos}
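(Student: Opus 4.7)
The plan is to deduce Theorem~\ref{thm:sparse} as a direct consequence of the general recipe in Theorem~\ref{thm:main}. The normalization $\|\eta_t(x_t)\|_2=1$ built into \eqref{eqn:eta-sparse} implies $\|\beta_t\|_2=1$ automatically, so the entire content of the theorem reduces to (i) verifying Assumptions~\ref{assump:eta}--\ref{assump:A-H-eta} with explicit bounds on the quantities $\rho,\rho_1,\rho_2,A_t,\ldots,G_t,\kappa_t$, (ii) iterating the residual recursion \eqref{eqn:xi-t-general}, and (iii) converting the non-asymptotic SE \eqref{eqn:alpha-t-genearl} into a bound against the population recursion \eqref{eqn:alpha-star-sparse}. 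The inductive hypothesis I would maintain at step $t$ is $\alpha_t\asymp\alpha_t^\star\asymp\lambda$ and $\|\xi_{t-1}\|_2\lesssim\sqrt{(k+t\log^3 n)/n}$.

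For the regularity assumption, the soft-thresholder satisfies $\mathcal{M}_{\mathsf{dc}}=\{\pm\tau_t\}$, $|\eta_t'|\le\gamma_t$, and $\eta_t''\equiv\eta_t'''\equiv 0$ off the singular set, so $\rho\lesssim\gamma_t$ and $\rho_1=\rho_2=0$. A short Gaussian calculation using $\tau_t\asymp\sqrt{\log n/n}$ and the inductive hypothesis $\alpha_t\asymp\lambda$ yields $\gamma_t\asymp 1/\lambda$. For the remaining quantities in Assumption~\ref{assump:A-H-eta}, the expressions inside $A_t,B_t,C_t,D_t$ are, conditional on $(\mu_t,\alpha_t,\beta_{t-1})$ being independent of $\{\phi_k\}$, zero-mean Lipschitz functions of i.i.d. Gaussians. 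I would apply Gaussian Lipschitz concentration followed by an $\epsilon$-net over the $O(t)$-dimensional coefficient space, which (combined with Stein's identity for $A_t$ and $D_t$) gives
\[
A_t,\sqrt{D_t}\,\lesssim\,\frac{1}{\lambda}\sqrt{\tfrac{t\log n}{n}},
\qquad B_t,C_t\,\lesssim\,\frac{1}{\lambda}\sqrt{\tfrac{k\log n+t\log n}{n}},
\qquad F_t\lesssim\tfrac{1}{\lambda},\qquad G_t\lesssim\tfrac{1}{\lambda\sqrt{k}}+\tfrac{1}{\lambda}\sqrt{\tfrac{\log n}{n}},
\]
the $k$-dependence in $B_t$ arising from the $k$-sparse deterministic vector $\vstar$ in \eqref{defi:B}. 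For $\kappa_t^2$ defined in \eqref{defi:kappa}, the fact that $\eta_t''\equiv 0$ kills the second term, and the Stein-type identity identifies the first integral with a shrunken version of $\bigl\langle(\eta_t')^2\bigr\rangle$; combined with the entrywise Gaussian smoothing one obtains $\kappa_t^2\le 1-c$ for a strictly positive $c$ in the regime $k\log n\lesssim n\lambda^2$, which is precisely the contraction factor that prevents the recursion \eqref{eqn:xi-t-general} from blowing up.

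The main obstacle is bounding $E_t$ in \eqref{defi:E}, since the implicit definition \eqref{defi:theta} couples $E_t$ nonlinearly to the residual $\|\xi_{t-1}\|_2$. The plan is to exploit Gaussian anti-concentration: for fixed $m\in\{\pm\tau_t\}$, the random variable $\alpha_t v^\star_j+\sum_k\beta_{t-1}^k\phi_{k,j}$ has a density uniformly bounded by $O(\sqrt{n})$ on the relevant scale, so the number of coordinates within a window $\theta$ of $m$ is at most $O(n\theta+k+\sqrt{n\log n})$; solving the self-consistent constraint $\sum_j|v_{t,j}-m|^2\ind(\cdot)\le\|\xi_{t-1}\|_2^2$ then gives $E_t\lesssim k+\sqrt{n}\|\xi_{t-1}\|_2\log n+\log^2 n$. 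Plugging all of these quantities into \eqref{eqn:xi-t-general}, and using $\rho_1=\rho_2=0$ to kill the quadratic-in-$\|\xi_{t-1}\|$ terms, one arrives at a linear recursion of the form $\|\xi_t\|_2\le\sqrt{\kappa_t^2+D_t}\,\|\xi_{t-1}\|_2+O\!\bigl(\sqrt{(k+t\log^3 n)/n}\bigr)$, whose geometric contraction (since $\sqrt{\kappa_t^2+D_t}<1-c/2$) yields the claimed residual bound \eqref{eq:residual-sparse} for $t\lesssim n\lambda^2/\log^3 n$. Finally, to obtain the SE bound \eqref{eqn:se-alpha-sparse} I would write $\alpha_{t+1}=f(\alpha_t)+\lambda\Delta_{\alpha,t}$, use the contractivity of $f$ in a neighborhood of $\alpha^\star_t$ (again via Stein's identity and the regime assumption), and close the induction using the bound on $\Delta_{\alpha,t}$ from \eqref{eqn:delta-alpha-general} combined with the residual estimate just derived.
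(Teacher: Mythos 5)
Your overall strategy coincides with the paper's: exploit the normalization to fix $\|\beta_t\|_2=1$, run an induction on $\alpha_t\asymp\lambda$ and on $\|\xi_{t-1}\|_2$, verify Assumptions~\ref{assump:eta}--\ref{assump:A-H-eta} with $\rho\asymp\gamma_t\asymp 1/\lambda$ and $\rho_1=\rho_2=0$, bound $A_t,B_t,D_t,E_t,\kappa_t$ via uniform Gaussian concentration plus an anti-concentration/self-consistency argument at $\pm\tau_t$, and finish the state-evolution comparison through the contractivity of $f$. However, your quantitative execution has a genuine gap precisely in the regime the theorem targets (i.e.\ $\lambda$ as small as $\sqrt{k\log n/n}$): your stated bound $A_t\lesssim\frac{1}{\lambda}\sqrt{t\log n/n}$ is too lossy. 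The quantity $A_t$ enters the recursion \eqref{eqn:xi-t-general} \emph{additively}, so with that bound the best closable induction yields $\|\xi_t\|_2\lesssim\frac{1}{\lambda}\sqrt{(k+t\log^3 n)/n}$ rather than the claimed \eqref{eq:residual-sparse}; your asserted linear recursion $\|\xi_t\|_2\le\sqrt{\kappa_t^2+D_t}\,\|\xi_{t-1}\|_2+O\bigl(\sqrt{(k+t\log^3 n)/n}\bigr)$ silently upgrades your own $A_t$ by a factor of $\lambda$. Concretely, with $k=\sqrt n$ and $\lambda\asymp\sqrt{k\log n/n}$, the term $\frac{1}{\lambda}\sqrt{t\log n/n}$ exceeds $\sqrt{(k+t\log^3 n)/n}$ by roughly $n^{1/4}/\log^{3/2}n$ once $t$ grows, and condition \eqref{cond:t-k} does not bridge this; nor does an inflated induction hypothesis close, since the term $\rho\sqrt{E_t/n}\,\|\xi_{t-1}\|_2\asymp\frac{1}{\lambda}\|\xi_{t-1}\|_2^2$ would then require $k+t\log^3 n\lesssim n\lambda^4$, which \eqref{cond:t-k} does not provide. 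The missing idea is the normalization cancellation the paper exploits in deriving \eqref{eqn:sparse-At}: since $\gamma_t\asymp1/\lambda$ while $\|(|v_t|-\tau_t 1)_+\|_2\asymp\lambda$ (cf.\ \eqref{eq:eta-v-norm-sparse}), one has $\|\eta_t(v_t)\|_2\asymp1$ and $\|\sum_k\mu_t^k\phi_k\circ\eta_t^{\prime}(v_t)\|_2\lesssim\frac{1}{\lambda}\sqrt{(k+t)\log n/n}\lesssim1$ under \eqref{cond:t-k}, so the Gaussian--Lipschitz constant of $\Phi\mapsto\langle\sum_k\mu^k\phi_k,\eta(v)\rangle$ is $O(1/\sqrt n)$ and $A_t\asymp\sqrt{t\log n/n}$, with no $1/\lambda$.

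Two secondary points. First, your justification that the $k$-dependence in $B_t$ "arises from the $k$-sparse vector $\vstar$" is off: since $\|\vstar\|_2=1$ and $\|\vstar\circ\eta^{\prime}(v)\|_2\lesssim1/\lambda$, the Lipschitz constant in \eqref{defi:B} carries no $k$, and the paper obtains $B_t\asymp\frac{1}{\lambda}\sqrt{t\log n/n}$; your extra $\frac{1}{\lambda}\sqrt{k\log n/n}$ would degrade $\lambda|\Delta_{\alpha,t}|$ to $\sqrt{k\log n/n}$, missing the $\sqrt{k/n}$ part of \eqref{eq:residual-sparse} (it would still match \eqref{eqn:se-alpha-sparse}). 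Second, your treatment of $E_t$ is only heuristic: the density-times-window count has a scaling inconsistency (for $\mathcal{N}(0,1/n)$ coordinates the expected count in a window of width $\theta$ scales like $n^{3/2}\theta$, not $n\theta$), and the self-consistent step needs the case split the paper performs in \eqref{defi:E}--type analysis, namely either $\mathbb{P}(|z-\tau_t|\le\theta(m))\lesssim(k+t\log^3 n)/n$ or else $\theta(m)\gtrsim n^{-1/2}$ so that $\mathbb{E}\bigl[|z-\tau_t|^2\ind(|z-\tau_t|\le\theta(m))\bigr]\ge\frac{1}{n}\mathbb{P}(|z-\tau_t|\le\theta(m))$, yielding $E_t\lesssim k+t\log^3 n+n\|\xi_{t-1}\|_2^2$; your stated $E_t\lesssim k+\sqrt n\,\|\xi_{t-1}\|_2\log n+\log^2 n$ would suffice for the recursion if it were proved, but the sketch as written does not establish it. You also never verify the base case of the induction (the paper checks $x_2=\alpha_2\vstar+\phi_1+\xi_1$ with $\alpha_2\asymp\lambda$ and $\|\xi_1\|_2\lesssim\sqrt{\log n/n}$), though that step is routine given the independent initialization.
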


In a nutshell, each AMP iterate behaves almost like a signal component superimposed by a Gaussian-type component (see the decomposition \eqref{eqn:sparse-decomp-dvorak}), 
with an residual term that is well controlled  up until the number of iterations reaches $$O\left(\frac{n\lambda^2}{\log^3 n}\right).$$
If $\lambda \asymp 1$, 
then the validity of the above non-asymptotic theory is guaranteed for $O(n/\log^3 n)$ iterations, 
which is far beyond what existing theory can cover. 
It is also worth pointing out that the non-asymptotic state-evolution~\eqref{eqn:se-alpha-sparse} matches the one derived in existing literature (cf.~\eqref{eq:SE-Montanari}) when $\frac{k\log n + t\log^3 n}{n} \to 0$. 
In the sequel, we single out a few additional remarks of this result in order.

\begin{itemize}

\item 
 In comparison to several prior works (e.g., \citet{amini2008high,montanari2021estimation,ding2019subexponential}),  our results impose no assumption on either the empirical distribution of $\vstar$, or the values of the non-zero entries of $\vstar$.  
For instance, we do allow some non-zero entries of $\vstar$ to be either extremely large or exceedingly small.





\item Different from \cite{montanari2021estimation}, we permit $\lambda$ to enter the regime where $\lambda < 1$. 
	Note that in this regime, the leading eigenvector of the observed matrix $M$ becomes uninformative \citep{deshpande2017asymptotic}, 
	and therefore, vanilla spectral initialization fails to provide a warm start as required in \cite{montanari2021estimation}. Fortunately, it is still possible to obtain an informative estimate for sparse PCA in the regime where $\lambda < 1$, as long as the sparsity structure is properly exploited.

\item In fact, assuming access to an informative initialization independent of $W$, Theorem~\ref{thm:sparse} only requires $\lambda \gtrsim \sqrt{\frac{k\log n}{n}}$.
This threshold matches the known information-theoretical lower bound in order to enable consistent estimation; 
see also \citet{vu2012minimax,berthet2013optimal} for relevant messages derived for the spiked covariance model. 
\red{
In sharp contrast with $\mathbb{Z}_{2}$ synchronization, where the information-theoretical limit appears at $\lambda = 1$, here it is still possible to recover the signal for $\lambda \ll 1$ by cleverly making use of the sparsity structure, although the leading eigenvector is uninformative. 
}



\end{itemize}

Finally, an informative starting point is not always available, particularly when it is close to the information-theoretic threshold.  
Noteworthily, 
a growing body of sparse PCA literature provided evidence concerning 
the existence of  computational barriers that prevent one from finding polynomial-time algorithms to approach the information-theoretic limits \citep{berthet2013computational,lesieur2015phase,krzakala2016mutual,hopkins2017power,macris2020all}.
In light of this, we study AMP with two data-driven initialization schemes that achieve non-trivial correlation with the true spike, focusing on the scenario where the SNR rises above the computational limit. Specifically, we develop in Section~\ref{sec:init-sparse-pca}, two initialization procedures to tackle the strong and weak SNR regimes, detailed in Section~\ref{sec:result-sparse-init} and \ref{sec:result-sparse-init-weak} respectively.
\section{Main analysis}
\label{sec:main-analysis}

We present the proofs of Theorem~\ref{thm:recursion} and \ref{thm:main} in this section and defer other technical details and lemmas to the appendices.

\subsection{Proof of Theorem~\ref{thm:recursion}}
\label{sec:pf-thm-recursion}

We carry out the main analysis for Theorem~\ref{thm:recursion} in the following three steps. 

\paragraph{Step 1: constructing a key set of auxiliary sequences.}

Let us first introduce a sequence of auxiliary vectors/matrices $\{z_k, W_k, \zeta_k\}_{1 \le k \le n}$ in a recursive manner as follows. 
\begin{itemize}
	\item[(i)] With the Wigner matrix $W$ and the initialization $x_{1}$ (pre-selected independent of $W$) in place, we define  
\begin{subequations}
\label{eqn:z-w-recursion}
\begin{align}
\label{eqn:z-w-init}
	z_1 \defn \frac{\eta_1(x_1)}{\lt\|\eta_1(x_1)\rt\|_2} \in \real^n
	\qquad\text{and}\qquad 
	W_1 \defn W \in \real^{n\times n},
\end{align}
which are statistically independent from each other. 
	\item[(ii)] For any $2 \leq t \leq n$, 
concatenate the $z_{k}$'s into a matrix $U_{t-1} \defn [z_k]_{1 \le k \leq t-1} \in \real^{n\times (t-1)}$
and set 
\begin{align}
	z_t &\defn \frac{\lt(I_n - U_{t-1}U_{t-1}^{\top}\rt)\eta_{t}(x_{t})}{\lt\|\lt(I_n - U_{t-1}U_{t-1}^{\top}\rt)\eta_{t}(x_{t})\rt\|_2}, 
	\label{eqn:zt}\\
	W_t &\defn \lt(I_n - z_{t-1}z_{t-1}^{\top}\rt)W_{t-1}\lt(I_n - z_{t-1}z_{t-1}^{\top}\rt),
	\label{eqn:Wt}
\end{align}
where $\{x_t\}$ is the sequence generated by the AMP updates~\eqref{eqn:AMP-updates}. 
\end{subequations}
\end{itemize}
In view of these definitions, we immediately single out the following basic fact. 
\begin{lems} \label{lemma:zk-orthonormal}
	The set of vectors $\{z_k\}_{1\leq k\leq n}$ forms an orthonormal basis. 
\end{lems}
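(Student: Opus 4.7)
The plan is to prove this statement by induction on $k$, viewing the construction in \eqref{eqn:z-w-recursion} as nothing more than a Gram--Schmidt orthogonalization applied on the fly to the sequence $\{\eta_k(x_k)\}$. Because $\real^n$ has dimension $n$ and the lemma asserts $n$ orthonormal vectors, producing an orthonormal \emph{set} of size $n$ automatically delivers a basis, so the only real content is orthonormality at each stage.

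For the base case, $z_1=\eta_1(x_1)/\|\eta_1(x_1)\|_2$ is manifestly a unit vector by construction (assuming $\eta_1(x_1)\ne 0$; otherwise the whole recursion is trivial). For the inductive step, suppose $\{z_1,\dots,z_{t-1}\}$ is an orthonormal set. Then $U_{t-1}=[z_1\,\cdots\,z_{t-1}]\in\real^{n\times (t-1)}$ satisfies $U_{t-1}^\top U_{t-1}=I_{t-1}$, so that $U_{t-1}U_{t-1}^\top=\sum_{k=1}^{t-1} z_k z_k^\top$ is precisely the orthogonal projection onto $\operatorname{span}(z_1,\dots,z_{t-1})$, and $I_n-U_{t-1}U_{t-1}^\top$ is the orthogonal projection onto its complement. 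Hence the numerator in \eqref{eqn:zt} already lies in the orthogonal complement of $\operatorname{span}(z_1,\dots,z_{t-1})$, and dividing by its Euclidean norm yields a unit vector $z_t$ satisfying $z_t^\top z_k=0$ for every $k<t$. This closes the induction.

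The one subtlety, and the place where one has to invoke a mild genericity argument rather than pure linear algebra, is that the denominator $\|(I_n-U_{t-1}U_{t-1}^\top)\eta_t(x_t)\|_2$ must be nonzero for the recursion to make sense. Equivalently, $\eta_t(x_t)$ must not lie in the $(t-1)$-dimensional subspace $\operatorname{span}(z_1,\dots,z_{t-1})$. For $t\le n$ this holds almost surely: $\eta_t(x_t)$ inherits absolutely continuous randomness from the Gaussian matrix $W$ entering the AMP updates \eqref{eqn:AMP-updates}, so landing exactly in a fixed lower-dimensional subspace is a probability-zero event (and one may restrict to the event on which this never fails, at no cost to the probability bounds elsewhere in the analysis). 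With non-degeneracy in hand, the induction produces $n$ orthonormal vectors in $\real^n$, which therefore form an orthonormal basis as claimed.
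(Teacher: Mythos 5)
Your proof is correct and follows essentially the same route as the paper: an induction in which the orthonormality of $\{z_1,\dots,z_{t-1}\}$ makes $I_n-U_{t-1}U_{t-1}^{\top}$ the orthogonal projection onto the complement of $\operatorname{span}(z_1,\dots,z_{t-1})$, so that the normalized vector $z_t$ in \eqref{eqn:zt} is a unit vector orthogonal to all previous ones. Your additional remark on the non-vanishing of the normalizing denominator is a reasonable (almost-sure) refinement that the paper simply leaves implicit, but it does not change the argument.
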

\begin{proof}
	First, it is clear that $U_1=z_1$ consists of orthonormal columns. 
	Next, suppose that $U_{t-1}$ contains orthonormal columns for some $t$, then   $I_n-U_{t-1}U_{t-1}^{\top}$ forms a projection matrix onto the subspace perpendicular to $U_{t-1}=[z_1,\cdots,z_{t-1}]$, and hence $\langle z_t, z_k\rangle = 0$ for all $1\leq k\leq t-1$ (cf.~\eqref{eqn:zt}). This implies that $U_t$ also consists of orthonormal columns. An induction argument thus concludes the proof. 
\end{proof}
%
%
As it turns out, $\{z_i\}_{1\leq i\leq t}$ assists in obtaining a useful decomposition of $\eta_t(x_t)$. 
By construction, for each $t$ we have 
$\eta_t(x_t) \in \mathsf{span}\big\{ z_t, U_{t-1} \big\} = \mathsf{span}\big\{ z_t, \cdots, z_{1} \big\}$.   
This together with Lemma~\ref{lemma:zk-orthonormal} allows us to decompose 
\begin{align}
\label{eqn:eta-decomposition}
	\eta_{t}(x_{t}) = \sum_{k = 1}^{t} \beta_{t}^kz_k, \qquad\text{with }\beta_{t}^k \defn \big\langle\eta_{t}(x_{t}), z_k \big\rangle ~~~(1\leq k\leq t),
\end{align}
which satisfies
\begin{align}
	\lt\|\eta_{t}(x_{t})\rt\|_2 = \lt\|\beta_{t}\rt\|_2 \qquad \text{with }~ \beta_{t} \defn \big(\beta_t^1,\beta_t^2,\ldots,\beta_t^t \big)^{\top} \in \real^{t}.
\end{align}

\paragraph{Step 2: deriving distributional properties of $W_kz_k$.}

Next, we look at some useful distributional properties of $W_kz_k$. 
Towards this end, let us generate another set of auxiliary vectors 
\begin{align}
\label{eqn:zeta-k}
	\zeta_k \defn \Big(\frac{\sqrt{2}}{2} - 1\Big) z_kz_k^{\top}W_kz_k + \sum_{i = 1}^{k - 1} g_i^kz_i, 
	\qquad 1\leq k\leq n,
\end{align}
where the $g_i^k$'s are independently drawn from $\mathcal{N}(0, \frac{1}{n})$. 
As it turns out, we can characterize the distribution of the superposition of $W_kz_k$ and $\zeta_k$, 
as stated in the following lemma.   
\begin{lems} 
\label{lem:distribution}
With $\{z_k, W_k, \zeta_k\}_{1 \le k \le n}$ defined as above, one has 
\begin{align}
\label{def:phi_k}
	\phi_k \defn W_kz_k + \zeta_k \sim \mathcal{N}\lt(0, \frac{1}{n}I_n\rt),\qquad\text{for all }1 \le k \le n.
\end{align}
Further, $\{\phi_k\}_{1\leq k\leq n}$ are statistically independent. 
{}
\end{lems}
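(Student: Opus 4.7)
The plan is an induction on $k$ that simultaneously delivers (a) a ``fresh Wigner'' description of $W_k$ on the subspace $V_{k-1}^\perp \defn \mathrm{span}\{z_1,\ldots,z_{k-1}\}^\perp$ conditional on the past, and (b) the marginal $\mathcal{N}(0,\tfrac{1}{n}I_n)$-law of $\phi_k$ together with its independence from $\phi_1,\ldots,\phi_{k-1}$.

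First, I would record two structural observations that fall out of the definitions in~\eqref{eqn:z-w-recursion}. A routine induction on $W_k = (I - z_{k-1}z_{k-1}^\top)W_{k-1}(I - z_{k-1}z_{k-1}^\top)$ gives $W_k z_j = 0$ for every $j<k$, so in particular $z_j^\top W_k z_k = 0$ whenever $j<k$; this already pins down the $z_i$-coordinates of $\phi_k$ for $i<k$ as the fresh independent Gaussians $g_i^k$ built into $\zeta_k$ by construction. Second, unfolding the AMP update $x_k = M\eta_{k-1}(x_{k-1}) - \langle \eta'_{k-1}(x_{k-1})\rangle \eta_{k-2}(x_{k-2})$ using the identity $W z_j = W_j z_j + r_j$ with $r_j \in \mathrm{span}\{z_1,\ldots,z_{j-1}\}$, together with the Onsager cancellation, shows that $x_k$ and hence $z_k$ is measurable with respect to $\mathcal{F}_{k-1} \defn \sigma\big(z_1,\ldots,z_{k-1},\phi_1,\ldots,\phi_{k-1},\{g_i^j\}_{i<j\le k-1}\big)$.

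Next I would invoke a Bolthausen-style conditioning argument. Rotating to an orthonormal basis whose first $k-1$ vectors coincide with $z_1,\ldots,z_{k-1}$ (permissible because the rotation is $\mathcal{F}_{k-1}$-measurable and the Wigner law is orthogonally invariant), the operator $W_k$ corresponds to the bottom-right $(n-k+1)\times(n-k+1)$ block of the rotated $W$, while every $\mathcal{F}_{k-1}$-measurable quantity sits in the complementary rows and columns. Independence of disjoint entries in a Wigner matrix then yields $W_k \mid \mathcal{F}_{k-1} \sim$ Wigner on $V_{k-1}^\perp$. Decomposing $\phi_k = W_k z_k + \zeta_k$ in the basis $\{z_1,\ldots,z_n\}$ completes the computation: for each $i<k$ the $z_i$-coordinate is $g_i^k \sim \mathcal{N}(0,1/n)$; the $z_k$-coordinate equals $\tfrac{\sqrt{2}}{2} z_k^\top W_k z_k$, a scaled diagonal Wigner entry with variance $\tfrac{1}{2}\cdot\tfrac{2}{n} = \tfrac{1}{n}$; and the complementary $(n-k)$-dimensional projection $(I - z_k z_k^\top) W_k z_k$ collects the strictly off-diagonal entries of the $k$-th column of the residual Wigner matrix, which is $\mathcal{N}(0,\tfrac{1}{n}I)$ on that subspace and independent of the diagonal entry. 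Assembling these independent Gaussian pieces gives $\phi_k \mid \mathcal{F}_{k-1} \sim \mathcal{N}(0,\tfrac{1}{n}I_n)$, which simultaneously delivers the marginal law and the joint independence of $\{\phi_k\}$.

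The principal obstacle is the measurability step $z_k \in \mathcal{F}_{k-1}$. Naively, $x_k$ depends on $W$ through $W z_{k-1}$, which is not manifestly in $\mathcal{F}_{k-1}$ because it carries information about the residual matrix $W_k$ as well. What one must verify algebraically is the identity $W z_j = W_j z_j + r_j$ with $r_j \in \mathrm{span}\{z_1,\ldots,z_{j-1}\}$, so that the Onsager correction precisely cancels the contribution of those spurious boundary directions and leaves $x_{j+1}$ expressible in terms of $\{\phi_1,\ldots,\phi_j\}$ and past data alone. Once this cancellation is secured, the Gaussianity and mutual independence of the $\phi_k$'s follow cleanly from the block-independence and rotation-invariance of the Wigner ensemble.
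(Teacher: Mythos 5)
Your overall plan --- sequential Gaussian conditioning showing that, given the past, the residual matrix restricted to $\mathrm{span}\{z_1,\ldots,z_{k-1}\}^{\perp}$ is a fresh Wigner matrix, then assembling $\phi_k$ coordinate by coordinate ($g_i^k$ along $z_i$ for $i<k$, $\tfrac{\sqrt{2}}{2}z_k^{\top}W_kz_k$ along $z_k$, and the off-diagonal column on the complement) --- is exactly the paper's argument (Claim~\ref{claim:independence} together with \eqref{defn:zWz}). However, one step as you state it would fail. You claim the Onsager correction ``precisely cancels'' the boundary terms $r_j=U_{j-1}U_{j-1}^{\top}Wz_j$, and that this cancellation is what makes $x_{j+1}$ expressible through $\phi_1,\ldots,\phi_j$ and past data. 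No such exact cancellation occurs: the Onsager term offsets those terms only approximately (that is the content of $\Delta_t$ being small via Stein's lemma), and their exact difference is precisely the residual $\xi_t$ that Theorem~\ref{thm:recursion} retains and later bounds. The cancellation is also unnecessary for the present lemma: since $z_i^{\top}Wz_j=\langle W_iz_i,z_j\rangle$ for $i<j$, one has $r_j=\sum_{i<j}\langle W_iz_i,z_j\rangle z_i$, so the boundary terms are themselves measurable functions of the already revealed $\{W_iz_i\}_{i<j}$ and $\{z_i\}$; this (plus including $x_1$, not merely $z_1$, in your filtration, because $x_2$ needs $\|\eta_1(x_1)\|_2$ and $v^{\star\top}\eta_1(x_1)$) is how the paper shows in \eqref{eqn:xt-by-Wkzk} that all randomness of $x_{k+1}$, hence $z_{k+1}$, comes from $\{W_iz_i\}_{i\le k}$ and $x_1$.

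The conditioning step also needs more care than your justification provides. Rotating by an $\mathcal{F}_{k-1}$-measurable orthogonal map and then invoking ``orthogonal invariance plus independence of disjoint Wigner entries'' is not valid as a one-shot argument: orthogonal invariance applies to rotations independent of $W$, whereas the basis $z_1,\ldots,z_{k-1}$ is built adaptively from $W$, and conditionally on $\mathcal{F}_{k-1}$ the law of $W$ is no longer the unconditional Wigner law. The statement must be proved sequentially: assuming inductively that $\widetilde{W}_{k-1}$ is Wigner conditional on the past and that $z_{k-1}$ is past-measurable (hence conditionally deterministic), rotation invariance of the \emph{conditional} law splits the column $W_{k-1}z_{k-1}$ from the complementary block, which simultaneously gives the distribution of $W_{k-1}z_{k-1}$ and the conditional Wigner property of $\widetilde{W}_k$ given the enlarged filtration. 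This inductive bookkeeping is exactly what the paper's Claim~\ref{claim:independence} supplies; your sketch announces the induction but justifies its step with the unconditional-invariance shortcut. With these two repairs, your coordinate computation and the conclusion that the conditional law of $\phi_k$ is $\mathcal{N}\big(0,\tfrac{1}{n}I_n\big)$, independent of the conditioning values --- hence the marginal Gaussianity and the joint independence of $\{\phi_k\}$ --- coincide with the paper's proof.
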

In words, when properly augmented by i.i.d.~Gaussians in the directions $\{z_i\}_{1\leq i < t}$ and adjusting the size of $W_kz_k$ along the direction $z_k$, we arrive at an i.i.d.~Gaussian vector. 
The proof is postponed to Section~\ref{sec:pf-distribution}.

\red{	Let us take a moment to explain the intuition behind the introduction of $\zeta_{k}$. This idea can be elucidated by examining the first two iterations. 
\begin{itemize}
\item Given an initial point $x_1$ independent of $W$, it can be easily verified that $z_1^\top W_1 z_1 \sim \mathcal{N}(0, 2/n)$ and 
\begin{align}
W z_1 \sim \mathcal{N}\bigg(0, \frac{1}{n}I_n + \frac{1}{n}z_1z_1^{\top}\bigg),
\end{align}
where we recall that $z_1 \defn \eta_1(x_1)/\big\|\eta_1(x_1)\big\|_2.$
In other words, $W z_1$ exhibits an inflated variance along the direction $z_{1}$, due to the fact that the diagonal entries of $W$ have a higher variance (namely, $2/n$) than that of the off-diagonal entries (namely, $1/n$). 
Thus, if we introduce $\zeta_{1} \defn (\sqrt{2}/{2} - 1) z^\top_{1} W z_1 \cdot z_1 $ (where $z^\top_{1} W z_1 \cdot z_1$ corresponds to the projection of $W z_{1}$ to the direction $z_{1}$),
we can rewrite $W z_1$ by adding and subtracting $\zeta_{1}$ as follows:
\begin{align}
	Wz_1 = \underbrace{W z_1 + \zeta_1}_{\phi_1} + \underbrace{(- \zeta_1)}_{\xi_1}.
\end{align} 
Here, $\zeta_{1}$ helps reduce the variance of $W z_1$ along the direction $z_{1}$, given that now the projection of $\phi_{1}$ to $z_{1}$ equals 
\begin{align*}
	z_1^\top  \phi_1 \cdot z_1 = (z_1^\top W z_1 + z_1^\top \zeta_1) \cdot z_1 = \frac{\sqrt{2}}{2}z^\top_{1} W z_1 \cdot z_1.
\end{align*}
Recognizing that $z_1^\top W_1 z_1 \sim \mathcal{N}(0, 2/n)$, we can see that $\frac{\sqrt{2}}{2}z^\top_{1} W z_1 \sim \mathcal{N}(0,1)$, and as a result, $\phi_{1}\sim \mathcal{N}\big(0, \frac{1}{n} I_n\big)$.  
\item Similarly, let us take one step further to look at the case with $t=2$. By virtue of property~\eqref{defn:zWz} of Lemma~\ref{lem:distribution},  we have
\begin{align}
W_2z_2 \sim \mathcal{N}\bigg(0, \frac{1}{n}I_n - \frac{1}{n}z_1z_1^{\top} + \frac{1}{n}z_2z_2^{\top}\bigg),\quad\text{conditioned on }x_2.
\end{align}
Again, by defining $\zeta_2 \defn \Big(\frac{\sqrt{2}}{2} - 1\Big) z_2^{\top}W_2z_2 \cdot z_2 + g_1^k z_1$ with $g_1^k \sim \mathcal{N}(0,1/n)$ being an independent Gaussian random variable,  
we can decompose $W_2z_2$ as follows
\begin{align*}
	W_2z_2 = \underbrace{W_2z_2 + \zeta_2}_{\phi_2} + (-\zeta_2).
\end{align*}
As it turns out, $\phi_2$ is also a Gaussian vector $\mathcal{N}(0, \frac{1}{n}I_{n})$. 
This occurs because the term $({\sqrt{2}}/{2} - 1) z_2^{\top}W_2z_2 \cdot z_2$ helps reduce the variance of $W_2 z_2$ along the direction $z_{2}$, and $g_1^k z_1$ adds back the extra variance along the direction $z_{1}$  given that $z_1^\top W_2 z_2 = z_1^\top (I - z_1z_1^\top)W(I - z_1z_1^\top) z_2 = 0$. . 
\end{itemize}
In fact, it is generally the case that: when dealing with $W_{k} z_{k}$, an extra term $\zeta_k$ --- and hence new independent random variables $g_i^k$'s --- is added to $W_{k} z_{k}$ in order to produce a homogeneous Gaussian vector $\phi_{k}$; this extra term will then be subtracted out from $\xi_t$. Clearly, the terms $\zeta_k$ and $-\zeta_k$ cancel out each other and hence their sum remains measurable with respect to $W$.
While introducing extra randomnesses of this kind might sound counterintuitive at first glance,  
this strategy gives rise to homogeneous Gaussian vectors in conjunction with a well-controlled residual term, which turns out to be remarkably useful when tackling the two specific examples here and beyond. 
}

\paragraph{Step 3: establishing a key decomposition of $\{x_t\}$.}
Equipped with the definitions above, we claim that the AMP updates satisfy the following decomposition:
\begin{align}
	x_t \defn \alpha_t \vstar + \sum_{k = 1}^{t-1} \beta_{t-1}^k\phi_k + \xi_{t-1}, \qquad\text{for }t \ge 2,  
	\label{def:dynamics}
\end{align}
where $\alpha_{t} = \lambda v^{\star\top} \eta_{t-1}(x_{t-1})$ and $\xi_{t-1}$ denotes some residual term obeying
\[
	\xi_{t-1} \in U_{t-1}. 
\]
Here and below, we abuse the notation $U_{t-1}$ to denote the subspace spanned by the columns of $[z_1,\cdots,z_{t-1}]$.

\begin{proof}[Proof of decomposition \eqref{def:dynamics}]
The proof proceeds in an inductive manner. 
First, recalling the update rule of AMP, the definition \eqref{eqn:z-w-init}, and the assumption $\eta_0(x_0)=0$ yields 
\begin{align*}
x_{2} & =(\lambda\vstar v^{\star\top}+W)\eta_{1}(x_{1})\\
 & =\lambda v^{\star\top}\eta_{1}(x_{1})\cdot\vstar+W\eta_{1}(x_{1})=\lambda v^{\star\top}\eta_{1}(x_{1})\cdot\vstar+\ltwo{\eta_{1}(x_{1})}\cdot W_{1}z_{1}\\
 & =\alpha_{2}\vstar+\beta_{1}^{1}W_{1}z_{1}=\alpha_{2}\vstar+\beta_{1}^{1}\phi_{1}+\underset{\eqqcolon\,\xi_{1}}{\underbrace{\left(-\beta_{1}^{1}\zeta_{1}\right)}},
\end{align*}
where the penultimate identity comes from the definition of $\alpha_t$ and $\beta_{t}^k$, and the last relation arises from \eqref{def:phi_k}. 
Clearly, $\xi_{1}\in U_1$ according to \eqref{eqn:zeta-k}. 
This establishes the claim \eqref{def:dynamics} for the base case with $t=2$.

Next, suppose that the decomposition \eqref{def:dynamics} is valid for step $t$, and we aim to justify it for step $t+1$ as well. 
Towards this, let us begin by expressing $W_1$ as 
\begin{align}
W_1 = W_t + \sum_{k = 1}^{t-1}(W_k - W_{k+1}) = W_t + \sum_{k = 1}^{t-1} \lt[W_kz_kz_k^{\top} + z_kz_k^{\top}W_k - z_kz_k^{\top}W_kz_kz_k^{\top}\rt], 
	\label{eq:W1-recursive-expand}
\end{align}
which comes from the definition~\eqref{eqn:Wt}. 
Based on this decomposition and the relation \eqref{eqn:eta-decomposition}, we can express the AMP iteration as: 
\begin{align}
\notag x_{t+1} & =\alpha_{t+1}\vstar+W_{1}\eta_{t}(x_{t})-\langle\eta_{t}^{\prime}(x_{t})\rangle\eta_{t-1}(x_{t-1})=\alpha_{t+1}\vstar+W_{1}\eta_{t}(x_{t})-\langle\eta_{t}^{\prime}(x_{t})\rangle\sum_{k=1}^{t-1}\beta_{t-1}^{k}z_{k}\notag\\
 & =\alpha_{t+1}\vstar+W_{t}\eta_{t}(x_{t})+\sum_{k=1}^{t-1}\lt[W_{k}z_{k}z_{k}^{\top}+z_{k}z_{k}^{\top}W_{k}-z_{k}z_{k}^{\top}W_{k}z_{k}z_{k}^{\top}\rt]\eta_{t}(x_{t})-\langle\eta_{t}^{\prime}(x_{t})\rangle\sum_{k=1}^{t-1}\beta_{t-1}^{k}z_{k}\notag\\
 & =\alpha_{t+1}\vstar+W_{t}\eta_{t}(x_{t})+\sum_{k=1}^{t-1}\beta_{t}^{k}W_{k}z_{k}+\sum_{k=1}^{t-1}z_{k}\big\langle W_{k}z_{k},\eta_{t}(x_{t})\big\rangle-\sum_{k=1}^{t-1}z_{k}\big(\beta_{t}^{k}z_{k}^{\top}W_{k}z_{k}\big)-\langle\eta_{t}^{\prime}(x_{t})\rangle\sum_{k=1}^{t-1}\beta_{t-1}^{k}z_{k}\notag\\
 & =\alpha_{t+1}\vstar+\sum_{k=1}^{t}\beta_{t}^{k}W_{k}z_{k}+\sum_{k=1}^{t-1}z_{k}\lt[\langle W_{k}z_{k},\eta_{t}(x_{t})\rangle-\langle\eta_{t}^{\prime}(x_{t})\rangle\beta_{t-1}^{k}-\beta_{t}^{k}z_{k}^{\top}W_{k}z_{k}\rt]\label{eqn:xt-by-Wkzk}\\
 & =\alpha_{t+1}\vstar+\sum_{k=1}^{t}\beta_{t}^{k}\phi_{k}+
	\underset{\eqqcolon\, \xi_t}{\underbrace{ \sum_{k=1}^{t-1}z_{k}\lt[\langle W_{k}z_{k},\eta_{t}(x_{t})\rangle-\langle\eta_{t}^{\prime}(x_{t})\rangle\beta_{t-1}^{k}-\beta_{t}^{k}z_{k}^{\top}W_{k}z_{k}\rt]-\sum_{k=1}^{t}\beta_{t}^{k}\zeta_{k}}}.
	\label{eqn:xt-by-phik}
\end{align}
where the second line invokes \eqref{eq:W1-recursive-expand}, the fourth line makes use of the fact that 
\[
	W_t\eta_{t}(x_{t})= W_t \big( I-U_{t-1}U_{t-1}^{\top} \big) \eta_{t}(x_{t}) = W_t  (\beta_t^t z_t), 
\]
and the last line in \eqref{eqn:xt-by-phik} follows from \eqref{def:phi_k}. 
By construction, $\zeta_k\in U_k$, and hence the expression of $\xi_t$ in \eqref{eqn:xt-by-phik} immediately reveals that $\xi_t\in U_t$.
\end{proof}

Before moving on, we further take a moment to derive an alternative expression of $\xi_t$.  
Let us first make the following  observation
arising from the definition \eqref{eqn:zeta-k} and the decomposition~\eqref{eqn:eta-decomposition}:  
\begin{align*}
\sum_{k = 1}^{t} \beta_{t}^k\zeta_k = \sum_{k = 1}^{t} \beta_{t}^k\lt[\bigg(\frac{\sqrt{2}}{2} - 1\bigg)z_kz_k^{\top}W_kz_k + \sum_{i = 1}^{k - 1} g_i^kz_i\rt] = \sum_{k = 1}^{t} z_k\lt[\beta_{t}^k\bigg(\frac{\sqrt{2}}{2} - 1\bigg)z_k^{\top}W_kz_k + \sum_{i = k+1}^{t} \beta_{t}^ig_k^i\rt],
\end{align*}
where the last line holds since
\begin{align*}
\sum_{k=1}^{t}\beta_{t}^{k}\sum_{i=1}^{k-1}g_{i}^{k}z_{i} & =\sum_{i=1}^{t-1}z_{i}\sum_{k=i+1}^{t}\beta_{t}^{k}g_{i}^{k}=\sum_{k=1}^{t-1}z_{k}\sum_{i=k+1}^{t}\beta_{t}^{i}g_{k}^{i}=\sum_{k=1}^{t}z_{k}\sum_{i=k+1}^{t}\beta_{t}^{i}g_{k}^{i} .
\end{align*}
Additionally, apply the decomposition~\eqref{eqn:eta-decomposition} and the \eqref{eqn:zeta-k} once again to reach
\begin{align*}
\big\langle \zeta_k, \eta_{t}(x_t)\big\rangle 
= \lt\langle\Big(\frac{\sqrt{2}}{2} - 1\Big) z_kz_k^{\top}W_kz_k + \sum_{i = 1}^{k - 1} g_i^kz_i, \sum_{k = 1}^{t} \beta_{t}^kz_k\rt\rangle
= \bigg(\frac{\sqrt{2}}{2} - 1\bigg)\beta_t^kz_k^{\top}W_kz_k + \sum_{i = 1}^{k - 1} \beta_t^ig_i^k 
\end{align*}
for any $k\leq t$. 
Substituting the above two equalities into \eqref{eqn:xt-by-phik}, we arrive at 
\begin{align}
\xi_{t} & =\sum_{k=1}^{t-1}z_{k}\lt[\langle W_{k}z_{k},\eta_{t}(x_{t})\rangle-\langle\eta_{t}^{\prime}(x_{t})\rangle\beta_{t-1}^{k}-\beta_{t}^{k}z_{k}^{\top}W_{k}z_{k}\rt]-\sum_{k=1}^{t}\beta_{t}^{k}\zeta_{k}\nonumber\\
 & =\sum_{k=1}^{t-1}z_{k}\lt[\langle\phi_{k},\eta_{t}(x_{t})\rangle-\langle\zeta_{k},\eta_{t}(x_{t})\rangle-\langle\eta_{t}^{\prime}(x_{t})\rangle\beta_{t-1}^{k}-\beta_{t}^{k}z_{k}^{\top}W_{k}z_{k}\rt]-\sum_{k=1}^{t}\beta_{t}^{k}\zeta_{k}\nonumber\\
 & =\sum_{k=1}^{t-1}z_{k}\Bigg[\Big\langle\phi_{k},\eta_{t}\Big(\alpha_{t}\vstar+\sum_{k=1}^{t-1}\beta_{t-1}^{k}\phi_{k}+\xi_{t-1}\Big)\Big\rangle-\langle\eta_{t}^{\prime}(x_{t})\rangle\beta_{t-1}^{k}
	- \sum_{i=1}^{k-1}\beta_{t}^{i}g_{i}^{k}-\sum_{i=k+1}^{t}\beta_{t}^{i}g_{k}^{i} \notag\\
 & \qquad\qquad-\big(\sqrt{2}-1\big)\beta_{t}^{k}z_{k}^{\top}W_{k}z_{k}\bigg] - \bigg(\frac{\sqrt{2}}{2} - 1\bigg)\beta_t^tz_tz_t^{\top}W_tz_t,\label{eq:xi-expression}
\end{align}
where the last line invokes the decomposition \eqref{def:dynamics}. 

\paragraph{Step 4: bounding the residual term $\ltwo{\xi_t}$.}
\red{Everything then boils down to controlling $\ltwo{\xi_t}$.  
Let us define a vector $\mu_t =[\mu_t^k]_{1\leq k\leq t} \in \mathbb{R}^{t}$ with coordinates 
\[
	\mu_t^k \coloneqq \frac{\xi_t^{\top}z_k}{\|\xi_t\|_2}, \qquad 1\leq k\leq t.
\]
Given that  $\{z_k\}_{k\leq t}$ forms an orthonormal basis and that $\xi_t\in U_t$, one can easily see that $$\ltwo{\mu_t} = 1 \qquad \text{and} \qquad \xi_{t} = \ltwo{\xi_t} \sum_{k=1}^t \mu_t^k z_k.$$
Hence, we can deduce that 
\begin{align}
\notag \|\xi_{t}\|_{2} & = \frac{\inprod{\xi_t}{\xi_t}}{\ltwo{\xi_t}}= \frac{\inprod{\ltwo{\xi_t}\sum_{k=1}^t \mu_t^k z_k}{\xi_t}}{\ltwo{\xi_t}} = \sum_{k=1}^t \mu_t^k \inprod{z_k}{\xi_t} \\
& \stackrel{(\text{i})}{=} \sum_{k=1}^{t-1}\mu_t^{k}\Bigg[\Big\langle\phi_{k},\eta_{t}\Big(\alpha_{t}\vstar+\sum_{k=1}^{t-1}\beta_{t-1}^{k}\phi_{k}+\xi_{t-1}\Big)\Big\rangle-\langle\eta_{t}^{\prime}(x_{t})\rangle\beta_{t-1}^{k}
	- \sum_{i=1}^{k-1}\beta_{t}^{i}g_{i}^{k}-\sum_{i=k+1}^{t}\beta_{t}^{i}g_{k}^{i} \notag\\
 & \qquad\qquad-\big(\sqrt{2}-1\big)\beta_{t}^{k}z_{k}^{\top}W_{k}z_{k}\bigg] - \bigg(\frac{\sqrt{2}}{2} - 1\bigg)\beta_t^t\mu_t^tz_t^{\top}W_tz_t \nonumber\\
\notag & = \bigg\langle\sum_{k=1}^{t-1}\mu_{t}^{k}\phi_{k},\delta_{t}\bigg\rangle-\langle\delta_{t}^{\prime}\rangle\sum_{k=1}^{t-1}\mu_{t}^{k}\beta_{t-1}^{k} - \bigg(\frac{\sqrt{2}}{2} - 1\bigg)\beta_t^t\mu_t^tz_t^{\top}W_tz_t\\
\notag & \qquad\qquad-\sum_{k=1}^{t-1}\mu_{t}^{k}\lt[-\big\langle\phi_{k},\eta_{t}(v_{t})\big\rangle+\big\langle\eta_{t}^{\prime}(v_{t})\big\rangle\beta_{t-1}^{k}+(\sqrt{2}-1)\beta_{t}^{k}z_{k}^{\top}W_{k}z_{k}+\sum_{i=1}^{k-1}\beta_{t}^{i}g_{i}^{k}+\sum_{i=k+1}^{t}\beta_{t}^{i}g_{k}^{i}\rt]\\
 & =\Big\langle\sum_{k=1}^{t-1}\mu_{t}^{k}\phi_{k},\delta_{t}\Big\rangle-\langle\delta_{t}^{\prime}\rangle\sum_{k=1}^{t-1}\mu_{t}^{k}\beta_{t-1}^{k} - \bigg(\frac{\sqrt{2}}{2} - 1\bigg)\beta_t^t\mu_t^tz_t^{\top}W_tz_t \notag \\
	& \qquad\qquad +\Delta_{t} - \sum_{k=1}^{t-1}\mu_{t}^{k}\lt[(\sqrt{2}-1)\beta_{t}^{k}z_{k}^{\top}W_{k}z_{k}+\sum_{i=1}^{k-1}\beta_{t}^{i}g_{i}^{k}+\sum_{i=k+1}^{t}\beta_{t}^{i}g_{k}^{i}\rt],\label{eq:xi_bound}
\end{align}
where $(\text{i})$  invokes expression~\eqref{eq:xi-expression} and the fact that $\{z_k\}_{k\leq t}$ are orthogonal to each other, 
and the last two lines rely on the definitions in~\eqref{eqn:delta-chorus} as follows:
\begin{align*}
v_{t} & \defn\alpha_{t}\vstar+\sum_{k=1}^{t-1}\beta_{t-1}^{k}\phi_{k},\\
\Delta_{t} & \defn\sum_{k=1}^{t-1}\mu_{t}^{k}\Big[\big\langle\phi_{k},\eta_{t}(v_{t})\big\rangle-\big\langle\eta_{t}^{\prime}(v_{t})\big\rangle\beta_{t-1}^{k}\Big],\\
\delta_{t} & \defn\eta_{t}\Big(\alpha_{t}\vstar+\sum_{k=1}^{t-1}\beta_{t-1}^{k}\phi_{k}+\xi_{t-1}\Big)-\eta_{t}(v_{t}),\\
\delta_{t}^{\prime} & \defn\eta_{t}^{\prime}\Big(\alpha_{t}\vstar+\sum_{k=1}^{t-1}\beta_{t-1}^{k}\phi_{k}+\xi_{t-1}\Big)-\eta_{t}^{\prime}(v_{t}).
\end{align*}
}

To establish Theorem~\ref{thm:recursion}, it then suffices to control the last term on the right-hand side of \eqref{eq:xi_bound}. 
This is accomplished in the following lemma, whose proof is deferred to Section~\ref{sec:pf-concentration}. 
\begin{lems} \label{lem:concentration}
With probability at least $1-O(n^{-11})$, for any $t\leq n$ we have
\begin{align*}
	\bigg|\sum_{k = 1}^{t-1} \mu_t^k\Big((\sqrt{2}-1)\beta_{t}^kz_k^{\top}W_kz_k + \sum_{i = 1}^{k - 1} \beta_t^ig_i^k + \sum_{i = k+1}^{t} \beta_{t}^ig_k^i\Big)\bigg| &\lesssim \sqrt{\frac{t\log n}{n}} \|\beta_{t}\|_2. 
\end{align*}
\end{lems}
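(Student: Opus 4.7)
The plan is to split the expression inside the absolute value into a ``diagonal'' piece involving $z_k^\top W_k z_k$ and a ``bilinear'' piece involving the auxiliary Gaussians $\{g_i^k\}_{i<k}$, and to handle each by first producing a uniform Gaussian bound that is independent of $\mu_t,\beta_t$, then absorbing these data-dependent coefficients through elementary norm inequalities.

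For the first piece, I will use the identity $z_k^\top W_k z_k=\sqrt{2}\,z_k^\top\phi_k$, which follows by taking the inner product of \eqref{eqn:zeta-k} with $z_k$ and invoking $z_k\perp z_i$ for $i<k$. Because $z_k$ is measurable with respect to $(\phi_1,\ldots,\phi_{k-1})$ and the earlier auxiliary $g$'s (by induction through the construction in Section~\ref{sec:pf-thm-recursion}) and because Lemma~\ref{lem:distribution} asserts that $\{\phi_k\}$ are jointly i.i.d.~$\mathcal{N}(0,I_n/n)$, one has $z_k^\top\phi_k\mid z_k\sim\mathcal{N}(0,1/n)$. A Gaussian tail bound with threshold $C\sqrt{\log n/n}$ followed by a union bound over $k\le n$ then gives $\max_k|z_k^\top\phi_k|\lesssim\sqrt{\log n/n}$ with probability $1-O(n^{-11})$; combining this with Cauchy-Schwarz and $\|\mu_t\|_2=1$ controls
\begin{align*}
\Bigl|(\sqrt{2}-1)\sum_{k=1}^{t-1}\mu_t^k\beta_t^k\,z_k^\top W_k z_k\Bigr|\lesssim\max_k|z_k^\top\phi_k|\cdot\|\mu_t\|_2\|\beta_t\|_2\lesssim\sqrt{\frac{\log n}{n}}\,\|\beta_t\|_2.
\end{align*}

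For the second piece, I will re-index by ordered pairs $(a,b)$ with $1\le a<b\le t$ (setting $\mu_t^t\defn 0$) to obtain
\begin{align*}
\sum_{k=1}^{t-1}\mu_t^k\Bigl[\sum_{i<k}\beta_t^i g_i^k+\sum_{i>k}\beta_t^i g_k^i\Bigr]=\sum_{a<b}(\mu_t^a\beta_t^b+\mu_t^b\beta_t^a)g_a^b=\mu_t^\top\tilde G\beta_t,
\end{align*}
where $\tilde G\in\mathbb{R}^{t\times t}$ is the symmetric matrix with off-diagonal entries $\tilde G_{ab}=g_{\min(a,b)}^{\max(a,b)}\sim\mathcal{N}(0,1/n)$ and vanishing diagonal. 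Standard Gaussian matrix concentration (Lipschitz property of the spectral norm together with Borell's inequality) then gives $\|\tilde G\|\lesssim\sqrt{t/n}+\sqrt{\log n/n}\lesssim\sqrt{t\log n/n}$ with probability $1-O(n^{-11})$, so $|\mu_t^\top\tilde G\beta_t|\le\|\tilde G\|\,\|\mu_t\|_2\,\|\beta_t\|_2\lesssim\sqrt{t\log n/n}\,\|\beta_t\|_2$. Summing the two bounds yields the claim for a fixed $t$, and a final union bound over $t\le n$ preserves the probability after adjusting the constant in the Gaussian tail.

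The main conceptual difficulty is the intricate statistical dependence of $\mu_t,\beta_t$ on the Gaussian variables, since these coefficients are produced by the AMP iteration run on exactly the same randomness that enters $\{z_k^\top\phi_k\}$ and $\tilde G$. The key idea is to never condition on $(\mu_t,\beta_t)$, and instead rely exclusively on deterministic inequalities ($|a^\top x|\le\|a\|_2\|x\|_2$ and $|a^\top Mb|\le\|M\|\,\|a\|_2\,\|b\|_2$) combined with high-probability bounds on the intrinsic Gaussian objects that hold regardless of how $\mu_t$ and $\beta_t$ were produced.
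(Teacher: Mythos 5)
Your argument is correct, and it matches the paper's proof in structure for the diagonal term: your identity $z_k^{\top}W_kz_k=\sqrt{2}\,z_k^{\top}\phi_k$ (obtained by pairing \eqref{eqn:zeta-k} with $z_k$) simply recovers the fact, stated directly in \eqref{defn:zWz}, that $z_k^{\top}W_kz_k\sim\mathcal{N}(0,2/n)$ irrespective of $z_k$, after which both you and the paper conclude via $\max_k|z_k^{\top}W_kz_k|\lesssim\sqrt{\log n/n}$ and $\sum_k|\mu_t^k\beta_t^k|\le\|\mu_t\|_2\|\beta_t\|_2$. Where you genuinely diverge is the Gaussian double sum: the paper writes it as $\sum_k\mu_t^k\varrho_k$ with $\varrho_k=\sum_{i<k}\beta_t^ig_i^k+\sum_{i>k}\beta_t^ig_k^i$, exploits that the fresh variables $\{g_i^k\}$ are independent of $\beta_t$ (though not of $\mu_t$) to get $\max_k|\varrho_k|\lesssim\|\beta_t\|_2\sqrt{\log n/n}$ by conditional scalar Gaussian concentration, and then absorbs the $\mu_t$-dependence by Cauchy--Schwarz over $k$, yielding $\sqrt{t\log n/n}\,\|\beta_t\|_2$. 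You instead symmetrize the sum into the bilinear form $\mu_t^{\top}\tilde G\beta_t$ (with the harmless padding $\mu_t^t\coloneqq 0$) and bound $\|\tilde G\|\lesssim\sqrt{t/n}+\sqrt{\log n/n}$ by Wigner-type operator-norm concentration, so the data-dependent coefficients are handled by the purely deterministic inequality $|\mu_t^{\top}\tilde G\beta_t|\le\|\tilde G\|\,\|\mu_t\|_2\|\beta_t\|_2$. Your route is slightly more robust --- it never uses independence of $\{g_i^k\}$ from $\beta_t$, so it would survive even if $\beta_t$ were contaminated by the auxiliary Gaussians --- and it even gives the marginally sharper rate $\sqrt{t/n}+\sqrt{\log n/n}$; the paper's route is more elementary in that it needs only scalar tail bounds rather than a random-matrix norm estimate, at the price of invoking the independence structure. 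One cosmetic remark: rather than arguing that $z_k$ is measurable with respect to $(\phi_1,\ldots,\phi_{k-1})$ and the earlier $g$'s, it is cleaner to note that $z_k$ is a function of $(x_1,W)$ alone and that, by the proof of Lemma~\ref{lem:distribution}, $\phi_k$ is $\mathcal{N}(0,\tfrac1n I_n)$ conditionally on $\{z_i\}_{i\le k}$, which immediately gives $z_k^{\top}\phi_k\mid z_k\sim\mathcal{N}(0,1/n)$; this does not affect the validity of your bound.
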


Taking this lemma collectively with equality~\eqref{eq:xi_bound} and the trivial bound $|z_t^{\top}W_tz_t| \lesssim \sqrt{\frac{\log n}{n}}$ leads to 
\begin{align}
\label{eqn:sonata}
\|\xi_t\|_2 = \Big\langle \sum_{k = 1}^{t-1} \mu_t^k\phi_k, \delta_{t}\Big\rangle - \langle\delta_{t}^{\prime}\rangle \sum_{k = 1}^{t-1} \mu_t^k\beta_{t-1}^k + \Delta_t + O\bigg(\sqrt{\frac{t\log n}{n}}\|\beta_{t}\|_2 \bigg),
\end{align}
thus completing the proof of Theorem~\ref{thm:recursion}.


\subsection{Proof of Theorem~\ref{thm:main}}
\label{sec:pf-thm-main}

Before embarking on the proof, we remind the reader of several results  that have already proven for $\alpha_t$ and $\beta_t$. 
Recall that in the proof of Theorem~\ref{thm:recursion}, we decompose the AMP iterate $x_{t+1}$ as follows 
%
\begin{align*}
	x_{t+1} = \alpha_{t+1} \vstar + \sum_{k = 1}^{t} \beta_{t}^k\phi_k + \xi_{t},
	\qquad 1\leq t\leq n,
\end{align*}
where $\xi_{t}\in U_{t}$ (some linear subspace of dimension $t$) represents some residual term, and
\begin{subequations}
\label{eq:alpha-beta-t-expansion-thm2}
\begin{align}
	\alpha_{t+1} &= \lambda v^{\star\top}\eta_t(x_t) = \lambda v^{\star\top}\eta_t\Big(\alpha_t \vstar + \sum_{k = 1}^{t-1} \beta_{t-1}^k\phi_k + \xi_{t-1}\Big), 
	\label{eq:alpha-t-expansion-thm2}\\
	\|\beta_t\|_2 &= \|\eta_t(x_t)\|_2 
	= \Big\|\eta_t\Big(\alpha_t \vstar + \sum_{k = 1}^{t-1} \beta_{t-1}^k\phi_k + \xi_{t-1} \Big)\Big\|_2.
	\label{eq:beta-t-expansion-thm2}
\end{align}
\end{subequations}
We have also shown in Theorem~\ref{thm:recursion} that with probability at least $1-O(n^{-11})$, the residual term satisfies 
\begin{align}
\notag	\|\xi_{t}\|_2 = \Big\langle \sum_{k = 1}^{t-1} \mu^k_t \phi_k, \delta_{t}\Big\rangle - \langle\delta_{t}^{\prime}\rangle \sum_{k = 1}^{t-1} \mu^k_t \beta_{t-1}^k + \Delta_t + O\Big(\sqrt{\frac{t\log n}{n}}\|\beta_{t}\|_2 \Big)\\
\label{eqn:xi-t-tmp} \leq \Big\langle \sum_{k = 1}^{t-1} \mu^k_t \phi_k, \delta_{t}\Big\rangle - \langle\delta_{t}^{\prime}\rangle \sum_{k = 1}^{t-1} \mu^k_t \beta_{t-1}^k + A_t + O\Big(\sqrt{\frac{t\log n}{n}}\|\beta_{t}\|_2 \Big),
\end{align} 
where the last step invokes property~\eqref{defi:A} in Assumption~\ref{assump:A-H-eta} as well as the definition \eqref{defn:Delta-t} of $\Delta_t$.



\paragraph{Step 1: bounding $\Delta_{\alpha,t} $ and $\Delta_{\beta,t} $ in terms of $\delta_t$ and $\delta_t^{\prime}$.}
We begin by controlling the size of the term $\Delta_{\alpha,t} $. In view of its definition in \eqref{eqn:alpha-t-genearl}, 
we have 
\begin{align*}
	\Delta_{\alpha,t} 
	&\defn \frac{\alpha_{t+1}}{\lambda} - v^{\star\top}\int\eta_t\Big(\alpha_t \vstar + \frac{\|\beta_{t-1}\|_2}{\sqrt{n}}x\Big)\varphi_n(\dx), \\
	& = v^{\star\top}\delta_t + v^{\star\top}\eta_{t}\Big(\alpha_t \vstar + \sum_{k = 1}^{t-1} \beta_{t-1}^k\phi_k\Big) - v^{\star\top}\int\eta_t\Big(\alpha_t\vstar + \frac{\|\beta_{t-1}\|_2}{\sqrt{n}}x\Big)\varphi_n(\dx),
\end{align*}
where the second line follows from \eqref{eq:alpha-t-expansion-thm2} and the definition \eqref{defn:delta-t} of $\delta_t$. 
As a direct consequence of the assumption~\eqref{defi:B}, we obtain
\begin{align}
\label{eqn:tmp-Delta-alpha}
	|\Delta_{\alpha,t}| &\leq \lt|\inprod{\vstar}{\delta_t}\rt| + B_t. 
\end{align}
We then move on to the term $\Delta_{\beta,t}$. 
Recognizing that 
\[
	\| \beta_t \|_2^2 = \|\eta_t(x_t)\|_2^2  = \Big\| \eta_{t}\Big(\alpha_t\vstar + \sum_{k = 1}^{t-1} \beta_{t-1}^k\phi_k \Big) + \delta_t \Big\|_2^2,
\]
we can combine it with the definition \eqref{eqn:beta-t-genearl} to obtain
\begin{align*}
	\Delta_{\beta,t} &\defn \|\beta_t\|_2^2 - \int\Big\|\eta_t\Big(\alpha_t\vstar + \frac{\|\beta_{t-1}\|_2}{\sqrt{n}}x\Big)\Big\|_2^2\varphi_n(\dx)\\
	&= \Big\langle2\eta_{t}\Big(\alpha_t\vstar + \sum_{k = 1}^{t-1} \beta_{t-1}^k\phi_k\Big), \delta_t\Big\rangle + \|\delta_t\|_2^2 
	+ \Big\|\eta_{t}\Big(\alpha_t\vstar + \sum_{k = 1}^{t-1} \beta_{t-1}^k\phi_k\Big)\Big\|_2^2 - \int\Big\|\eta_t\Big(\alpha_t\vstar + \frac{\|\beta_{t-1}\|_2}{\sqrt{n}}x\Big)\Big\|_2^2\varphi_n(\dx).
\end{align*}
By virtue of the assumption~\eqref{defi:C}, we obtain 
\begin{align}
\label{eqn:tmp-Delta-beta}
	|\Delta_{\beta,t}| &\leq \Big|\Big\langle2\eta_{t}\Big(\alpha_t\vstar + \sum_{k = 1}^{t-1} \beta_{t-1}^k\phi_k\Big), \delta_t\Big\rangle\Big| + \|\delta_t\|_2^2 + C_t.
\end{align}

\paragraph{Step 2: bounding $\delta_t$ and $\delta_t^{\prime}$.}

To further control the right-hand side of \eqref{eqn:tmp-Delta-alpha} and \eqref{eqn:tmp-Delta-beta}, 
we proceed by bounding terms associated with $\delta_{t}$. 
Given that $\eta_t(\cdot)$ is assumed to be continuous, one can derive 
\begin{align}
\notag \delta_{t} & =\eta_{t}\Big(\alpha_{t}\vstar+\sum_{k=1}^{t-1}\beta_{t-1}^{k}\phi_{k}+\xi_{t-1}\Big)-\eta_{t}\Big(\alpha_{t}\vstar+\sum_{k=1}^{t-1}\beta_{t-1}^{k}\phi_{k}\Big)\\
\notag & ={\displaystyle \int}_{0}^{1}\bigg\{\eta_{t}^{\prime}\Big(\alpha_{t}\vstar+\sum_{k=1}^{t-1}\beta_{t-1}^{k}\phi_{k}+\tau\xi_{t-1}\Big)\circ\xi_{t-1}\bigg\}\mathrm{d}\tau\\
 & =\eta_{t}^{\prime}\Big(\alpha_{t}\vstar+\sum_{k=1}^{t-1}\beta_{t-1}^{k}\phi_{k}\Big)\circ\xi_{t-1}+{\displaystyle \int}_{0}^{1}\bigg\{\bigg[\eta_{t}^{\prime}\Big(\alpha_{t}\vstar+\sum_{k=1}^{t-1}\beta_{t-1}^{k}\phi_{k}+\tau\xi_{t-1}\Big)-\eta_{t}^{\prime}\Big(\alpha_{t}\vstar+\sum_{k=1}^{t-1}\beta_{t-1}^{k}\phi_{k}\Big)\bigg]\circ\xi_{t-1}\bigg\}\mathrm{d}\tau,
\label{eqn:shostakovich-delta-t-123}
\end{align}
where the second line invokes the fundamental theorem of calculus. 
Note that $\eta_t^{\prime}(\cdot)$ has a finite number of discontinuous points.  
Recalling that $|\eta_t^{\prime}(w)|\leq \rho$ and $|\eta_t^{\second}(w)|\leq \rho_1$ for any continuous point $w\in \real$ (see Assumption~\ref{assump:eta}), 
we have
\begin{align}
	&\left|\eta_{t}^{\prime}\Big(\alpha_{t}\vstar+\sum_{k=1}^{t-1}\beta_{t-1}^{k}\phi_{k}+\tau\xi_{t-1}\Big)-\eta_{t}^{\prime}\Big(\alpha_{t}\vstar+\sum_{k=1}^{t-1}\beta_{t-1}^{k}\phi_{k}\Big)\right| \notag\\
	&\qquad \leq\left|{\displaystyle \int}_{0}^{1}\bigg\{\eta_{t}^{\second}\Big(\alpha_{t}\vstar+\sum_{k=1}^{t-1}\beta_{t-1}^{k}\phi_{k}+\tau_{1}\tau\xi_{t-1}\Big)\circ\big(\tau\xi_{t-1}\big)\bigg\}\mathrm{d}\tau_{1}\right|+ 2\rho\Gamma \notag\\
 &\qquad \leq\rho_{1}\big|\xi_{t-1}\big|+ 2\rho\Gamma,
	\label{eq:diff-eta-t-prime-UB135}
\end{align}
where $\Gamma= [\Gamma_j]_{1\leq j\leq n} \in \real^n$ is a term reflecting the influence of discontinuous points. 
More precisely, $\Gamma_j$ denotes the number of discontinuities of $\eta_t^{\prime}(\cdot)$ encountered between 
$\big[\alpha_{t}\vstar_j+\sum_{k=1}^{t-1}\beta_{t-1}^{k}\phi_{k,j}, \alpha_{t}\vstar_j+\sum_{k=1}^{t-1}\beta_{t-1}^{k}\phi_{k,j}+\xi_{t-1,j}\big]$.   
Note that if a point $m$ is contained in an interval $[a,b]$,
then one must have $a+\tau(b-a)=m$ for some $\tau\in[0,1]$,
which requires that $|b-a|\geq|\tau(b-a)|=|a-m|$. This basic fact allows us to take
\begin{align}
	\Gamma_j =  \sum_{m\in\mathcal{M}_{\mathsf{dc}}}\ind\bigg\{ \big|\xi_{t-1,j}\big|\geq\Big|\alpha_{t}v_{j}^{\star}+\sum_{k=1}^{t-1}\beta_{t-1}^{k}\phi_{k,j}
	-m\Big|\bigg\} \eqqcolon \sum_{m\in\mathcal{M}_{\mathsf{dc}}} \Gamma_{j}(m).
	\label{eq:Gamma-ub-discontinuous}
\end{align}
Substitution into \eqref{eqn:shostakovich-delta-t-123} yields
\begin{align}
\Big|\delta_{t} & -\eta_{t}^{\prime}\Big(\alpha_{t}\vstar+\sum_{k=1}^{t-1}\beta_{t-1}^{k}\phi_{k}\Big)\circ\xi_{t-1}\Big|
	\leq \rho_{1}\big|\xi_{t-1}\big|^{2}+ 2\rho\Gamma \circ \big|\xi_{t-1}\big|
.
\label{eqn:shostakovich-delta-t}
\end{align}
Similarly, we can repeat the same argument (particularly \eqref{eq:diff-eta-t-prime-UB135} and \eqref{eqn:shostakovich-delta-t}) to bound $\delta_{t}^{\prime}$ as follows:  
\begin{align}
 & \bigg|\delta_{t}^{\prime}-\eta_{t}^{\second}\Big(\alpha_{t}\vstar+\sum_{k=1}^{t-1}\beta_{t-1}^{k}\phi_{k}\Big)\circ\xi_{t-1}\bigg| \notag\\
 & =\bigg|\eta_{t}^{\prime}\Big(\alpha_{t}\vstar+\sum_{k=1}^{t-1}\beta_{t-1}^{k}\phi_{k}+\xi_{t-1}\Big)-\eta_{t}^{\prime}\Big(\alpha_{t}\vstar+\sum_{k=1}^{t-1}\beta_{t-1}^{k}\phi_{k}\Big)-\eta_{t}^{\second}\Big(\alpha_{t}\vstar+\sum_{k=1}^{t-1}\beta_{t-1}^{k}\phi_{k}\Big)\circ\xi_{t-1}\bigg| \notag\\
 & \leq\bigg|{\displaystyle \int}_{0}^{1}\bigg\{\eta_{t}^{\second}\Big(\alpha_{t}\vstar+\sum_{k=1}^{t-1}\beta_{t-1}^{k}\phi_{k}+\tau\xi_{t-1}\Big)\circ\xi_{t-1}\bigg\}\mathrm{d}\tau-\eta_{t}^{\second}\Big(\alpha_{t}\vstar+\sum_{k=1}^{t-1}\beta_{t-1}^{k}\phi_{k}\Big)\circ\xi_{t-1}\bigg|+2\rho\Gamma  \notag\\
	& \leq\rho_{2}\big|\xi_{t-1}\big|^{2}+ 2\rho\Gamma +2\rho_{1}\Gamma \circ \big|\xi_{t-1}\big|. 
	\label{eqn:shostakovich-delta-t-prime}
\end{align}
With the above bounds on $\delta_t$ and $\delta_t^{\prime}$ in place, 
we are ready to establish the advertised results \eqref{eqn:delta-alpha-general}, \eqref{eqn:delta-beta-general} and \eqref{eqn:xi-t-general}, 
which we will look at one by one in the sequel.

\paragraph{Step 3: establishing inequality~\eqref{eqn:xi-t-general}.} 
With these relations in place, let us start with controlling quantity $\|\xi_{t}\|_2$. 
In view of expression~\eqref{eqn:xi-t-tmp}, it requires us to bound $\langle \mu_t^k\phi_k, \delta_{t}\rangle - \langle\delta_{t}^{\prime}\rangle \sum_{k = 1}^{t-1} \mu_t^k\beta_{t-1}^k$. 
Taking the bounds~\eqref{eqn:shostakovich-delta-t} and \eqref{eqn:shostakovich-delta-t-prime} collectively with \eqref{eqn:xi-t-tmp}, 
and recalling the definition \eqref{defn:v-t-thm1} of $v_t$, we arrive at 
\begin{align}
\|\xi_{t}\|_{2} & \leq\Big\langle\sum_{k=1}^{t-1}\mu_{t}^{k}\phi_{k},\delta_{t}\Big\rangle-\langle\delta_{t}^{\prime}\rangle\sum_{k=1}^{t-1}\mu_{t}^{k}\beta_{t-1}^{k}+A_{t}+O\Big(\sqrt{\frac{t\log n}{n}}\|\beta_{t}\|_{2}\Big)\notag\\
 & =\bigg\langle\sum_{k=1}^{t-1}\mu_{t}^{k}\phi_{k},\,\eta_{t}^{\prime}(v_{t})\circ\xi_{t-1}\bigg\rangle-\bigg\langle\eta_{t}^{\second}(v_{t})\circ\xi_{t-1}\bigg\rangle\sum_{k=1}^{t-1}\mu_{t}^{k}\beta_{t-1}^{k}\notag\\
 & \qquad+\rho_{1}\bigg\langle\bigg|\sum_{k=1}^{t-1}\mu_{t}^{k}\phi_{k}\bigg|,\big|\xi_{t-1}\big|^{2}\bigg\rangle+\rho_{2}\Big\langle\big|\xi_{t-1}\big|^{2}\Big\rangle\bigg|\sum_{k=1}^{t-1}\mu_{t}^{k}\beta_{t-1}^{k}\bigg|\notag\\
 & \qquad+2\rho\bigg\langle\bigg|\sum_{k=1}^{t-1}\mu_{t}^{k}\phi_{k}\bigg|,\,\Gamma\circ \big|\xi_{t-1}\big|\bigg\rangle+\Big\{2\rho\langle \Gamma \rangle + 2\rho_{1}\big\langle\Gamma\circ \big|\xi_{t-1}\big|\big\rangle\Big\}\bigg|\sum_{k=1}^{t-1}\mu_{t}^{k}\beta_{t-1}^{k}\bigg| \notag\\
	& \qquad +A_{t}+O\Big(\sqrt{\frac{t\log n}{n}}\|\beta_{t}\|_{2}\Big). \label{eq:xi-t-UB-13579}
\end{align} 
This leaves us with several terms to control, which is the content of the lemma below; the proof is deferred to Section~\ref{sec:pf-lem-recursion}.
\begin{lems} \label{lem:recursion}
Consider any $t\leq n$. Given $\kappa_t$ defined in~\eqref{defi:kappa}, it holds that 
\begin{subequations}
\begin{align}
	\bigg\langle\sum_{k=1}^{t-1}\mu_{t}^{k}\phi_{k},\eta_{t}^{\prime}(v_{t})\circ\xi_{t-1}\bigg\rangle-\Big\langle\eta_{t}^{\second}(v_{t})\circ\xi_{t-1}\Big\rangle\sum_{k=1}^{t-1}\mu_{t}^{k}\beta_{t-1}^{k} &\le\sqrt{\kappa_{t}^{2}+D_{t}}\,\|\xi_{t-1}\|_{2}	 
	\label{eq:lem-recursion-smooth-part-1} \\
	\rho_{1}\bigg\langle\bigg|\sum_{k=1}^{t-1}\mu_{t}^{k}\phi_{k}\bigg|,\big|\xi_{t-1}\big|^{2}\Big\rangle+\rho_{2}\Big\langle\big|\xi_{t-1}\big|^{2}\Big\rangle\bigg|\sum_{k=1}^{t-1}\mu_{t}^{k}\beta_{t-1}^{k}\bigg| & \lesssim\lt( \rho_{1}  \frac{\sqrt{t}+\sqrt{\log n}}{\sqrt{n}} +\frac{\rho_{2}\|\beta_{t-1}\|_{2}}{n}\rt)\|\xi_{t-1}\|_{2}^{2}
\label{eq:lem-recursion-smooth-part-2}
\end{align}
hold with probability at least $1-O(n^{-11})$. 
In addition, one has 
\begin{align}
 & 2\rho\bigg\langle\bigg|\sum_{k=1}^{t-1}\mu_{t}^{k}\phi_{k}\bigg|,\,\Gamma\circ\big|\xi_{t-1}\big|\bigg\rangle+\Big\{2\rho\langle\Gamma\rangle+2\rho_{1}\big\langle\Gamma\circ\big|\xi_{t-1}\big|\big\rangle\Big\}\bigg|\sum_{k=1}^{t-1}\mu_{t}^{k}\beta_{t-1}^{k}\bigg| \notag\\
 & \qquad\lesssim\rho\sqrt{\frac{(E_{t}+t)\log n}{n}}\big\|\xi_{t-1}\big\|_{2}+\frac{(\rho+\rho_{1}\big\|\xi_{t-1}\big\|_{\infty})E_{t}\big\|\beta_{t-1}\big\|_{2}}{n}. 	\label{eqn:new-version}
\end{align}
\end{subequations}
\end{lems}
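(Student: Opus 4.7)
My plan is to prove the three inequalities in turn, each via a Cauchy--Schwarz argument paired with different concentration ingredients. For part~\eqref{eq:lem-recursion-smooth-part-1}, I would first rewrite the left-hand side as a single inner product by pulling $\xi_{t-1,j}$ out coordinatewise:
\[
\Big\langle\sum_{k=1}^{t-1}\mu_{t}^{k}\phi_{k},\eta_{t}^{\prime}(v_{t})\circ\xi_{t-1}\Big\rangle-\Big\langle\eta_{t}^{\second}(v_{t})\circ\xi_{t-1}\Big\rangle\sum_{k=1}^{t-1}\mu_{t}^{k}\beta_{t-1}^{k}=\langle a,\xi_{t-1}\rangle,
\]
with
\[
a \defn \sum_{k=1}^{t-1}\mu_{t}^{k}\phi_{k}\circ\eta_{t}^{\prime}(v_{t}) - \frac{1}{n}\Big(\sum_{k=1}^{t-1}\mu_{t}^{k}\beta_{t-1}^{k}\Big)\eta_{t}^{\second}(v_{t}).
\]
Cauchy--Schwarz then gives $\langle a,\xi_{t-1}\rangle\leq\|a\|_{2}\|\xi_{t-1}\|_{2}$, and assumption~\eqref{defi:D} directly yields $\|a\|_{2}^{2}\leq \kappa_{t}^{2}+D_{t}$. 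This part is essentially algebraic.

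For part~\eqref{eq:lem-recursion-smooth-part-2}, the crucial tool is a uniform $\ell_{\infty}$ bound
\[
\sup_{\mu\in\mathcal{S}^{t-2}}\Big\|\sum_{k=1}^{t-1}\mu^{k}\phi_{k}\Big\|_{\infty}\lesssim\sqrt{(t+\log n)/n}
\]
with probability at least $1-O(n^{-11})$, since $\mu_{t}$ itself depends on the $\phi_{k}$'s. I would obtain this via a $1/n^{2}$-net of $\mathcal{S}^{t-2}$ (of cardinality $n^{O(t)}$), a standard Gaussian tail bound at each net point, and a union bound. The first term of \eqref{eq:lem-recursion-smooth-part-2} then follows from $\rho_{1}\big\langle|\sum_{k}\mu_{t}^{k}\phi_{k}|,|\xi_{t-1}|^{2}\big\rangle\leq\rho_{1}\|\sum_{k}\mu_{t}^{k}\phi_{k}\|_{\infty}\|\xi_{t-1}\|_{2}^{2}$, and the second from $\langle|\xi_{t-1}|^{2}\rangle=\|\xi_{t-1}\|_{2}^{2}/n$ together with $|\langle\mu_{t},\beta_{t-1}\rangle|\leq\|\beta_{t-1}\|_{2}$.

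For part~\eqref{eqn:new-version}, the structural observation is that $\Gamma_{j}(m)=1$ forces $|\xi_{t-1,j}|\ge\tilde{y}_{j}(m)$, where $\tilde{y}_{j}(m)\coloneqq|\alpha_{t}v^{\star}_{j}+\sum_{k}\beta_{t-1}^{k}\phi_{k,j}-m|$, so that $\sum_{j}\tilde{y}_{j}(m)^{2}\Gamma_{j}(m)\leq\|\xi_{t-1}\|_{2}^{2}$. Combining this with the supremum characterization of $\theta(m)$ in \eqref{defi:theta} and a Markov-type argument on the ``extra'' indices $\{j:\tilde{y}_{j}(m)>\theta(m),\,\Gamma_{j}(m)=1\}$, I would establish $\sum_{j}\Gamma_{j}(m)\lesssim|I(m)|$ where $I(m)=\{j:\tilde{y}_{j}(m)\leq\theta(m)\}$. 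Summing over $m\in\mathcal{M}_{\mathsf{dc}}$ (with $|\mathcal{M}_{\mathsf{dc}}|=O(1)$) and invoking \eqref{defi:E} then gives $\sum_{j}\Gamma_{j}\lesssim E_{t}$. The first subterm is then controlled via Cauchy--Schwarz, $\sum_{j}\Gamma_{j}|\xi_{t-1,j}|\leq\sqrt{\sum_{j}\Gamma_{j}^{2}}\|\xi_{t-1}\|_{2}\lesssim\sqrt{E_{t}}\|\xi_{t-1}\|_{2}$ (using $\Gamma_{j}\lesssim 1$), combined with the uniform $\ell_{\infty}$ bound above, which produces the $\rho\sqrt{(E_{t}+t)\log n/n}\|\xi_{t-1}\|_{2}$ piece. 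The second subterm follows from $\langle\Gamma\rangle\leq E_{t}/n$, $\langle\Gamma\circ|\xi_{t-1}|\rangle\leq\|\xi_{t-1}\|_{\infty}\langle\Gamma\rangle$, and $|\langle\mu_{t},\beta_{t-1}\rangle|\leq\|\beta_{t-1}\|_{2}$. The main obstacle will be the step $\sum_{j}\Gamma_{j}(m)\lesssim|I(m)|$: the natural pointwise inclusion $\{j:\Gamma_{j}(m)=1\}\subseteq I(m)$ can fail, so one must carefully combine the supremum characterization of $\theta(m)$ with an $\ell_{2}$ counting argument on indices with $|\xi_{t-1,j}|>\theta(m)$ to recover the claimed rate.
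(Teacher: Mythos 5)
Your part~\eqref{eq:lem-recursion-smooth-part-1} argument (rewrite as $\langle a,\xi_{t-1}\rangle$, Cauchy--Schwarz, then \eqref{defi:D}) is exactly the paper's, and your treatment of the second halves of \eqref{eq:lem-recursion-smooth-part-2} and \eqref{eqn:new-version}, including the counting step $\sum_j\Gamma_j(m)\lesssim E_t$ via the supremum characterization of $\theta(m)$ and an $\ell_2$ cardinality comparison, matches the paper's inequality \eqref{eqn:zero-norm-comparison}. The genuine gap is in your bound on the first piece of \eqref{eqn:new-version}. You propose
\[
2\rho\Big\langle\Big|\sum_{k}\mu_t^k\phi_k\Big|,\,\Gamma\circ|\xi_{t-1}|\Big\rangle
\;\le\;2\rho\,\Big\|\sum_k\mu_t^k\phi_k\Big\|_\infty\sqrt{E_t}\,\|\xi_{t-1}\|_2
\;\lesssim\;\rho\sqrt{\tfrac{(t+\log n)\,E_t}{n}}\,\|\xi_{t-1}\|_2,
\]
and this is strictly weaker than the claimed $\rho\sqrt{(E_t+t)\log n/n}\,\|\xi_{t-1}\|_2$ whenever $E_t\gg\log n$ (e.g.\ $t\asymp E_t\asymp\sqrt{n}$ gives $\rho\|\xi_{t-1}\|_2$ versus the target $\rho\sqrt{\log n}\,n^{-1/4}\|\xi_{t-1}\|_2$). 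This regime is exactly the one that matters in the sparse PCA application, where $E_t\gtrsim k$. The fix requires a finer use of the order statistics of $\sum_k\mu_t^k\phi_k$: first apply Cauchy--Schwarz in the form $\big\|\sum_k\mu_t^k\phi_k\circ\Gamma(m)\big\|_2\,\|\xi_{t-1}\|_2$, then split the support of $\Gamma(m)$ according to whether a coordinate is among the $t$ largest entries of $\sum_k\mu_t^k\phi_k$. Uniformly over unit $\mu$, the top $t$ entries contribute at most $O(t\log n/n)$ in squared norm (Lemma~\ref{lem:brahms-lemma}, \eqref{eqn:vive}), while every remaining entry is at most $O(\sqrt{\log n/n})$, so the at most $E_t$ indices in the support of $\Gamma(m)$ contribute $O(E_t\log n/n)$; this is what produces $\sqrt{(E_t+t)\log n/n}$.

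A secondary point: for \eqref{eq:lem-recursion-smooth-part-2}, your $\varepsilon$-net of $\mathcal{S}^{t-2}$ of cardinality $n^{O(t)}$ plus a per-point Gaussian tail only yields $\sup_\mu\|\sum_k\mu^k\phi_k\|_\infty\lesssim\sqrt{t\log n/n}$, not the stated $(\sqrt{t}+\sqrt{\log n})/\sqrt{n}$. The cleaner route (and the paper's) is coordinatewise Cauchy--Schwarz in $\mu$: $|\sum_k\mu^k\phi_{k,i}|\le\|(\phi_{1,i},\dots,\phi_{t-1,i})\|_2$, a $\chi^2$ tail bound for each row norm, and a union bound over the $n$ coordinates only; uniformity in $\mu$ then comes for free and the stated rate follows.
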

Combining Lemma~\ref{lem:recursion} with \eqref{eq:xi-t-UB-13579} immediately completes the proof of inequality~\eqref{eqn:xi-t-general}.

\paragraph{Step 4: establishing inequalities \eqref{eqn:delta-alpha-general} and \eqref{eqn:delta-beta-general}.}
Finally, we return to establish the advertised bounds on $|\Delta_{\alpha,t}|$ and $|\Delta_{\beta,t}|$. 
Towards this, we are in need of the following lemma, whose proof is provided in Section~\ref{sec:pf-lem-recursion2}.
\begin{lems} 
\label{lem:recursion2}
The following inequalities hold true:
\begin{subequations}
\begin{align}
\label{eqn:cello}
	|\langle \vstar, \delta_t\rangle| &\lesssim \rho\|\xi_{t-1}\|_2, \\
\label{eqn:viola}
	\Big|\Big\langle\eta_{t}\big(\alpha_t\vstar + \sum_{k = 1}^{t-1} \beta_{t-1}^k\phi_k\big), \delta_t \Big\rangle\Big| &\lesssim F_t\|\xi_{t-1}\|_2 + \rho_1G_t\|\xi_{t-1}\|_2^2 + \rho \sqrt{E_t}G_t \|\xi_{t-1}\|_2, \\
%
\label{eqn:violin}
\|\delta_t\|_2^2 &\lesssim \rho^2\|\xi_{t-1}\|_2^2.
\end{align}
\end{subequations}
\end{lems}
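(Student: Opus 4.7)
I would prove Lemma~\ref{lem:recursion2} by combining two ingredients that are already in place from earlier in the proof of Theorem~\ref{thm:main}: (a) the global Lipschitz property of $\eta_t$ that Assumption~\ref{assump:eta} implies, and (b) the first-order Taylor-type expansion \eqref{eqn:shostakovich-delta-t} that decomposes $\delta_t$ as $\eta_t'(v_t)\circ\xi_{t-1}$ plus an explicit, pointwise-controlled remainder.

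For \eqref{eqn:cello} and \eqref{eqn:violin} the plan is very short. Since $\eta_t$ is continuous everywhere and its derivative is bounded by $\rho$ off the finite set $\mathcal{M}_{\mathsf{dc}}$, the function is globally $\rho$-Lipschitz, so
\[
\|\delta_t\|_2 \,=\, \|\eta_t(x_t)-\eta_t(v_t)\|_2 \,\leq\, \rho\|x_t - v_t\|_2 \,=\, \rho\|\xi_{t-1}\|_2,
\]
which squared gives \eqref{eqn:violin}, and applying Cauchy-Schwarz with $\|\vstar\|_2=1$ yields \eqref{eqn:cello}.

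The substantive claim is \eqref{eqn:viola}. Starting from \eqref{eqn:shostakovich-delta-t}, I would write $\delta_t = \eta_t'(v_t)\circ\xi_{t-1} + R_t$ with $|R_t|\leq \rho_1|\xi_{t-1}|^2 + 2\rho\,\Gamma\circ|\xi_{t-1}|$ pointwise, and then split
\[
\bigl\langle \eta_t(v_t), \delta_t\bigr\rangle \,=\, \bigl\langle \eta_t(v_t)\circ\eta_t'(v_t),\, \xi_{t-1}\bigr\rangle \,+\, \bigl\langle \eta_t(v_t), R_t\bigr\rangle.
\]
The first summand is handled by Cauchy-Schwarz together with \eqref{defi:F}, yielding the $F_t\|\xi_{t-1}\|_2$ contribution. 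For the second summand I would pull out $\|\eta_t(v_t)\|_\infty\le G_t$ via \eqref{defi:G} to obtain $G_t\bigl(\rho_1\|\xi_{t-1}\|_2^2 + 2\rho\langle\Gamma,|\xi_{t-1}|\rangle\bigr)$; the $\rho_1 G_t\|\xi_{t-1}\|_2^2$ term is already of the right form, and the remaining piece I would bound by Cauchy-Schwarz as $\langle\Gamma,|\xi_{t-1}|\rangle\le \|\Gamma\|_2\|\xi_{t-1}\|_2$.

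The one genuine obstacle is linking $\|\Gamma\|_2$ to $\sqrt{E_t}$. Since $|\mathcal{M}_{\mathsf{dc}}|=O(1)$, each entry $\Gamma_j$ is $O(1)$, so $\|\Gamma\|_2^2 \lesssim \sum_{m\in\mathcal{M}_{\mathsf{dc}}}\sum_j \Gamma_j(m)$, and the whole problem reduces to bounding a count of pairs $(j,m)$ with $|v_{t,j}-m|\le|\xi_{t-1,j}|$ by the count $E_t$ of pairs with $|v_{t,j}-m|\le\theta(m)$. I would split each such $j$ according to whether $|v_{t,j}-m|\le\theta(m)$ (for which \eqref{defi:E} applies directly) or $|v_{t,j}-m|>\theta(m)$; in the latter regime the inequality $|\xi_{t-1,j}|^2 \ge |v_{t,j}-m|^2 > \theta(m)^2$ combined with the extremality of $\theta(m)$ --- namely that enlarging the window past $\theta(m)$ would push the cumulative budget above $\|\xi_{t-1}\|_2^2$ --- must be used to absorb these indices into $E_t$ as well. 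Carefully carrying out this case split is the only delicate bookkeeping in the proof; once it is done the three inequalities of the lemma fall out by assembling the pieces above.
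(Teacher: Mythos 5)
Your proposal is correct and follows essentially the same route as the paper: the Lipschitz bound for \eqref{eqn:cello} and \eqref{eqn:violin}, and for \eqref{eqn:viola} the expansion \eqref{eqn:shostakovich-delta-t} followed by Cauchy--Schwarz/H\"older with \eqref{defi:F}, \eqref{defi:G}, reducing everything to $\|\Gamma\|_2\lesssim\sqrt{E_t}$. The counting comparison you flag as the ``delicate bookkeeping'' is exactly the already-established inequality \eqref{eqn:zero-norm-comparison} (proved in Section~\ref{sec:pf-lem-recursion} via the maximal-cardinality property of the $\theta(m)$-threshold set), so no new case-split argument is needed beyond citing it, together with $|\mathcal{M}_{\mathsf{dc}}|=O(1)$ and the fact that each $\Gamma(m)$ is $\{0,1\}$-valued so $\|\Gamma(m)\|_2^2=\sum_j\Gamma_j(m)$.
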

Substituting the results in Lemma~\ref{lem:recursion2} into inequalities~\eqref{eqn:tmp-Delta-alpha} and \eqref{eqn:tmp-Delta-beta} immediately   
establishes \eqref{eqn:delta-alpha-general} and \eqref{eqn:delta-beta-general}.  
We have thus completed the proof of Theorem~\ref{thm:main}.

\section{Discussion}
\label{sec:discussion}

In this paper, 
we have proposed a general recipe towards analyzing the finite-sample performance of the AMP algorithm when applied to spiked Wigner models. 
Our analysis framework makes explicit a crucial decomposition of each AMP iterate (as a superposition of a signal term and a Gaussian-type stochastic component), 
with a residual term that can be tracked recursively without exploding rapidly. 
Further, this analysis framework can be seamlessly integrated with spectral initialization. 
The power of our analysis strategy has been demonstrated via two concrete applications:  $\mathbb{Z}_{2}$ synchronization and sparse PCA. 
In both cases, explicit non-asymptotic behaviors of AMP have been derived up to a polynomial number of iterations, thereby revealing new insights about the finite-sample convergence properties of  AMP. 
\red{After finishing this paper, 
we have posted a companion paper \cite{li2023approximate} that --- built upon and extending the analysis framework herein ---  characterizes the finite-sample dynamics of random initialized AMP for the problem of $\mathbb{Z}_2$ synchronization. It is proved there that AMP is capable of escaping random initialization and entering a local refinement phase within at most $O(\log n)$ iterations. In other words, an informative initialization is not crucial at all for the effectiveness of AMP. }


Our work leaves open a variety of questions; we conclude the paper by highlighting a few of them. 

\begin{itemize}
\item 
Firstly, while we have illustrated the effectiveness of our master theorems with two examples of different flavor, 
there is no shortage of other signal structures that are of practical interest. 
For instance,  one might wonder how AMP behaves non-asymptotically when the signal $\vstar$ is known to satisfy certain shape constraints (e.g., having non-negative entries, residing in a monotone or convex cone \citep{bandeira2019computational,wei2019geometry}). In some of these cases, the natural denoising functions might not be separable, therefore while the decomposition in Theorem~\ref{thm:recursion} still holds true, controlling those residual terms is significantly more complicated. 

\item 
Secondly, our analysis is tailored to the spiked Wigner model where the noise takes the form of an independent Gaussian matrix. 
It remains unclear whether our non-asymptotic characterizations can be generalized to accommodate non-Gaussian noise matrices \citep{bayati2015universality,chen2021universality,dudeja2022universality}. 
Developing universality results in a non-asymptotic manner is an important yet highly challenging task worthy of future investigation. 
 
\item
\red{Additionally, we have observed in our empirical simulations that: in many examples, AMP continues to work well even when $t$ further increases (to a point that goes far beyond $n$).
This suggests that the dependence on $t$ in our statistical bounds might fall short of tightness in general. 
		How to tighten the statistical performance guarantees for large $t$ remains an interesting open question, which is left for future studies. }

\item
\red{Finally, moving beyond spiked models, 
we expect that our non-asymptotic framework can be generalized to accommodate other important settings such as sparse linear regression and generalized linear models (GLMs). 
In fact, the update rule of AMP for regression and GLMs can often be viewed as AMP applied to  \emph{asymmetric} matrix models; 
more specifically, given an asymmetric design matrix $X$, AMP for GLMs maintains two sequences of updates as follows: 
\begin{align*}
	s_{t} &= XF_t(\beta_{t}) - \lt\langle F_t^{\prime}\rt\rangle G_{t-1}(s_{t-1}), \\
	\beta_{t+1} &= X^{\top}G_t(s_t) - \lt\langle G_t^{\prime}\rt\rangle F_{t}(\beta_{t}),
\end{align*}
thus resembling the updating rule analyzed in the current paper. 
To control these two sequences of updates, one can employ similar analysis ideas as the ones developed for Theorem~\ref{thm:recursion}, 
		while in the meantime keeping track of two sets of orthogonal basis and two sequences of Gaussian random vectors. 
Given that these two sequences rely heavily on each together, caution needs to be exercised when dealing with their accumulated errors.  
Carrying out such analysis fully is fairly involved, and hence we will leave it for future investigation.  
}

 \end{itemize}


\vspace{1cm}

\begin{center}
    {\large APPENDIX}
\end{center}

\appendix


\section{Preliminaries: useful concentration results}
\label{sec:Gaussian-concentration}

This section gathers a few useful concentration results concerning functions of random vectors that will be applied multiple times throughout this paper.

\subsection{List of concentration lemmas}

The first result is concerned with Gaussian concentration for Lipschitz-continuous functions, 
whose proof can be found in Section~\ref{sec:pf-Gaussian}. 
Here and below, we remind the reader that $\mathbb{B}^d(r)$ indicates the $d$-dimensional Euclidean ball with radius $r$ centered at 0.  
\begin{lems}
\label{lem:Gauss}
Consider an $n$-dimensional Gaussian vector $X \sim \mathcal{N}(0, I_n)$, 
and a set of functions $f_{\theta} : \mathbb{R}^n \to \mathbb{R}$ as parameterized by $\theta \in \Theta \subseteq \mathbb{B}^d(r)$.  
Let $\mathcal{E}$ be some convex set  obeying $\mathbb{P}(X \in \mathcal{E}) \geq 1 - O(n^{-11})$. 
Assume that for any fixed $\theta,\widetilde{\theta} \in \Theta$ and any given $Z_1, Z_2 \in \mathcal{E}$, we have
\begin{align}
\label{eqn:gauss-lipschitz}
	|f_{\theta}(Z_1) - f_{\theta}(Z_2)| \le \sigma \ltwo{Z_1 - Z_2}
	\qquad\text{and}\qquad
	\lt\|f_{\theta}(Z) - f_{\widetilde{\theta}}(Z)\rt\|_2 \le L \ltwo{\theta - \widetilde{\theta}}.
\end{align}
In addition, suppose that for any fixed $\theta \in \Theta$, we have
\begin{align}
\label{eqn:gauss-lipschitz-B-proj}	
	\big|\myE\lt[f_{\theta}(\mathcal{P}_{\mathcal{E}}(X)) - f_{\theta}(X)\rt]\big| \le B,
\end{align}
where $\mathcal{P}_{\mathcal{E}}(\cdot)$ denotes the Euclidean projection onto the set $\mathcal{E}$.
Then for any $\epsilon < r$,
\begin{align}
\label{eqn:Gauss-target}
	\sup_{\theta\in \Theta} \big|f_{\theta}(X) - \myE\lt[f_{\theta}(X)\rt] \big| \lesssim \sigma\sqrt{d\log\lt(\frac{nr}{\epsilon}\rt)} + L\epsilon + B
\end{align}
holds with probability at least $1-O(n^{-11})$. 
\end{lems}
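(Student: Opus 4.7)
The plan is to first reduce to a globally Lipschitz surrogate, then combine classical Gaussian concentration with a covering argument in $\theta$. Define the surrogate $g_\theta(x) \defn f_\theta(\mathcal{P}_\mathcal{E}(x))$. Since $\mathcal{E}$ is convex, the Euclidean projection $\mathcal{P}_\mathcal{E}(\cdot)$ is $1$-Lipschitz on $\mathbb{R}^n$, so assumption \eqref{eqn:gauss-lipschitz} lifts to $|g_\theta(x) - g_\theta(y)| \le \sigma \ltwo{x-y}$ for \emph{all} $x,y \in \mathbb{R}^n$, while the $\theta$-Lipschitz bound transfers to $|g_\theta(x) - g_{\widetilde\theta}(x)| \le L \ltwo{\theta - \widetilde\theta}$ uniformly in $x$ (since $\mathcal{P}_\mathcal{E}(x) \in \mathcal{E}$ always). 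The Gaussian concentration inequality for Lipschitz functions then yields, for each fixed $\theta$, the pointwise tail bound $\mathbb{P}\bigl(|g_\theta(X) - \myE g_\theta(X)| > t\bigr) \le 2\exp(-t^2/(2\sigma^2))$.

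Next, I would build an $\epsilon$-net $\mathcal{N}_\epsilon$ of $\Theta \subseteq \mathbb{B}^d(r)$ with cardinality $|\mathcal{N}_\epsilon| \le (3r/\epsilon)^d$. Choosing the threshold $t \asymp \sigma \sqrt{d \log(nr/\epsilon)}$ and applying a union bound across $\mathcal{N}_\epsilon$ gives
\[
	\sup_{\theta \in \mathcal{N}_\epsilon} \bigl|g_\theta(X) - \myE g_\theta(X)\bigr| \lesssim \sigma \sqrt{d \log(nr/\epsilon)}
\]
with probability at least $1 - O(n^{-11})$, since the net size is absorbed into the logarithmic factor. To pass from the net to the full set $\Theta$, for any $\theta \in \Theta$ pick its nearest neighbor $\widetilde\theta \in \mathcal{N}_\epsilon$ with $\ltwo{\theta - \widetilde\theta} \le \epsilon$; the Lipschitz bound in $\theta$ applied to both $g_\theta(X)$ and its expectation adds at most $2L\epsilon$ to the deviation. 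Thus on the same high-probability event, $\sup_{\theta \in \Theta} |g_\theta(X) - \myE g_\theta(X)| \lesssim \sigma \sqrt{d \log(nr/\epsilon)} + L\epsilon$.

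Finally, I would translate back from $g_\theta$ to $f_\theta$. On the event $\{X \in \mathcal{E}\}$ (of probability at least $1 - O(n^{-11})$ by hypothesis), we have $g_\theta(X) = f_\theta(\mathcal{P}_\mathcal{E}(X)) = f_\theta(X)$ directly from the definition of projection on a convex set containing $X$; meanwhile, assumption \eqref{eqn:gauss-lipschitz-B-proj} gives $|\myE g_\theta(X) - \myE f_\theta(X)| \le B$ for every $\theta \in \Theta$. Intersecting the two high-probability events and applying the triangle inequality yields the claim~\eqref{eqn:Gauss-target}.

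The only non-routine ingredient is the reduction via $\mathcal{P}_\mathcal{E}$: Gaussian concentration for Lipschitz functions demands global Lipschitzness on $\mathbb{R}^n$, whereas the hypothesis only supplies Lipschitz control on $\mathcal{E}$. Convexity of $\mathcal{E}$ (through the $1$-Lipschitz projection) is what makes this reduction possible, and the residual term $B$ in \eqref{eqn:Gauss-target} is precisely the price paid for swapping $f_\theta(X)$ with its projected version in expectation. The remainder of the argument—covering, union bound, and Lipschitz extrapolation—is standard empirical-process bookkeeping.
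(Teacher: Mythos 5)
Your proposal is correct and follows essentially the same route as the paper's proof: define the surrogate $g_\theta = f_\theta \circ \mathcal{P}_{\mathcal{E}}$, exploit non-expansiveness of projection onto the convex set to obtain global Lipschitzness, apply Gaussian concentration plus an $\epsilon$-net union bound over $\Theta$, and then convert back to $f_\theta$ on the event $\{X\in\mathcal{E}\}$ at the cost of the $L\epsilon$ and $B$ terms. No gaps; this matches the paper's argument.
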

As an immediate consequence of Lemma~\ref{lem:Gauss}, we can take $\epsilon\asymp n^{-200}$ to yield the following result:
\begin{cors}
	\label{cor:Gauss}
Under the assumptions of Lemma~\ref{lem:Gauss}, suppose the convex set $\mathcal{E}$ obeys
\begin{subequations}
\label{eq:f-grad-bound-poly}
\begin{align}
	\lt\|f_{\theta}(Z) - f_{\widetilde{\theta}}(Z)\rt\|_2 &\lesssim n^{100} \ltwo{\theta - \widetilde{\theta}}
	\qquad\text{for all }Z\in\mathcal{E}\text{ and all }\theta,\widetilde{\theta}\in \Theta;\\
\big|\myE\lt[f_{\theta}(\mathcal{P}_{\mathcal{E}}(X))-f_{\theta}(X)\rt]\big| & \lesssim n^{-100}.
\end{align}
\end{subequations}
Then with probability at least $1-O(n^{-11})$ one has 
\[
\sup_{\theta\in\Theta}\big|f_{\theta}(X)-\myE\lt[f_{\theta}(X)\rt]\big|\lesssim\sigma\sqrt{d\log\lt(nr\rt)}+n^{-100}.
\]
\end{cors}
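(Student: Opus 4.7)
The plan is to invoke Lemma~\ref{lem:Gauss} directly with the specific parameter choice $\epsilon = n^{-200}$, after identifying the Lipschitz-in-$\theta$ constant $L$ and the projection-bias constant $B$ from the hypotheses of the corollary. Concretely, condition~\eqref{eq:f-grad-bound-poly}(a) gives $L \asymp n^{100}$, while condition~\eqref{eq:f-grad-bound-poly}(b) gives $B \asymp n^{-100}$; the Lipschitz-in-$Z$ constant $\sigma$ is inherited unchanged from Lemma~\ref{lem:Gauss}, as is the convex set $\mathcal{E}$ (together with the high-probability event $\{X\in\mathcal{E}\}$).

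Substituting these values into the bound~\eqref{eqn:Gauss-target} and evaluating term by term yields three contributions. First, $\sigma\sqrt{d\log(nr/\epsilon)} = \sigma\sqrt{d\log(n^{201}r)} = \sigma\sqrt{d\,(200\log n + \log(nr))}$, which is $\lesssim \sigma\sqrt{d\log(nr)}$ once the numerical constant $201$ is absorbed into $\lesssim$ (and using $\log n \lesssim \log(nr)$ under the mild assumption that $nr \gtrsim 1$). Second, $L\epsilon \lesssim n^{100}\cdot n^{-200} = n^{-100}$. Third, $B \lesssim n^{-100}$. Summing the three bounds collapses the right-hand side of \eqref{eqn:Gauss-target} to $\sigma\sqrt{d\log(nr)} + n^{-100}$, with the failure probability $O(n^{-11})$ carried over unchanged from the master lemma.

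The only routine detail to verify is the condition $\epsilon < r$ required by Lemma~\ref{lem:Gauss}. Since $r$ denotes the radius of the ball containing $\Theta$ and may be taken to be at most polynomial in $n$ (the regime of interest in all subsequent applications), the choice $\epsilon = n^{-200}$ satisfies $\epsilon < r$ for all sufficiently large $n$. No genuine obstacle arises: the corollary is a pure bookkeeping consequence of Lemma~\ref{lem:Gauss}, obtained by balancing the very large Lipschitz-in-$\theta$ constant against the $\epsilon$-discretization error at the scale $\epsilon = n^{-200}$, so that the $L\epsilon$ contribution matches the projection-bias term $B$ up to the target tolerance $n^{-100}$.
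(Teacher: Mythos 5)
Your proposal is correct and is exactly the paper's route: the paper obtains the corollary by applying Lemma~\ref{lem:Gauss} with $\epsilon \asymp n^{-200}$, $L \lesssim n^{100}$, $B \lesssim n^{-100}$, and absorbing $\log(nr/\epsilon) \lesssim \log(nr)$ into the leading term, just as you do. Your additional check that $\epsilon < r$ and the remark on absorbing the factor $201$ are the only bookkeeping details, and they are handled correctly.
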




Next, we develop concentration results for a family of functions that include indicator functions.  
Consider a set of independent random vectors $X_1,\ldots, X_{m} \in \real^{n}$ with $m\leq n$, 
and for each $1\leq i\leq m$, 
consider a collection of functions $f_{i, \theta}, h_{i, \theta} : \mathbb{R}^n \to \mathbb{R}$ indexed by $\theta \in \mathbb{B}^d(r)$. 
The following concentration bound --- whose proof is deferred to Section~\ref{sec:scriabin} --- proves useful when establishing our main results. 

\begin{lems}\label{lem:Gauss-jump}
Suppose that for any given $\theta \in \Theta \subseteq \mathbb{B}^d(r)$, 
the random variable $f_{i, \theta}(X_i) \ge 0$ is $\sigma_i$-subexponential. 
Assume that there exist a set of events $\mathcal{E}_i$ ($1\leq i\leq m\leq n)$ obeying $\mathbb{P}(\bigcap_{i}\mathcal{E}_i) > 1 - O(n^{-11})$ such that:
	for any $i \in [m]$ and any $\theta,\widetilde{\theta}\in \Theta$, 
\begin{subequations}
\label{eqn:lips-jump}
\begin{align}
	\big\| f_{i, \theta}(Z_i) - f_{i, \widetilde{\theta}}(Z_i) \big\|_2 
	+ \big\| h_{i, \theta}(Z_i) - h_{i, \widetilde{\theta}}(Z_i) \big\|_2 
	&\leq L\big\| \theta - \widetilde{\theta} \big\|_2
	\qquad \text{for all }Z_i \text{ with }\ind_{\mathcal{E}_i}(Z_i)=1 , \\ 
	%
	\myE\big[ f_{i, \theta}(X_i) \ind\lt(\mathcal{E}_i^{\mathrm{c}}\rt)\big] &\le B.
\end{align}
\end{subequations}
Also, for any $i\in [m]$ and any $\theta\in \Theta$, define
\begin{equation}
	\varrho_{i,\theta} \defn \myE\big[f_{i, \theta}(X_i)\big] + L\epsilon+\sigma_i\log n , \qquad 1\leq i\leq m. 
\end{equation}
Then for any $0<\epsilon < r$, with probability at least $1-O(n^{-11})$ one has
\begin{align}
 & \lt|\sum_{i=1}^{m}\Big(f_{i,\theta}(X_{i})\ind\big(h_{i,\theta}(X_{i})>\tau\big)-\myE\Big[f_{i,\theta}(X_{i})\ind\big(h_{i,\theta}(X_{i})>\tau\big)\Big]\Big)\rt|\nonumber\\
	& \notag\qquad\lesssim\sqrt{\sum_{i=1}^{m}\varrho_{i,\theta}^{2}\Big(\mathbb{P}\lt(h_{i,\theta}(X_{i})>\tau\rt)+\frac{1}{n}\Big)d\log\frac{rn}{\epsilon}}+\Big(\max_{1\leq i\leq m}\varrho_{i,\theta}\Big) d\log\frac{rn}{\epsilon} \\
 & \qquad\qquad\qquad+mL\epsilon + mB+\sum_{i=1}^{m}\varrho_{i,\theta}\mathbb{P}\Big(\tau-(3L+1)\epsilon\le h_{i,\theta}(X_{i})\le\tau+(3L+1)\epsilon\Big) 
	\label{eqn:scriabin}
\end{align}
simultaneously for all $\theta \in \Theta$ and all $\tau \in [-r,r]$.
\end{lems}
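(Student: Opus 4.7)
The plan is to apply Bernstein's inequality at each fixed pair $(\theta,\tau)$ after truncation, then promote the pointwise bound to a uniform bound via an $\epsilon$-net over $\Theta$ combined with a $(1/n)$-grid of $\tau$-values in $[-r,r]$. Since $f_{i,\theta}(X_i)\geq 0$ is $\sigma_i$-subexponential with mean at most $\myE[f_{i,\theta}(X_i)]$, a suitably chosen large constant $C$ guarantees that $f_{i,\theta}(X_i)\leq C\varrho_{i,\theta}$ on an event of probability at least $1-n^{-13}$. Intersecting with $\mathcal{E}_i$, the truncated summand $(f_{i,\theta}(X_i)\wedge C\varrho_{i,\theta})\ind_{\mathcal{E}_i}(X_i)\ind(h_{i,\theta}(X_i)>\tau)$ is bounded by $C\varrho_{i,\theta}$ and has variance proxy at most $C^{2}\varrho_{i,\theta}^{2}\bigl(\mathbb{P}(h_{i,\theta}(X_i)>\tau)+n^{-1}\bigr)$, where the extra $n^{-1}$ absorbs both the truncation tail and the truncation-induced expectation bias; the latter also produces the additive $mB$ through the hypothesis $\myE[f_{i,\theta}(X_i)\ind(\mathcal{E}_i^{\mathrm{c}})]\leq B$.

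Applying the one-sided Bernstein inequality to the independent sum of truncated summands then yields, for any fixed $(\theta,\tau)$, a deviation of order $\sqrt{\sum_{i}\varrho_{i,\theta}^{2}(\mathbb{P}(h_{i,\theta}(X_i)>\tau)+n^{-1})\log(1/\delta)}+(\max_{i}\varrho_{i,\theta})\log(1/\delta)$ with probability $1-\delta$. I would then build an $\epsilon$-net $\mathcal{N}_{\theta}\subset\Theta$ of cardinality at most $(3r/\epsilon)^{d}$ and a $(1/n)$-grid $\mathcal{N}_{\tau}\subset[-r,r]$ of cardinality $O(rn)$, and take a union bound over $\mathcal{N}_{\theta}\times\mathcal{N}_{\tau}$; this promotes $\log(1/\delta)$ to a constant multiple of $d\log(rn/\epsilon)$ and produces the first two terms of~\eqref{eqn:scriabin}. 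For a generic $(\theta,\tau)$ with nearest grid point $(\theta_{0},\tau_{0})$, the Lipschitz hypothesis on $f_{i,\cdot}$ gives $|f_{i,\theta}(X_i)-f_{i,\theta_{0}}(X_i)|\leq L\epsilon$ on $\mathcal{E}_i$, contributing the term $mL\epsilon$. The analogous estimate $|h_{i,\theta}(X_i)-h_{i,\theta_{0}}(X_i)|\leq L\epsilon$ shows that the indicators at $(\theta,\tau)$ and at $(\theta_{0},\tau_{0})$ can disagree only when $h_{i,\theta}(X_i)\in[\tau-(3L+1)\epsilon,\tau+(3L+1)\epsilon]$; weighting these flip events by $C\varrho_{i,\theta}$ reproduces the final near-threshold term of~\eqref{eqn:scriabin}.

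The main technical obstacle is exactly this last step: the hard indicator $\ind(h_{i,\theta}(X_i)>\tau)$ is discontinuous jointly in $\theta$ and $\tau$, which rules out a direct Gaussian-Lipschitz or chaining argument of the type used in Lemma~\ref{lem:Gauss}. The resolution is the anti-concentration-type observation above, which bounds the total weight of coordinates that can possibly flip across an $\epsilon$-perturbation of $\theta$ or a $(1/n)$-perturbation of $\tau$ by the near-threshold probability; the analyst then controls this residual case-by-case in the downstream applications (e.g.\ the sparse-PCA analysis of Section~\ref{sec:sparse-main}). The remainder of the argument is routine bookkeeping: aggregating the Bernstein deviations, the Lipschitz residual $mL\epsilon$, the truncation residual $mB$, and the near-threshold term, and tuning the net parameter $\epsilon$ and the failure probability $\delta$ so that the overall failure probability stays below $O(n^{-11})$.
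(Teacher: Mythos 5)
Your overall architecture — a Bernstein-type bound at each fixed $(\theta,\tau)$, a product net over $\Theta\times[-r,r]$, Lipschitz transfer of $f_{i,\theta}$ giving $mL\epsilon$, the observation that indicators can only flip on the near-threshold band $\{|h_{i,\theta}(X_i)-\tau|\lesssim (L+1)\epsilon\}$, and the $mB$ term from $\mathbb{E}[f_{i,\theta}(X_i)\ind(\mathcal{E}_i^{\mathrm{c}})]\le B$ — is the same as the paper's, and those pieces are fine. The genuine gap is in how you make the summands bounded. You truncate $f_{i,\theta}(X_i)$ at $C\varrho_{i,\theta}$ on a pathwise event of probability $1-n^{-13}$ per $(i,\theta)$. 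But the final statement must hold uniformly over $\Theta$, so this truncation event has to be valid simultaneously at every point of the $\epsilon$-net; its cardinality is of order $(3r/\epsilon)^{d}$ with $\epsilon$ allowed to be as small as $n^{-200}$ (this is exactly how the lemma is invoked in Corollary~\ref{cor:Gauss-jump}) and $d\asymp t+2s$ possibly polynomial in $n$. A per-point failure probability of $n^{-13}$ cannot survive that union bound. The obvious repair — raising the truncation level so that it fails with probability comparable to the Bernstein failure probability $\delta\asymp n^{-11}(\epsilon/3r)^{d+1}$ — forces the truncation level up to $\mathbb{E}[f_{i,\theta}(X_i)]+C\sigma_i d\log(rn/\epsilon)$, which inflates $\varrho_{i,\theta}$ and yields a bound with $\sigma_i d^2\log^2(rn/\epsilon)$ in place of $\sigma_i\log n\cdot d\log(rn/\epsilon)$; this is strictly weaker than \eqref{eqn:scriabin} and, in the downstream sparse-PCA application where $d\asymp t$ grows polynomially, it would substantially shrink the admissible range of $t$.

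The missing idea is that the scale parameter in the fixed-$(\theta,\tau)$ concentration must be $\mathbb{E}[f_{i,\theta}(X_i)]+\sigma_i\log n$ \emph{independently of the target failure probability} (and hence of the net size), so that the union bound only multiplies $\log(1/\delta)$ and not the $\varrho$'s. The paper achieves this without any pathwise truncation: it works with the untruncated summands $Z_i=f_{i,\theta}(X_i)\ind(h_{i,\theta}(X_i)>\tau)\ind(\mathcal{E}_i)$ and bounds their $k$-th central moments directly, splitting inside the expectation at the level $\mu_i+c_5\sigma_i k\log n$; the sub-exponential tail beyond that level contributes a factor $e^{-k\log n}\le n^{-1}$, which is how the benign factor $(\mathbb{P}(h_{i,\theta}(X_i)>\tau)+n^{-1})$ enters the variance proxy while the Bernstein scale stays at $\mu_i+\sigma_i\log n$ (see the moment bound leading to \eqref{eqn:sum-zi}). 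You should replace your high-probability truncation by this moment/MGF computation. A smaller, fixable issue: you discretize $\tau$ on a $(1/n)$-grid, whereas the stated near-threshold window is $(3L+1)\epsilon$; since $\epsilon$ may be far smaller than $1/n$, you need an $\epsilon$-net in $\tau$ (as the paper uses) for the flip-window bookkeeping to match the claimed bound.
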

Similar to Corollary~\ref{cor:Gauss}, 
we can take $\epsilon = n^{-200}$ to derive the following immediate consequence. 
\begin{cors}
	\label{cor:Gauss-jump}
Under the assumptions of Lemma~\ref{lem:Gauss-jump}, suppose that  
\begin{subequations}
	\label{eq:f-grad-bound-poly-jump}
\begin{align}
	\big\| f_{i, \theta}(Z_i) - f_{i, \widetilde{\theta}}(Z_i) \big\|_2 
	+ \big\| h_{i, \theta}(Z_i) - h_{i, \widetilde{\theta}}(Z_i) \big\|_2 
	&\leq n^{100} \big\| \theta - \widetilde{\theta} \big\|_2
	\qquad \text{for all }Z_i \text{ with }\ind_{\mathcal{E}_i}(Z_i)=1  \\ 
	%
	\myE\big[ f_{i, \theta}(X_i) \ind\lt(\mathcal{E}_i^{\mathrm{c}}\rt)\big] &\le n^{-100}
\end{align}
\end{subequations}
for any $i\in [m]$ and any $\theta,\widetilde{\theta}\in \Theta$. Also, suppose that 
\[
	\mathbb{P}\Big(\tau-400n^{-100}\le h_{i,\theta}(X_{i})\le\tau+400n^{-100}\Big) \lesssim m^{-1} ,
	\qquad 1\leq i\leq m
\]
for any $\theta\in \Theta$ and any $\tau \in [-r,r]$. 
If we redefine
\begin{equation}
	\varrho_{i,\theta} \defn \myE\big[f_{i, \theta}(X_i)\big]  +\sigma_i\log n , \qquad 1\leq i\leq m,  
\end{equation}
then with probability at least $1-O(n^{-11})$ one has 
\begin{align*}
 & \lt|\sum_{i=1}^{m}\Big(f_{i,\theta}(X_{i})\ind\big(h_{i,\theta}(X_{i})>\tau\big)-\myE\Big[f_{i,\theta}(X_{i})\ind\big(h_{i,\theta}(X_{i})>\tau\big)\Big]\Big)\rt|\nonumber\\
	& \notag\qquad\lesssim\sqrt{\sum_{i=1}^{m}\varrho_{i,\theta}^{2}\Big(\mathbb{P}\lt(h_{i,\theta}(X_{i})>\tau\rt)+\frac{1}{n}\Big)d\log(rn)}+\Big(\max_{1\leq i\leq m}\varrho_{i,\theta}\Big)d\log(rn) + \frac{n+d\log(rn)}{n^{100}}
%
\end{align*}
simultaneously for all $\theta \in \Theta$ and all $\tau \in [-r,r]$. 
\end{cors}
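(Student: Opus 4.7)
The plan is to obtain Corollary~\ref{cor:Gauss-jump} as a direct specialization of Lemma~\ref{lem:Gauss-jump} by choosing $\epsilon = n^{-200}$ and then verifying that every residual term on the right-hand side of \eqref{eqn:scriabin} either disappears into the main terms or is absorbed into the announced $\frac{n + d\log(rn)}{n^{100}}$ slack. First I would check that the hypotheses of Lemma~\ref{lem:Gauss-jump} are satisfied with $L = n^{100}$ and $B = n^{-100}$: the Lipschitz bound \eqref{eqn:lips-jump} is precisely \eqref{eq:f-grad-bound-poly-jump}, and the tail-mass bound on $\mathcal{E}_i^{\mathrm{c}}$ follows from the second line of \eqref{eq:f-grad-bound-poly-jump}.

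Next I would substitute $\epsilon = n^{-200}$ into the bound \eqref{eqn:scriabin} and control the four nuisance pieces one by one. The logarithmic factor simplifies via $\log(rn/\epsilon) = \log(rn) + 200\log n \asymp \log(rn)$, so it is not harmed by the tiny $\epsilon$. The two additive terms become $mL\epsilon + mB \le n \cdot n^{100} \cdot n^{-200} + n\cdot n^{-100} \lesssim n/n^{100}$, which is absorbed in the announced slack. For the probability of the thin shell around $\tau$, note that $(3L+1)\epsilon \le 4n^{-100} < 400 n^{-100}$, so the interval $[\tau - (3L+1)\epsilon,\, \tau + (3L+1)\epsilon]$ lies inside $[\tau - 400 n^{-100},\, \tau + 400 n^{-100}]$; invoking the corollary's assumption, each such probability is $\lesssim m^{-1}$, so
\[
\sum_{i=1}^{m} \varrho_{i,\theta}\,\mathbb{P}\Big(\tau-(3L+1)\epsilon\le h_{i,\theta}(X_{i})\le\tau+(3L+1)\epsilon\Big) \lesssim \max_{1\le i\le m}\varrho_{i,\theta},
\]
which is already dominated by the $(\max_i\varrho_{i,\theta})\,d\log(rn)$ term in the main bound.

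Finally I would reconcile the two different definitions of $\varrho_{i,\theta}$: the one used inside Lemma~\ref{lem:Gauss-jump} carries an extra $L\epsilon = n^{-100}$ compared with the redefined one in the Corollary. Writing $\varrho^{\mathrm{lem}}_{i,\theta} = \varrho_{i,\theta} + n^{-100}$ and plugging into the first two terms of \eqref{eqn:scriabin}, the extra contribution is at most $\sqrt{m\cdot n^{-200}\cdot 2\cdot d\log(rn)} + n^{-100}\, d\log(rn) \lesssim \frac{d\log(rn)}{n^{100}}$, using $m\le n$. Collecting all these pieces produces exactly the advertised inequality. I do not expect any genuine obstacle: the work is entirely bookkeeping, with the one mildly delicate point being the verification that the discretization width $(3L+1)\epsilon$ stays below the $400 n^{-100}$ cutoff under which the anti-concentration hypothesis on $h_{i,\theta}(X_i)$ has been imposed.
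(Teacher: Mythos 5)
Your proposal is correct and is exactly the paper's intended derivation: Corollary~\ref{cor:Gauss-jump} is obtained from Lemma~\ref{lem:Gauss-jump} by setting $\epsilon=n^{-200}$ (so $L\epsilon=n^{-100}$ and $(3L+1)\epsilon<400n^{-100}$), absorbing $mL\epsilon+mB$ and the $\varrho$-redefinition into the $\frac{n+d\log(rn)}{n^{100}}$ slack, and using the anti-concentration hypothesis to dominate the thin-shell sum by $\max_i\varrho_{i,\theta}$. The only cosmetic slip is that $\sqrt{m\,n^{-200}\,d\log(rn)}$ should be bounded by $\frac{n+d\log(rn)}{n^{100}}$ (via $\sqrt{ab}\le a+b$) rather than by $\frac{d\log(rn)}{n^{100}}$ alone, which still lands inside the announced slack.
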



The third result is concerned with norms of (linear combinations of) independent Gaussian vectors; 
the proof can be found in Section~\ref{sec:pf-brahms}. 
Here and throughout, for every vector $x\in \real^{n}$, we adopt the convention and let $|x|_{(i)}$ denote its $i$-th largest entry in magnitude. 
\begin{lems}
\label{lem:brahms-lemma}
	Consider a collection of independent Gaussian vectors $\{\phi_k\}_{1\leq k\leq n}$ with $\phi_k \overset{\mathrm{i.i.d.}}{\sim} \mathcal{N}(0,\frac{1}{n}I_n)$. 
With probability at least $1-\delta$, it holds that 
\begin{subequations}
\label{eqn:vive-brahms}
\begin{align}
	\label{eqn:brahms}
	\Big| \max_{1\leq k\leq t-1} \|\phi_k\|_2 -1 \Big| & \lesssim \sqrt{\frac{\log \frac{n}{\delta}}{n}}, \\
	\label{eqn:long}
	 \sup_{a = [a_k]_{1\leq k< t} \in \mathcal{S}^{t-2}} \bigg| \Big\|\sum_{k = 1}^{t-1} a_k\phi_k\Big\|_2 - 1 \bigg| & \lesssim \sqrt{\frac{t\log \frac{n}{\delta}}{n}}, \\
	\label{eqn:vive}
\sup_{a=[a_{k}]_{1\leq k<t}\in\mathcal{S}^{t-2}}\sum_{i=1}^{s}\Big|\sum_{k=1}^{t-1}a_{k}\phi_{k}\Big|_{(i)}^{2}
	&\lesssim\frac{(t+s)\log \frac{n}{\delta}}{n},\qquad \forall 1 \leq s\leq n.	
\end{align}
\end{subequations}
 \end{lems}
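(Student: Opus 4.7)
\textbf{Proof proposal for Lemma~\ref{lem:brahms-lemma}.} The unifying setup is to collect the Gaussian vectors as columns of a matrix $\Phi \defn [\phi_1, \ldots, \phi_{t-1}] \in \real^{n\times(t-1)}$, so that $\sqrt{n}\,\Phi$ has i.i.d.\ standard Gaussian entries and $\sum_{k=1}^{t-1} a_k \phi_k = \Phi a$ for every $a = [a_k]_{1\leq k<t}$. Under this reformulation, the three bounds \eqref{eqn:brahms}--\eqref{eqn:vive} become, respectively, statements about the column norms of $\Phi$, the operator norm of $\Phi$ on the sphere $\mathcal{S}^{t-2}$, and the operator norms of the row-restricted sub-matrices $\Phi_{S,:}$ of $\Phi$.

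For inequality~\eqref{eqn:brahms}, I would simply observe that $n \ltwo{\phi_k}^2 \sim \chi^2_n$, so standard chi-square concentration (e.g.\ Laurent--Massart) gives $\big|\ltwo{\phi_k} - 1\big| \lesssim \sqrt{\log(n/\delta)/n}$ with probability $1 - \delta/n$, after which a union bound over $k \leq t-1 \leq n$ yields \eqref{eqn:brahms}. For inequality~\eqref{eqn:long}, the cleanest route is an $\epsilon$-net argument on $\mathcal{S}^{t-2}$: for each fixed $a \in \mathcal{S}^{t-2}$ one has $\Phi a \sim \mathcal{N}(0,\frac{1}{n} I_n)$, so again chi-square concentration gives $\big| \ltwo{\Phi a} - 1 \big| \lesssim \sqrt{\log(1/\delta')/n}$ pointwise; choosing a $(1/n)$-net of $\mathcal{S}^{t-2}$ of cardinality $(3n)^{t-1}$, setting $\delta' = \delta\,(3n)^{-(t-1)}$, and finally transferring the bound off the net via the trivial Lipschitz estimate $\big|\ltwo{\Phi a}-\ltwo{\Phi a'}\big| \leq \opnorm{\Phi}\,\ltwo{a-a'}$ (crude control of $\opnorm{\Phi}$ on the good event is enough) delivers the claim.

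The main work is in establishing \eqref{eqn:vive}, and this will be the main obstacle. The key reduction is
\[
\sup_{a \in \mathcal{S}^{t-2}} \sum_{i=1}^s \Big|\sum_{k=1}^{t-1} a_k \phi_k\Big|_{(i)}^2
= \sup_{\substack{S\subset[n]\\ |S|=s}} \sup_{a \in \mathcal{S}^{t-2}} \ltwo{\Phi_{S,:}\,a}^2
= \sup_{\substack{S\subset[n]\\ |S|=s}} \opnorm{\Phi_{S,:}}^2,
\]
after which the problem becomes bounding the operator norms of all $s\times(t-1)$ Gaussian sub-matrices simultaneously. For a fixed $S$ with $|S|=s$, the Davidson--Szarek theorem (or Gordon's inequality) yields $\sqrt{n}\,\opnorm{\Phi_{S,:}} \leq \sqrt{s} + \sqrt{t-1} + u$ with probability at least $1 - 2e^{-u^2/2}$. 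I would then take a union bound over $\binom{n}{s} \leq (en/s)^s$ subsets, whose log-cardinality is $\lesssim s\log(en/s)$, and absorb this into $u$ by setting $u^2 \asymp s\log(n/s) + \log(1/\delta)$; another union bound over $s \in \{1, \ldots, n\}$ costs only an extra $\log n$ factor. Squaring and simplifying via $s\log(n/s) \lesssim s\log n$ yields $\opnorm{\Phi_{S,:}}^2 \lesssim (s + t + s\log n + \log(1/\delta))/n \lesssim (s+t)\log(n/\delta)/n$, which is \eqref{eqn:vive}. The subtle point is that the simultaneous ``$\forall\, 1 \leq s \leq n$'' quantifier forces the secondary union bound over $s$, but since the tail bound for each $s$ is subgaussian in $u$, a single choice $u^2 \asymp s\log(n/s) + \log(n/\delta)$ handles all $s$ at once without loss; making sure this accounting gives the claimed constant-free $(s+t)\log(n/\delta)$ dependence is the main bookkeeping task.
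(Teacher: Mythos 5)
Your proposal is correct, and for the hardest part, \eqref{eqn:vive}, it takes a genuinely different route from the paper. The paper derives \eqref{eqn:brahms} and \eqref{eqn:long} in one stroke from a Wishart-type concentration bound $\big\|[\phi_1,\ldots,\phi_{t-1}]^{\top}[\phi_1,\ldots,\phi_{t-1}] - I_{t-1}\big\| \lesssim \sqrt{t\log(n/\delta)/n}$ (so no $\epsilon$-net is needed for \eqref{eqn:long}, whereas you build one; both are standard and equivalent in strength). For \eqref{eqn:vive}, the paper keeps the vector $a$ explicit: it fixes $a$ on an $\epsilon$-net of $\mathcal{S}^{t-2}$ and a subset $S$ with $|S|=s$, applies chi-square concentration (Hsu--Kakade--Zhang) to $\big\|(\sum_k a_k\phi_k)_S\big\|_2$ at level $\tau = \log\big(\tfrac{1}{\delta}(3/\epsilon)^{t}\binom{n}{s}\big)$, union bounds over the net and over all $S$, and then transfers off the net using $\|a-\widetilde{a}\|_2\leq\epsilon$ together with a crude bound on $\big\|[\phi_1,\cdots,\phi_{t-1}]\big\|$. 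Your reduction $\sup_{a}\sum_{i\leq s}\big|\Phi a\big|_{(i)}^2 = \max_{|S|=s}\opnorm{\Phi_{S,:}}^2$ absorbs the supremum over $a$ into the operator norm of the row-restricted Gaussian submatrix, so Davidson--Szarek plus a union bound over $\binom{n}{s}\leq n^s$ subsets (and over $s\in[n]$, which only costs an extra $\log n$ inside $\log(n/\delta)$) finishes the job: $u^2\asymp s\log(en/s)+\log(n/\delta)$ gives $\opnorm{\Phi_{S,:}}^2\lesssim (s+t+u^2)/n \lesssim (s+t)\log(n/\delta)/n$, exactly the claimed bound. This is cleaner than the paper's argument in that it dispenses with the net over $a$ and the off-net transfer step entirely; what the paper's version buys is uniformity with the net-based uniform-concentration machinery it reuses elsewhere (Lemmas~\ref{lem:Gauss} and \ref{lem:Gauss-jump}). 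The only minor bookkeeping caveat, shared equally by the paper's own proof, is that passing from concentration of the squared norm to the norm itself via $\big|\|x\|_2-1\big|\leq\big|\|x\|_2^2-1\big|$ is the intended simplification in the regime $t\log(n/\delta)\lesssim n$ where the bounds are nontrivial.
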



Finally, we state a lemma that quantifies the 1-Wasserstein distance between a weighted combination of independent Gaussian vectors (with the weights being possibly dependent on the Gaussian vectors) and an i.i.d.~Gaussian vector. The proof of this lemma can be found in Section~\ref{sec:proof-wasserstein}.
\begin{lems} 
\label{lem:wasserstein}
Consider a set of i.i.d.~random vectors $\phi_k \overset{\mathrm{i.i.d.}}{\sim} \mathcal{N}(0, \frac{1}{n}I_n)$, 
as well as any unit vector $\beta =[\beta_i]_{1\leq i\leq t} \in \mathcal{S}^{t-1}$ that might be statistically dependent on $\{\phi_k\}$. 
Then the 1-Wasserstein distance (cf.~\eqref{eqn:wasserstein-p}) between the distribution of $\sum_{i=1}^{t}\beta_{k}\phi_{k}$ --- denoted by $\mu\big(\sum_{i=1}^{t}\beta_{k}\phi_{k}\big)$  --- and $\mathcal{N}\big(0,\frac{1}{n}I_{n}\big)$ obeys 
\begin{align}
\label{eqn:gaussian-approx}
W_{1}\bigg(\mu\Big(\sum_{i=1}^{t}\beta_{k}\phi_{k}\Big),\mathcal{N}\Big(0,\frac{1}{n}I_{n}\Big)\bigg) \lesssim \sqrt{\frac{t\log n}{n}}. 
\end{align}
%
\end{lems}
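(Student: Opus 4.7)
The plan is to appeal to the Kantorovich--Rubinstein dual characterization
\[
W_1\bigl(\mu(\Phi\beta),\,\mathcal{N}(0,\tfrac{1}{n}I_n)\bigr) \;=\; \sup_{f:\,1\text{-Lipschitz}} \bigl|\myE f(\Phi\beta) - \myE f(G)\bigr|,
\]
where I abbreviate $\Phi \defn [\phi_1,\ldots,\phi_t]\in\real^{n\times t}$ and $G\sim\mathcal{N}(0,\tfrac{1}{n}I_n)$. Normalizing $f(0)=0$ leaves the difference invariant, so it suffices to upper bound $\myE f(\Phi\beta) - \myE f(G)$ uniformly over $1$-Lipschitz $f$; applying the argument to $-f$ then delivers the absolute-value version.

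First I would invoke a standard Gaussian operator-norm bound: assuming $t\le n$ (the claim is trivial otherwise), on an event $E$ with $\mathbb{P}(E)\ge 1-O(n^{-11})$ one has $\|\Phi\|_{\mathrm{op}}\le C$ for an absolute constant $C$. The bad-event contribution is then negligible via Cauchy--Schwarz: since $f(0)=0$ gives $|f(\Phi\beta)|\le\|\Phi\|_{\mathrm{op}}$, we have $\myE|f(\Phi\beta)|\ind_{E^c}\le\sqrt{\myE\|\Phi\|_{\mathrm{op}}^2\cdot\mathbb{P}(E^c)}=O(n^{-5})$, and similarly for $|\myE f(G)|\cdot\mathbb{P}(E^c)$.

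The key step is a covering argument that decouples $\beta$ from $\Phi$. Fix $\epsilon\in(0,1)$, and let $\mathcal{N}_\epsilon\subset\mathcal{S}^{t-1}$ be an $\epsilon$-net of cardinality at most $(3/\epsilon)^t$. For every realization, pick the nearest $\beta_0(\beta)\in\mathcal{N}_\epsilon$, so $\|\beta-\beta_0(\beta)\|_2\le\epsilon$; on $E$, $1$-Lipschitz continuity yields
\[
f(\Phi\beta)\;\le\;\max_{\beta_0\in\mathcal{N}_\epsilon} f(\Phi\beta_0) + \|\Phi\|_{\mathrm{op}}\epsilon \;\le\;\max_{\beta_0\in\mathcal{N}_\epsilon} f(\Phi\beta_0) + C\epsilon.
\]
The crucial observation is that for any \emph{deterministic} $\beta_0\in\mathcal{S}^{t-1}$, rotational invariance of the Gaussian ensemble gives $\Phi\beta_0\overset{d}{=}G$, so $f(\Phi\beta_0)$ is a $1$-Lipschitz function of an $\mathcal{N}(0,\tfrac{1}{n}I_n)$ vector and is therefore $\tfrac{1}{\sqrt{n}}$-sub-Gaussian about the common mean $\myE f(G)$. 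A standard maximal inequality over the $(3/\epsilon)^t$ anchors in $\mathcal{N}_\epsilon$ then delivers
\[
\myE\max_{\beta_0\in\mathcal{N}_\epsilon}\bigl(f(\Phi\beta_0)-\myE f(G)\bigr) \;\lesssim\; \sqrt{\tfrac{t\log(3/\epsilon)}{n}}.
\]

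Assembling these pieces yields $\myE f(\Phi\beta)-\myE f(G) \lesssim \sqrt{t\log(3/\epsilon)/n} + C\epsilon + n^{-5}$, and the choice $\epsilon=1/n$ produces the desired $\sqrt{t\log n/n}$ bound. The principal obstacle is conceptual rather than technical: the dependence of $\beta$ on $\{\phi_k\}$ destroys the exact Gaussianity of $\Phi\beta$, so a direct Gaussian comparison is unavailable. The covering scheme circumvents this by substituting the random $\beta$ with a deterministic anchor $\beta_0\in\mathcal{N}_\epsilon$ (for which $\Phi\beta_0$ is \emph{exactly} $\mathcal{N}(0,\tfrac{1}{n}I_n)$), paying only the Lipschitz approximation error $\epsilon$ and the $\log|\mathcal{N}_\epsilon|=O(t\log n)$ factor coming from uniform control over the net.
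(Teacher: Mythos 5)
Your proposal is correct and follows essentially the same route as the paper's proof: Kantorovich--Rubinstein duality with $f(0)=0$, an $\epsilon$-net over $\mathcal{S}^{t-1}$ so that at each deterministic anchor $\Phi\beta_0$ is exactly $\mathcal{N}(0,\tfrac{1}{n}I_n)$ and Gaussian concentration of Lipschitz functions applies, a discretization error controlled by $\|\Phi\|_{\mathrm{op}}$, and a negligible bad-event contribution via Cauchy--Schwarz. The only (immaterial) difference is that you control the net through an expected sub-Gaussian maximum with $\epsilon=1/n$, whereas the paper takes a union bound over high-probability isoperimetry events (with $\epsilon=n^{-5}$ and the cruder bound $\|[\phi_1,\ldots,\phi_t]\|\lesssim\sqrt{t}$) before passing to expectations.
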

%


\subsection{Proof of Lemma~\ref{lem:Gauss}}
\label{sec:pf-Gaussian}

Let us define
\begin{align*}
	g_{\theta}(X) \defn f_{\theta}(\mathcal{P}_{\mathcal{E}}(X)).
\end{align*}
By Lipschitz property of $f_{\theta}$ (cf.~\eqref{eqn:gauss-lipschitz}), we can obtain the Lipschitz property for $g_{\theta}$ as follows:
\begin{align*}
	\lt|g_{\theta}(X) - g_{\theta}(Y)\rt| = \lt|f_{\theta}(\mathcal{P}_{\mathcal{E}}(X)) - f_{\theta}(\mathcal{P}_{\mathcal{E}}(Y))\rt| \le \sigma\|\mathcal{P}_{\mathcal{E}}(X) - \mathcal{P}_{\mathcal{E}}(Y)\|_2 \le \sigma\|X - Y\|_2,
\end{align*}
where the last step uses the non-expansiveness of Euclidean projection onto convex sets. 
Gaussian isoperimetric inequalities (e.g., \citet[Theorem 3.8]{massart2007concentration}) then tells us that,  for any fixed $\theta \in \Theta$, 
%
\begin{align}
\label{eqn:schubert}
	\big|g_{\theta}(X) - \myE\lt[g_{\theta}(X)\rt]\big| \leq \sigma\sqrt{2\log\frac{1}{\delta}}
\end{align}
holds with probability at least $1 - \delta$.

Next, we need to establish uniform concentration over all $\theta\in \Theta$. 
Towards this, let us construct an $\epsilon$-net $\mathcal{N}_{\epsilon}$ for $\Theta$ with smallest size such that:
 for any $\theta \in \Theta$, there exists some $\thetahat \in \mathcal{N}_{\epsilon}$ obeying $\ltwo{\theta - \widehat{\theta}} \le \epsilon$.
Given that $\Theta \subseteq \real^d$, 
it is easily seen that the cardinality of the $\epsilon$-net can be chosen such that $|\mathcal{N}_{\epsilon}| \le (\frac{2r}{\epsilon})^{d}$ \citep[Chapter 4.2]{vershynin2018high}. 
Taking \eqref{eqn:schubert} with the union bound over the set $\mathcal{N}_{\epsilon}$ reveals that: with probability at least $1 - \delta$, 
\begin{align}
\label{eqn:schubert-impromptu}
	\sup_{\theta \in \mathcal{N}_{\epsilon}} \big|g_{\theta}(X) - \myE\lt[g_{\theta}(X)\rt] \big| \leq \sigma \sqrt{2d \log\Big(\frac{2 r}{\delta\epsilon}\Big)}. 
\end{align}
%

With the above concentration result in place, we are ready prove the advertised inequality~\eqref{eqn:Gauss-target}.  
First, recalling that $\lt|\myE\lt[g_{\theta}(X) - f_{\theta}(X)\rt]\rt| \le B$ and $f_{\theta}(X) = g_{\theta}(X)$ with probability at least $1-O(n^{-11})$, one has 
\begin{align*}
	\big|f_{\theta}(X) - \myE\lt[f_{\theta}(X)\rt]\big| 
	& = \big|g_{\theta}(X) - \myE\lt[f_{\theta}(X)\rt]\big|
	 \le \big|g_{\theta}(X) - \myE\lt[g_{\theta}(X)\rt]\big| + \big|\myE [f_{\theta}(X)] - \myE[g_{\theta}(X)]\big|\\
	&\le \big|g_{\theta}(X) - \myE\lt[g_{\theta}(X)\rt]\big| + B 
\end{align*}
with probability at least $1-O(n^{-11})$.
In addition, our assumption \eqref{eqn:gauss-lipschitz} also indicates that 
\begin{align*}
	&\big|g_{\theta_1}(X) - g_{\theta_2}(X)\big| = \big|f_{\theta_1}(\mathcal{P}_{\mathcal{E}}(X)) - f_{\theta_2}(\mathcal{P}_{\mathcal{E}}(X))\big| 
	\le L\|\theta_1 - \theta_2\|_2, \qquad \text{and} \\
	\big|\myE [g_{\theta_1}(X)] - \myE[g_{\theta_2}(X)]\big| &= \big|\myE[f_{\theta_1}(\mathcal{P}_{\mathcal{E}}(X)) - f_{\theta_2}(\mathcal{P}_{\mathcal{E}}(X))]\big| \le \myE\big[|f_{\theta_1}(\mathcal{P}_{\mathcal{E}}(X)) - f_{\theta_2}(\mathcal{P}_{\mathcal{E}}(X))|\big] \leq L\|\theta_1 - \theta_2\|_2. 
\end{align*}
%
Consequently, for every $\theta\in \Theta$, we obtain 
\begin{align*}
	\lt|f_{\theta}(X) - \myE\lt[f_{\theta}(X)\rt]\rt|
	&\le \lt|g_{\widehat{\theta}}(X) - \myE\lt[g_{\widehat{\theta}}(X)\rt]\rt| + 2L\epsilon + B \\
	&\lesssim \sigma\sqrt{d\log\lt(\frac{nr}{\epsilon}\rt)} + L\epsilon + B,
\end{align*}
with probability exceeding $1-O(n^{-11})$, 
where we invoke the Lipschitz property for $g_{\theta}$ and $\myE g_{\theta}$ with respect to $\theta$,
 and the last inequality follows from relation~\eqref{eqn:schubert-impromptu} with $\delta = n^{-11}$.
We have thus established Lemma~\ref{lem:Gauss}. 


\subsection{Proof of Lemma~\ref{lem:Gauss-jump}}
\label{sec:scriabin}

\paragraph{Step 1: establishing concentration for any fixed $\theta$ and $\tau$.}

For notational simplicity, let us introduce
\begin{align*}
	\mu_i &\defn \mathbb{E}\big[f_{i, \theta}(X_i)\ind(\mathcal{E}_i)\big] \\
	Z_i &\defn f_{i, \theta}(X_i)\ind\lt(h_{i, \theta}(X_i) > \tau\rt)\ind(\mathcal{E}_i) 
\end{align*}
for any $i\in [m]$, and any fixed $\theta\in \Theta$ and $\tau \in [-r,r]$. 
The goal of this step is to show the following Bernstein-type inequality: for any given $\theta\in \Theta$ and $\tau \in [-r,r]$,  
\begin{align}
\label{eqn:sum-zi}
	\Big|\sum_{i = 1}^m Z_i - \mathbb{E}[Z_i]\Big| \lesssim  
	\sqrt{\sum_{i = 1}^m \lt(\mu_i+\sigma_i\log n\rt)^2\lt(\mathbb{P}\big(h_{i, \theta}(X_i) > \tau\big) + \frac{1}{n}\rt)\log\frac{1}{\delta}}
	 + \max_{i} \lt(\mu_i+\sigma_i\log n\rt)\log\frac{1}{\delta} 
\end{align}
holds with probability at least $1 - \delta$, where $\delta$ can be any value in $(0,1)$. 
The remainder of this step is devoted to establishing \eqref{eqn:sum-zi}. 


We find it useful to first single out several preliminary facts. 
Recognizing that $f_{i, \theta}(X_i)$ is assumed to be non-negative, one has  $\mu_{i} \le \mathbb{E}\big[f_{i, \theta}(X_i)\big]$.  
Given that $f_{i, \theta}(X_i)$ is assumed to be $\sigma_i$-subexponential, 
we see that $f_{i, \theta}(X_i)\ind(\mathcal{E}_i)$ is also $\sigma_i$-subexponential, 
which further implies that the centered version $f_{i, \theta}(X_i)\ind(\mathcal{E}_i)-\mu_i$ is $O(\sigma_i)$-subexponential 
(see \citet[Exercise 2.7.10]{vershynin2018high}); 
this means that 
there exists some universal constant $c_5\geq 1$ such that
\begin{align}
	\mathbb{P}\Big( f_{i, \theta}(X_i)\ind(\mathcal{E}_i) \geq \mu_i + \tau \Big)
	\leq 
	\mathbb{P}\lt( \big|f_{i, \theta}(X_i)\ind(\mathcal{E}_i) - \mu_i \big| \geq \tau \rt)
	 \leq 2\exp\Big(- \frac{\tau}{c_5 \sigma_i} \Big) 
	 \label{eq:sub-exponential-simgai-135}
\end{align}
for any $\tau \geq 0$ and any $i \in [m]$.  
In what follows, we shall follow similar ideas for proving Bernstein's inequality (e.g., \cite{wainwright2019high}). 

For every integer $k\geq 1$, 
let us first look at the $k$-moment of $Z_{i} - \mathbb{E}[Z_i]$. 
Note that for any two non-negative numbers $a,b\geq 0$, one has $|(a-b)^k|\leq a^k + b^k$. 
This fact taken together with Jensen's inequality gives  
\begin{align}
\notag\mathbb{E}\lt[\big|Z_{i}-\mathbb{E}[Z_{i}]\big|^{k}\rt] & \leq\mathbb{E}\lt[Z_{i}^{k}\rt]+\big(\mathbb{E}[Z_{i}]\big)^{k}\leq2\mathbb{E}\lt[Z_{i}^{k}\rt]\\
\notag & =2\mathbb{E}\lt[\big(f_{i,\theta}(X_{i})\big)^{k}\ind\lt(h_{i,\theta}(X_{i})>\tau\rt)\ind(\mathcal{E}_{i})\rt]\\
	\notag & \leq2\mathbb{E}\lt[\big(f_{i,\theta}(X_{i})\big)^{k}\ind(\mathcal{E}_{i})\ind\lt(h_{i,\theta}(X_{i})>\tau\rt)\ind\big(f_{i,\theta}(X_{i})\leq \mu_{i}+c_5\sigma_{i}k\log n\big)\rt]\\
\notag & \qquad+2\mathbb{E}\lt[\big(f_{i,\theta}(X_{i})\big)^{k}\ind(\mathcal{E}_{i})\ind\big(f_{i,\theta}> \mu_{i}+ c_5\sigma_{i}k\log n\big)\rt]\\
 & \stackrel{(\text{i})}{\leq}
	2\lt(\mu_{i}+c_5\sigma_{i}k\log n\rt)^{k}\mathbb{E}\big[\ind\lt(h_{i,\theta}(X_{i})>\tau\rt)\big]+16(\mu_{i}+c_{5}\sigma_{i}k\log n)^{k}\exp(-k\log n) \notag\\
 & \leq 16 \lt(\mu_{i}+ c_5\sigma_{i}k\log n\rt)^{k}\Big(\mathbb{P}\big(h_{i,\theta}(X_{i})>\tau\big)+\frac{1}{n}\Big). 
\label{eqn:bernstein-condition}	
\end{align}
%
To justify why (i) is valid, we note that
\begin{align}
\mathbb{E}\left[\big(f_{i,\theta}(X_{i})\big)^{k}\ind(\mathcal{E}_{i})\ind\big(f_{i,\theta}(X_{i})>\mu_{i}+c_5\sigma_{i}k\log n\big)\right] 
	& \leq\int_{(\mu_{i}+c_5\sigma_{i}k\log n)^{k}}^{\infty}\mathbb{P}\left\{ \big(f_{i,\theta}(X_{i})\ind(\mathcal{E}_{i})\big)^{k}\geq\tau\right\} \mathrm{d}\tau\notag\\
 & =\int_{(\mu_{i}+c_5\sigma_{i}k\log n)^{k}}^{\infty}\mathbb{P}\left\{ f_{i,\theta}(X_{i})\ind(\mathcal{E}_{i})\geq\tau^{1/k}\right\} \mathrm{d}\tau\notag\\
 & =\int_{\mu_{i}+c_5\sigma_{i}k\log n}^{\infty}kx^{k-1}\mathbb{P}\Big\{ f_{i,\theta}(X_{i})\ind(\mathcal{E}_{i})\geq x\Big\}\mathrm{d}x\notag\\
 & =\int_{c_5\sigma_{i}k\log n}^{\infty}k(x+\mu_{i})^{k-1}\mathbb{P}\Big\{ f_{i,\theta}(X_{i})\ind(\mathcal{E}_{i})\geq x+\mu_{i}\Big\}\mathrm{d}x\notag\\
	& \overset{(\mathrm{ii})}{\leq}2k\int_{c_5\sigma_{i}k\log n}^{\infty}(x+\mu_{i})^{k-1}\exp\Big(-\frac{x}{c_{5}\sigma_{i}}\Big)\mathrm{d}x, \label{eq:RHS-135}
\end{align}
where (ii) follows from inequality~\eqref{eq:sub-exponential-simgai-135}. Now the right-hand side of the above inequality can be further controlled as 
\begin{align}
\eqref{eq:RHS-135} & =2c_{5}\sigma_{i}k\int_{k\log n}^{\infty}(c_{5}\sigma_{i}x+\mu_{i})^{k-1}\exp(-x)\mathrm{d}x\notag\\
 & \overset{(\mathrm{iii})}{\leq}2c_{5}\sigma_{i}k\sum_{l=k\log n}^{\infty}(c_{5}\sigma_{i}l+\mu_{i})^{k-1}\exp(-l)\notag\\
 & \overset{(\mathrm{iv})}{\leq}2c_{5}\sigma_{i}k\cdot(c_{5}\sigma_{i}k\log n+\mu_{i})^{k-1}\exp(-k\log n)\sum_{i=0}^{\infty}\Big(\frac{2}{e}\Big)^{i} \notag\\
 & \leq8(c_{5}\sigma_{i}k\log n+\mu_{i})^{k}\exp(-k\log n),
	\label{eq:exp-tail}
\end{align}
where (iii) is valid since the function $(ax+b)^{k}e^{-x}$ with $a,b>0$ is decreasing
in $x$ for any $x>k$, and (iv) holds since for any $l\geq k\log n$,
one has
\[
	\frac{\big((l+1)c_{5}\sigma_{i}+\mu_{i} \big)^{k}\exp(-(l+1))}{\big(lc_{5}\sigma_{i}+\mu_{i}\big)^{k}\exp(-l)}\le e^{-1}\Big(1+\frac{1}{l}\Big)^{k}\le \frac{2}{e},
\]
namely, $\big(lc_{5}\sigma_{i}+\mu_{i}\big)^{k}\exp(-l)$ decreases geometrically in $l$ with a contraction factor $2/e$.  
This validates Step (i) in \eqref{eqn:bernstein-condition}.

In view of \eqref{eqn:bernstein-condition}, letting $\widetilde{Z}_i \coloneqq Z_{i}-\mathbb{E}[Z_{i}]$ (so that $\mathbb{E}[\widetilde{Z}_i]=0$) and using the power series expansion, we obtain 
\begin{align*}
\mathbb{E}\lt[\sum_{k=0}^{\infty}\frac{\lambda^{k}\widetilde{Z}_{i}^{k}}{k!}\rt] & =1+\mathbb{E}\lt[\sum_{k=2}^{\infty}\frac{\lambda^{k}\widetilde{Z}_{i}^{k}}{k!}\rt]\le\exp\lt(\sum_{k=2}^{\infty}\frac{\mathbb{E}\lt[\lambda^{k}\widetilde{Z}_{i}^{k}\rt]}{k!}\rt)\\
 & \le\exp\lt(\sum_{k=2}^{\infty}\frac{8\lambda^{k}\lt(\mu_{i}+c_{5}\sigma_{i}k\log n\rt)^{k}}{k!}\lt(\mathbb{P}\big(h_{i,\theta}(X_{i})>\tau\big)+\frac{1}{n}\rt)\rt)\\
 & \le\exp\lt(16e^{2}c_5^2\lambda^{2} \lt(\mu_{i}+\sigma_{i}\log n\rt)^{2}\lt(\mathbb{P}\big(h_{i,\theta}(X_{i})>\tau\big)+\frac{1}{n}\rt)\rt) 
\end{align*}
for any $\lambda>0$ obeying $c_5\lambda(\mu_i+\sigma_i\log n) \le (2e)^{-1}$, 
where the first line applies the elementary inequality $1+x\leq\exp(x)$ for any $x\in \real$,
and the last line holds since, by taking  $z = c_5\lambda(\mu_i+\sigma_i\log n) \le (2e)^{-1}$, one has
\[
\sum_{k=2}^{\infty}\frac{\big[\lambda(\mu_{i}+c_{5}\sigma_{i}k\log n)\big]^{k}}{k!}\stackrel{(\text{v})}{\le}\sum_{k=2}^{\infty}\frac{\big[\lambda c_{5}(\mu_{i}+\sigma_{i}\log n)k\big]^{k}}{\sqrt{2\pi k}\,k^{k}e^{-k}}\leq\sum_{k=2}^{\infty}(ez)^{k}\leq e^{2}z^{2}\sum_{i=0}^{\infty}\frac{1}{2^{i}}=2e^{2}z^{2},
\]
where (v) follows from the fact $c_5\geq 1$ and the well-known Stirling inequality $\sqrt{2\pi}k^{k+\frac{1}{2}}e^{-k}\leq k!$. 
Given the above convergence of the power series, we conclude that for any $\lambda>0$ obeying $c_5\lambda(\mu_i+\sigma_i\log n) \le (2e)^{-1}$, 
\begin{align}
	\mathbb{E}\big[\exp\big(\lambda \widetilde{Z}_i\big)\big] = \mathbb{E}\lt[\sum_{k=0}^\infty \frac{\lambda^k \widetilde{Z}_i^k}{k!}\rt] 
\le\exp\lt(16e^{2}c_5^2\lambda^{2} \lt(\mu_{i}+\sigma_{i}\log n\rt)^{2}\lt(\mathbb{P}\big(h_{i,\theta}(X_{i})>\tau\big)+\frac{1}{n}\rt)\rt)
	\eqqcolon \exp(\lambda^2 \nu_0^2 ). 
\end{align}

%
%
%
%
%
%
%

To finish up, letting $L_{0}=\max_{1\leq i\leq n}2ec_{5}(\mu_{i}+\sigma_{i}\log n)$, we can apply Markov's inequality to obtain
\begin{align*}
\mathbb{P}\bigg(\sum_{i=1}^{m}\widetilde{Z}_{i}>\tau\bigg) & \le\min_{0<\lambda<\frac{1}{L_{0}}}\Bigg\{\exp(-\lambda\tau)\cdot\mathbb{E}\Big[\exp\Big(\lambda\sum_{i=1}^{m}\widetilde{Z}_{i}\Big)\Big]\Bigg\}\le\min_{0<\lambda<\frac{1}{L_{0}}}\Big\{\exp(-\lambda\tau)\exp\big(\lambda^{2}\nu_{0}^{2}\big)\Big\}.
\end{align*}
Repeating standard arguments for establishing Bernstein's inequality (see, e.g., \citet[Step 4 in Pages 118-119]{vershynin2018high}), 
one can immediately conclude that
\begin{align*}
	\sum_{i = 1}^m \widetilde{Z}_i 
	&\lesssim \max \bigg\{ \sqrt{\nu_0 \log \frac{1}{\delta}} + L_0 \log \frac{1}{\delta} \bigg\}  \\
	&\asymp 
	\sqrt{\sum_{i = 1}^m \lt(\mu_i+\sigma_i\log n\rt)^2\lt(\mathbb{P}\lt(h_{i, \theta}(X_i) > \tau\rt) + \frac{1}{n}\rt)\log\frac{1}{\delta}}
	+ 
	 \max_{i}\big\{ \mu_i+\sigma_i\log n \big\} \log\frac{1}{\delta} 
\end{align*}
with probability exceeding $1-\delta$. 
Repeating the same argument reveals that the above inequality continues to hold if $\sum_{i = 1}^m \widetilde{Z}_i$ is replaced with 
$-\sum_{i = 1}^m \widetilde{Z}_i$. 
This in turn establishes \eqref{eqn:sum-zi} any fixed $\theta\in \Theta$ and $\tau \in [-r,r]$.

\paragraph{Step 2: controlling the difference between the $\epsilon$-net and the remaining parameters.} 
To show uniform concentration over all $\theta$ and $\tau$, we intend to invoke an $\epsilon$-net-based argument. 
Towards this, let us 
construct an $\epsilon$-net $\mathcal{N}_{\epsilon}^{\Theta} \subseteq \Theta \subseteq \mathbb{B}^d(r)$ for the $d$-dimensional ball $\mathbb{B}^d(r)$ of radius $r$
--- which can be chosen to have cardinality $|\mathcal{N}_{\epsilon}^{\Theta}| \le (\frac{3r}{\epsilon})^{d}$ \citep[Chapter 4.2]{vershynin2018high} --- 
such that for any $\theta \in \Theta$, there exists some $\thetahat \in \mathcal{N}_{\epsilon}^{\Theta}$ satisfying $\ltwo{\thetahat - \theta} \le \epsilon<r$.
In addition, we construct another $\epsilon$-net $\mathcal{N}_{\epsilon}^{[-r,r]} \subseteq [-r, r]$ 
obeying $|\mathcal{N}_{\epsilon}^{[-r,r]}| \le \frac{2r}{\epsilon}$
for the interval $[-r,r]$, 
such that for any $\tau\in [-r,r]$, there exists $\widetilde{\tau}\in \mathcal{N}_{\epsilon}^{[-r,r]}$
obeying $|\tau - \widetilde{\tau}|\leq \epsilon$.

Let us now look at an arbitrary 
$\theta \in \Theta$ and its nearest neighbor $\thetahat$ in $\mathcal{N}_{\epsilon}^{\Theta}$ (so that $\|\thetahat - \theta\|_2 \le \epsilon$). 
In view of the Lipschitz property~\eqref{eqn:lips-jump}, we can deduce that
\begin{align}
\notag\sum_{i=1}^{m}f_{i,\theta}(X_{i})\ind\big(h_{i,\theta}(X_{i})>\tau\big)\ind(\mathcal{E}_{i}) & =\sum_{i=1}^{m}f_{i,\theta}(X_{i})\ind\lt(h_{i,\thetahat}(X_{i})>\tau+h_{i,\thetahat}(X_{i})-h_{i,\theta}(X_{i})\rt)\ind\lt(\mathcal{E}_{i}\rt)\\
 & \leq\sum_{i=1}^{m}f_{i,\thetahat}(X_{i})\ind\lt(h_{i,\thetahat}(X_{i})>\tau+h_{i,\thetahat}(X_{i})-h_{i,\theta}(X_{i})\rt)\ind\lt(\mathcal{E}_{i}\rt)+O(mL\epsilon) \notag\\
 & \le\sum_{i=1}^{m}f_{i,\thetahat}(X_{i})\ind\lt(h_{i,\thetahat}(X_{i})>\widehat{\tau}_{-}\rt)\ind(\mathcal{E}_{i}) +O(mL\epsilon)
\label{eqn:jump-lips-up}
\end{align}
for some point $\widehat{\tau}_- \in \mathcal{N}_{\epsilon}^{[-r,r]}$ satisfying
\begin{align*}
	\tau - (L+1)\epsilon \le \widehat{\tau}_- \le \tau + h_{i, \thetahat}(X_i) - h_{i, \theta}(X_i).
\end{align*}
Here, the second line in \eqref{eqn:jump-lips-up} applies the Lipschitz continuity of $f_{i,\theta}$ w.r.t.~$\theta$, 
while the last line 
relies on  the Lipschitz condition that $|h_{i, \thetahat}(X_i) - h_{i, \theta}(X_i)|  \leq L\epsilon$. 
Similarly, we have the following lower bound:
\begin{align}
\label{eqn:jump-lips-down}
\sum_{i = 1}^m f_{i, \theta}(X_i)\ind\big(h_{i, \theta}(X_i) > \tau\big)\ind\lt(\mathcal{E}_i\rt)  
\ge \sum_{i = 1}^m f_{i, \thetahat}(X_i)\ind\lt(h_{i, \thetahat}(X_i) > \widehat{\tau}_+\rt) \ind(\mathcal{E}_{i}) - O(mL\epsilon)
\end{align}
for some point $\widehat{\tau}_+ \in \mathcal{N}_{\epsilon}^{[-r,r]}$ obeying
\begin{align*}
\tau + h_{i, \thetahat}(X_i) - h_{i, \theta}(X_i) \le \widehat{\tau}_+ \le \tau + (L+1)\epsilon.
\end{align*}

Next, we turn attention to the mean term $\mathbb{E}\big[f_{i, \theta}(X_i)\ind\lt(h_{i, \theta}(X_i) > \tau\rt)\big].$
Consider $\widehat{\tau} \in \mathcal{N}_{\epsilon}^{[-r,r]}$ such that
\begin{align*}
 	|\widehat{\tau} - \tau| \le (L+1)\epsilon.
 \end{align*} 
Clearly, there might be more than one points in the $\epsilon$-net that are within distance  
$(L+1)\epsilon$ to $\tau$, and we shall specify the choice of $\widehat{\tau}$ momentarily. 
Recall the assumption $\myE\big[ f_{i, \theta}(X_i) \ind\lt(\mathcal{E}_i^{\mathrm{c}}\rt)\big] \le B$ (cf.~\eqref{eqn:lips-jump}) 
and the non-negativity of $f_{i,\theta}$ to arrive at 
\begin{align}
&\mathbb{E}\big[f_{i, \theta}(X_i)\ind\lt(h_{i, \theta}(X_i) > \tau\rt)\big] 
\leq \mathbb{E}\big[f_{i, \theta}(X_i)\ind\lt(h_{i, \theta}(X_i) > \tau\rt)\ind\lt(\mathcal{E}_i\rt)\big] + O(B) \notag\\
&\qquad \leq \mathbb{E}\lt[f_{i, \theta}(X_i)\ind\lt(h_{i, \theta}(X_i) > h_{i, \thetahat}(X_i) - h_{i, \theta}(X_i) + \tau\rt)\ind\lt(\mathcal{E}_i\rt)\rt] + O\lt( B\rt) \notag\\
&\qquad\leq \mathbb{E}\lt[f_{i, \thetahat}(X_i)\ind\lt(h_{i, \thetahat}(X_i) > h_{i, \thetahat}(X_i) - h_{i, \theta}(X_i) + \tau\rt)\ind\lt(\mathcal{E}_i\rt)\rt] + O\lt(L\epsilon + B\rt) \notag\\
&\qquad\leq \mathbb{E}\lt[f_{i, \thetahat}(X_i)\ind\lt(h_{i, \thetahat}(X_i) > \widehat{\tau}\rt)\ind\lt(\mathcal{E}_i\rt)\rt] \notag\\
&\qquad \quad + O\lt(\mathbb{E}\lt[f_{i, \thetahat}(X_i)\ind\lt(\tau-(L+1)\epsilon \le h_{i, \thetahat}(X_i) \le \tau+(L+1)\epsilon)\rt)\ind\lt(\mathcal{E}_i\rt)\rt] + L\epsilon + B\rt). 
	\label{eq:E-f-indicator-1729}
\end{align}
Here, the second inequality follows from the Lipschitz continuity of $f_{i,\theta}$ w.r.t.~$\theta$ (cf.~\eqref{eqn:lips-jump}), and
the last inequality holds since 
\begin{align}
\notag &\lt\{Z_i : \ind\lt(h_{i, \thetahat}(Z_i) > h_{i, \thetahat}(Z_i) - h_{i, \theta}(Z_i) + \tau\rt) \ne \ind\lt(h_{i, \thetahat}(Z_i) > \widehat{\tau}\rt)\rt\} \\
\notag &\subseteq \Big\{Z_i : \widehat{\tau} \le h_{i, \thetahat}(Z_i) \le h_{i, \thetahat}(Z_i) - h_{i, \theta}(Z_i) + \tau\Big\}
\cup \Big\{Z_i : h_{i, \thetahat}(Z_i) - h_{i, \theta}(Z_i) + \tau \le h_{i, \thetahat}(Z_i) \le \widehat{\tau}\Big\} \\
&\subseteq \Big\{Z_i : \tau-(L+1)\epsilon \le h_{i, \thetahat}(Z_i) \le \tau+(L+1)\epsilon\Big\}. \label{eqn:set-relation}
\end{align}
where we invoke again $|h_{i, \thetahat}(Z_i) - h_{i, \theta}(Z_i)| \leq L\epsilon$ and 
$|\widehat{\tau} - \tau| \le (L+1)\epsilon$.  
Let us augment the notation $\mu_i$ to make explicit the dependency on $\theta_i$ as follows
\begin{equation}
	\mu_{i,\theta} \defn \mathbb{E}\big[f_{i, \theta}(X_i)\ind(\mathcal{E}_i)\big] .
	\label{eq:defn-mu-i-theta}
\end{equation}
Then an application of the bound~\eqref{eq:exp-tail} with $k=1$ leads directly to
\begin{align*}
 & \mathbb{E}\lt[f_{i,\thetahat}(X_{i})\ind\lt(\tau-(2L+1)\epsilon\le h_{i,\widehat{\theta}}(X_{i})\le\tau+(2L+1)\epsilon)\rt)\ind\lt(\mathcal{E}_{i}\rt)\rt]\\
 & \qquad=\mathbb{E}\lt[f_{i,\thetahat}(X_{i})\ind\lt(\tau-(2L+1)\epsilon\le h_{i,\widehat{\theta}}(X_{i})\le\tau+(2L+1)\epsilon)\rt)\ind\lt(\mathcal{E}_{i}\rt)\ind\lt(f_{i,\thetahat}(X_{i})\leq\mu_{i,\thetahat}+c_5\sigma_{i}\log n\rt)\rt]\\
 & \qquad\qquad+\mathbb{E}\lt[f_{i,\thetahat}(X_{i})\ind\lt(\tau-(2L+1)\epsilon\le h_{i,\widehat{\theta}}(X_{i})\le\tau+(2L+1)\epsilon)\rt)\ind\lt(\mathcal{E}_{i}\rt)\ind\lt(f_{i,\thetahat}(X_{i})>\mu_{i,\thetahat}+c_5\sigma_{i}\log n\rt)\rt]\\
 & \qquad\lesssim(\mu_{i,\thetahat}+\sigma_{i}\log n)\mathbb{P}\Big(\tau-(2L+1)\epsilon\le h_{i,\widehat{\theta}}(X_{i})\le\tau+(2L+1)\epsilon)\Big)+\frac{\mu_{i,\thetahat}+\sigma_{i}\log n}{n}.
\end{align*}
Substituting it into \eqref{eq:E-f-indicator-1729} yields 
\begin{align}
	\notag &\mathbb{E}\Big[f_{i, \theta}(X_i)\ind\big(h_{i, \theta}(X_i) > \tau\big)\Big] 
	- 
	\mathbb{E}\lt[f_{i, \thetahat}(X_i)\ind\big(h_{i, \thetahat}(X_i) > \widehat{\tau}\big)\ind\lt(\mathcal{E}_i\rt)\rt]\\ 
	&\qquad \lesssim 
	(\mu_{i,\thetahat} + \sigma_i\log n)\mathbb{P}\Big(\tau-(2L+1)\epsilon \le h_{i, \thetahat}(X_i) \le \tau+(2L+1)\epsilon\Big) + \frac{\mu_{i,\thetahat} + \sigma_i\log n}{n} + L\epsilon + B. 
	\label{eqn:expectation-jump-123}
\end{align}
Clearly, repeating the above argument shows that \eqref{eqn:expectation-jump-123} continues to hold if the left-hand side of \eqref{eqn:expectation-jump-123} is replaced by
$\mathbb{E}\big[f_{i, \thetahat}(X_i)\ind\big(h_{i, \thetahat}(X_i) > \widehat{\tau}\big)\ind\lt(\mathcal{E}_i\rt)\big] - \mathbb{E}\big[f_{i, \theta}(X_i)\ind\big(h_{i, \theta}(X_i) > \tau\big)\big]$. As a result, we conclude that
\begin{align}
	\notag &\Big| \mathbb{E}\Big[f_{i, \theta}(X_i)\ind\big(h_{i, \theta}(X_i) > \tau\big)\Big] 
	- 
	\mathbb{E}\lt[f_{i, \thetahat}(X_i)\ind\big(h_{i, \thetahat}(X_i) > \widehat{\tau}\big)\ind\lt(\mathcal{E}_i\rt)\rt] \Big|\\ 
	&\qquad \lesssim 
	(\mu_{i,\thetahat} + \sigma_i\log n)\mathbb{P}\Big(\tau-(2L+1)\epsilon \le h_{i, \thetahat}(X_i) \le \tau+(2L+1)\epsilon\Big) + \frac{\mu_{i,\thetahat} + \sigma_i\log n}{n} + L\epsilon + B. 
	\label{eqn:expectation-jump}
\end{align}

\paragraph{Step 3: establishing uniform convergence.}
We are now ready to establish the advertised concentration result~\eqref{eqn:scriabin}. 
Recall that the concentration result~\eqref{eqn:sum-zi} holds for every fixed $(\thetahat,\widehat{\tau})$ pair. 
By taking the union bound over all points in $\mathcal{N}_{\epsilon}^{\Theta}\times \mathcal{N}_{\epsilon}^{[-r,r]}$
and setting $\delta = n^{-11}(\frac{\epsilon}{3r})^{d+1}$, 
we can see that with probability at least $1 - \delta  (\frac{3r}{\epsilon})^{d+1} = 1 - O(n^{-11})$,  
\begin{align}
\label{eqn:sum-zi-eps-net}
	&\bigg|\sum_{i = 1}^m f_{i, \thetahat}(X_i)\ind\lt(h_{i, \thetahat}(X_i) > \widehat{\tau}\rt) \ind(\mathcal{E}_i)  
	- \mathbb{E}\Big[f_{i, \thetahat}(X_i) \ind\lt(h_{i, \thetahat}(X_i) > \widehat{\tau}\rt) \ind(\mathcal{E}_i) \Big]\bigg|   \notag\\
	&\qquad \lesssim\sqrt{\sum_{i = 1}^m \big(\mu_{i,\thetahat}+\sigma_i\log n\big)^2\lt(\mathbb{P}\big(h_{i, \thetahat}(X_i) > \widehat{\tau} \big) 
	+ \frac{1}{n}\rt) d\log\frac{nr}{\epsilon}}
	 + \max_{i} \big(\mu_{i,\thetahat}+\sigma_i\log n\big) d\log\frac{nr}{\epsilon} 
\end{align}
holds simultaneously for all $(\thetahat,\widehat{\tau})\in \mathcal{N}_{\epsilon}^{\Theta}\times \mathcal{N}_{\epsilon}^{[-r,r]}$.

Consider an arbitrary point $\theta \in \Theta$ and $\tau\in [-r,r]$;   
let  $\thetahat$ be its closest point in $\mathcal{N}_{\epsilon}^{\theta}$, 
and take $\widehat{\tau}$ to be either $\widehat{\tau}_-$ or $\widehat{\tau}_+$.
Combining \eqref{eqn:jump-lips-up}, \eqref{eqn:jump-lips-down} and \eqref{eqn:expectation-jump} 
and using the assumption $\mathbb{P}(\cap_{i}\mathcal{E}_i)\geq 1-O(n^{-11})$ lead to: with probability at least $1-O(n^{-11})$, 
\begin{align*}
&\lt|\sum_{i = 1}^m \Big(f_{i, \theta}(X_i)\ind\lt(h_{i, \theta}(X_i) > \tau\rt) - \mathbb{E}\big[f_{i, \theta}(X_i)\ind\big(h_{i, \theta}(X_i) > \tau\big)\big]\Big)\rt| \\
&\qquad
=\lt|\sum_{i = 1}^m \Big(f_{i, \theta}(X_i)\ind\lt(h_{i, \theta}(X_i) > \tau\rt)\ind(\mathcal{E}_i) - \mathbb{E}\big[f_{i, \theta}(X_i)\ind\big(h_{i, \theta}(X_i) > \tau\big)\big]\Big)\rt| \\
&\qquad \le 
\lt|\sum_{i = 1}^m \lt(f_{i, \thetahat}(X_i)\ind\big(h_{i, \thetahat}(X_i) > \widehat{\tau}\big)\ind(\mathcal{E}_i) - \mathbb{E}\lt[f_{i, \thetahat}(X_i)\ind\big(h_{i, \thetahat}(X_i) > \widehat{\tau}\big)\ind\lt(\mathcal{E}_i\rt)\rt]\rt)\rt| \\
&\qquad\qquad+ O\bigg(mL\epsilon + mB + \sum_i \big(\mu_{i,\thetahat} + \sigma_i\log n\big)\Big[\mathbb{P}\Big(\tau-(2L+1)\epsilon \le h_{i, \thetahat}(X_i) \le \tau+(2L+1)\epsilon \Big) + \frac{1}{n}\Big]\bigg)\\
&\qquad\lesssim \sqrt{\sum_{i = 1}^m \big(\mu_{i,\thetahat}+\sigma_i\log n\big)^2\Big(\mathbb{P}\lt(h_{i, \widehat{\theta}}(X_i) > \widehat{\tau}\rt)+\frac{1}{n} \Big)d\log\frac{rn}{\epsilon}} + \max_i \big(\mu_{i,\thetahat}+\sigma_i\log n\big)d\log\frac{rn}{\epsilon} \\
&\qquad\qquad+ mL\epsilon + mB + \sum_i \big(\mu_{i,\thetahat} + \sigma_i\log n\big)
	\underbrace{\mathbb{P}\Big(\tau-(2L+1)\epsilon \le h_{i, \thetahat}(X_i) \le \tau+(2L+1)\epsilon \Big)}_{\eqqcolon\, \widehat{p}_i}, 
\end{align*}
where the last inequality follows from \eqref{eqn:sum-zi-eps-net}. 
Additionally, recognizing that $|h_{i, \thetahat}(X_i) - h_{i, \theta}(X_i)| \leq  L\epsilon$ (see \eqref{eqn:lips-jump}), 
we see that
\[
\widehat{p}_{i}\leq\mathbb{P}\Big(\tau-(3L+1)\epsilon\le h_{i,\theta}(X_{i})\le\tau+(3L+1)\epsilon\Big)\eqqcolon p_{i}.
\]

Finally, we remind the readers of the set relation~\eqref{eqn:set-relation}. Repeating the argument in \eqref{eq:E-f-indicator-1729}, we obtain
\begin{align*}
	\mathbb{P}\lt(h_{i, \widehat{\theta}}(X_i) > \widehat{\tau}\rt) &\le 
\mathbb{P}\big(h_{i, \theta}(X_i) > \tau\big) + 
\mathbb{P}\Big(\tau-(2L+1)\epsilon \le h_{i, \theta}(X_i) \le \tau+(2L+1)\epsilon \Big) \\ 
	&\leq \mathbb{P}\big(h_{i, \theta}(X_i) > \tau\big) + p_i.
\end{align*}
We also make note of the following elementary relation 
\begin{align*}
	\sqrt{\sum_{i = 1}^m \big(\mu_{i,\thetahat}+\sigma_i\log n\big)^2 p_i d\log\frac{rn}{\epsilon}} 
	&\leq 
	\sqrt{\lt\{ \max_i \big( \mu_{i,\thetahat}+\sigma_i\log n\big)d\log\frac{rn}{\epsilon} \rt\} \sum_{i = 1}^m \big(\mu_{i,\thetahat}+\sigma_i\log n\big) p_i } \\
	&\leq 
	\max_i \big(\mu_{i,\thetahat}+\sigma_i\log n\big)d\log\frac{rn}{\epsilon} 
	+
	\sum_i \big(\mu_{i,\thetahat} + \sigma_i\log n \big)p_i.
\end{align*}
Putting the above pieces together then yields 
\begin{align}
 & \lt|\sum_{i=1}^{m}\Big(f_{i,\theta}(X_{i})\ind\lt(h_{i,\theta}(X_{i})>\tau\rt)-\mathbb{E}\big[f_{i,\theta}(X_{i})\ind\big(h_{i,\theta}(X_{i})>\tau\big)\big]\Big)\rt| \notag\\
 & \qquad\lesssim\sqrt{\sum_{i=1}^{m}\lt(\mu_{i,\thetahat}+\sigma_{i}\log n\rt)^{2}\Big(\mathbb{P}\lt(h_{i,\theta}(X_{i})>\tau\rt)+p_{i}+\frac{1}{n}\Big)d\log\frac{rn}{\epsilon}}+\max_i\big(\mu_{i,\thetahat}+\sigma_{i}\log n\big)d\log\frac{rn}{\epsilon} \notag\\
 & \qquad\qquad\qquad+mL\epsilon+mB+\sum_{i}(\mu_{i,\thetahat}+\sigma_{i}\log n)p_{i} \notag\\
 & \qquad\lesssim\sqrt{\sum_{i=1}^{m}\lt(\mu_{i,\thetahat}+\sigma_{i}\log n\rt)^{2}\Big(\mathbb{P}\lt(h_{i,\theta}(X_{i})>\tau\rt)+\frac{1}{n}\Big)d\log\frac{rn}{\epsilon}}+\max\big(\mu_{i,\thetahat}+\sigma_{i}\log n\big)d\log\frac{rn}{\epsilon} \notag\\
 & \qquad\qquad\qquad+mL\epsilon+mB+\sum_{i}(\mu_{i,\thetahat}+\sigma_{i}\log n)p_{i}.
	\label{eq:sum-f-dc-abded}
\end{align}
Finally, recall that the Lipschitz continuity of $f_{i,\theta}$ w.r.t.~$\theta$ gives 
\[
	\mu_{i,\thetahat} = \mathbb{E}\big[f_{i, \thetahat}(X_i)\ind(\mathcal{E}_i)\big] 
	\leq \mathbb{E}\big[f_{i, \theta}(X_i)\ind(\mathcal{E}_i)\big] + L\epsilon 
	\le \mathbb{E}\big[f_{i, \theta}(X_i)\big] + L\epsilon .
\]
Substitution into \eqref{eq:sum-f-dc-abded} thus completes the proof.

%
%
%
%


\subsection{Proof of Lemma~\ref{lem:brahms-lemma}} 
\label{sec:pf-brahms}

For a set of random vectors $\{\phi_k\}_{k=1}^{t-1}$ independently drawn from $\mathcal{N}(0,\frac{1}{n}I_n)$, 
standard concentration results for Wishart matrices (e.g., \citet[Example 6.2]{wainwright2019high}) together with the union bound tell us that  
\begin{align}
\label{eqn:simple-rm}
	\lt\|(\phi_1, \ldots, \phi_{t-1})^{\top}(\phi_1, \ldots, \phi_{t-1}) - I_{t-1}\rt\| \lesssim \sqrt{\frac{t\log \frac{n}{\delta}}{n}},\qquad\text{for any } 1 < t \leq n 
\end{align}
with probability at least $1 - \delta$. 
Two immediate consequences of \eqref{eqn:simple-rm} are in order.
\begin{itemize}
	\item First, taking $t=2$ in \eqref{eqn:simple-rm} reveals that with probability at least $1 - \delta$, 
\[
\Big|\|\phi_{1}\|_{2}^{2}-1\Big|\lesssim\sqrt{\frac{\log \frac{n}{\delta}}{n}}\text{\ensuremath{\qquad}}\Longrightarrow\qquad\Big|\|\phi_{1}\|_{2}-1\Big|=\frac{\big|\|\phi_{1}\|_{2}^{2}-1\big|}{\big|\|\phi_{1}\|_{2}+1\big|} \leq \Big|\|\phi_{1}\|_{2}^{2}-1\Big| \lesssim\sqrt{\frac{\log \frac{n}{\delta}}{n}}.
\]	
	Clearly, this inequality holds if $\phi_1$ is replaced by any other $\phi_k$. Taking the union bound over all $1\leq k \leq n$ establishes inequality \eqref{eqn:brahms}.

	
	\item As another direct consequence of \eqref{eqn:simple-rm}, we have, with probability at least $1 - \delta$,  
\begin{align*}
	\sup_{a = [a_k]_{1\leq k< t} \in \mathcal{S}^{t-2}} 
	 \bigg| \Big\|\sum_{k = 1}^{t-1} a_k\phi_k \Big\|_2^2 - 1 \bigg| \lesssim \sqrt{\frac{t\log \frac{n}{\delta}}{n}} ,
\end{align*}
		which allows us to establish the claim \eqref{eqn:long} as follows: 
\begin{align}
\label{eqn:spect-brahms}
	 \sup_{a = [a_k]_{1\leq k< t} \in \mathcal{S}^{t-2}} 
	\bigg| \Big\|\sum_{k = 1}^{t-1} a_k\phi_k \Big\|_2 - 1 \bigg| 
	= \sup_{a = [a_k]_{1\leq k< t} \in \mathcal{S}^{t-2}} 
	 \frac{\Big| \big\|\sum_{k = 1}^{t-1} a_k\phi_k \big\|_2^2 - 1 \Big|}{ \big\|\sum_{k = 1}^{t-1} a_k\phi_k \big\|_2 + 1  }
	\lesssim \sqrt{\frac{t\log \frac{n}{\delta}}{n}}.
\end{align}

\end{itemize}

%

Next, we turn attention to the claim~\eqref{eqn:vive}. 
Here and throughout, for any vector $x\in \real^n$ and any index set $S\subseteq [n]$, 
we let $x_S$ denote the subvector of $x$ formed by the entries of $x$ at indices from $S$. 
Following the discretization argument \citep[Chapter 5]{wainwright2019high}, we can construct an $\epsilon$-net $\mathcal{N}_{\epsilon}$ on $\mathcal{S}^{t-2}$ --- which can be chosen such that its cardinality does not exceed $(3/\epsilon)^t$ \citep[Eq.~(4.10)]{vershynin2018high} --- such that for any $a\in \mathcal{S}^{t-2}$, one can find a point $\widetilde{a}\in \mathcal{N}_{\epsilon}$ such that $\|a-\widetilde{a}\|_2\leq \epsilon<1$. 

\begin{itemize}
	\item We first bound the supermum over $\mathcal{N}_{\epsilon}$. 
Note that for any fixed $a \in \mathcal{N}_\epsilon \subseteq \mathcal{S}^{t-2}$ and any subset $S \subseteq [n]$ with $|S| = s$, 
the vector $(\sum_{k = 1}^{t-1} a_k\phi_k)_S$ is a Gaussian vector drawn from $\mathcal{N}(0, \frac{1}{n}I_s)$. 
Applying \citet[Proposition 1]{hsu2012tail} then implies that
%
%
%
\[
\mathbb{P}\left\{ \bigg\|\Big(\sum_{k=1}^{t-1}a_{k}\phi_{k}\Big)_{S}\bigg\|_{2}>\frac{2}{\sqrt{n}}(\sqrt{s}+\sqrt{\tau})\right\} \leq\mathbb{P}\left\{ \bigg\|\Big(\sum_{k=1}^{t-1}a_{k}\phi_{k}\Big)_{S}\bigg\|_{2}^{2}>\frac{s}{n}+\frac{2\sqrt{s\tau}}{n}+\frac{2\tau}{n}\right\} \leq e^{-\tau}
\]
for any $\tau>0$. 
		Setting $\tau=  \log \big( \frac{1}{\delta} (\frac{3}{\epsilon})^t {n \choose s} \big)$ and combining this inequality with the union bound over all $a\in \mathcal{N}_{\epsilon}$ and all $S\subseteq [n]$ with $|S|=s$  lead to
\begin{align*}
 & \mathbb{P}\Bigg\{\sup_{\substack{a\in\mathcal{N}_{\epsilon}\\
S\subset[n],|S|=s
}
}\bigg\|\Big(\sum_{k=1}^{t-1}a_{k}\phi_{k}\Big)_{S}\bigg\|_{2}>\frac{2\sqrt{s}}{\sqrt{n}}+\frac{2}{\sqrt{n}}\sqrt{\log\left(\frac{1}{\delta}\Big(\frac{3}{\epsilon}\Big)^{t}{n \choose s}\right)}\Bigg\}\\
 & \qquad\leq\sum_{\substack{a\in\mathcal{N}_{\epsilon}\\
S\subset[n],|S|=s
}
}\mathbb{P}\Bigg\{\bigg\|\Big(\sum_{k=1}^{t-1}a_{k}\phi_{k}\Big)_{S}\bigg\|_{2}>\frac{2\sqrt{s}}{\sqrt{n}}+\frac{2}{\sqrt{n}}\sqrt{\log\bigg(\frac{1}{\delta}\Big(\frac{3}{\epsilon}\Big)^{t}{n \choose s}\bigg)}\Bigg\}\\
 & \qquad\leq\Big(\frac{3}{\epsilon}\Big)^{t}{n \choose s}\exp\left(-\log\bigg(\frac{1}{\delta}\Big(\frac{3}{\epsilon}\Big)^{t}{n \choose s}\bigg)\right)
	\leq \delta. 
\end{align*}
		Taking $\epsilon = (\delta/n)^{10}$ and using ${n \choose s} \leq n^s$ imply that: with probability exceeding $1-\delta$, 
\begin{equation}
\sup_{\substack{a\in\mathcal{N}_{\epsilon}\\
S\subset[n],|S|=s
}
	}\bigg\|\Big(\sum_{k=1}^{t-1}a_{k}\phi_{k}\Big)_{S}\bigg\|_{2}\lesssim\frac{\sqrt{s \log\frac{n}{\delta}}}{\sqrt{n}}+\frac{\sqrt{t\log \frac{n}{\delta}}}{\sqrt{n}}\label{eq:sup-Neps-a-S-sum}
\end{equation}

\item Next, consider an arbitrary vector $a\in \mathcal{S}^{t-2}$ and let $\widetilde{a}\in \mathcal{N}_{\epsilon}$ obey $\|a-\widetilde{a}\|_2\leq \epsilon= (\delta/n)^{10}$. 
	Then \eqref{eq:sup-Neps-a-S-sum} together with the triangle inequality tells us that with probability exceeding $1-\delta$, 
	\begin{align*}
\bigg\|\Big(\sum_{k=1}^{t-1}a_{k}\phi_{k}\Big)_{S}\bigg\|_{2} & \leq\bigg\|\Big(\sum_{k=1}^{t-1}\widetilde{a}_{k}\phi_{k}\Big)_{S}\bigg\|_{2}+\bigg\|\Big(\sum_{k=1}^{t-1}(a_{k}-\widetilde{a}_{k})\phi_{k}\Big)_{S}\bigg\|_{2}\\
		& \lesssim\frac{\sqrt{s\log \frac{n}{\delta}}}{\sqrt{n}}+\frac{\sqrt{t\log \frac{n}{\delta}}}{\sqrt{n}}+\|a-\widetilde{a}\|_{2}\Big\|\Big[\phi_{1},\cdots,\phi_{t-1}\Big]\Big\|\\
		& \asymp\frac{\sqrt{s\log \frac{n}{\delta}}}{\sqrt{n}}+\frac{\sqrt{t\log \frac{n}{\delta}}}{\sqrt{n}},
\end{align*}
		where the last line holds since $\|a-\widetilde{a}\|_2\leq (\delta/n)^{10}$ and, with probability exceeding $1-\delta$, $\big\|\big[\phi_{1},\cdots,\phi_{t-1}\big]\big\|\leq \sqrt{t/\delta}$ \citep[Chapter 4.4]{vershynin2018high}. Given that $a$ can be an arbitrary vector lying within $\mathcal{S}^{t-2}$, we have concluded the proof of the claim~\eqref{eqn:vive}.

\end{itemize}

\subsection{Proof of Lemma~\ref{lem:wasserstein}}
\label{sec:proof-wasserstein}



Recall the definition \eqref{eqn:wasserstein-p} of the Wasserstein metric between to probability measures.
In view of the celebrated Kantorovich-Rubinstein duality, the 1-Wasserstein distance admits the following dual representation:
\begin{align}
	W_1(\mu, \nu) = \sup \Big\{\mathbb{E}_{\mu}[f] - \mathbb{E}_{\nu}[f] : f \text{ is } 1\text{-Lipschitz}\Big\}, 
	\label{eq:K-R-duality}
\end{align}
which is the key to establishing this lemma. 


Let us start by considering any given $1$-Lipschitz function $f$. 
It is assumed without loss of generality that $f(0) = 0$ (as the expression \eqref{eq:K-R-duality} only involves the difference of $f$), which together with the 1-Lipschitz property gives 
\begin{align}
	|f(x)| = |f(x) - f(0)| \le \|x\|_2 .
	\label{eq:fx-size-norm2}
\end{align}
For any fixed unit vector $\widetilde{\beta}=[\widetilde{\beta}_k]_{1\leq k\leq t}\in \mathcal{S}^{t-1}$, the vector
$\sum_{i = 1}^t \widetilde{\beta}_k \phi_k$ clearly follows a Gaussian distribution $\mathcal{N}(0, \frac{1}{n}I_n)$.  
Applying Gaussian isoperimetric inequalities (e.g., \citet[Theorem 3.8]{massart2007concentration}) yields
\begin{align}
\label{eqn:blues}
	f\Big(\sum_{i = 1}^t \widetilde{\beta}_k \phi_k\Big) - \mathop{\mathbb{E}}\limits_{g \sim \mathcal{N}(0, \frac{1}{n}I_n)}\big[f(g)\big] 
	\le \sqrt{\frac{2\log\frac{1}{\delta}}{n}} 
\end{align}
with probability at least $1 - \delta$. 
Next, let us construct an $\epsilon$-net $\mathcal{N}_{\epsilon}$ of $\mathcal{S}^{t-1}$ with cardinality not exceeding $(2/\epsilon )^t$, 
such that for any $\widehat{\beta} \in  \mathcal{S}^{t-1}$, one can find a point $\widetilde{\beta}\in \mathcal{N}_{\epsilon}$ obeying $\|\widehat{\beta} - \widetilde{\beta}\|_2\leq \epsilon$. 
Taking the above inequality with the union bound over $\mathcal{N}_{\epsilon}$ then leads to
\begin{align*}
\sup_{\widetilde{\beta}\in\mathcal{N}_{\epsilon}}\bigg\{ f\Big(\sum_{i=1}^{t}\widetilde{\beta}_{k}\phi_{k}\Big)-\mathop{\mathbb{E}}\limits _{g\sim\mathcal{N}(0,\frac{1}{n}I_{n})}\big[f(g)\big]\bigg\} & \lesssim\sqrt{\frac{t\log n}{n}}
\end{align*}
with probability at least $1-O(n^{-11})$. 
Armed with this result, for an arbitrary $\widehat{\beta} \in  \mathcal{S}^{t-1}$ one can show that
\begin{align*}
f\Big(\sum_{i=1}^{t}\widehat{\beta}_{k}\phi_{k}\Big)-\mathop{\mathbb{E}}\limits _{g\sim\mathcal{N}(0,\frac{1}{n}I_{n})}\big[f(g)\big] & \leq\Bigg\{ f\Big(\sum_{i=1}^{t}\widetilde{\beta}_{k}\phi_{k}\Big)-\mathop{\mathbb{E}}\limits _{g\sim\mathcal{N}(0,\frac{1}{n}I_{n})}\big[f(g)\big]\Bigg\}+f\Big(\sum_{i=1}^{t}\widehat{\beta}_{k}\phi_{k}\Big)-f\Big(\sum_{i=1}^{t}\widetilde{\beta}_{k}\phi_{k}\Big)\\
 & \lesssim\sqrt{\frac{t\log n}{n}}+\bigg\|\sum_{i=1}^{t}\widehat{\beta}_{k}\phi_{k}-\sum_{i=1}^{t}\widetilde{\beta}_{k}\phi_{k}\bigg\|_{2}\asymp\sqrt{\frac{t\log n}{n}}+\bigg\|\sum_{i=1}^{t}\big(\widetilde{\beta}_{k}-\widehat{\beta}_{k}\big)\phi_{k}\bigg\|_{2}\\
 & \lesssim\sqrt{\frac{t\log n}{n}}+\big\|\widetilde{\beta}-\widehat{\beta}\big\|_{2}\big\|\left[\phi_{1},\cdots,\phi_{t}\right]\big\|\lesssim\sqrt{\frac{t\log n}{n}}+\epsilon\,\big\|\left[\phi_{1},\cdots,\phi_{t}\right]\big\|\\
 & \lesssim\sqrt{\frac{t\log n}{n}}+\frac{\sqrt{t}}{n^{5}}\asymp\sqrt{\frac{t\log n}{n}}
\end{align*}
with probability at least $1-O(n^{-11})$, 
where the second line results from the 1-Lipschitz property of $f$, 
and the last line takes $\epsilon = 1 / n^5$ and invokes standard random matrix theory \citep[Chapter 4.4]{vershynin2018high} 
that asserts 
\begin{equation}
	\mathbb{P} \Big\{ \big\|\left[\phi_{1},\cdots,\phi_{t}\right]\big\| \leq C_8 \sqrt{t} \Big\} \geq 1-O(n^{-11})
\end{equation}
for some constant $C_8>0$. 
Given that the above inequality holds simultaneously for all $\widehat{\beta} \in  \mathcal{S}^{t-1}$, 
we have
\begin{align}
\sup_{\widehat{\beta}=[\widehat{\beta}_{k}]_{1\leq k\leq t}\in\mathcal{S}^{t-1}}\bigg\{ f\Big(\sum_{i=1}^{t}\widehat{\beta}_{k}\phi_{k}\Big)-\mathop{\mathbb{E}}\limits _{g\sim\mathcal{N}(0,\frac{1}{n}I_{n})}\big[f(g)\big]\bigg\} & \leq C_7\sqrt{\frac{t\log n}{n}}\label{eq:sup-f-beta-phi-Ef}
\end{align}
with probability exceeding $1-O(n^{-11})$, where $C_7>0$ is some universal constant.

Next, we would like to use \eqref{eq:sup-f-beta-phi-Ef} to bound $\mathbb{E}\big[f\big(\sum_{i=1}^{t}\beta_{k}\phi_{k}\big)\big]$. 
Let us define the following event:
\begin{align*}
\mathcal{E}_{1} & \coloneqq\left\{ f\Big(\sum_{i=1}^{t}\beta_{k}\phi_{k}\Big)\leq\mathop{\mathbb{E}}\limits _{g\sim\mathcal{N}(0,\frac{1}{n}I_{n})}\big[f(g)\big]+C_{7}\sqrt{\frac{t\log n}{n}}\right\} ,
\end{align*}
which clearly obeys 
$
	\mathbb{P}(\mathcal{E}_1)\geq 1-O(n^{-11}) .
$
One can then decompose
\begin{align}
\mathbb{E}\bigg[f\Big(\sum_{i=1}^{t}\beta_{k}\phi_{k}\Big)\bigg] & =\mathbb{E}\bigg[f\Big(\sum_{i=1}^{t}\beta_{k}\phi_{k}\Big)\ind(\mathcal{E}_{1})\bigg]+\mathbb{E}\bigg[f\Big(\sum_{i=1}^{t}\beta_{k}\phi_{k}\Big)\ind(\mathcal{E}_{1}^{\mathrm{c}})\bigg]
	\label{eq:Exp-f-beta-phi-decompose}
\end{align}
The first term on the right-hand side of \eqref{eq:Exp-f-beta-phi-decompose} can be controlled as follows: 
\begin{align*}
\mathbb{E}\bigg[f\Big(\sum_{i=1}^{t}\beta_{k}\phi_{k}\Big)\ind(\mathcal{E}_{1})\bigg] & \leq\mathbb{E}\bigg[\bigg\{\mathop{\mathbb{E}}\limits _{g\sim\mathcal{N}(0,\frac{1}{n}I_{n})}\big[f(g)\big]+C_{7}\sqrt{\frac{t\log n}{n}}\bigg\}\ind(\mathcal{E}_{1})\bigg]\\
 & \leq\mathop{\mathbb{E}}\limits _{g\sim\mathcal{N}(0,\frac{1}{n}I_{n})}\big[f(g)\big]+C_{7}\sqrt{\frac{t\log n}{n}}+\Bigg|\mathop{\mathbb{E}}\limits _{g\sim\mathcal{N}(0,\frac{1}{n}I_{n})}\big[f(g)\big]+C_{7}\sqrt{\frac{t\log n}{n}}\,\Bigg|\,\mathbb{P}(\mathcal{E}_{1}^{\mathrm{c}})\\
 & \leq\mathop{\mathbb{E}}\limits _{g\sim\mathcal{N}(0,\frac{1}{n}I_{n})}\big[f(g)\big]+O\left(\sqrt{\frac{t\log n}{n}}\right).
\end{align*}
Here, the last line holds since $\mathbb{P}(\mathcal{E}_{1}^{\mathrm{c}})\leq O(n^{-11})$ and
\[
\bigg|\mathop{\mathbb{E}}\limits _{g\sim\mathcal{N}(0,\frac{1}{n}I_{n})}\big[f(g)\big]\bigg|\leq\mathop{\mathbb{E}}\limits _{g\sim\mathcal{N}(0,\frac{1}{n}I_{n})}\left[\|g\|_{2}\right]\leq1+\mathop{\mathbb{E}}\limits _{g\sim\mathcal{N}(0,\frac{1}{n}I_{n})}\left[\|g\|_{2}^{2}\right]=2,
\]
where the first inequality arises from \eqref{eq:fx-size-norm2}. 
When it comes to the second term on the right-hand side of \eqref{eq:Exp-f-beta-phi-decompose}, 
we make the observation that
\begin{align*}
\mathbb{E}\bigg[f\Big(\sum_{i=1}^{t}\beta_{k}\phi_{k}\Big)\ind(\mathcal{E}_{1}^{\mathrm{c}})\bigg] & \leq\mathbb{E}\bigg[\bigg\|\sum_{i=1}^{t}\beta_{k}\phi_{k}\bigg\|_{2}\ind(\mathcal{E}_{1}^{\mathrm{c}})\bigg]\leq\mathbb{E}\bigg[\|\beta\|_{2} \cdot \big\|\big[\phi_{1},\cdots,\phi_{t}\big]\big\|\ind(\mathcal{E}_{1}^{\mathrm{c}})\bigg]\\
 & =\mathbb{E}\bigg[\big\|\big[\phi_{1},\cdots,\phi_{t}\big]\big\|\ind(\mathcal{E}_{1}^{\mathrm{c}})\bigg]\leq\mathbb{E}\Big[\big\|\big[\phi_{1},\cdots,\phi_{t}\big]\big\|_{\mathrm{F}}\ind(\mathcal{E}_{1}^{\mathrm{c}})\Big]\\
 & \leq\sqrt{\mathbb{E}\bigg[\big\|\big[\phi_{1},\cdots,\phi_{t}\big]\big\|_{\mathrm{F}}^{2}\bigg]}\sqrt{\mathbb{E}\big[\ind(\mathcal{E}_{1}^{\mathrm{c}})\big]}\\
 & \leq\sqrt{t}\cdot O(n^{-11})\leq O(n^{-10}), 
\end{align*}
where the first inequality comes from \eqref{eq:fx-size-norm2}, the second line is valid since $\|\beta\|_2=1$, and the third line invokes the Cauchy-Schwarz inequality.  
Substituting the above two inequalities into \eqref{eq:Exp-f-beta-phi-decompose}, we obtain
\begin{align}
\mathbb{E}\bigg[f\Big(\sum_{i=1}^{t}\beta_{k}\phi_{k}\Big)\bigg] & 
	\leq\mathop{\mathbb{E}}\limits _{g\sim\mathcal{N}(0,\frac{1}{n}I_{n})}\big[f(g)\big]+O\left(\sqrt{\frac{t\log n}{n}}\right).
	\label{eqn:w1-arbitrary-f}
\end{align}

To finish up, combine \eqref{eqn:w1-arbitrary-f} with \eqref{eq:K-R-duality} to arrive at
\[
W_{1}\bigg(\mu\Big(\sum_{i=1}^{t}\beta_{k}\phi_{k}\Big),\mathcal{N}\Big(0,\frac{1}{n}I_{n}\Big)\bigg)=\sup\bigg\{\mathbb{E}\bigg[f\Big(\sum_{i=1}^{t}\beta_{k}\phi_{k}\Big)\bigg]-\mathbb{E}_{g\sim\mathcal{N}(0,\frac{1}{n}I_{n})}\big[f(g)\big]:f\text{ is }1\text{-Lipschitz}\bigg\}\lesssim \sqrt{\frac{t\log n}{n}}.
\]

\section{Proof of auxiliary lemmas for master theorems (Theorems~\ref{thm:recursion}-\ref{thm:main})}


\subsection{Proof of Lemma~\ref{lem:distribution}}
\label{sec:pf-distribution}

Before embarking on the proof, let us introduce some notation and basic properties. 
Recall that $\{z_k\}_{k\leq t}$ are orthonormal (see Lemma~\ref{lemma:zk-orthonormal}) and $U_{t-1}=[z_1,\cdots,z_{t-1}] \in \real^{n\times (t-1)}$. 
For any $1\leq k< n$, 
we let $U_k^{\perp}\in \real^{n\times (n-k)}$ represent the orthogonal complement of $U_k$ (such that  $U_k^{\top} U_k^{\perp} = 0$ and $U_k^{\perp\top}U_k^{\perp}=I_{n-k}$). 
We also define the projection of $W_{k+1}$ onto $U_k^{\perp}$ as follows
\begin{align}
	\widetilde{W}_{k+1} &\defn U_k^{\perp \top} W_{k+1} U_k^{\perp} 
\end{align}
which together with the construction \eqref{eqn:Wt} clearly satisfies
\begin{align}
	\widetilde{W}_{k+1} = U_k^{\perp \top} (I_n - z_{k}z_{k}^{\top}) W_{k}(I_n - z_{k}z_{k}^{\top}) U_k^{\perp} 
	= U_k^{\perp \top}  W_{k} U_k^{\perp} = \cdots  
	= U_k^{\perp \top} W U_k^{\perp} \in \real^{(n-k)\times (n-k)}.
	\label{eq:tilde-W-k-W-relation}
\end{align}
In view of the construction, we also have
\begin{align}
	W_{k+1} &= \lt(I_n - z_{k}z_{k}^{\top}\rt)W_{k}\lt(I_n - z_{k}z_{k}^{\top}\rt) = \cdots = \lt( I_n - U_k U_k^{\top} \rt) W \lt( I_n - U_k U_k^{\top} \rt) \notag\\
	 &= U_k^{\perp} U_k^{\perp \top} W U_k^{\perp} U_k^{\perp \top}   = U_k^{\perp} \widetilde{W}_{k+1} U_k^{\perp\top}. 
	 \label{eq:W-k-tilde-W-relation}
\end{align}

To establish Lemma~\ref{lem:distribution}, the first step lies in proving the following claim. 
In the sequel, let us prove this crucial claim first before moving on to the next step. 
\begin{claim} \label{claim:independence}
 	Consider any $2\leq k\leq n$. Conditional on $\{z_i\}_{i < k}$ and $x_1$, the following hold:
	\begin{itemize}
		\item $\widetilde{W}_k$ is a (rescaled) Wigner matrix in the sense that its entries $\big\{(\widetilde{W}_k)_{ij} \mid i\geq j \big\}$ are independent obeying
			\begin{align}
				(\widetilde{W}_k)_{ii} \sim \mathcal{N}\Big(0,\frac{2}{n}\Big)
				\qquad \text{and} \qquad 
				(\widetilde{W}_k)_{ij} = (\widetilde{W}_k)_{ji} \sim \mathcal{N}\Big(0,\frac{1}{n}\Big) \quad \text{for any }i>j ;
				\label{eq:proj-Wk-Wigner}
			\end{align}
		\item $W_k$ is conditionally independent of $\{W_iz_i\}_{i < k}$;
		\item the randomness of $x_k$ and $z_{k}$ comes purely from that of $\{W_iz_i\}_{i < k}$ and $x_1$, and hence $x_k$ and $z_{k}$ are conditionally independent of $W_k$.
	\end{itemize}
 \end{claim}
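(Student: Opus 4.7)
My plan is to establish the three properties jointly by induction on $k$, with the repeated use of two ingredients: (i) the orthogonal invariance of the Wigner/GOE distribution, which says that for any deterministic matrix $V$ with orthonormal columns, $V^\top W V$ is a rescaled Wigner matrix, and that the three blocks obtained from any orthogonal splitting of $W$ are mutually independent Gaussians; and (ii) the fact that a conditional independence statement survives additional conditioning on a quantity that is itself conditionally independent of one of the variables on the left-hand side.

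\emph{Base case ($k=2$)}: Since $x_1$ and therefore $z_1 = \eta_1(x_1)/\|\eta_1(x_1)\|_2$ are independent of $W$, conditioning on $(x_1, z_1)$ leaves $W$ with its original GOE law. Splitting $W$ into the three blocks $z_1^\top W z_1$, $U_1^{\perp\top}Wz_1$ and $\widetilde{W}_2 = U_1^{\perp\top}WU_1^{\perp}$, the rotational invariance of GOE yields that these blocks are independent and $\widetilde{W}_2$ is itself a rescaled Wigner matrix on $\real^{n-1}$, proving (a). Since $W_2 = U_1^\perp \widetilde{W}_2 U_1^{\perp\top}$ depends only on the third block while $W_1 z_1 = Wz_1$ depends only on the first two, (b) follows. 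Finally, by the AMP recursion $x_2$ is a deterministic function of $v^\star$, $x_1$ and $W_1 z_1$, hence conditionally independent of $W_2$; this gives (c).

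\emph{Inductive step}: Assume the claim at level $k$. Writing $z_k = U_{k-1}^\perp \tilde{z}_k$ for some $\tilde{z}_k \in \mathcal{S}^{n-k}$ and letting $\tilde{U} \in \real^{(n-k+1)\times(n-k)}$ be any orthonormal basis for $\tilde{z}_k^\perp$, we have $U_k^\perp = U_{k-1}^\perp \tilde{U}$ and
\[
\widetilde{W}_{k+1} \;=\; \tilde{U}^\top \widetilde{W}_k \tilde{U}.
\]
Part (c) of the inductive hypothesis makes $\tilde{z}_k$ conditionally independent of $\widetilde{W}_k$ given $\{z_i\}_{i<k}$ and $x_1$, so further conditioning on $z_k$ preserves the Wigner law of $\widetilde{W}_k$. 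Applying GOE rotational invariance in $\real^{n-k+1}$ then gives (a) at level $k+1$ together with the key fact that the three blocks $\tilde{z}_k^\top\widetilde{W}_k\tilde{z}_k$, $\tilde{U}^\top\widetilde{W}_k\tilde{z}_k$ and $\widetilde{W}_{k+1}$ are mutually conditionally independent. For (b), note that
\[
W_k z_k \;=\; U_{k-1}^\perp \widetilde{W}_k \tilde{z}_k \;=\; z_k\bigl(\tilde{z}_k^\top \widetilde{W}_k \tilde{z}_k\bigr) + U_{k-1}^\perp \tilde{U}\bigl(\tilde{U}^\top \widetilde{W}_k \tilde{z}_k\bigr)
\]
involves only the first two blocks, so $W_k z_k$ is conditionally independent of $\widetilde{W}_{k+1}$ (hence of $W_{k+1}$); for $i<k$, the inductive (b) gives $\{W_i z_i\}_{i<k} \perp\!\!\!\perp \widetilde{W}_k$ and (c) gives $z_k \perp\!\!\!\perp \widetilde{W}_k$, so since $W_{k+1}$ is a measurable function of $\widetilde{W}_k$ and $z_k$, it is conditionally independent of $\{W_iz_i\}_{i<k}$ as well. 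Part (c) at level $k+1$ then follows from the already-established expansion \eqref{eqn:xt-by-Wkzk}, which displays $x_{k+1}$ (and therefore $z_{k+1}$) as a deterministic function of $v^\star$, $\{z_i\}_{i\le k}$, $\{W_i z_i\}_{i\le k}$ and $\{x_i\}_{i\le k}$, all of which are conditionally independent of $W_{k+1}$ by part (b) just proved and the inductive hypothesis.

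The main obstacle is the careful bookkeeping of conditional independences when propagating from level $k$ to $k+1$: although $z_k$ is itself built from the randomness of $W$, we need to argue that conditioning on $z_k$ does not alter the Wigner law of $\widetilde{W}_k$. This is precisely where inductive property (c) is indispensable, and the rest of the step reduces to a standard orthogonal-block decomposition of a GOE matrix.
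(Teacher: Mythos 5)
Your proof is correct and follows essentially the same route as the paper's: an induction on $k$ that combines rotational invariance of the (rescaled) Wigner law with the observation that, by inductive property (c), conditioning additionally on $z_k$ neither changes the conditional law of $\widetilde{W}_k$ nor breaks the independence between $W_k$ and $\{W_iz_i\}_{i<k}$; your explicit three-block decomposition of $\widetilde{W}_k$ along $\tilde{z}_k$ is just a more spelled-out version of the paper's use of \eqref{eq:tilde-W-k-W-relation}--\eqref{eq:W-k-tilde-W-relation}. One small point of hygiene: when arguing that $W_{k+1}\perp\{W_iz_i\}_{i<k}$ after adding $z_k$ to the conditioning, you should invoke the full strength of (c) (that $z_k$ is, conditionally, a measurable function of $\{W_iz_i\}_{i<k}$ and $x_1$) rather than only the pairwise conditional independences $z_k\perp\widetilde{W}_k$ and $\{W_iz_i\}_{i<k}\perp\widetilde{W}_k$, which by themselves would not justify the conclusion.
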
 
\begin{proof}[Proof of Claim~\ref{claim:independence}] 
The proof of this claim proceeds via an inductive argument. 

\paragraph{The base case with $k=2$.}
Consider first the  case when $k = 2$. In view of the definition~\eqref{eqn:Wt}, we have 
\begin{align*}
	W_2 &= \big(I - z_{1}z_{1}^{\top}\big)W \big(I - z_{1}z_{1}^{\top}\big)  
\end{align*}
	where $z_1$ is independent from $W$. Let $z_1^{\perp}=\real^{n\times (n-1)}$ denote the orthogonal complement of $z_1$ (so that $z_1^{\top}z_1^{\perp}=0$ and $z_1^{\perp\top}z_1^{\perp}=I_{n-1}$), and define the projection of $W_2$ onto $z_1^{\perp}$ (see \eqref{eq:tilde-W-k-W-relation}) obeys: 
\begin{equation}
	\widetilde{W}_2 = z_1^{\perp \top} W_2 z_1^{\perp} = z_1^{\perp \top} W z_1^{\perp} 
	\overset{\mathrm{d}}{=}  e_1^{\perp \top} W e_1^{\perp}, 
\end{equation}
%
where the last relation arises from the rotational invariance of the Wigner matrix (with $e_1$ denoting the first standard basis vector). 
Therefore, it is readily seen that: conditioned on $z_{1}$, 
\begin{itemize}
	\item $\widetilde{W}_{2}$ is a (rescaled) Wigner matrix in $\real^{(n-1)\times (n-1)}$ obeying \eqref{eq:proj-Wk-Wigner};
	\item  $\widetilde{W}_{2}$ --- and hence $W_{2}$ --- is statistically independent from $Wz_1$. 

\end{itemize}
In addition,  recalling the update rule \eqref{eqn:AMP-updates}, the definition \eqref{eqn:z-w-init} of $z_1$ and the assumption $\eta_0(x_0)=0$, we have
\begin{align*}
	x_2 
	&= \lambda\vstar v^{\star \top}\eta_1(x_1) + W\eta_1(x_1)
	= \big( \lambda v^{\star \top}z_1 \ltwo{\eta_1(x_1)} \big) \cdot \vstar+  \ltwo{\eta_1(x_1)} \cdot Wz_1,
\end{align*}
where the last step relies on the definition \eqref{eqn:z-w-init} of $z_{1}$. 
Given that $z_1$ is fully determined by $x_1$, 
we see that the randomness of $x_{2}$ --- and hence that of $z_2$ --- comes entirely from $Wz_{1}$ and $x_1$. 
We have thus established the advertised claim for the case with $k=2.$

\paragraph{The induction step.} 
Next, assuming that the claim holds for all step $i$ with $i\leq k$, let us extend it to the $(k+1)$-th step. 
To begin with, the inductive assumption tells us that: conditional on $\{z_i\}_{i < k}$ and $x_1$,   
\begin{itemize}
	\item[(i)] $W_k$ is independent of $\{W_iz_i\}_{i < k}$;
	\item[(ii)] the randomness of $z_k$ purely comes from $\{W_iz_i\}_{i < k}$, and hence $W_k$ is also independent of $z_k$. 
\end{itemize}
Taking these two conditions together reveals that:  
if we condition on  $\{z_i\}_{i \leq k}$ and $x_1$ (namely, we condition on an additional variable $z_k$ compared to the  above induction hypothesis), 
then clearly $W_k$ is still independent of $\{W_iz_i\}_{i < k}$. 
Recalling that 
\begin{equation}
	W_{k+1} = \lt(I_n - z_{k}z_{k}^{\top}\rt)W_{k}\lt(I_n - z_{k}z_{k}^{\top}\rt),
	\label{eq:expression-Wk+1-Wk}
\end{equation}
we can readily conclude that: conditioned on $\{z_i\}_{i \leq k}$ and $x_1$, 
\begin{itemize}
	\item $W_{k+1}$ is also independent of $\{W_iz_i\}_{i < k}$, 
given the conditional independence between $W_k$ and $\{W_iz_i\}_{i < k}$ and the fact that $z_k$ is being conditioned now; 
	\item 
		$W_{k+1}$ is independent of $\{W_iz_i\}_{i < k}$. 
\end{itemize}
As a result, in order to show that $W_{k+1}$ is conditionally independent from $\{W_iz_i\}_{i \leq k}$, 
it suffices to justify that it is conditionally independent from $W_kz_k$, which we shall accomplish next.

 Recall that $\widetilde{W}_k$ is a rescaled Wigner matrix independent of $z_k$ (see Property (ii) above) when conditioned on  $\{z_i\}_{i < k}$ and $x_1$. 
 Akin to the argument for the base case, the rotational invariance of the Wigner matrix together with  expression \eqref{eq:expression-Wk+1-Wk} tells us that: 
conditional on $\{z_i\}_{i \leq k}$ and $x_1$, 
\begin{itemize}
	\item $\widetilde{W}_{k+1}$ is a (rescaled) Wigner matrix in $\real^{(n-k)\times (n-k)}$ obeying \eqref{eq:proj-Wk-Wigner};
	\item  $\widetilde{W}_{k+1}$ --- and hence $W_{k+1}$ --- is statistically independent from $W_kz_k$.
\end{itemize}
We can thus conclude that: conditional on $\{z_i\}_{i \leq k}$ and $x_1$, 
both $W_{k+1}$ and $\widetilde{W}_{k+1}$ are independent from $\big\{ W_iz_i\big\}_{1\leq i\leq k}$. 

%
%
%
%
%
In addition, given the AMP update rule \eqref{eqn:AMP-updates}, it is legitimate to write
\begin{align*}
	x_{k+1} &= (\lambda\vstar v^{\star \top} + W)\eta_k(x_{k}) - \big\langle\eta_k^{\prime}(x_{k})\big\rangle \cdot \eta_{k-1}(x_{k-1})\\
	&= \lambda\vstar v^{\star \top}\eta_k(x_{k}) + \sum_{i = 1}^{k} \beta_{k}^i W_iz_i + \sum_{i = 1}^{k-1} z_i\Big[\langle W_iz_i, \eta_{k}(x_{k})\rangle - \langle\eta_k^{\prime}(x_{k})\rangle \beta_{k-1}^i - \beta_{k}^iz_i^{\top}W_iz_i\Big],
\end{align*}
where the last equality follows from expression~\eqref{eqn:xt-by-Wkzk}. 
Clearly,  $x_{k+1}$ is determined by $x_k$, $\big\{W_iz_i\big\}_{i\leq k}$, and $\{z_i\}_{i\leq k}$ (given that $\beta_k^i$ is also determined by $z_i$ and $x_k$), 
in addition to other deterministic objects. 
Moreover,  our induction hypothesis asserts that the randomness of $x_k$ and $z_k$ all comes from $\big\{W_iz_i\big\}_{i< k}$ and $x_1$.  
Consequently, these taken collectively imply that all randomness of $x_{k+1}$ (and hence $z_{k+1}$) comes from $\{W_iz_i\}_{i \leq k}$  and $x_1$.
We have thus established the claim for step $k+1.$
To finish up, applying the inductive argument concludes the proof of Claim~\ref{claim:independence}. 
\end{proof}

Armed with  the results in Claim~\ref{claim:independence}, we can characterize the conditional distribution of $W_kz_k$. 
Given that the $z_i$'s are orthonormal (cf.~Lemma~\ref{lemma:zk-orthonormal}), we can apply Claim~\ref{claim:independence} to show that: 
conditional on $\{z_i\}_{1\leq i< k}$ and $x_1$,  
\begin{subequations}
\label{defn:zWz}	
\begin{align}
z_{i}^{\top}W_{k}z_{k} & =z_{i}^{\top}U_{k-1}^{\perp}U_{k-1}^{\perp\top}WU_{k-1}^{\perp}U_{k-1}^{\perp\top}z_{k}=0\qquad\qquad\text{for }i< k,
	\label{defn:zWz-i-less-k}\\
z_{k}^{\top}W_{k}z_{k} & =z_{k}^{\top}U_{k-1}^{\perp}U_{k-1}^{\perp\top}WU_{k-1}^{\perp}U_{k-1}^{\perp\top}z_{k}=\left(U_{k-1}^{\perp\top}z_{k}\right)^{\top}\widetilde{W}_{k}\left(U_{k-1}^{\perp\top}z_{k}\right)\overset{\mathrm{d}}{=}e_{1}^{\top}\widetilde{W}_{k}e_{1}\sim\mathcal{N}\Big(0,\frac{2}{n}\Big), \\
U_{k}^{\perp\top}W_{k}z_{k} & =U_{k}^{\perp\top}U_{k-1}^{\perp}U_{k-1}^{\perp\top}WU_{k-1}^{\perp}U_{k-1}^{\perp\top}z_{k}=\left(U_{k-1}^{\perp\top}U_{k}^{\perp}\right)^{\top}\widetilde{W}_{k}\left(U_{k-1}^{\perp\top}z_{k}\right)\sim\mathcal{N}\Big(0,\frac{1}{n}I_{n-k}\Big),
\end{align}
\end{subequations}
 where we have made use of the fact in Claim~\ref{claim:independence} that, conditional on $\{z_i\}_{1\leq i< k}$ and $x_1$, 
 $\widetilde{W}_{k}$ is a rescaled Wigner matrix independent of $z_k$.   
Therefore, if we generate i.i.d.~Gaussian random variables $g_i^k \sim \mathcal{N}(0,\frac{1}{n})$ for all $i < k$,  
then conditional on $\{z_i\}_{i \leq k}$ and $x_1$, it follows that
\begin{align}
	\phi_{k} & \defn W_{k}z_{k}+\Big(\frac{\sqrt{2}}{2}-1\Big)z_{k}^{\top}W_{k}z_{k}\cdot z_{k}+\sum_{i=1}^{k-1}g_{i}^{k}z_{i} \label{eq:defn-phi-k-proof}\\
 & =\bigg(\sum_{i=1}^{k}z_{i}z_{i}^{\top}\bigg)W_{k}z_{k}+\big(U_{k}^{\perp}U_{k}^{\perp\top}\big)W_{k}z_{k}+\Big(\frac{\sqrt{2}}{2}-1\Big)z_{k}^{\top}W_{k}z_{k}\cdot z_{k}+\sum_{i=1}^{k-1}g_{i}^{k}z_{i} \notag\\
 & =\frac{\sqrt{2}}{2}\left(z_{k}^{\top}W_{k}z_{k}\right)z_{k}+\sum_{i=1}^{k-1}g_{i}^{k}z_{i}+U_{k}^{\perp}\big(U_{k}^{\perp\top}W_{k}z_{k}\big) \notag\\
	& \sim\mathcal{N}\Big(0,\frac{1}{n}I_{n}\Big). \label{eq:phi-k-distribution}
\end{align}
 Here, the penultimate line makes use of \eqref{defn:zWz-i-less-k} and a little algebra, 
 whereas the last line is valid since, along each basis direction (i.e., $z_1,\cdots,z_k$ and each column of $U_k^{\perp}$), the projection of $\phi_k$ is independent $\mathcal{N}(0,1/n)$. 
 In fact, \eqref{eq:phi-k-distribution} tells us that the conditional distribution of $\phi_{k}$ is always $\mathcal{N}(0, \frac{1}{n}I_n)$ no matter what value the sequence $\{z_i\}_{i \leq k}$ takes, thus indicating the (unconditional) distribution of $\phi_{k}$ as follows: 
 \begin{align}
\phi_{k}  \sim\mathcal{N}\Big(0,\frac{1}{n}I_{n}\Big). \label{eq:phi-k-distribution-marginal}
\end{align}
%


Finally, we demonstrate that $\{\phi_i\}_{1 \le i \leq k}$ are independent. 
To this end, we first observe that $\phi_{k}$ is independent of $\{z_i\}_{i < k}$ and $x_1$, 
which is an immediate consequence of the conditional distribution derivation \eqref{eq:phi-k-distribution}. 
Further, combining Claim~\ref{claim:independence} with the definition \eqref{eq:defn-phi-k-proof} of $\phi_k$ (which depends only on $W_kz_k$  and $\{g_i^k\}$ conditional on $\{z_i\}_{1\leq i\leq k}$) reveals that: 
conditional on $\{z_i\}_{i < k}$ and $x_1$, $\phi_k$ is statistically independent from $\phi_{k-1},\cdots,\phi_1$. 
Letting us abuse the notation and use $f$ to represent the pdf of the random vectors of interest, we obtain
\begin{align*}
 & f(\phi_{k},\phi_{k-1},\cdots,\phi_{1})={\displaystyle \int}f\big(\phi_{k},\phi_{k-1},\cdots,\phi_{1}\mid z_{k-1},\cdots,z_{1},x_{1}\big)\mu\left(\mathrm{d}z_{k-1},\cdots,\mathrm{d}z_{1},\mathrm{d}x_{1}\right)\\
 & ={\displaystyle \int}f\big(\phi_{k}\mid z_{k-1},\cdots,z_{1},x_{1}\big)f\big(\phi_{k-1},\cdots,\phi_{1}\mid z_{k-1},\cdots,z_{1},x_{1}\big)\mu\left(\mathrm{d}z_{k-1},\cdots,\mathrm{d}z_{1},\mathrm{d}x_{1}\right)\\
 & =f(\phi_{k}){\displaystyle \int}f\big(\phi_{k-1},\cdots,\phi_{1}\mid z_{k-1},\cdots,z_{1},x_{1}\big)\mu\left(\mathrm{d}z_{k-1},\cdots,\mathrm{d}z_{1},\mathrm{d}x_{1}\right)\\
 & =f\big(\phi_{k}\big)f\big(\phi_{k-1},\cdots,\phi_{1}\big),
\end{align*}
where the second line holds since, as shown above, $\phi_k$ is independent of $\phi_{k-1},\cdots,\phi_1$ when conditioned on $z_1,\cdots,z_{k-1}$ and $x_1$, 
and the third line makes use of the statistical independence between $\phi_k$ and $z_{k-1},\cdots,z_1,x_1$.  
Repeating the above derivation gives
\begin{align*}
f(\phi_{k},\phi_{k-1},\cdots,\phi_{1}) & =f\big(\phi_{k}\big)f\big(\phi_{k-1},\cdots,\phi_{1}\big)=\cdots=f\big(\phi_{k}\big)f\big(\phi_{k-1}\big)\cdots f\big(\phi_{1}\big),  \end{align*}
thereby justifying that $\{\phi_i\}_{1 \le i \leq k}$ are statistically independent.

%
%



\subsection{Proof of Lemma~\ref{lem:concentration}}
\label{sec:pf-concentration}


To begin with, it is seen from property~\eqref{defn:zWz} that $z_k^{\top}W_kz_k$ follows a normal distribution with variance $2/n$ (given that this distribution is independent of $\{z_i\}_{1\leq i\leq k}$ and $x_1$). 
Standard Gaussian concentration inequalities \citep[Chapter 2.6]{vershynin2018high} together with the union bound tell us that
\begin{equation}
	\max_{1\leq k\leq n} \big| z_k^{\top}W_kz_k \big| \lesssim \sqrt{\frac{\log n}{n}}
\end{equation}
with probability at least $1-n^{-11}$. Consequently, we have
\begin{align}
\Big|\sum_{k=1}^{t-1}\mu_{t}^{k}\beta_{t}^{k}z_{k}^{\top}W_{k}z_{k}\Big| & \leq\Big|\max_{1\leq k\leq n}z_{k}^{\top}W_{k}z_{k}\Big|\cdot\sum_{k=1}^{t-1}\big|\mu_{t}^{k}\beta_{t}^{k}\big|\leq\Big|\max_{1\leq k\leq n}z_{k}^{\top}W_{k}z_{k}\Big|\cdot\|\mu_{t}\|_{2}\|\beta_{t}\|_{2}\nonumber \\
 & \lesssim\sqrt{\frac{\log n}{n}}\|\beta_{t}\|_{2},\label{eq:sum-mu-beta-zWz}
\end{align}
with probability at least $1-n^{-11}$, where we remind the reader of the notation $\mu_t=[\mu_t^k]_{1\leq k\leq t}$ and $\beta_t=[\beta_t^k]_{1\leq k\leq t}$ and the fact that $\|\mu_t\|_2=1$.

Next, we turn to the following term 
\begin{align*}
\mathcal{I}_{1} & \coloneqq\sum_{k=1}^{t-1}\mu_{t}^{k}\Big(\sum_{i=1}^{k-1}\beta_{t}^{i}g_{i}^{k}+\sum_{i=k+1}^{t}\beta_{t}^{i}g_{k}^{i}\Big) 
	\eqqcolon \sum_{k=1}^{t-1}\mu_{t}^{k} \varrho_k,
\end{align*}
where we recall that each random variable $g_i^{k}$ with $i\neq k$ is independently generated from $\mathcal{N}(0,1/n)$,
which is also independent from $\beta_{t}$ (but not $\mu_t$). 
Conditional on $\beta_t$, one has
\begin{align*}
\mathsf{Var}\left(\varrho_{k}\mid\beta_{t}\right) & \coloneqq\frac{1}{n}\sum_{i=1}^{k-1}\big(\beta_{t}^{i}\big)^{2}+\frac{1}{n}\sum_{i=k+1}^{t}\big(\beta_{t}^{i}\big)^{2}\leq\frac{\|\beta_{t}\|_{2}^{2}}{n},
\end{align*}
which combined with Gaussian concentration inequalities \citep[Chapter 2.6]{vershynin2018high} and the union bound yields 
\begin{align}
	\max_{1\leq k\leq n}|\varrho_k| \lesssim \frac{\|\beta_{t}\|_{2}\sqrt{\log n}}{\sqrt{n}}
	\label{eq:var-rho-k-UB}
\end{align}
with probability at least $1-O(n^{-11})$. As a result, the Cauchy-Schwarz inequality gives
\begin{align}
|\mathcal{I}_{1}| & =\Big|\sum_{k=1}^{t-1}\mu_{t}^{k}\varrho_{k}\Big|\leq\|\mu_{t}\|_{2}\sqrt{\sum_{k=1}^{t-1}\varrho_{k}^{2}}
	\lesssim \sqrt{\frac{t\log n}{n}} \|\beta_{t}\|_{2} \label{eq:UB-I1-135}
\end{align}
with probability at least $1-O(n^{-11})$, where we have used \eqref{eq:var-rho-k-UB} and the fact $\|\mu_t\|_2=1$. 

Combining \eqref{eq:sum-mu-beta-zWz} and \eqref{eq:UB-I1-135} immediately finishes the proof.

\subsection{Proof of Lemma~\ref{lem:recursion}}
\label{sec:pf-lem-recursion}

Lemma~\ref{lem:recursion} involves bounds concerning the continuous part of the function and that of the discontinuous part, 
which we shall prove separately.  

\paragraph{The continuous part: proof of inequalities \eqref{eq:lem-recursion-smooth-part-1} and \eqref{eq:lem-recursion-smooth-part-2}.} First, some basic algebra leads to 
\begin{align}
\notag\Big\langle\sum_{k=1}^{t-1}\mu_{t}^{k}\phi_{k},\eta_{t}^{\prime}(v_{t})\circ\xi_{t-1}\Big\rangle-\langle\eta_{t}^{\second}(v_{t})\circ\xi_{t-1}\rangle\sum_{k=1}^{t-1}\mu_{t}^{k}\beta_{t-1}^{k} & =\bigg\langle\sum_{k=1}^{t-1}\mu_{t}^{k}\phi_{k}\circ\eta_{t}^{\prime}(v_{t}),\xi_{t-1}\bigg\rangle-\bigg\langle\frac{1}{n}\sum_{k=1}^{t-1}\mu_{t}^{k}\beta_{t-1}^{k}\eta_{t}^{\second}(v_{t}),\xi_{t-1}\bigg\rangle\\
 & \le\bigg\|\sum_{k=1}^{t-1}\mu_{t}^{k}\phi_{k}\circ\eta_{t}^{\prime}(v_{t})-\frac{1}{n}\sum_{k=1}^{t-1}\mu_{t}^{k}\beta_{t-1}^{k}\eta_{t}^{\second}(v_{t})\bigg\|_{2}\|\xi_{t-1}\|_{2}.\label{eqn:shostakovich-simple}
\end{align}
The condition \eqref{defi:D} imposed in Assumption~\ref{assump:A-H-eta} tells us that
\begin{align*}
	\Big\|\sum_{k = 1}^{t-1} \mu_t^k\phi_k \circ \eta_{t}^{\prime} - \frac{1}{n}\sum_{k = 1}^{t-1} \mu_t^k\beta_{t-1}^k\eta_{t}^{\second}\Big\|_2^2 
	& \le \kappa_t^2 + D_t,
\end{align*}
which taken collectively with inequality~\eqref{eqn:shostakovich-simple} concludes the proof of inequality \eqref{eq:lem-recursion-smooth-part-1}. 

When it comes to the second claim \eqref{eq:lem-recursion-smooth-part-2}, we observe that for any $t\leq n$, 
\begin{align*}
\rho_{1}\bigg\langle\bigg|\sum_{k=1}^{t-1}\mu_{t}^{k}\phi_{k}\bigg|,\big|\xi_{t-1}\big|^{2}\Big\rangle+\rho_{2}\Big\langle\big|\xi_{t-1}\big|^{2}\Big\rangle\bigg|\sum_{k=1}^{t-1}\mu_{t}^{k}\beta_{t-1}^{k}\bigg| & \leq\rho_{1}\bigg\{\max_{1\leq i\leq n}\Big|\sum_{k=1}^{t-1}\mu_{t}^{k}\phi_{k}\Big|_{i}\bigg\}\cdot\|\xi_{t-1}\|_{2}^{2}+\frac{\rho_{2}}{n}\|\xi_{t-1}\|_{2}^{2}\bigg|\sum_{k=1}^{t-1}\mu_{t}^{k}\beta_{t-1}^{k}\bigg|\\
 & \lesssim\lt(\rho_{1}\sqrt{\frac{t\log n}{n}}+\frac{\rho_{2}\|\beta_{t-1}\|_{2}}{n}\rt)\|\xi_{t-1}\|_{2}^{2}. 
\end{align*}
Here, the last line makes use of two properties:
(i) $\big| \sum_{k = 1}^{t-1} \mu_t^k\beta_{t-1}^k \big| \le \ltwo{\mu_t^k} \|\beta_{t-1}\|_2 = \ltwo{\beta_{t-1}}$ (given that $\mu_t$ is constructed as a unit vector); 
(ii) the standard Gaussian concentration inequalities \citep[Chapter 4.4]{vershynin2018high} indicating that, with probability at least $1 - O(n^{-11})$, 
\[
	\max_{1\leq i\leq n}\Big|\sum_{k=1}^{t-1}\mu_{t}^{k}\phi_{k}\Big|_{i}\leq\max_{1\leq i\leq n}\|\mu_{t}\|_{2}\big\|[\phi_{1,i},\cdots,\phi_{t-1,i}]\big\|_{2}=\max_{1\leq i\leq n}\big\|[\phi_{1,i},\cdots,\phi_{t-1,i}]\big\|_{2}\lesssim \frac{\sqrt{t}+\sqrt{\log n}}{\sqrt{n}} .
\]
This establishes inequality \eqref{eq:lem-recursion-smooth-part-2}.


\paragraph{The discontinuous part: proof of inequality \eqref{eqn:new-version}.} 

We first make the observation that: the quantity $\theta(m)$ defined in expression~\eqref{defi:theta} obeys  
\begin{align}
\label{eqn:zero-norm-comparison}
\sum_{j=1}^{n}\ind\Big(\Big|\alpha_{t}\vstar_{j}+\sum_{k=1}^{t-1}\beta_{t-1}^{k}\phi_{k,j}-m\Big|\le|\xi_{t-1,j}|\Big)
	\le\sum_{j=1}^{n}\ind\Big(\Big|\alpha_{t}\vstar_{j}+\sum_{k=1}^{t-1}\beta_{t-1}^{k}\phi_{k,j}-m\Big|\le\theta(m)\Big), 
\end{align}
which can be proved using the definition \eqref{defi:theta} as follows.
\begin{proof}[Proof of \eqref{eqn:zero-norm-comparison}]
By defining the set
\begin{align*}
	\mathcal{J} \defn \bigg\{j\in [n] : \Big|\alpha_tv_j^{\star} + \sum_{k = 1}^{t-1} \beta_{t-1}^k\phi_{k, j} - m\Big| \le |\xi_{t-1, j}|\bigg\}, 
\end{align*}
we can easily see that
\begin{align}
	\sum_{j \in \mathcal{J}}\Big|\alpha_tv_j^{\star} + \sum_{k = 1}^{t-1} \beta_{t-1}^k\phi_{k, j} - m_i\Big|^2 \le \sum_{j \in \mathcal{J}} |\xi_{t-1, j}|^2 \le \|\xi_{t-1}\|_2^2 .
	\label{eq:Jtrue-condition}
\end{align}
Additionally,  if we define another set $\mathcal{J}^{\prime}$ as follows
\begin{align}
	\mathcal{J}^{\prime} \defn \bigg\{j\in [n] : \Big|\alpha_tv_j^{\star} + \sum_{k = 1}^{t-1} \beta_{t-1}^k\phi_{k, j} - m\Big| \le \theta(m) \bigg\},
	\label{eq:Jprime-condition}
\end{align}
then in view of definition of $\theta(m)$, $\mathcal{J}^{\prime}$ is clearly the index set with the largest cardinality obeying 
$$\sum_{j \in \mathcal{J}^{\prime}}\Big|\alpha_tv_j^{\star} + \sum_{k = 1}^{t-1} \beta_{t-1}^k\phi_{k, j} - m\Big|^2 \le \|\xi_{t-1}\|_2^2.$$
	Since $\mathcal{J}$ also satisfies this relation (cf.~\eqref{eq:Jtrue-condition}),  we arrive at $\lt|\mathcal{J}^{\prime}\rt| \ge \lt|\mathcal{J}\rt|$, thus validating inequality~\eqref{eqn:zero-norm-comparison}. 
\end{proof}

Next, for any $m\in \mathcal{M}_{\mathsf{dc}}$, define $\Gamma(m) \coloneqq \big[\Gamma_j(m) \big]_{1\leq j\leq n}\in \real^n$ (see \eqref{eq:Gamma-ub-discontinuous}). 
Equipped with the above relation \eqref{eqn:zero-norm-comparison},  one can show that
\begin{align}
2\rho\bigg\langle\bigg|\sum_{k=1}^{t-1}\mu_{t}^{k}\phi_{k}\bigg|,\,\Gamma\circ\big|\xi_{t-1}\big|\bigg\rangle & =2\rho\bigg\langle\bigg|\sum_{k=1}^{t-1}\mu_{t}^{k}\phi_{k}\bigg|\circ\Gamma,\,\big|\xi_{t-1}\big|\bigg\rangle=\sum_{m\in\mathcal{M}_{\mathsf{dc}}}2\rho\bigg\langle\bigg|\sum_{k=1}^{t-1}\mu_{t}^{k}\phi_{k}\bigg|\circ\Gamma(m),\,\big|\xi_{t-1}\big|\bigg\rangle \notag\\
 & \leq\sum_{m\in\mathcal{M}_{\mathsf{dc}}}2\rho\Big\|\sum_{k=1}^{t-1}\mu_{t}^{k}\phi_{k}\circ\Gamma(m)\Big\|_{2}\cdot\big\|\xi_{t-1}\big\|_{2}.
	\label{eqn:sonnet}
\end{align}
To control the right-hand side of \eqref{eqn:sonnet}, 
we first apply inequality~\eqref{eqn:vive} in Lemma~\ref{lem:brahms-lemma} with $s = t$ to obtain 
\begin{align*}
	\sum_{i = 1}^t \Big|\sum_{k = 1}^{t-1} \mu_t^k\phi_k\Big|_{(i)}^2 \lesssim \frac{t\log{n}}{n}
\end{align*}
with probability at least $1-O(n^{-11})$. 
This relation in turn implies that for every $j \geq t$, 
\begin{align*}
	\Big|\sum_{k = 1}^{t-1} \mu_t^k\phi_k\Big|_{(j)}^2 \leq \frac{1}{t} \sum_{i = 1}^t \Big|\sum_{k = 1}^{t-1} \mu_t^k\phi_k\Big|_{(i)}^2 \lesssim \frac{\log{n}}{n}.
\end{align*}
With these two inequalities in mind, we can deduce that 
\begin{align}
\Big\|\sum_{k=1}^{t-1}\mu_{t}^{k}\phi_{k}\circ\Gamma(m)\Big\|_{2}^{2} & \leq
\sum_{i=1}^{t}\Big|\sum_{k=1}^{t-1}\mu_{t}^{k}\phi_{k}\Big|_{(i)}^{2}+\Big|\sum_{k=1}^{t-1}\mu_{t}^{k}\phi_{k}\Big|_{(t+1)}^{2}\cdot\big\|\Gamma(m)\big\|_{1} \notag\\
 & \lesssim\frac{t\log n}{n}+\frac{\log n}{n}\sum_{j=1}^{n}\ind\Big(\Big|\alpha_{t}\vstar_{j}+\sum_{k=1}^{t-1}\beta_{t-1}^{k}\phi_{k,j}-m\Big|\le|\xi_{t-1,j}|\Big). 
	\label{eqn:l2-decomposition}
\end{align}
This taken collectively with inequality~\eqref{eqn:sonnet} leads to 
\begin{align*}
2\rho\bigg\langle\bigg|\sum_{k=1}^{t-1}\mu_{t}^{k}\phi_{k}\bigg|,\,\Gamma\circ\big|\xi_{t-1}\big|\bigg\rangle & \lesssim\rho\sum_{m\in\mathcal{M}_{\mathsf{dc}}}\left(\sqrt{\frac{t\log n}{n}}+\sqrt{\frac{\log n}{n}}\sqrt{\sum_{j=1}^{n}\ind\Big(\Big|\alpha_{t}\vstar_{j}+\sum_{k=1}^{t-1}\beta_{t-1}^{k}\phi_{k,j}-m\Big|\le|\xi_{t-1,j}|\Big)}\right)\big\|\xi_{t-1}\big\|_{2}\\
 & \lesssim\rho\sum_{m\in\mathcal{M}_{\mathsf{dc}}}\left(\sqrt{\frac{t\log n}{n}}+\sqrt{\frac{\log n}{n}}\sqrt{\sum_{j=1}^{n}\ind\Big(\Big|\alpha_{t}\vstar_{j}+\sum_{k=1}^{t-1}\beta_{t-1}^{k}\phi_{k,j}-m\Big|\le \theta(m) \Big)}\right) \big\|\xi_{t-1}\big\|_{2}\\
 & \lesssim\rho\sqrt{\frac{(E_{t}+t)\log n}{n}}\big\|\xi_{t-1}\big\|_{2},
\end{align*}
where the second inequality comes from \eqref{eqn:zero-norm-comparison}, and the last inequality makes use of the definition \eqref{defi:E} of $E_t$. 

%
%
%
%
%
%

Similar calculations lead to 
\begin{align*}
 & \Big\{2\rho\langle\Gamma\rangle+2\rho_{1}\big\langle\Gamma\circ\big|\xi_{t-1}\big|\big\rangle\Big\}\bigg|\sum_{k=1}^{t-1}\mu_{t}^{k}\beta_{t-1}^{k}\bigg|\leq2\Big(\rho+\rho_{1}\big\|\xi_{t-1}\big\|_{\infty}\Big)\langle\Gamma\rangle\big\|\mu_{t}\big\|_{2}\big\|\beta_{t-1}\big\|_{2}\\
 & \qquad=\frac{2(\rho+\rho_{1}\big\|\xi_{t-1}\big\|_{\infty})}{n}\Big\{\sum_{m\in\mathcal{M}_{\mathsf{dc}}}\sum_{j=1}^{n}\Gamma_{j}(m)\Big\}\big\|\beta_{t-1}\big\|_{2}\\
 & \qquad\leq\frac{2(\rho+\rho_{1}\big\|\xi_{t-1}\big\|_{\infty})\big\|\beta_{t-1}\big\|_{2}}{n}\sum_{m\in\mathcal{M}_{\mathsf{dc}}}\sum_{j=1}^{n}\ind\Big(\Big|\alpha_{t}\vstar_{j}+\sum_{k=1}^{t-1}\beta_{t-1}^{k}\phi_{k,j}-m\Big|\le|\xi_{t-1,j}|\Big)\\
 & \qquad\leq\frac{2(\rho+\rho_{1}\big\|\xi_{t-1}\big\|_{\infty})\big\|\beta_{t-1}\big\|_{2}}{n}\sum_{m\in\mathcal{M}_{\mathsf{dc}}}\sum_{j=1}^{n}\ind\Big(\Big|\alpha_{t}\vstar_{j}+\sum_{k=1}^{t-1}\beta_{t-1}^{k}\phi_{k,j}-m\Big|\le\theta(m)\Big)\\
 & \qquad\leq\frac{2(\rho+\rho_{1}\big\|\xi_{t-1}\big\|_{\infty})E_{t}\big\|\beta_{t-1}\big\|_{2}}{n}
\end{align*}
Taking the above pieces collectively, we demonstrate that
\begin{align*}
 & 2\rho\bigg\langle\bigg|\sum_{k=1}^{t-1}\mu_{t}^{k}\phi_{k}\bigg|,\,\Gamma\circ\big|\xi_{t-1}\big|\bigg\rangle+\Big\{2\rho\langle\Gamma\rangle+2\rho_{1}\big\langle\Gamma\circ\big|\xi_{t-1}\big|\big\rangle\Big\}\bigg|\sum_{k=1}^{t-1}\mu_{t}^{k}\beta_{t-1}^{k}\bigg|\\
 & \qquad\lesssim\rho\sqrt{\frac{(E_{t}+t)\log n}{n}}\big\|\xi_{t-1}\big\|_{2}+\frac{(\rho+\rho_{1}\big\|\xi_{t-1}\big\|_{\infty})E_{t}\big\|\beta_{t-1}\big\|_{2}}{n}
\end{align*}
as claimed.


\subsection{Proof of Lemma~\ref{lem:recursion2}}
\label{sec:pf-lem-recursion2}

Inequality~\eqref{eqn:cello} and \eqref{eqn:violin} directly results from the Lipschitz property of $\eta_t$ and the fact that $\ltwo{\vstar} = 1.$
We then move on to proving inequality~\eqref{eqn:viola}. 
Recall from \eqref{eqn:shostakovich-delta-t} that $\delta_{t}$ obeys
\begin{align}
\Big|\delta_{t} & -\eta_{t}^{\prime}\Big(\alpha_{t}\vstar+\sum_{k=1}^{t-1}\beta_{t-1}^{k}\phi_{k}\Big)\circ\xi_{t-1}\Big|
	\leq \rho_{1}\big|\xi_{t-1}\big|^{2}+ 2\rho\Gamma \circ \big|\xi_{t-1}\big|.
\label{eqn:shostakovich-delta-t-135}
\end{align}
In view of  \eqref{eqn:shostakovich-delta-t-135}, we have
\begin{align*}
\Big|\big\langle\eta_{t}(v_{t}),\delta_{t}\big\rangle\Big| & \leq\Big|\big\langle\eta_{t}(v_{t}),\eta_{t}^{\prime}(v_{t})\circ\xi_{t-1}\big\rangle\Big|+\rho_{1}\Big|\big\langle\eta_{t}(v_{t}),\big|\xi_{t-1}\big|^{2}\big\rangle\Big|+2\rho\Big|\big\langle\eta_{t}(v_{t}),\Gamma\circ\big|\xi_{t-1}\big|\big\rangle\Big|\\
 & =\Big|\big\langle\eta_{t}(v_{t})\circ\eta_{t}^{\prime}(v_{t}),\xi_{t-1}\big\rangle\Big|+\rho_{1}\big\|\eta_{t}(v_{t})\big\|_{\infty}\Big\|\big|\xi_{t-1}\big|^{2}\Big\|_{1}+2\rho\Big|\big\langle\eta_{t}(v_{t})\circ\Gamma,\big|\xi_{t-1}\big|\big\rangle\Big|\\
 & \leq\big\|\eta_{t}(v_{t})\circ\eta_{t}^{\prime}(v_{t})\big\|_{2}\big\|\xi_{t-1}\big\|_{2}+\rho_{1}\big\|\eta_{t}(v_{t})\big\|_{\infty}\big\|\xi_{t-1}\big\|_{2}^{2}+2\rho\big\|\eta_{t}(v_{t})\big\|_{\infty}\big\|\Gamma\big\|_{2}\big\|\xi_{t-1}\big\|_{2}\\
 & \lesssim F_{t}\big\|\xi_{t-1}\big\|_{2}+\rho_{1}G_{t}\big\|\xi_{t-1}\big\|_{2}^{2}+ \rho G_{t}\sqrt{E_{t}}\big\|\xi_{t-1}\big\|_{2}. 
\end{align*}
Here, the last line follows from Assumptions~\eqref{defi:F} and \eqref{defi:G}, as well as the fact that
\[
	\big\|\Gamma\big\|_{2} \leq \big\|\widetilde{\Gamma}\big\|_{2} 
	\leq  \sum_{m\in\mathcal{M}_{\mathsf{dc}}} \big\|\widetilde{\Gamma}(m)\big\|_{2} 
	= \sum_{m\in\mathcal{M}_{\mathsf{dc}}} \bigg( \sum_{j=1}^n \widetilde{\Gamma}_j(m) \bigg)^{1/2},
\]
where $\widetilde{\Gamma}(m)\coloneqq\big[\widetilde{\Gamma}_{j}(m)\big]_{1\leq j\leq n}\in\real^{n}$
with $$\widetilde{\Gamma}_{j}(m)=\ind\Big(\Big|\alpha_{t}\vstar_{j}+\sum\nolimits_{k=1}^{t-1}\beta_{t-1}^{k}\phi_{k,j}-m\Big|\le\theta(m)\Big).$$   
Further invoking $\sum_{m\in\mathcal{M}_{\mathsf{dc}}}\sum_{j=1}^{n}\widetilde{\Gamma}_{j}(m) \leq E_{t}$ (see \eqref{defi:E}) and $|\mathcal{M}_{\mathsf{dc}}|=O(1)$ gives 
\begin{align*}
	\big\|\Gamma\big\|_{2} \leq \sum_{m\in\mathcal{M}_{\mathsf{dc}}} \sqrt{E_t} \asymp \sqrt{E_t}. 
\end{align*}
This concludes the proof of the claim  \eqref{eqn:viola}. 

\section{Analysis for spectral initialization: Proof of Theorem~\ref{thm:recursion-spectral}}
\label{sec:pf-thm-spectral}

To establish Theorem~\ref{thm:recursion-spectral}, 
our strategy is to construct some auxiliary AMP sequences that are intimately connected to spectral initialization (obtained via a sequence of power iterations), 
thus allowing us to analyze spectrally initialized AMP by means of the theory developed in Theorems~\ref{thm:recursion} and \ref{thm:main}. 
Note that the auxiliary AMP sequence to be introduced below is designed only for analysis purposes, and is not implemented during the execution of the real algorithm.

Throughout this section, we denote by $\lambda_{\max}$ (resp.~$\widehat{v}^{\star}$) the leading eigenvalue (resp.~eigenvector) of $M$, 
and let $\lambda_i(M)$ represent the $i$-th largest eigenvalue (in magnitude) of $M$. 



\subsection{Preliminaries: non-asymptotic eigenvalue and eigenvector analysis}
\label{sec:preliminary-evalue-evector}

Understanding the performance of spectral methods requires careful control of the eigenvalues and eigenvectors of the random matrices of interest. 
Before embarking on the proof, we gather several useful non-asymptotic eigenvalue/eigenvector perturbation bounds.   
\begin{itemize}
	\item \citet[Corollary~3.9]{bandeira2016sharp} asserts that (by taking $\varepsilon$ therein to be $(\log n/n)^{1/3}$)  
\begin{subequations}
\label{eq:banderia-W-ub}
\begin{align}
	\| W \|  \leq 2 + O\bigg(  \Big(\frac{\log n}{n}\Big)^{1/3} \bigg),
\end{align}
	holds with probability at least $1-O(n^{-15})$. This combined with Weyl's inequality further leads to
\begin{align}
	\big|\lambda_2(M)\big| 
	\leq \| W \|  \leq 2 + O\bigg(  \Big(\frac{\log n}{n}\Big)^{1/3} \bigg). 
	\label{eq:banderia-lambda-i-ub}
\end{align}
\end{subequations}
		
 	\item \citet[Theorem 3.1]{peng2012eigenvalues} establishes that, with probability at least $1-O(n^{-11})$ one has 
 \begin{align}
 	\lambda+\frac{1}{\lambda}- C_9\sqrt{\frac{ \log n}{n(\lambda-1)^{5}}}
 	\leq \lambda_{\max} 
 	\leq \lambda+\frac{1}{\lambda}+C_9 \sqrt{\frac{ \log n}{n}}
 	\label{eq:minyu-lambda-LB}
 \end{align}
		for some large enough constant $C_9>0$, provided that $ 1 + \big(\frac{\log n}{n}\big)^{1/5} < \lambda = O(1)$. 	



\item Applying Weyl's inequality (i.e., $|\lambda_{2}(M)| \leq \|W\|$)  
	and \citet[Theorem 6.1]{simchowitz2018tight} (with $\epsilon=\frac{1}{8}\frac{\lambda + \frac{1}{\lambda}-2}{\lambda + \frac{1}{\lambda}}\min\big\{\frac{1}{2},\frac{1}{\lambda^2-1}\big\}$ and $\kappa=1/2$ taken therein) yield
\begin{subequations}
\begin{align}
	\lambda_{\max}-|\lambda_{2}(M)| &\geq \lambda_{\max} - \| W \| 
	\geq \frac{\lambda + \frac{1}{\lambda}-2}{4} = \frac{(\lambda-1)^2}{4\lambda} 
	\label{eq:lambda-1-gap-non-asymptotic-UCB} \\
	\bigg|\lambda_{\max}-\lambda-\frac{1}{\lambda}\bigg| 
	&\leq\min\left\{ \frac{(\lambda-1)^{2}}{16\lambda},\frac{1}{8\lambda}\cdot\frac{\lambda-1}{\lambda+1}\right\} 	 
	\label{eq:lambda-1-non-asymptotic-UCB}
\end{align}
\end{subequations}
 with probability at least $1-O(n^{-11})$, provided that 
\begin{align}
	\lambda-1\geq C_{\lambda}\Big(\frac{\log (n\lambda) }{n}\Big)^{1/6}
	\label{eq:lambda-1-gap-condition-UCB}
\end{align}
for some sufficiently large constant $C_{\lambda}>0$. 
A direct consequence of \eqref{eq:lambda-1-non-asymptotic-UCB} is that
\begin{subequations}
\label{eq:lambda-max-two}
\begin{align}
	\lambda_{\max}-2 &\geq\lambda+\frac{1}{\lambda}-2-\frac{(\lambda-1)^{2}}{16\lambda}=\frac{15(\lambda-1)^{2}}{16\lambda}>0, \\
	\lambda_{\max} &\leq \lambda+\frac{1}{\lambda} + \frac{1}{8\lambda} \leq 3\lambda. 
\end{align}
\end{subequations}

\item

In addition, we make note of an immediate consequence of \eqref{eq:lambda-1-non-asymptotic-UCB} as follows:
\begin{equation}
	|\widetilde{\lambda} -\lambda|\leq\min\left\{ C_9\sqrt{\frac{ \log n}{n(\lambda-1)^{7}}}, \frac{\lambda-1}{4},\frac{1}{2(\lambda+1)}\right\}, 
	\label{eq:lambda-max-correct-bound}
\end{equation}
where we recall $\widetilde{\lambda}\defn\frac{\lambda_{\max}+\sqrt{\lambda_{\max}^{2}-4}}{2}.$
\begin{proof}[Proof of inequality \eqref{eq:lambda-max-correct-bound}]
In view of \eqref{eq:lambda-1-non-asymptotic-UCB}, one can write $\lambda_{\max}=\lambda+\frac{1}{\lambda}+\Delta$
for some $\Delta$ with $|\Delta|\leq\min\big\{ C_9\sqrt{\frac{ \log n}{n(\lambda-1)^{5}}}, \frac{(\lambda-1)^{2}}{16\lambda},\frac{1}{8\lambda}\frac{\lambda-1}{\lambda+1}\big\}$.
It is readily seen that
\begin{align*}
\Bigg|\frac{\lambda_{\max}+\sqrt{\lambda_{\max}^{2}-4}}{2}-\lambda\Bigg| & =\Bigg|\frac{\lambda+\frac{1}{\lambda}+\Delta+\sqrt{\left(\lambda+\frac{1}{\lambda}+\Delta\right)^{2}-4}}{2}-\lambda\Bigg|\\
 & =\Bigg|\frac{\sqrt{\left(\lambda+\frac{1}{\lambda}+\Delta\right)^{2}-4}-\sqrt{\left(\lambda+\frac{1}{\lambda}\right)^{2}-4}}{2}+\frac{\Delta}{2}\Bigg|\\
 & 
 \leq\frac{1}{2}\frac{|2\Delta\left(\lambda+\frac{1}{\lambda}\right)+\Delta^{2}|}{\sqrt{\left(\lambda+\frac{1}{\lambda}+\Delta\right)^{2}-4}+\sqrt{\left(\lambda+\frac{1}{\lambda}\right)^{2}-4}}+\frac{|\Delta|}{2}\\
 & \leq 
 \frac{3|\Delta|\lambda}{\lambda-\frac{1}{\lambda}}+\frac{|\Delta|}{2}
 \leq\frac{4|\Delta|\lambda}{\lambda-1},
\end{align*}
where the last line follows since $\lambda+\frac{1}{\lambda}\leq2\lambda$
and $|\Delta|\leq\lambda+\frac{1}{\lambda}$. This directly concludes the proof.  
\end{proof}

\end{itemize}

Furthermore, the following lemma develops a non-asymptotic bound on the correlation between the leading eigenvector $\widehat{v}^{\star}$ and the ground truth $\vstar$; the proof can be found in Section~\ref{sec:lem:cor-evector}. 
\begin{lems}
\label{lem:cor-evector}
Suppose that $1 + C_{\lambda}\big(\frac{\log n }{n}\big)^{1/9} \leq \lambda= O(1)$ for some large enough constant $C_{\lambda }>0$.  
The correlation between $\vstar$ and the leading eigenvector $\widehat{v}^{\star}$ of $M$ satisfies 
\begin{equation}
\label{eq:cor-evector}
\big|\big\langle\widehat{v}^{\star},v^{\star}\big\rangle\big|=\sqrt{1-\frac{1}{\lambda^{2}}}+O\Big(\sqrt{\frac{\log n}{(\lambda-1)^{9}n}}\Big)
\end{equation}
with probability at least $1-O(n^{-11})$. 
\end{lems}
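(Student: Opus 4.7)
The plan is to derive an algebraic identity for $\big(\langle\widehat{v}^{\star},v^{\star}\rangle\big)^{2}$ in terms of the resolvent of $W$ evaluated at $z=\lambda_{\max}$, and then combine it with the non-asymptotic estimates on $\widetilde{\lambda}$ from \eqref{eq:lambda-max-correct-bound} together with a sharp concentration bound for the resolvent quadratic form.

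First I would work from the eigenvalue equation $M\widehat{v}^{\star}=\lambda_{\max}\widehat{v}^{\star}$ combined with the decomposition $M=\lambda v^{\star}v^{\star\top}+W$. Since $\lambda_{\max}>\|W\|$ by the spectral gap estimate \eqref{eq:lambda-1-gap-non-asymptotic-UCB}, the matrix $\lambda_{\max}I-W$ is invertible, and a rearrangement gives $\widehat{v}^{\star}=\lambda(v^{\star\top}\widehat{v}^{\star})(\lambda_{\max}I-W)^{-1}v^{\star}$. Taking the squared Euclidean norm of both sides yields the key identity
\begin{equation}
\big(v^{\star\top}\widehat{v}^{\star}\big)^{2} \;=\; \frac{1}{\lambda^{2}\; v^{\star\top}(\lambda_{\max}I-W)^{-2}v^{\star}},
\label{eq:key-identity-planning}
\end{equation}
so the entire problem reduces to controlling the scalar quadratic form $Q\defn v^{\star\top}(\lambda_{\max}I-W)^{-2}v^{\star}$.

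The next step is to replace $Q$ by its deterministic limit. By rotational invariance of the Gaussian Wigner ensemble, $v^{\star\top}(zI-W)^{-k}v^{\star}$ has the same distribution as the $(1,1)$ entry of $(zI-W)^{-k}$, which for $z$ bounded away from $[-2,2]$ concentrates (by either the local semicircle law or by an elementary truncated Neumann-series argument combined with Hanson--Wright-type concentration for the quadratic forms $v^{\star\top}W^{k}v^{\star}$ around their Catalan-number limits) around the Stieltjes transform $m_{\mathrm{sc}}(z)=(z-\sqrt{z^{2}-4})/2$ and its derivatives, respectively. Parametrizing $\lambda_{\max}=\widetilde{\lambda}+1/\widetilde{\lambda}$ as in \eqref{eq:lambda-max-correct-bound}, a direct calculation gives the closed-form identity $-m_{\mathrm{sc}}'(\lambda_{\max})=1/(\widetilde{\lambda}^{2}-1)$, so plugging into \eqref{eq:key-identity-planning} and collecting errors I would obtain
\begin{align*}
\big(v^{\star\top}\widehat{v}^{\star}\big)^{2} \;=\; \frac{\widetilde{\lambda}^{2}-1}{\lambda^{2}} \;+\; \text{(concentration error)}.
\end{align*}

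To convert this into the stated bound, I would invoke $|\widetilde{\lambda}-\lambda|\lesssim\sqrt{\log n/((\lambda-1)^{7}n)}$ from \eqref{eq:lambda-max-correct-bound} together with a Taylor expansion $(\widetilde{\lambda}^{2}-1)/\lambda^{2}=(1-1/\lambda^{2})+O(|\widetilde{\lambda}-\lambda|)$, and then take a square root using $|\sqrt{a}-\sqrt{b}|=|a-b|/(\sqrt{a}+\sqrt{b})$ with $\sqrt{a}+\sqrt{b}\asymp\sqrt{\lambda-1}$; the resulting division by $\sqrt{\lambda-1}$ is what produces the extra $(\lambda-1)^{-1}$ factor in the error and yields the advertised rate. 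The main technical obstacle is making the resolvent concentration quantitatively sharp all the way down to the spectral edge: since the spectral gap $\lambda_{\max}-\|W\|\asymp(\lambda-1)^{2}$ can be arbitrarily small, naive bounds on $v^{\star\top}(\lambda_{\max}I-W)^{-2}v^{\star}$ blow up as large powers of $(\lambda-1)^{-1}$, so careful bookkeeping (e.g., truncating the Neumann series at order $k\asymp\log n/(\lambda-1)$ and applying Gaussian/Hanson--Wright concentration level-by-level) is needed to absorb these singularities into the claimed $(\lambda-1)^{9}$ denominator without losing the optimal logarithmic dependence.
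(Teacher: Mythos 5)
Your plan is a legitimately different route from the paper's, and its skeleton is sound. You work from the eigen-equation identity $\big(v^{\star\top}\widehat{v}^{\star}\big)^{2}=\big(\lambda^{2}\,v^{\star\top}(\lambda_{\max}I-W)^{-2}v^{\star}\big)^{-1}$ and aim to show the full-resolvent quadratic form concentrates around $-m_{\mathrm{sc}}'(\lambda_{\max})=1/(\widetilde{\lambda}^{2}-1)$; the paper instead rotates so that $v^{\star}=e_{1}$ and uses the minor/Schur-complement formula $\big|\big\langle\widehat{v}^{\star},v^{\star}\big\rangle\big|^{2}=\big(1+\|(\lambda_{\max}I_{n-1}-W_{n-1})^{-1}w_{n-1}\|_{2}^{2}\big)^{-1}$ (citing an exact identity from prior work). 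The paper's formula buys an explicit independence split: conditional on the minor $W_{n-1}$, the quantity is a genuine quadratic form in the independent Gaussian column $w_{n-1}$, so Hanson--Wright applies directly, the Frobenius norm of the resolvent is handled by existing trace estimates, and the randomness of the evaluation point is absorbed by proving the bound uniformly over a deterministic interval of $\lambda_{0}$ via an $\epsilon$-net before plugging in $\lambda_{0}=\lambda_{\max}$. Your identity avoids the minor decomposition but pushes all the difficulty into isotropic resolvent concentration at a point whose distance to the edge is only $\asymp(\lambda-1)^{2}$; with an isotropic local law (stated uniformly in $z$) the error budget closes comfortably and your final Taylor-plus-square-root step, dividing by $\sqrt{1-1/\lambda^{2}}\asymp\sqrt{\lambda-1}$, reproduces the $(\lambda-1)^{9}$ exponent exactly as in the paper.

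Two concrete soft spots in your write-up need attention. First, you never address that $\lambda_{\max}$ depends on $W$, so pointwise-in-$z$ concentration of $v^{\star\top}(zI-W)^{-2}v^{\star}$ is not enough; and the cheap fix via Lipschitz continuity in $z$ fails quantitatively, since $\big|\partial_{z}\,v^{\star\top}(zI-W)^{-2}v^{\star}\big|\lesssim(\lambda-1)^{-6}$ while the fluctuation of $\lambda_{\max}$ is only known to order $\sqrt{\log n/\big(n(\lambda-1)^{5}\big)}$ (cf.~\eqref{eq:minyu-lambda-LB}), which overshoots the admissible error $\sqrt{\log n/\big(n(\lambda-1)^{12}\big)}$ in the quadratic form. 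You must, as the paper does, prove concentration around the $z$-dependent deterministic limit uniformly over a deterministic interval containing $\lambda_{\max}$, and only then substitute the random point into both sides (the $\widetilde{\lambda}\to\lambda$ conversion via \eqref{eq:lambda-max-correct-bound} then handles the remaining dependence). Second, the proposed elementary backup is not correct as stated: $v^{\star\top}W^{k}v^{\star}$ for $k\ge 2$ is a degree-$k$ Gaussian chaos, not a quadratic form in independent coordinates, so Hanson--Wright does not apply level-by-level; one needs either higher-order chaos concentration with careful moment bookkeeping or the local-law machinery. Relatedly, since $\|W\|/\lambda_{\max}=1-\Theta\big((\lambda-1)^{2}\big)$, the Neumann series must be truncated at order $k\asymp\log n/(\lambda-1)^{2}$, not $\log n/(\lambda-1)$, which makes the accumulated error analysis more delicate than your sketch suggests. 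With the isotropic local law in place of this elementary route, and the uniformity-in-$z$ step added, your argument goes through and matches the paper's rate.
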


\subsection{Constructing an AMP-style basis that covers $\widehat{v}^{\star}$ approximately}
\label{sec:spec-vhatstar}

In this subsection, we design an auxiliary AMP sequence that allows us to construct a set of orthonormal vectors $\{y_s\}_{1\leq k\leq s}$,  
whose span {\em approximately} covers the leading eigenvector $\widehat{v}^{\star}$ of $M$. 

\paragraph{Construction of auxiliary AMP iterates.}
Let us produce the following iterative procedure initialized at the truth $\vstar$: 
\begin{align}
\label{eqn:amp-for-spectral}
	\omega_{t+1} = W \omega_t - \omega_{t-1}, \qquad (t\geq 1) \qquad\text{with }~ w_0 = 0 ~\text{ and }~ w_1 = \vstar.
\end{align}
This iterative procedure involves a power iteration  $W \omega_t$ in each iteration,
while at the same time it takes the form of AMP updates (by subtracting $\omega_{t-1}$ and choosing the denoiser to be the identity function). 
Note, however, 
that the power iteration  $W \omega_t$ in \eqref{eqn:amp-for-spectral} is concerned with only the noise matrix $W$, 
which stands in stark contrast to \eqref{eqn:AMP-updates} that consists of computing $(\lambda \vstar v^{\star\top} + W)\eta_t(x_t)$ and involves the signal component $\lambda \vstar v^{\star\top}$. 
In fact, the signal component comes into play in \eqref{eqn:amp-for-spectral} only through the initial vector  $ v_1 = \vstar.$


\paragraph{Other auxiliary sequences derived from $\{\omega_k\}$.}
Akin to our proof of Theorem~\ref{thm:recursion} (see Section~\ref{sec:pf-thm-recursion}), 
we find it useful to look at several auxiliary sequences $\{y_k\}_{k \ge 1}$, $\{\zeta_k^{\prime}\}$ and $\{\psi_k\}$ derived based on $\{\omega_k\}$, 
which will assist in analyzing $\{\omega_t\}$. 
\begin{itemize}
	\item[(i)] Given that $\omega_1= v^{\star}$, we define
\begin{align}
	y_1 \defn \omega_1 = \vstar \in \mathcal{S}^{n-1},\qquad\text{and}\qquad W_1^{\prime} \defn W. 
	\label{eqn:y-auxiliary-init}
\end{align}
%

	\item[(ii)] For each $2\leq t<n$, concatenate $\{y_{k}\}$ into a matrix $V_{t-1} \defn [y_k]_{1 \le k \leq t-1} \in \real^{n\times (t-1)}$
and define
\begin{equation}
\label{eqn:spect-seq-1}
\begin{aligned}
	y_t &\defn \frac{\lt(I - V_{t-1}V_{t-1}^{\top}\rt)\omega_{t}}{\lt\|\lt(I - V_{t-1}V_{t-1}^{\top}\rt)\omega_{t}\rt\|_2}, 	\\
	W_t^\prime &\defn \lt(I - y_{t-1}y_{t-1}^{\top}\rt)W_{t-1}^{\prime}\lt(I - y_{t-1}y_{t-1}^{\top}\rt).
\end{aligned}
\end{equation}
According to Lemma~\ref{lemma:zk-orthonormal}, the $y_k$'s constructed above are orthonormal, and $\omega_t \in \mathsf{span}\{y_1,\ldots,y_t\}$. 

	\item[(iii)] Additionally, if we generate $\{g_i^k\}_{1\leq i,k\leq n}$ as i.i.d.~$\mathcal{N}(0, \frac{1}{n})$ and define
\begin{align}
	\zeta_k^{\prime} \defn \Big(\frac{\sqrt{2}}{2} - 1\Big) y_ky_k^{\top}W_k^{\prime}y_k + \sum_{i = 1}^{k - 1} g_i^ky_i \in \real^n,
	\qquad 1\leq k\leq n ,
\end{align}
then Lemma~\ref{lem:distribution} reveals that the $\psi_k$'s constructed below are i.i.d.~obeying
\begin{align}
\label{def:psi_k}
	\psi_k \defn W_k^{\prime}y_k + \zeta_k^{\prime} \overset{\text{i.i.d.}}{\sim} \mathcal{N}\lt(0, \frac{1}{n}I_n\rt),\qquad 1 \le k \le n.
\end{align}
\end{itemize}
Clearly, $\{y_k, W_k^\prime, \zeta_k^{\prime}, \psi_k\}$ plays the same role as $\{z_k, W_k, \zeta_k, \phi_k\}$ in expression \eqref{eqn:z-w-recursion} 
in the proof of Theorem~\ref{thm:recursion}.

\paragraph{Connections between $\{\omega_k\}$ and spectral initialization.}

We now discuss some important connections between $\{\omega_k\}$ and the leading eigenvector $\widehat{v}^{\star}$ of $M$. 
One basic fact to connect \eqref{eqn:amp-for-spectral} with the power method 
is that: $W^t v^{\star}$ can be linearly represented by the iterates $\{\omega_i\}_{1 \le i \le t+1}$, 
as stated in the following lemma. Intuitively, this fact makes sense as the update rule \eqref{eqn:amp-for-spectral} resembles that of the power method.
\begin{lems}
\label{lem:linear-combination-power-method-AMP}
For every $t\geq 0$, 
$W^t v^{\star}$ is a linear combination of $\{\omega_i\}_{1 \le i \le t+1}$.
\end{lems}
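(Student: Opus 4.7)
The plan is to prove this statement by a direct induction on $t$, leveraging the three-term recursion that defines $\{\omega_k\}$. The key observation is that the recursion \eqref{eqn:amp-for-spectral} can be rewritten as $W\omega_i = \omega_{i+1} + \omega_{i-1}$ for every $i \geq 1$, so multiplying any iterate $\omega_i$ by $W$ only produces a linear combination of iterates whose indices differ from $i$ by one.

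For the base cases, I would note that $W^0 v^\star = v^\star = \omega_1$ covers $t=0$, while $W v^\star = W\omega_1 = \omega_2 + \omega_0 = \omega_2$ covers $t=1$. For the inductive step, suppose that for every $0 \leq s \leq t$ we can write $W^s v^\star = \sum_{i=1}^{s+1} c_{s,i}\,\omega_i$ for some scalars $c_{s,i}$. Then
\begin{align*}
W^{t+1}v^\star \;=\; W\bigl(W^{t}v^\star\bigr) \;=\; \sum_{i=1}^{t+1} c_{t,i}\,W\omega_i \;=\; \sum_{i=1}^{t+1} c_{t,i}\,\bigl(\omega_{i+1} + \omega_{i-1}\bigr),
\end{align*}
which, after recalling $\omega_0 = 0$, is manifestly a linear combination of $\omega_1,\ldots,\omega_{t+2}$, closing the induction.

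There is no real obstacle: the claim is purely algebraic and follows immediately from the recursion together with the initialization $\omega_0 = 0$, $\omega_1 = v^\star$. The only mild bookkeeping is tracking that the indices stay within $\{1,\ldots,t+2\}$ after applying $W$, which the recursion guarantees since $W\omega_i$ only involves $\omega_{i-1}$ and $\omega_{i+1}$. This lemma will subsequently be used to show that $M^s \widetilde{v}$ (the spectral initialization) lies approximately in $\operatorname{span}\{\omega_1,\ldots,\omega_{s+1}\} \subseteq \operatorname{span}\{y_1,\ldots,y_{s+1}\}$, which is precisely the subspace structure required to invoke Theorem~\ref{thm:recursion} with spectral initialization.
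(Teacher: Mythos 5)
Your proof is correct and is essentially the paper's own argument: both proceed by induction, using $W\omega_i = \omega_{i+1} + \omega_{i-1}$ (from the recursion \eqref{eqn:amp-for-spectral}) together with $\omega_0 = 0$, $\omega_1 = v^\star$ to show that multiplying by $W$ keeps the expression inside $\mathsf{span}\{\omega_1,\ldots,\omega_{t+2}\}$. The only cosmetic difference is that the paper also records the coefficient recursion $c_{t+1}^i = c_t^{i+1} + c_t^{i-1}$ (with the convention $c_t^i = 0$ for out-of-range indices), which is not needed for the lemma itself but is reused later in the proof of Lemma~\ref{lem:spec}.
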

\begin{proof}[Proof of Lemma~\ref{lem:linear-combination-power-method-AMP}]
We shall establish this result by induction.
First, the claim is trivially true for $t= 0$ since $\omega_1=v^{\star}$. 
Now, suppose the statement further holds true for $t-1$, i.e.,
\begin{align}
\label{eqn:magic-flute}
	W^{t-1}v^{\star} = \sum_{i = 1}^{t} c_{t}^i \omega_i \qquad \text{for some coefficients } c_{t}^i,   
\end{align}
and we would like to extend it to $t$. Towards this, observe that
\begin{align*}
	W^tv^{\star} = W\big(W^{t-1}v^{\star}\big) = W\sum_{i = 1}^t c_t^i\omega_i = \sum_{i = 1}^t c_t^i(\omega_{i+1} + \omega_{i-1}),
\end{align*}
where the last step follows since $W\omega_i = \omega_{i+1}+\omega_{i-1}$ (cf.~\eqref{eqn:amp-for-spectral}). 
For notational simplicity, we shall also set 
\begin{align}
	c_t^i = 0 \qquad \text{for any }i > t \text{ or } i = 0. \label{eq:ct-i-zero-coefficient}
\end{align}
As a result, we can write 
\begin{subequations}
\label{eqn:linear-induc}
\begin{equation}
	W^t v^{\star} = \sum_{i = 1}^{t+1} c_{t+1}^i \omega_i 
\end{equation}
with the coefficients 
\begin{align}
	c_{t+1}^{i}=c_{t}^{i+1}+c_{t}^{i-1} ;
	\label{eq:c-t-i-recursion}
\end{align}
here, we have invoked \eqref{eq:ct-i-zero-coefficient}. 
%
%
\end{subequations}
This validates the claimed result. 
\end{proof}
Moreover, it turns out that  $\widehat{v}^{\star}$ can be approximately  
represented as (i) a linear combination of $\{y_k\}_{1 \le k \le s}$, 
and also (ii) a linear combination of the set of independent Gaussian vectors $\{\psi_k\}_{1 \le k \le s}$ (cf.~\eqref{def:psi_k}).    
This is asserted by the following lemma, whose proof can be found in Section~\ref{sec:pf-lem-spec}. 
\begin{lems} 
\label{lem:spec}
	Let $s = \frac{C_v\lambda^2 \log n}{(\lambda - 1)^2}$ for some sufficiently large constant $C_v>0$. 
	Assume that $1+C_{\lambda} (\frac{\log^{7} n}{n})^{1/6} < \lambda =O(1)$ for some large enough constant $C_{\lambda}>0$. 
	With probability at least $1-O(n^{-11})$, there exist coefficients $c_i $ $(1\leq i\leq s)$ such that  
\begin{align}
\label{eqn:spectral-expansion}
	\bigg\| \widehat{v}^{\star} - \sum_{i = 1}^{\power} c_iy_i \bigg\|_2 \lesssim \frac{\log^{3.5} n}{\sqrt{(\lambda - 1)^6n}} 
	\qquad \text{and} \qquad
\bigg\| \widehat{v}^{\star} - c_1v^{\star} + \frac{1}{\widetilde{\lambda}}\sum_{i = 1}^{\power} c_i\psi_i \bigg\|_2 \lesssim \frac{\log^{3.5} n}{\sqrt{(\lambda - 1)^6n}}, \end{align}
where $\widetilde{\lambda} \defn \frac{2}{\lambda_{\max} - \sqrt{\lambda_{\max}^2 - 4}}$, and $c_{i+1} = \widetilde{\lambda}^{-1} c_i$ for all $i\geq 1$. 
\end{lems}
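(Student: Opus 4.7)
The plan is to exploit the Chebyshev-polynomial structure hidden in the three-term recurrence \eqref{eqn:amp-for-spectral} to expand $\widehat{v}^{\star}$ as a truncated series in $\{\omega_t\}$, and then to translate that series into the two target representations. First, the eigenvalue equation $M\widehat{v}^{\star}=\lambda_{\max}\widehat{v}^{\star}$ combined with $M=\lambda v^{\star}v^{\star\top}+W$ yields
\[
\widehat{v}^{\star}=c_{0}(\lambda_{\max}I-W)^{-1}v^{\star},\qquad c_{0}\defn\lambda\langle v^{\star},\widehat{v}^{\star}\rangle,
\]
and Lemma~\ref{lem:cor-evector} gives $|c_{0}|\asymp\sqrt{\lambda^{2}-1}$. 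Next, unrolling the recurrence identifies $\omega_{t}=U_{t-1}(W/2)v^{\star}$ with $U_{t-1}$ the degree $t-1$ Chebyshev polynomial of the second kind, since both sequences of operators obey the same three-term relation with the same initial conditions. The classical generating identity $\sum_{n\ge 0}U_{n-1}(x/2)\xi^{n}=\xi/(\xi^{2}-x\xi+1)$, evaluated at the root $\xi=\widetilde{\lambda}^{-1}$ of $\xi^{2}-\lambda_{\max}\xi+1=0$ (which exists precisely because $\widetilde{\lambda}+\widetilde{\lambda}^{-1}=\lambda_{\max}$), then furnishes the operator identity $\sum_{t\ge 1}\widetilde{\lambda}^{-t}\omega_{t}=(\lambda_{\max}I-W)^{-1}v^{\star}$. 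Setting $c_{i}\defn c_{0}\widetilde{\lambda}^{-i}$, one obtains $\widehat{v}^{\star}=\sum_{t\ge 1}c_{t}\omega_{t}$, and the choice manifestly satisfies the advertised geometric relation $c_{i+1}=\widetilde{\lambda}^{-1}c_{i}$.

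The next step is to truncate at $s$ terms and control the tail $\big\|\sum_{t>s}c_{t}\omega_{t}\big\|_{2}$. Using the spectral decomposition $W=\sum_{i}\mu_{i}u_{i}u_{i}^{\top}$ together with the Chebyshev growth bounds $|U_{t-1}(\mu/2)|\le t$ for $|\mu|\le 2$ and $|U_{t-1}(\mu/2)|\lesssim e^{t\theta_{\mu}}/\sinh\theta_{\mu}$ for $|\mu|>2$ (where $\mu/2=\cosh\theta_{\mu}$), the tail is governed by the geometric factor $e^{-t(\log\widetilde{\lambda}-\theta_{\|W\|})}$. The operator norm bound $\|W\|\le 2+O((\log n/n)^{1/3})$ from \eqref{eq:banderia-W-ub} forces $\theta_{\|W\|}\lesssim(\log n/n)^{1/6}$, while the gap bound \eqref{eq:lambda-max-two} combined with $\widetilde{\lambda}+\widetilde{\lambda}^{-1}=\lambda_{\max}$ gives $\log\widetilde{\lambda}\gtrsim\lambda-1$. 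The hypothesis $\lambda-1\gtrsim(\log^{7}n/n)^{1/6}$ guarantees $\log\widetilde{\lambda}-\theta_{\|W\|}\gtrsim\lambda-1$, so the choice $s=C_{v}\lambda^{2}\log n/(\lambda-1)^{2}$ drives the tail to be super-polynomially small—comfortably within the target $\log^{3.5}n/\sqrt{(\lambda-1)^{6}n}$.

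For the first claim, since $\omega_{t}\in\mathsf{span}\{y_{1},\ldots,y_{t}\}$ by construction \eqref{eqn:spect-seq-1}, the truncated sum $\sum_{t=1}^{s}c_{t}\omega_{t}$ lies in $\mathsf{span}\{y_{1},\ldots,y_{s}\}$. The iterates $\{\omega_{t}\}$ are furthermore nearly orthonormal: the Gram matrix entry $\langle\omega_{s},\omega_{t}\rangle$ equals the pairing of Chebyshev polynomials against the spectral measure of $W$ at $v^{\star}$, which concentrates around its semicircle-weighted counterpart $\int U_{s-1}(\mu/2)U_{t-1}(\mu/2)\rho_{\mathrm{sc}}(\mu)\,d\mu=\delta_{st}$. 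This yields a decomposition $\omega_{t}=y_{t}+\varepsilon_{t}$ with $\varepsilon_{t}\in\mathsf{span}\{y_{1},\ldots,y_{t-1}\}$ having summable norm, so that $\sum c_{t}\omega_{t}$ coincides with $\sum c_{i}y_{i}$ up to the target error. For the second claim, treat \eqref{eqn:amp-for-spectral} as an instance of AMP with $\eta_{t}(x)=x$ and no signal amplification (formally $\lambda=0$ in the update), and apply the decomposition of Theorem~\ref{thm:recursion}: for $t\ge 2$, $\omega_{t}=\sum_{k<t}\beta_{t-1}^{k}\psi_{k}+\xi_{t-1}'$, while $\omega_{1}=v^{\star}=y_{1}$. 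Substituting into $\sum_{t=1}^{s}c_{t}\omega_{t}$ produces a $c_{1}v^{\star}$ piece together with $\sum_{k}\big[\sum_{t>k}c_{t}\beta_{t-1}^{k}\big]\psi_{k}$; reindexing and invoking the same generating-function relation $c_{t}\beta_{t-1}^{k}\approx c_{k+1}\delta_{t,k+1}=\widetilde{\lambda}^{-1}c_{k}\delta_{t,k+1}$ (valid because $\{\omega_{t}\}$ is near-orthonormal, forcing $\beta_{t-1}^{k}\approx\delta_{t-1,k}$) collapses the sum to the advertised $\widetilde{\lambda}^{-1}\sum_{i}c_{i}\psi_{i}$ expression, with signs and prefactors pinned down by the AMP identity \eqref{def:phi_k}.

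The principal obstacle is controlling the cumulative error over the $s\asymp\lambda^{2}\log n/(\lambda-1)^{2}$ iterations. Two sources must be handled uniformly in $t\le s$: the deviation $\omega_{t}-y_{t}$ arising from fluctuations of the empirical spectral measure of $W$ away from the semicircle (needed for the first claim), and the AMP residuals $\xi_{t-1}'$ produced by the Theorem~\ref{thm:recursion}-style analysis (needed for the second claim). Both require concentration estimates whose errors degrade in $(\lambda-1)^{-1}$; showing that the accumulated error stays within $\log^{3.5}n/\sqrt{(\lambda-1)^{6}n}$ rather than blowing up over polynomially many iterations is the heart of the proof, and relies essentially on the exponential gain $e^{-t(\lambda-1)}$ provided by the geometric weights $c_{t}$ to tame the per-step errors.
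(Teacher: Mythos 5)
Your skeleton coincides with the paper's: both start from the resolvent identity $\widehat{v}^{\star}=\lambda\langle v^{\star},\widehat{v}^{\star}\rangle(\lambda_{\max}I-W)^{-1}v^{\star}$, expand in the auxiliary iterates $\{\omega_t\}$ of \eqref{eqn:amp-for-spectral}, identify the coefficients as a geometric sequence with ratio $\widetilde{\lambda}^{-1}$ (the small root of $r^2-\lambda_{\max}r+1=0$), truncate using this decay, and conclude by replacing $\omega_t$ with $y_t$ (resp.\ $\psi_{t-1}$). Where you genuinely differ is in two sub-steps. You obtain the geometric coefficient structure in one stroke from the identity $\omega_t=U_{t-1}(W/2)v^{\star}$ and the Chebyshev generating function, whereas the paper derives the recursion $c_{i+1}+c_{i-1}=\lambda_{\max}c_i$ (via Lemma~\ref{lem:linear-combination-power-method-AMP} and \eqref{eq:c-t-i-recursion}) and eliminates the growing root through the boundedness bound \eqref{eq:ci-upper}; your derivation is cleaner and buys a transparent tail estimate through the Chebyshev growth rates together with \eqref{eq:banderia-W-ub}. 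For the first claim you propose near-orthonormality of $\{\omega_t\}$ via concentration of $v^{\star\top}U_{s-1}(W/2)U_{t-1}(W/2)v^{\star}$ around the semicircle orthogonality relations plus Gram--Schmidt stability, while the paper instead runs the Theorem~\ref{thm:recursion} decomposition with identity denoiser and proves, by induction on $t$, the per-iterate bounds $\|\xi_t\|_2\lesssim\sqrt{t^3\log n/n}$, $\big|\|\omega_t\|_2-1\big|\lesssim\sqrt{t^5\log n/n}$, $\|\omega_t-y_t\|_2,\|\omega_t-\psi_{t-1}\|_2\lesssim\sqrt{t^5\log n/n}$, and $\|[\beta_t^1,\dots,\beta_t^{t-1}]\|_2\lesssim\sqrt{t^5\log n/n}$ (see \eqref{eqn:spec-finale} and \eqref{eqn:spect-beta-ni}), then combines them with $\sum_i c_i^2\le 4$ and Cauchy--Schwarz over the first $\tfrac{C_v\log n}{\widetilde{\lambda}-1}$ terms.

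The place where your write-up is materially incomplete is exactly this quantitative step, which you defer to your final paragraph. Two cautions. First, saying the accumulation is tamed ``essentially by the exponential gain $e^{-t(\lambda-1)}$'' inverts the logic: the geometric weights only help after one has shown that the per-iterate approximation errors grow at most polynomially in $t$ --- this is the inductive content of \eqref{eqn:spec-finale-2}--\eqref{eqn:spec-finale-3} --- since without such control a naive accumulation argument could let $\|\omega_t-y_t\|_2$ grow geometrically and overwhelm $c_t$. Second, your Gram-matrix route requires concentration of $v^{\star\top}p(W)v^{\star}$ uniformly over Chebyshev products of degree up to $2s$ with $s\asymp\log n/(\lambda-1)^2$ (which may be as large as $n^{1/3}$ under the stated assumption on $\lambda-1$), together with a Gram--Schmidt perturbation bound guaranteeing that orthogonalizing $\omega_t$ against $t-1$ earlier vectors inflates the error only polynomially in $t$; neither is automatic, and in carrying them out you would effectively be re-deriving the estimates the paper obtains more directly from the Theorem~\ref{thm:recursion} decomposition and Lemma~\ref{lem:brahms-lemma}. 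With those per-iterate bounds supplied, your argument closes along the same lines as the paper's.
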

%
This approximate linear representation of $\widehat{v}^{\star}$ 
plays a crucial role in explaining why spectrally initialized AMP yields a similar decomposition as 
another AMP with independent initialization.

\subsection{Constructing another AMP-style basis that covers $x_1$ exactly}
\label{sec:rep-spec}
Next, we turn to our spectral estimate obtained through the power method:
\[
	x_{1} = a_s M^s \widetilde{v} \qquad \text {with }a_s = \frac{1}{\|M^s \widetilde{v}\|_2},
\]
where $\widetilde{v} \sim \mathcal{N}(0,\frac{1}{n}I_n)$ is the initial vector of the power method chosen randomly. 
Based upon our results in Section~\ref{sec:spec-vhatstar}, we intend to further augment $\{y_k\}_{1\leq k\leq s}$ 
into a set of $2s+1$ orthonormal vectors $\{\widehat{y}_t\}_{1\leq t \leq 2s+1}$ 
--- again via a certain auxiliary AMP sequence ---
such that $x_{1}$ falls {\em perfectly} within $\mathsf{span} \{\widehat{y}_1,\ldots, \widehat{y}_{2s+1}\}$.

\paragraph{Preliminaries about the power method.}

Standard convergence analysis for the power method tells us that: 
if we take $s \geq \widetilde{C}_v \frac{ \lambda_{\max} \log n}{\lambda_{\max}-|\lambda_2(M)|}$ for some constant $\widetilde{C}_v>0$ large enough
and if $\widetilde{v}\sim \mathcal{N}(0,\frac{1}{n}I_n)$, then with probability exceeding $1-O(n^{-11})$ we can guarantee that  
\begin{subequations}
\label{eqn:power-error}
\begin{align}
	\|x_1 - \widehat{v}^{\star}\|_2 &\lesssim \frac{1}{n^{12}} 
	\qquad \text{ and } \qquad \\
	a_s = \frac{1}{\ltwo{M^s \vseed}} 
	&\leq \frac{1}{\lambda_{\max}^s |\langle \widehat{v}^{\star}, \widetilde{v}\rangle | }
	\lesssim \frac{n^{11.5}}{\lambda_{\max}^s},
\end{align}
\end{subequations}
where the last inequality is valid since 
$\langle \widehat{v}^{\star}, \widetilde{v}\rangle \stackrel{\textrm{d}}{=} \frac{\inprod{g}{v}}{\ltwo{g}},~g\sim \mathcal{N}(0,\frac{1}{n}I_n)$ and hence $|\langle \widehat{v}^{\star}, \widetilde{v}\rangle| \gtrsim n^{-11.5}$ with probability at least $1-O(n^{-11})$. 
In addition,  \eqref{eq:lambda-1-gap-non-asymptotic-UCB} and \eqref{eq:lambda-1-non-asymptotic-UCB} 
allow us to control $\frac{\lambda_{\max}}{\lambda_{\max} - |\lambda_2(M)|}$, 
thus indicating that \eqref{eqn:power-error} is guaranteed to hold as long as 
\[
	s \geq C_v \frac{ \lambda \log n}{(\lambda - 1)^2}
\]
for some constant $C_v$ large enough.


%

In addition, we remark that there exist a set of coefficients $a_{0},\ldots, a_{s-1} \in \real$ that allow us to express
\begin{align}
\label{eqn:x1-decmp-violin}
	x_1 = \sum_{i = 0}^{s-1} a_iW^{i}v^{\star} + a_sW^s\vseed, 
	\qquad \text{with }a_s = \frac{1}{\ltwo{M^s \vseed}}. 
\end{align}
\begin{proof}[Proof of \eqref{eqn:x1-decmp-violin}]
Recall that $x_1$ is proportional to $	(\lambda v^{\star}v^{\star\top} + W)^s \vseed$. 
By expanding $(\lambda v^{\star}v^{\star\top} + W)^s$, we know that each term in the expansion takes one of the following forms:  
\[
	\text{(i) }\lambda v^{\star}v^{\star\top}A_{1}\widetilde{v}\text{ for some matrix }A_{1};\quad
	\text{(ii) }W^{i}(\lambda v^{\star}v^{\star\top})A_{2}\widetilde{v}\text{ for some matrix }A_{2}\text{ and some }i; \quad
	\text{(iii) }W^{s}\widetilde{\nu}.
\]
Clearly, in each case the term falls within the span of $\{v^{\star}, W^i v^{\star}, W^s \vseed \}$, thus concluding the proof.
\end{proof}

\paragraph{Construction of a set of basis vectors using another auxiliary AMP.} 

Based on the decomposition \eqref{eqn:x1-decmp-violin}, 
we intend to show that $x_{1}$ can be linearly represented (in an exact manner) using a set of $2s+1$ orthonormal basis vectors, 
in a way similar to Lemma~\ref{lem:spec}. 
Towards this end, we design another AMP-type algorithm (with the denoising functions taken as the identity function):
\begin{align}
\label{eqn:amp-for-power}
	u_{t+1} = W u_t - u_{t-1} \quad (t>s), \qquad\text{with }~ u_{s} = 0 ~\text{ and }~ u_{s+1} = \vseed, 
\end{align}
Despite the use of the same update rule, 
a key difference between \eqref{eqn:amp-for-power} and \eqref{eqn:amp-for-spectral} lies in that $u_t$ starts from $\vseed$ (i.e., the vector used to initialize the power method), while $\omega_t$ starts from the ground-truth vector $\vstar$.

Akin to our analysis for Theorem~\ref{thm:recursion}, we generate a sequence of orthonormal vectors $\{\widehat{y}_k\}$ and auxiliary random matrices $\widehat{W}_k$ as follows: recalling the sequence $\{y_k, W_k\}$ defined in \eqref{eqn:spect-seq-1}, we take
\begin{equation}
\label{eqn:spect-seq-2}
\begin{aligned}
	\widehat{y}_t &\defn y_t, \qquad &&1\leq t\leq s, \\
	\widehat{W}_t &\defn W_t^{\prime}, \qquad &&1\leq t\leq s, \\
	\widehat{y}_t &\defn \frac{\big(I - \widehat{V}_{t-1}\widehat{V}_{t-1}^{\top}\big)u_{t}}{\big\|\big(I - \widehat{V}_{t-1}\widehat{V}_{t-1}^{\top}\big)u_{t}\big\|_2}, 	\qquad && s< t \leq 2s+1,\\
	\widehat{W}_t &\defn \big(I - \widehat{y}_{t-1}\widehat{y}_{t-1}^{\top}\big)\widehat{W}_{t-1}\big(I - \widehat{y}_{t-1}\widehat{y}_{t-1}^{\top}\big),
	\qquad && s< t \leq 2s+1, 
\end{aligned}
\end{equation}
where $\widehat{V}_{t-1} \defn [\widehat{y}_k]_{1 \le k \leq t-1} \in \real^{n\times (t-1)}$. 
The orthonormality of the sequence $\{\widehat{y}_k\}_{1\leq k\leq  2s+1}$ can be seen by repeating the proof of Lemma~\ref{lemma:zk-orthonormal}. 
In addition,  let us further generate the following vectors
\begin{align}
	\widehat{\psi}_k \defn \widehat{W}_k\widehat{y}_k + \widehat{\zeta}_k \overset{\text{i.i.d.}}{\sim} \mathcal{N}\lt(0, \frac{1}{n}I_n\rt),\qquad\text{for all }1 \le k \le n,
\end{align}
where
\begin{align}
	\widehat{\zeta}_k \defn \Big(\frac{\sqrt{2}}{2} - 1\Big) \widehat{y}_k\widehat{y}_k^{\top}\widehat{W}_k\widehat{y}_k + \sum_{i = 1}^{k - 1} g_i^k\widehat{y}_i,
\end{align}
with the $g_i^k$'s independently drawn from $\mathcal{N}(0, \frac{1}{n})$. 
Then Lemma~\ref{lem:distribution} and its analysis immediately tell us that the $\widehat{\psi}_k$'s are statistically independent obeying
\begin{align}
	\widehat{\psi}_k  \overset{\text{i.i.d.}}{\sim} \mathcal{N}\lt(0, \frac{1}{n}I_n\rt),\qquad\text{for all }1 \le k \le n,
\end{align}
%


\paragraph{Linear representation of $x_1.$}

We are positioned to represent $x_1$ over the set of basis vectors  It turns out that $x_1$ can be represented approximately as the linear combination of $\{\widehat{y}_k\}$, or the set of independent Gaussian vectors $\{\widehat{\psi}_k\}$. 
Our result is formally stated as follows. 
\begin{lems}
\label{lem:alt-x1}
With probability exceeding $1-O(n^{-11})$, we have
\begin{align}
\label{eqn:alt-x1}
	x_1 = \sum_{i = 1}^{2s+1} b_i\widehat{y}_i \qquad \text{and} \qquad
	\Bigg\| x_{1}-c_{1}v^{\star}-\frac{1}{\lambda}\sum_{i=1}^{2s+1}b_{i}\widehat{\psi}_{i}\Bigg\|_{2} & \lesssim\frac{\log^{3.5}n}{\sqrt{(\lambda-1)^{7}n}}
\end{align}
with 
\begin{equation}
	\label{eq:defn-bi-x1}
	b_i \defn \inprod{\widehat{y}_i}{x_1}\qquad \text{for any }1\leq i\leq 2s+1.
\end{equation}
\end{lems}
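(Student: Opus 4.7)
The plan is to split the lemma into its two claims and prove them sequentially. For the first identity, I would show $x_1$ lies in the span of $\{\widehat{y}_i\}_{i=1}^{2s+1}$, after which orthonormality yields the representation with $b_i = \langle \widehat{y}_i, x_1\rangle$. Starting from the expansion $x_1 = \sum_{i=0}^{s-1} a_i W^i v^\star + a_s W^s \widetilde{v}$ from \eqref{eqn:x1-decmp-violin}, Lemma~\ref{lem:linear-combination-power-method-AMP} gives $W^i v^\star \in \mathsf{span}(\omega_1,\ldots,\omega_{i+1}) \subseteq \mathsf{span}(y_1,\ldots,y_{i+1})$ for every $0 \le i \le s-1$. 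An entirely analogous induction applied to the recursion $u_{t+1} = Wu_t - u_{t-1}$ with $u_s = 0$ and $u_{s+1} = \widetilde{v}$ yields $W^i \widetilde{v} \in \mathsf{span}(u_{s+1},\ldots,u_{s+i+1}) \subseteq \mathsf{span}(\widehat{y}_{s+1},\ldots,\widehat{y}_{s+i+1})$ for every $0 \le i \le s$. Summing with coefficients $\{a_i\}$ places $x_1$ inside the desired $(2s+1)$-dimensional subspace, completing the first identity.

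For the second identity, the strategy is to bootstrap from Lemma~\ref{lem:spec} using the near-equality $\|x_1 - \widehat{v}^\star\|_2 = O(n^{-12})$ supplied by \eqref{eqn:power-error}. Note that by construction $\widehat{y}_i = y_i$ for $i \le s$, and taking the auxiliary Gaussians $g_j^i$ to be shared between the two constructions gives $\widehat{\psi}_i = \psi_i$ for $i \le s$. I then compare the coefficients $b_i = \langle \widehat{y}_i, x_1\rangle$ against the $c_i$'s from Lemma~\ref{lem:spec}: for $i \le s$, Parseval combined with the first approximation of Lemma~\ref{lem:spec} yields $\sum_{i\le s}(b_i-c_i)^2 \lesssim \log^{7} n/((\lambda-1)^{6} n)$, while for $i > s$, orthogonality $\widehat{y}_i \perp \mathsf{span}(y_1,\ldots,y_s)$ and the same approximation give $\sum_{i>s} b_i^2 \lesssim \log^{7} n/((\lambda-1)^{6} n)$. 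Feeding these into the near-isometry bound \eqref{eqn:long} of Lemma~\ref{lem:brahms-lemma} shows
\begin{align*}
\bigg\| \sum_{i=1}^{2s+1} b_i \widehat{\psi}_i - \sum_{i=1}^{s} c_i \psi_i \bigg\|_2 \,\lesssim\, \frac{\log^{3.5} n}{\sqrt{(\lambda-1)^{6} n}}.
\end{align*}

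Substituting this into the second assertion of Lemma~\ref{lem:spec} and combining with $\|x_1 - \widehat{v}^\star\|_2 = O(n^{-12})$ produces the claimed identity but with $1/\widetilde{\lambda}$ in place of $1/\lambda$; to finish, I would invoke \eqref{eq:lambda-max-correct-bound}, which forces $|1/\widetilde{\lambda}-1/\lambda|\lesssim\sqrt{\log n/((\lambda-1)^{7} n)}$, an error absorbed into the stated bound once multiplied by $\|\sum_i b_i\widehat{\psi}_i\|_2 = O(1)$. The main obstacle is controlling $\sum_{i=1}^{s}(b_i-c_i)\psi_i$ in $\ell_2$, since $b_i-c_i$ depends on the Gaussians $\psi_i$ through the eigenstructure of $W$; the Parseval-type bound on $\sum(b_i-c_i)^2$ combined with the \emph{uniform} concentration in \eqref{eqn:long} is precisely what breaks this dependence. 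A minor technicality will be reconciling the sign conventions and the precise definition of $c_1$ between the two lemmas, which follows by tracing the construction from Lemma~\ref{lem:spec}.
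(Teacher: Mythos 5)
Your proposal is correct and follows essentially the same route as the paper: exact representability of $x_1$ in $\mathsf{span}\{\widehat{y}_1,\ldots,\widehat{y}_{2s+1}\}$ via the two AMP-style recursions, coefficient comparison with the $c_i$'s of Lemma~\ref{lem:spec}, the near-isometry \eqref{eqn:long}/\eqref{eqn:spect-brahms} to pass from coefficient bounds to $\big\|\sum_i b_i\widehat{\psi}_i-\sum_{i\le s}c_i\psi_i\big\|_2$, and finally the swap of $\widetilde{\lambda}$ for $\lambda$ via \eqref{eq:lambda-max-correct-bound}, which is exactly where the exponent degrades from $(\lambda-1)^{-3}$ to $(\lambda-1)^{-7/2}$. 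The one place you genuinely deviate is the control of the tail coefficients: the paper bounds $\big\|[b_{s+1},\ldots,b_{2s+1}]\big\|_2\lesssim 1/n$ by noting that $\sum_{i\le s}b_i\widehat{y}_i$ is the projection of $x_1$ onto $\mathsf{span}\{\widehat{y}_1,\ldots,\widehat{y}_s\}\ni\sum_{i=0}^{s-1}a_iW^i v^\star$ and exploiting the geometric decay $\|a_sW^s\widetilde{v}\|_2\lesssim(\|W\|/\lambda_{\max})^s n^{11.5}$, whereas you use orthogonality of $\widehat{y}_i$ ($i>s$) to $\mathsf{span}\{y_1,\ldots,y_s\}$ together with $\|x_1-\widehat{v}^\star\|_2\lesssim n^{-12}$ and the first display of Lemma~\ref{lem:spec}, getting only $\log^{3.5}n/\sqrt{(\lambda-1)^6n}$; your weaker bound is perfectly sufficient for the stated rate (and Parseval plus the uniform concentration in \eqref{eqn:long} does, as you say, neutralize the dependence of $b_i-c_i$ on the Gaussians). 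One small imprecision worth fixing in the write-up: by construction $\widehat{y}_t$ for $t>s$ is obtained by projecting $u_t$ against \emph{all} previous $\widehat{y}_1,\ldots,\widehat{y}_{t-1}$, so the induction gives $W^i\widetilde{v}\in\mathsf{span}\{\widehat{y}_1,\ldots,\widehat{y}_{s+i+1}\}$ rather than $\mathsf{span}\{\widehat{y}_{s+1},\ldots,\widehat{y}_{s+i+1}\}$; this does not affect the conclusion that $x_1$ lies in the full $(2s+1)$-dimensional span.
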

\noindent The proof of this lemma is deferred to Section~\ref{sec:proof-eqn:alt-x1}

\subsection{Analysis for spectrally initialized AMP}

We are now positioned to develop non-asymptotic analysis for the spectrally initialized AMP, namely, the AMP sequence $\{x_t\}$ (cf.~\eqref{eqn:AMP-updates}) when initialized to $x_1$ (i.e., the output of the power method).

\paragraph{Auxiliary sequences derived from $\{x_t\}$.}

Akin to the proof of Theorem~\ref{thm:recursion}, we introduce a sequence of auxiliary vectors/matrices $\{z_k, W_k, \zeta_k\}_{-2s\le k \le n}$ in a recursive manner in order to help understand the dynamics of $x_{t}$:  
\begin{itemize}
\item 
For any $k$ with $-2s \leq k \leq 0$, set 
\begin{align*}
 	 z_{k} \defn \widehat{y}_{k+2s+1}, \quad
 	 W_{k} \defn \widehat{W}_{k+2s+1}^{\prime}, \quad
 	 \zeta_{k} \defn \zeta_{k+2s+1}^{\prime}, \quad
 	 \phi_{k} \defn \widehat{\psi}_{k+2s+1}, 
 \end{align*} 
where $\{\widehat{y}_{k}, \widehat{W}_k , \widehat{\psi}_k\}$ have been introduced in Section~\ref{sec:rep-spec}.

\item For any $1\leq k \leq n$, define 
\begin{subequations}
\begin{align}
	z_1 \defn \frac{\big(I - \widehat{V}_{2s+1}\widehat{V}_{2s+1}^{\top}\big)\eta_1(x_1)}{\big\|\big(I - \widehat{V}_{2s+1}\widehat{V}_{2s+1}^{\top}\big)\eta_1(x_1)\big\|_2} \in \real^n,
\end{align}
where we remind the readers that $\widehat{V}_{2s+1}=[\widehat{y}_1,\ldots,\widehat{y}_{2s+1}]$. 
Further, we take 
\begin{equation}
\begin{aligned}
	U_{k-1} &\defn \big[\widehat{V}_{2s+1},~[z_i]_{1 \le i \leq k-1}\big] \in \real^{n\times (k+2\power)},\\
	z_{k} &\defn \frac{\lt(I - U_{k-1}U_{k-1}^{\top}\rt)\eta_{k}(x_{k})}{\lt\|\lt(I - U_{k-1}U_{k-1}^{\top}\rt)\eta_{k}(x_{k})\rt\|_2}, 	\\
	W_k &\defn \lt(I - z_{k-1}z_{k-1}^{\top}\rt)W_{k-1}\lt(I - z_{k-1}z_{k-1}^{\top}\rt).
\end{aligned}
\end{equation}
\end{subequations}
\end{itemize}

\noindent With these definitions in place,  we see that for each $t\geq 1$, the vectors $\{z_k\}^{t}_{-2s}$ are orthonormal whose span contains $\eta_t(x_t)$ (see Lemma~\ref{lemma:zk-orthonormal} and the text right after),  
This allows us to decompose
\begin{align}
\label{eqn:eta-decomposition-augment}
	\eta_{t}(x_{t}) = 
	\sum_{k = -2s}^{t} \beta_{t}^kz_k, \qquad\text{with }\beta_{t}^k \defn \langle\eta_{t}(x_{t}), z_k\rangle
\end{align}
and ensure that $\lt\|\eta_{t}(x_{t})\rt\|_2 = \lt\|\beta_{t}\rt\|_2$ with $\beta_{t} \defn (\beta_t^{-2\power},\ldots,\beta_t^{0},\beta_t^1,\beta_t^2,\ldots,\beta_t^t)^{\top} \in \real^{t+2s+1}.$ 
Additionally, we introduce the following vectors as in Lemma~\ref{lem:distribution}:  
\begin{align}
\label{eqn:zeta-k-augment}
	\zeta_k \defn \Big(\frac{\sqrt{2}}{2} - 1\Big) z_kz_k^{\top}W_kz_k + \sum_{i = -2\power}^{k - 1} g_i^kz_i
	\qquad\text{for } k\geq 1,
\end{align}
with each $g_i^k$ independently generated from $\mathcal{N}(0, \frac{1}{n})$, and we set 
\begin{align}
	\phi_k \defn W_kz_k + \zeta_k ,\qquad\text{for all } -2s \le k < n-2s.
\end{align}
Consequently, repeating exactly the same argument as in the proof of Lemma~\ref{lem:distribution} reveals that
\begin{align}
	\phi_k  \overset{\text{i.i.d.}}{\sim} \mathcal{N}\lt(0, \frac{1}{n}I_n\rt),\qquad\text{for all } -2s \le k < n-2s.
\end{align}


\paragraph{Analyzing spectrally initialized AMP via our general recipe.}

Recall that Theorem~\ref{thm:recursion} offers a general recipe in deriving the decomposition for $x_{t}$. 
As it turns out, the same induction-based proof idea developed for Theorem~\ref{thm:recursion} continues to work for analyzing spectrally initialized AMP.  
In fact, assuming validity for the initialization (which we shall justify momentarily), such proof arguments lead to:
\begin{align}
	x_t \defn \alpha_t \vstar + \sum_{k = -2\power}^{t-1} \beta_{t-1}^k\phi_k + \xi_{t-1}, \qquad\text{for }1 \leq t < n-2s;  
	\label{def:dynamics-augment}
\end{align}
here, $\alpha_{t+1} = \lambda v^{\star\top} \eta_{t}(x_t)$, and $\xi_{t-1}$ is the residual term obeying 
\begin{align*}
	\xi_t &= \sum_{k = -2\power}^{t-1} z_k\bigg[\Big\langle \phi_k, \eta_{t}\big(\alpha_t\vstar + \sum_{k = -2\power}^{t-1} \beta_{t-1}^k\phi_k + \xi_{t-1}\big)\Big\rangle - \langle\eta_t^{\prime}(x_{t})\rangle \beta_{t-1}^k - (\sqrt{2}-1) \beta_{t}^kz_k^{\top}W_kz_k - \sum_{i = -2\power}^{k - 1} \beta_t^ig_i^k - \sum_{i = k+1}^{t} \beta_{t}^ig_k^i\bigg].
\end{align*}
Following Steps 2 and 3 verbatim in Section~\ref{sec:pf-thm-recursion}, we see that \eqref{def:dynamics-augment}
holds true for $t+1$ and satisfies 
\begin{align*}
\|\xi_t\|_2 
&= \Big\langle \sum_{k = -2\power}^{t-1} \mu_t^k\phi_k, \delta_{t}\Big\rangle - \langle\delta_{t}^{\prime}\rangle \sum_{k = -2\power}^{t-1} \mu_t^k\beta_{t-1}^k + \Delta_t 
- 
	\sum_{k = -2\power}^{t-1} \mu_t^k\lt[ (\sqrt{2}-1) \beta_{t}^kz_k^{\top}W_kz_k + \sum_{i = -2\power}^{k - 1} \beta_t^ig_i^k + \sum_{i = k+1}^{t} \beta_{t}^ig_k^i\rt], 
\end{align*}
where $\Delta_t$, $\delta_{t}$ and $\delta_{t}^{\prime}$ are defined in \eqref{eqn:delta-chorus-spectral}.
Further, taking this collectively with Lemma~\ref{lem:concentration} establishes \eqref{eqn:xi-norm-main-spectral}.

We still need to verify that the spectral estimate $x_1$ also satisfies the desired decomposition \eqref{def:dynamics-augment}. 
Towards this, note that if we choose $x_1 = \lambda \eta_{0}(x_0)$ 
(e.g., taking $x_1= \lambda x_0$ and choosing $\eta_0$ to be  identity), 
then 
\begin{align}
	\beta_0^k \coloneqq \frac{1}{\lambda} b_{k+2s+1} 
	= \frac{1}{\lambda} \inprod{x_1}{\widehat{y}_{k+2s+1}} 
	= \frac{1}{\lambda} \inprod{x_1}{z_{k}} = \big\langle \eta_{0}(x_0), z_k \big\rangle  ,
	\label{eq:defn-beta0-spect}
\end{align}
where we have used $\eqref{eq:defn-bi-x1}$. This combined with inequality~\eqref{eqn:alt-x1} gives
\begin{align*}
	\bigg\| x_1 - c_1v^{\star} -  \sum_{i = -2s}^{0} \beta_0^k\phi_k \bigg\|_2 = 
	\bigg\| x_1 - c_1v^{\star} - \frac{1}{\lambda} \sum_{i = -2s}^{0} b_{k+2s+1}\widehat{\psi}_{k+2s+1} \bigg\|_2 
	\lesssim \frac{\log^{3.5} n}{\sqrt{(\lambda - 1)^7n}}, 
\end{align*}
In addition, Lemma~\ref{lem:spec} tells us that 
\[
\big|c_{1}-\big\langle y_{1},\widehat{v}^{\star}\big\rangle\big|=\bigg|\Big\langle y_{1},\sum_{i=1}^{s}c_{i}y_{i}\Big\rangle-\big\langle y_{1},\widehat{v}^{\star}\big\rangle\bigg|\leq\bigg\|\sum_{i=1}^{s}c_{i}y_{i}-\widehat{v}^{\star}\bigg\|_{2}\lesssim\frac{\log^{3.5}n}{\sqrt{(\lambda-1)^{6}n}},
\]
where we have used the fact that $y_{1}=v^{\star}$ and the orthonormality of $\{y_i\}$. 
Combining this with Lemma~\ref{lem:cor-evector} implies that
\[
\Big|c_{1}- \sqrt{1-\frac{1}{\lambda^{2}}} \Big| \leq 
\big|c_{1}-\big\langle y_{1},\widehat{v}^{\star}\big\rangle\big| + \Big|\big\langle y_{1},\widehat{v}^{\star}\big\rangle - \sqrt{1-\frac{1}{\lambda^{2}}} \Big|
\lesssim\frac{\log^{3.5}n}{\sqrt{(\lambda-1)^{9}n}}. 
\]
Putting the preceding results together, we can express
\begin{align}
\label{eqn:ronaldinho}
	x_1 =  \sqrt{1-\frac{1}{\lambda^{2}}} \,v^{\star} +  \sum_{i = -2s}^{0} \beta_0^k\phi_k + \xi_0
\end{align}
where $\|\xi_0\|_2\lesssim \frac{\log^{3.5}n}{\sqrt{(\lambda-1)^{9}n}}$. 
This justifies the validity of \eqref{def:dynamics-augment} for the base case with $t=1$, thus concluding the   proof of Theorem~\ref{thm:recursion-spectral}.

\subsection{Proof of Lemma~\ref{lem:cor-evector}}
\label{sec:lem:cor-evector}

Given the rotational invariance of the Wigner matrix $W$, we shall
assume without loss of generality that $v^{\star}=e_{1}$ throughout
this proof. We also introduce the convenient notation $W=\left[\begin{array}{cc}
W_{1,1} & w_{n-1}^{\top}\\
w_{n-1} & W_{n-1}
\end{array}\right]$, where $W_{n-1}\in\mathbb{R}^{(n-1)\times(n-1)}$ and $w_{n-1}\in\mathbb{R}^{n-1}$
are statistically independent. 

To begin with, it is readily seen from (\ref{eq:lambda-1-gap-non-asymptotic-UCB})
that $\lambda_{\max}I_{n-1}-W_{n-1}$ is invertible with probability
at least $1-O(n^{-11})$. Apply \citet[Theorem~5]{li2021minimax}
to show that
\[
\big|\big\langle\widehat{v}^{\star},v^{\star}\big\rangle\big|^{2}=\frac{1}{1+\big\|\big(\lambda_{\max}I_{n-1}-W_{n-1}\big)^{-1}w_{n-1}\big\|_{2}^{2}}\eqqcolon\frac{1}{1+\big\|\sqrt{n}R(\lambda_{\max})w_{n-1}\big\|_{2}^{2}},
\]
where for notational simplicity we define, for any $\lambda_{0}>\|W\|$,
\[
R(\lambda_{0})\coloneqq\frac{1}{\sqrt{n}}\big(\lambda_{0}I_{n-1}-W_{n-1}\big)^{-1}.
\]
In what follows, let us first analyze the target quantity for any
fixed $\lambda_{0}\in\big[\frac{7}{8}\big(\lambda+\frac{1}{\lambda}\big)+\frac{1}{4},3\lambda]$. 
\begin{itemize}
\item First of all, \citet[Lemmas~5.6 and 5.7]{peng2012eigenvalues} combined
with a little algebra implies that
\[
\Bigg|\big\| R(\lambda_{0})\big\|_{\mathrm{F}}^{2}-\frac{1}{\big(\frac{\lambda_{0}+\sqrt{\lambda_{0}^{2}-4}}{2}\big)^{2}-1}\Bigg|=\Bigg|\mathsf{Tr}\Big[\big(\lambda_{0}I_{n-1}-W_{n-1}\big)^{-2}\Big]-\frac{1}{\big(\frac{\lambda_{0}+\sqrt{\lambda_{0}^{2}-4}}{2}\big)^{2}-1}\Bigg|\lesssim\sqrt{\frac{\log n}{(\lambda-1)^{8}n}}
\]
holds with probability at least $1-O(n^{-15})$, provided that $1<\lambda=O(1)$.
As a result, 
\begin{equation}
\big\| R(\lambda_{0})\big\|_{\mathrm{F}}^{2}=\frac{1}{\big(\frac{\lambda_{0}+\sqrt{\lambda_{0}^{2}-4}}{2}\big)^{2}-1}+O\bigg(\sqrt{\frac{\log n}{(\lambda-1)^{8}n}}\bigg)\lesssim \frac{1}{\lambda-1},\label{eq:Rlambda-F}
\end{equation}
where the last relation holds since $\lambda_{0} \geq  \lambda+\frac{1}{\lambda}-\frac{1}{4}\big(\lambda+\frac{1}{\lambda}-2\big)$ and hence
$\frac{\lambda_{0}+\sqrt{\lambda_{0}^{2}-4}}{2}-1 \gtrsim \lambda - 1$ (by repeating the proof of inequality \eqref{eq:lambda-max-correct-bound} with $\Delta = \frac{1}{4}\big(\lambda+\frac{1}{\lambda}-2$). 
In addition, it follows from (\ref{eq:banderia-W-ub}) that
\begin{equation}
\|R(\lambda_{0})\|\leq\frac{1}{\sqrt{n}}\frac{1}{\lambda_{0}-\|W_{n-1}\|}\leq\frac{1}{\sqrt{n}}\cdot\frac{1}{\frac{7}{8}\big(\lambda+\frac{1}{\lambda}\big)+\frac{1}{4}-2-O\big((\frac{\log n}{n})^{1/3}\big)}\lesssim\frac{1}{(\lambda-1)^{2}\sqrt{n}}.\label{eq:Rlambda-op}
\end{equation}
\item Further, invoking \citet[Eq.~(2.1)]{rudelson2013hanson} (with $\varepsilon$
therein taken to be $C_{9}\frac{\|R(\lambda_{0})\|\sqrt{\log n}}{\|R(\lambda_{0})\|_{\mathrm{F}}}$
for some large enough constant $C_{9}>0$) reveals that, conditional
on $W_{n-1}$, 
\[
\Big|\big\|\sqrt{n}R(\lambda_{0})w_{n-1}\big\|^{2}-\|R(\lambda_{0})\|_{\mathrm{F}}^{2}\Big|\leq C_{9}\|R(\lambda_{0})\|_{\mathrm{F}}\|R(\lambda_{0})\|\sqrt{\log n}\lesssim\sqrt{\frac{\log n}{(\lambda-1)^{5}n}}
\]
holds with probability at least $1-O(n^{-15})$, provided that $C_{9}\frac{\|R(\lambda_{0})\|\sqrt{\log n}}{\|R(\lambda_{0})\|_{\mathrm{F}}}<1$
(which is guaranteed to hold due to (\ref{eq:Rlambda-F}) and (\ref{eq:Rlambda-op})). 
\item Combine the above results to yield, with probability at least $1-O(n^{-15})$,
\begin{align*}
\Bigg|\frac{1}{1+\big\|\sqrt{n}R(\lambda_{0})w_{n-1}\big\|_{2}^{2}}-\frac{1}{1+\frac{1}{\big(\frac{\lambda_{0}+\sqrt{\lambda_{0}^{2}-4}}{2}\big)^{2}-1}}\Bigg| & \leq\Bigg|\big\|\sqrt{n}R(\lambda_{0})w_{n-1}\big\|_{2}^{2}-\frac{1}{\big(\frac{\lambda_{0}+\sqrt{\lambda_{0}^{2}-4}}{2}\big)^{2}-1}\Bigg|\lesssim\sqrt{\frac{\log n}{(\lambda-1)^{8}n}}.
\end{align*}
\end{itemize}
Next,  invoke standard epsilon-net argument \citep[Chapter 4.2]{vershynin2018high}
to show that
\begin{align*}
\Bigg|\frac{1}{1+\big\|\sqrt{n}R(\lambda_{0})w_{n-1}\big\|_{2}^{2}}-\frac{1}{1+\frac{1}{\big(\frac{\lambda_{0}+\sqrt{\lambda_{0}^{2}-4}}{2}\big)^{2}-1}}\Bigg| & \lesssim\sqrt{\frac{\log n}{(\lambda-1)^{8}n}},\qquad\forall\lambda_{0}\in\Big[\frac{7}{8}\big(\lambda+\frac{1}{\lambda}\big)+\frac{1}{4},3\lambda\Big]
\end{align*}
with probability at least $1-O(n^{-11})$; we omit this standard argument
here for the sake of brevity. Recognizing that $\lambda_{\max}\in\big[\frac{7}{8}\big(\lambda+\frac{1}{\lambda}\big)+\frac{1}{4},3\lambda]$
(see \eqref{eq:lambda-1-non-asymptotic-UCB} and \eqref{eq:lambda-max-two}) and defining $\widetilde{\lambda}\coloneqq\frac{\lambda_{\max}+\sqrt{\lambda_{\max}^{2}-4}}{2}$,
we immediately obtain
\begin{align*}
\big|\big\langle\widehat{v}^{\star},v^{\star}\big\rangle\big|^{2} & =\frac{1}{1+\big\|\sqrt{n}R(\lambda_{\max})w_{n-1}\big\|_{2}^{2}}=\frac{1}{1+\frac{1}{\widetilde{\lambda}^{2}-1}}+O\Big(\sqrt{\frac{\log n}{(\lambda-1)^{8}n}}\Big)=1-\frac{1}{\widetilde{\lambda}^{2}}+O\Big(\sqrt{\frac{\log n}{(\lambda-1)^{8}n}}\Big)\\
	& =1-\frac{1}{\lambda^{2}}+ O\Big( \frac{|\lambda - \widetilde{\lambda} |}{\lambda \widetilde{\lambda}} \Big) + O\Big(\sqrt{\frac{\log n}{(\lambda-1)^{8}n}}\Big)
  =1-\frac{1}{\lambda^{2}}+O\Big(\sqrt{\frac{\log n}{(\lambda-1)^{8}n}}\Big),
\end{align*}
where the last inequality comes from (\ref{eq:lambda-max-correct-bound})
and $\lambda\asymp1$. Consequently, we arrive at
\[
\Bigg|\big|\big\langle\widehat{v}^{\star},v^{\star}\big\rangle\big|-\sqrt{1-\frac{1}{\lambda^{2}}}\Bigg|=\frac{\Big|\big|\big\langle\widehat{v}^{\star},v^{\star}\big\rangle\big|^{2}-\big(1-\frac{1}{\lambda^{2}}\big)\Big|}{\big|\big\langle\widehat{v}^{\star},v^{\star}\big\rangle\big|+\sqrt{1-\frac{1}{\lambda^{2}}}}\lesssim\frac{\sqrt{\frac{\log n}{(\lambda-1)^{8}n}}}{\sqrt{1-\frac{1}{\lambda^{2}}}}\asymp\sqrt{\frac{\log n}{(\lambda-1)^{9}n}}.
\]


\subsection{Proof of Lemma~\ref{lem:spec}}
\label{sec:pf-lem-spec}

Recall that the leading eigenvector $\widehat{v}^{\star}$ of $M$ satisfies $M \widehat{v}^{\star} = \lambda_{\max} \widehat{v}^{\star}$. 
In view of the Neumann expansion for eigenvectors  (see, e.g.~\cite[Theorem 2]{chen2021asymmetry}), $\widehat{v}^{\star}$ admits the following expansion: 
\begin{align}
\label{eqn:x0-neumann}
	\widehat{v}^{\star} = \widetilde{c}_0\sum_{t = 0}^{\infty} \frac{1}{\lambda_{\max}^{t}} W^tv^{\star}, 
	\qquad \text{with }\widetilde{c}_0 = \frac{\lambda}{\lambda_{\max}} \langle v^{\star},\widehat{v}^{\star}\rangle ,
\end{align}
with the proviso that $\|W\|<\lambda_{\max}$ --- a condition that has been guaranteed in \eqref{eq:lambda-1-gap-non-asymptotic-UCB}.  
Clearly, one has
\begin{equation}
|\widetilde{c}_0|=\frac{\lambda}{\lambda_{\max}}\big|\langle v^{\star},\widehat{v}^{\star}\rangle\big|\leq\big|\langle v^{\star},\widehat{v}^{\star}\rangle\big|\leq
	1. \label{eq:coefficient-c0-small-than-1}
\end{equation}

Next, it follows from Lemma~\ref{lem:linear-combination-power-method-AMP} that $W^t v^{\star}$ can be written as a linear combination of $\{\omega_i\}_{1 \le i \le t+1}$ with $\omega_{i}$ defined in \eqref{eqn:amp-for-spectral}. 
Substituting \eqref{eqn:linear-induc} into expression~\eqref{eqn:x0-neumann} yields 
\begin{align}
	\widehat{v}^{\star} &= \widetilde{c}_0\sum_{t = 0}^{\infty} \frac{1}{\lambda_{\max}^{t}} W^tv^{\star}
	= \widetilde{c}_0\sum_{t = 0}^{\infty} \frac{1}{\lambda_{\max}^{t}} \Big(\sum_{i = 1}^{t+1} c_{t+1}^i \omega_i\Big)
	= \sum_{i=1}^{\infty} \Big(\widetilde{c}_0\sum_{t = i-1}^{\infty} \lambda_{\max}^{-t}c_{t+1}^i \Big) \omega_i 
	 \notag\\
	&= \sum_{i=1}^{\infty} \Big(\widetilde{c}_0\sum_{t = 0}^{\infty} \lambda_{\max}^{-t}c_{t+1}^i \Big) \omega_i \eqqcolon \sum_{i=1}^{\infty} c_i \omega_i,	
	\qquad \text{with } c_i \defn \widetilde{c}_0\sum_{t = 0}^{\infty} \lambda_{\max}^{-t}c_{t+1}^i \text{ for }i\geq 1, 
	\label{eq:vhat-ci-defn}
\end{align}
where the second line has made use of \eqref{eq:ct-i-zero-coefficient}. 
To proceed, let us claim for the moment that the following relations hold true for all $t$ obeying $\frac{t^{5}\log n}{n}=o(1)$:  
\begin{subequations}
\label{eqn:spec-finale}
\begin{align}
& c_i = \widetilde{\lambda}^{-1}\cdot c_{i-1}, \qquad \text{where  }\widetilde{\lambda}^{-1} \defn \frac{\lambda_{\max} - \sqrt{\lambda_{\max}^2 - 4}}{2},
\label{eqn:spec-finale-1}\\
& \|\omega_t\|_2  = 1+O\Big(\sqrt{\frac{t^5\log n}{n}}\Big),\label{eqn:spec-finale-3} \\
&\big\| \omega_t - \psi_{t-1} \big\|_2  \lesssim \sqrt{\frac{t^5\log n}{n}} 
	\quad \text{and} \quad  \big\| \omega_t -  y_t \big\|_2 \lesssim \sqrt{\frac{t^5\log n}{n}},
\label{eqn:spec-finale-2} \\
&\sum_{i=1}^{\frac{C_{v}\log n}{\widetilde{\lambda}-1}}(c_{i})^{2} \leq4.\label{eq:sum-of-square-ci} 
\end{align}
\end{subequations}
In particular, when $\lambda = O(1)$, it follows from \eqref{eq:lambda-max-correct-bound} that, with probability at least $1-O(n^{-11})$, 
\begin{subequations}
\begin{equation}
	\big| \widetilde{\lambda}  - \lambda \big| 
	= \bigg| \frac{\lambda_{\max}+\sqrt{\lambda_{\max}^2-4}}{2} - \lambda \bigg|
	\leq \frac{\lambda - 1 }{4} ,
	\label{eq:minyu-lambda-tilde-bound}
\end{equation}
and as a result, 
\begin{equation}
	\widetilde{\lambda} - 1 \geq \lambda - \frac{\lambda - 1 }{4} - 1
	= \frac{3(\lambda - 1) }{4} .
	\label{eq:minyu-lambda-tilde-minus-1-bound}
\end{equation}
\end{subequations}


The preceding claims in \eqref{eqn:spec-finale} allow us to complete the proof of Lemma~\ref{lem:spec}. 
To see this, note that by virtue of expression~\eqref{eqn:spec-finale-1} and \eqref{eqn:spec-finale-3}, 
we can truncate the infinite sum by keeping the first $\frac{C_v\log n}{\widetilde{\lambda}-1}$ terms for some large enough constant $C_v>0$, namely, 
\begin{align}
\bigg\|\widehat{v}^{\star}-\sum_{i=1}^{\frac{C_{v} \log n}{\widetilde{\lambda}-1}}c_{i}\omega_{i}\bigg\|_{2} 
	& =\bigg\| \widetilde{c}_0 \sum_{i=\frac{C_{v}\log n}{\widetilde{\lambda}-1}+1}^{\infty}c_{i}\omega_{i}\bigg\|_{2}
	\leq O\bigg(|\widetilde{c}_0|\sum_{i=\frac{C_{v}\log n}{\widetilde{\lambda}-1}}^{\infty}\widetilde{\lambda}^{-i}\bigg)
	\leq O\Big(\frac{|\widetilde{c}_0|\widetilde{\lambda}^{-\frac{C_{v}\log n}{\widetilde{\lambda}-1}}}{\widetilde{\lambda}-1}\Big) \notag\\
	&=O\Big(\frac{|\widetilde{c}_0| n^{-\frac{C_{v}\log\widetilde{\lambda}}{\widetilde{\lambda}-1}}}{\widetilde{\lambda}-1}\Big)=O\Big(\frac{1}{(\widetilde{\lambda}-1)n}\Big) ,
	\label{eq:v-truncated-sum-bound135}
\end{align}
where the last line holds when $C_v$ is large enough and uses the fact that $|\widetilde{c}_0|\leq 1$ (cf.~\eqref{eq:coefficient-c0-small-than-1}).
Notice that here we truncate at the first $\frac{C_{v} \log n}{\widetilde{\lambda}-1}$ terms. If one decides to keep the first $s$ terms for $s \geq \frac{C_{v} \log n}{\widetilde{\lambda}-1}$, it only results in a smaller truncation error. 

Taking the relation~\eqref{eqn:spec-finale-2} and \eqref{eq:v-truncated-sum-bound135} together allows us to demonstrate that 
\begin{align*}
\bigg\|\widehat{v}^{\star}-\sum_{i=1}^{\frac{C_{v}\log n}{\widetilde{\lambda}-1}}c_{i}y_{i}\bigg\|_{2} & \leq\bigg\|\widehat{v}^{\star}-\sum_{i=1}^{\frac{C_{v}\log n}{\widetilde{\lambda}-1}}c_{i}\omega_{i}\bigg\|_{2}+O\bigg(\sum_{i=1}^{\frac{C_{v}\log n}{\widetilde{\lambda}-1}}|c_{i}|\sqrt{\frac{i^{5}\log n}{n}}\bigg)\\
 & \lesssim\frac{1}{(\widetilde{\lambda}-1)n}+\bigg(\sum_{i=1}^{\frac{C_{v}\log n}{\widetilde{\lambda}-1}}|c_{i}|^{2}\bigg)^{\frac{1}{2}}\bigg(\sum_{i=1}^{\frac{C_{v}\log n}{\widetilde{\lambda}-1}}\frac{i^{5}\log n}{n}\bigg)^{\frac{1}{2}}\\
 & \lesssim\bigg(\frac{\log^{7}n}{(\widetilde{\lambda}-1)^{6}n}\bigg)^{\frac{1}{2}}\lesssim \frac{\log^{3.5}n}{\sqrt{(\lambda-1)^{6}n}}, 
\end{align*}
where the penultimate relation comes from \eqref{eq:sum-of-square-ci}, 
and the last relation results from \eqref{eq:minyu-lambda-tilde-minus-1-bound}. 
Also, similar to the arguments in \eqref{eq:v-truncated-sum-bound135} we can obtain
\[
\bigg\|\widetilde{c}_{0}\sum_{i=\frac{C_{v}\log n}{\widetilde{\lambda}-1}+1}^{s}c_{i}\omega_{i}\bigg\|_{2}\lesssim\frac{1}{(\widetilde{\lambda}-1)n}\qquad\text{and}\qquad\bigg\|\widetilde{c}_{0}\sum_{i=\frac{C_{v}\log n}{\widetilde{\lambda}-1}+1}^{s}c_{i}y_{i}\bigg\|_{2}\lesssim\frac{1}{(\widetilde{\lambda}-1)n}.
\]
As a result, we arrive at
\[
\bigg\|\widehat{v}^{\star}-\sum_{i=1}^{s}c_{i}y_{i}\bigg\|_{2}\leq\bigg\|\widehat{v}^{\star}-\sum_{i=1}^{\frac{C_{v}\log n}{\widetilde{\lambda}-1}}c_{i}\omega_{i}\bigg\|_{2}+\bigg\|\widetilde{c}_{0}\sum_{i=\frac{C_{v}\log n}{\widetilde{\lambda}-1}+1}^{\infty}c_{i}\omega_{i}\bigg\|_{2}\lesssim\frac{\log^{3.5}n}{\sqrt{(\lambda-1)^{6}n}}.
\]

Repeating the same argument and recognizing that  $y_{1} = \vstar$ lead to
\begin{equation}
\bigg\|\widehat{v}^{\star}-c_{1}v^{\star}-\frac{1}{\widetilde{\lambda}}\sum_{i=1}^{s}c_{i}\psi_{i}\bigg\|_{2}
	=\bigg\|\widehat{v}^{\star}-c_{1}y_{1}-\sum_{i=2}^{s+1}c_{i}\psi_{i-1}\bigg\|\lesssim\frac{\log^{3.5}n}{\sqrt{(\lambda-1)^{6}n}}.
	\label{eq:vstar-truncated-13579}
\end{equation}
%
This concludes the proof of Lemma~\ref{lem:spec}, as long as the claims in \eqref{eqn:spec-finale} can be justified. 
As a consequence, the remainder of this section is dedicated to proving \eqref{eqn:spec-finale}.


\subsubsection{Proof of claim~\eqref{eqn:spec-finale}}

\paragraph{Proof of recurrence relation~\eqref{eqn:spec-finale-1}.} 
For any $i \ge 2$, it follows from the definition \eqref{eq:vhat-ci-defn} of $c_i$ and the relation \eqref{eq:c-t-i-recursion} that
\begin{align}
	c_{i+1} + c_{i-1} = \widetilde{c}_0\sum_{t = 0}^{\infty} \lambda_{\max}^{-t}(c_{t+1}^{i+1} + c_{t+1}^{i-1}) 
	= \widetilde{c}_0\sum_{t = 0}^{\infty} \lambda_{\max}^{-t}c_{t+2}^i 
	= \lambda_{\max} \widetilde{c}_0\sum_{t = 1}^{\infty} \lambda_{\max}^{-t}c_{t+1}^i
	= \lambda_{\max}c_i,
	\label{eq:recursion-ci-ci}
\end{align}
where the last step is valid since $c_{t}^{i} = 0$ for $i > t.$
To analyze this recurrence relation \eqref{eq:recursion-ci-ci}, 
let us look at the two roots of the characteristic equation $r^2 - \lambda_{\max} r + 1=0$, namely, 
$r_1 = \frac{\lambda_{\max} - \sqrt{\lambda_{\max}^2 - 4}}{2} $, $r_2 = \frac{\lambda_{\max} + \sqrt{\lambda_{\max}^2 - 4}}{2} $.  
It is well known that the solution to \eqref{eq:recursion-ci-ci} can be expressed via these two roots as follows:
\begin{align}
	c_i = a_1(r_1)^i + a_2(r_2)^i, \qquad i\geq 0
	\label{eqn:ci-roots}
\end{align}
for some coefficients $a_1,a_2\in \real$.

In view of \eqref{eq:lambda-max-correct-bound}, one has 
\[
	r_2  \geq 1 +  (\lambda - 1) - \bigg|\frac{\lambda_{\max}+\sqrt{\lambda_{\max}^{2}-4}}{2}-\lambda\bigg|
		\geq  1+ \frac{3(\lambda-1)}{4} > 1,
\]
which also indicates that $r_1 = 1/r_2 < 1$. 
In addition, we claim that 
\begin{align}
	0\leq c_t^i \le 2^t \qquad t\geq 0,~ i\geq 0.
	\label{eq:ct-i-exponential}
\end{align}
This relation can be easily shown by induction: (i) we first learn 
		from Lemma~\ref{lem:linear-combination-power-method-AMP} that $v^{\star}=c_1^1\omega_1 =c_1^1v^{\star}$ and hence $c_1^1=1$, which together with $c_1^i=0$ ($i=0$ or $i>1$) 
justifies \eqref{eq:ct-i-exponential} when $t=1$; 
(ii) if \eqref{eq:ct-i-exponential} is valid for $t$, then it follows from 
\eqref{eq:c-t-i-recursion} that $0\leq c_{t+1}^i = c_t^{i+1} + c_t^{i-1} \le 2^t+2^t\leq 2^{t+1}$, thus establishing \eqref{eq:ct-i-exponential} for $t+1$ --- and hence its validity for all $t\geq 0$. 
Combine \eqref{eq:ct-i-exponential} with~\eqref{eq:vhat-ci-defn} to show the boundedness of $c_i$ in the sense that:
\begin{align}
	|c_{i}| & =|\widetilde{c}_{0}|\sum_{t=0}^{\infty}\lambda_{\max}^{-t}c_{t+1}^{i}\le2|\widetilde{c}_{0}|\sum_{t=0}^{\infty}(\lambda_{\max}/2)^{-t}=\frac{4|\widetilde{c}_{0}|\lambda_{\max}}{\lambda_{\max}-2}\leq\frac{64\lambda^2}{5(\lambda-1)^2},
	\label{eq:ci-upper}
\end{align}
which relies on \eqref{eq:coefficient-c0-small-than-1} and \eqref{eq:lambda-max-two}. 
The boundedness of $c_i$ for any $i\geq 0$ necessarily implies that $a_2=0$ in \eqref{eqn:ci-roots} (otherwise $c_i$ will blow up as $i$ grows given that $r_2>1$). 
We can thus conclude that $c_i = a_1r_1^i$ $(i\geq 0)$ holds for some $a_1\neq 0$, thus implying that
\begin{align*}
\frac{c_i}{c_{i-1}} = r_1 = \frac{\lambda_{\max} - \sqrt{\lambda_{\max}^2 - 4}}{2}.
\end{align*}
%

\paragraph{Proof of inequality \eqref{eqn:spec-finale-3}.}

As discussed previously, the iterates $\{\omega_{t}\}$ in \eqref{eqn:amp-for-spectral} form another sequence of AMP updates with the denoising functions taken to be the identity function. 
In view of Theorem~\ref{thm:recursion}, the iterates $\{\omega_{t}\}$ admit the decomposition
\begin{align}
	\omega_t = \sum_{k = 1}^{t-1} \beta_{t-1}^k\psi_k + \xi_{t-1}; 
	\label{eq:omegat-decomp-spec}
\end{align}
here, we abuse the notation by taking $\beta_{t}^k \defn \langle \omega_{t}, y_k\rangle$ (cf.~\eqref{eqn:eta-decomposition})
(which satisfies $\ltwo{\beta_t} = \ltwo{\omega_t}$) and letting  $\xi_{t-1}$ denote the residual term.

In order to control $\omega_{t}$, we need to bound the size of $\xi_{t-1}$.
 Specializing the expression \eqref{eq:xi_bound} to the special choice of $\eta_t$ (i.e., the identity function), 
we obtain
\begin{align}
\label{eqn:spect-tenor}
	\ltwo{\xi_t} &= 
	\Big\langle \sum_{k = 1}^{t-1} \mu_t^k\psi_k, \xi_{t-1} \Big\rangle
	+
	\sum_{k = 1}^{t-1} \mu_{t}^k\bigg[\Big\langle \psi_k, \sum_{j = 1}^{t-1} \beta_{t-1}^j\psi_j\Big\rangle - \beta_{t-1}^k 
	- (\sqrt{2}-1)\beta_{t}^ky_k^{\top}W_k^{\prime}y_k - \sum_{i = 1}^{k - 1} \beta_t^ig_i^k - \sum_{i = k+1}^{t} \beta_{t}^ig_k^i\bigg].
\end{align}
Here, $\{y_k, W_k'\}$ have been defined in expression~\eqref{eqn:spect-seq-1}, whereas $\mu_t$ is a unit vector in $\mathcal{S}^{t-2}.$ 
We then control each term in \eqref{eqn:spect-tenor} separately. 
First, observe that with probability at least $1 - O(n^{-11})$, 
\begin{align}
\label{eqn:trio}
	\Big|\Big\langle \sum_{k = 1}^{t-1} \mu_t^k\psi_k, \xi_{t-1}\Big\rangle\Big|
	\leq 
	\Big\|\sum_{k = 1}^{t-1} \mu_t^k\psi_k\Big\|_2 \cdot \ltwo{\xi_{t-1}}
	\leq 
	\Big(1+O\Big(\sqrt{\frac{t\log n}{n}}\Big)\Big)\ltwo{\xi_{t-1}}
\end{align}
holds for every $t \in [n]$, where the last inequality follows from \eqref{eqn:spect-brahms}. 
In view of Lemma~\ref{lem:concentration},  with probability at least $1 - O(n^{-11})$ one has
\begin{align}
\label{eqn:spect-baritone}
	\bigg|\sum_{k = 1}^{t-1} (\sqrt{2}-1) \mu_{t}^k\Big[\beta_{t}^ky_k^{\top}W_k^{\prime}y_k + \sum_{i = 1}^{k - 1} \beta_t^ig_i^k + \sum_{i = k+1}^{t} \beta_{t}^ig_k^i\Big]\bigg|
	\lesssim
	\sqrt{\frac{t\log n}{n}}\ltwo{\beta_t}.
\end{align}
In addition, if we write matrix $\Psi \defn [\psi_1,\ldots,\psi_{t-1}] \in \real^{n\times (t-1)}$, then property~\eqref{eqn:simple-rm} and $\|u_t\|_2=1$ give  
\begin{align}
	\notag \bigg|\Big\langle \sum_{k = 1}^{t-1} \mu_t^k\psi_k, \sum_{j = 1}^{t-1} \beta_t^j\psi_j\Big\rangle - \sum_{k = 1}^{t-1} \mu_t^k\beta_t^k \bigg|
	&= 
	\Big|\mu_t^\top\Psi^\top \Psi \beta_t - \mu_t^\top I_{t-1} \beta_t \Big| \\
	&\leq 
	\ltwo{\mu_t}\ltwo{\beta_t} \big\|\Psi^\top \Psi  - I_{t-1} \big\|
	\lesssim 
	\sqrt{\frac{t\log n}{n}}\ltwo{\beta_t} \label{eqn:spect-soprano}
\end{align}
holds with probability at least $1-O(n^{-11})$. 
Taking the decomposition~\eqref{eqn:spect-tenor} collectively with \eqref{eqn:trio}, \eqref{eqn:spect-baritone} and \eqref{eqn:spect-soprano} 
and using $\|\omega_{t-1}\|_2 = \|\beta_{t-1}\|_2$, 
we arrive at 
\begin{align}
\label{eqn:spect-xi-to-beta}
	\ltwo{\xi_t} \leq \Big(1+ C_3 \sqrt{\frac{t\log n}{n}}\Big)\ltwo{\xi_{t-1}}
	+
	C_3 \sqrt{\frac{t\log n}{n}}  \ltwo{\omega_t} 
\end{align}
for some large enough constant $C_3>0$.

Additionally, invoke \eqref{eq:omegat-decomp-spec} to obtain 
\begin{align}
\label{eqn:spect-vt-ni}
\|\omega_t\|_2 
\notag \leq \Big\|\sum_{k = 1}^{t-1} \beta_{t-1}^k\psi_k\Big\|_2 + \|\xi_{t-1}\|_2 
&\stackrel{(\textrm{i})}{\leq} 
\Big(1 + O\Big(\sqrt{\frac{t\log n}{n}}\Big)\Big) \|\beta_{t-1}\|_2 + \|\xi_{t-1}\|_2 \\
&\leq \Big(1 + C_3 \sqrt{\frac{t\log n}{n}}\Big) \|\omega_{t-1}\|_2 + \|\xi_{t-1}\|_2 ,
\end{align}
provided that the constant $C_3>0$ is large enough. 
Here, (i) comes from \eqref{eqn:spect-brahms}, 
and we remind the readers that $\|\omega_{t-1}\|_2 = \|\beta_{t-1}\|_2$ and $\ltwo{\omega_1} = 1$.

Clearly, 
the inequalities \eqref{eqn:spect-xi-to-beta} and \eqref{eqn:spect-vt-ni} taken together
lead  to a recurrence relation involving $\ltwo{\xi_t}$ and $\ltwo{\omega_t}$. 
Based on this, we claim that for all $t$ obeying $\frac{t^{5}\log n}{n}=o(1)$, one has
\begin{equation}
\|\xi_{t}\|_{2}\leq C_{5}\sqrt{\frac{t^{3}\log n}{n}}\qquad\text{and}\qquad\|\omega_{t}\|_{2}\leq1+C_{5}\sqrt{\frac{t^{5}\log n}{n}}
	\label{eqn:spec-xi}
\end{equation}
for some universal constant $C_{5}=2C_{3}$. Clearly, (\ref{eqn:spec-xi}) is satisfied
when $t=1$, given that $\xi_1 = 0$ (as $\omega_{2} = W\vstar$) and $\ltwo{\omega_1} = \ltwo{\vstar} = 1$. 
Suppose now that (\ref{eqn:spec-xi}) is valid for
the $t$-th iteration, then we can deduce that
\begin{align}
\|\omega_{t+1}\|_{2} & \leq\Big(1+C_{3}\sqrt{\frac{(t+1)\log n}{n}}\Big)\|\omega_{t}\|_{2}+\|\xi_{t}\|_{2} \notag\\
 & \leq\Big(1+C_{3}\sqrt{\frac{(t+1)\log n}{n}}\Big)\left(1+C_{5}\sqrt{\frac{t^{5}\log n}{n}}\right)+C_{5}\sqrt{\frac{t^{3}\log n}{n}} \notag\\
 & =1+C_{5}\sqrt{\frac{(t+1)\log n}{n}}\left(\frac{C_{3}}{C_{5}}+t^{2}+C_{3}\sqrt{\frac{t^{5}\log n}{n}}+t\right) \notag\\
	& \leq1+C_{5}\sqrt{\frac{(t+1)\log n}{n}}\left(t+1\right)^{2}=1+C_{5}\sqrt{\frac{(t+1)^{5}\log n}{n}}; \label{eq:omega-t-recursion}\\
\|\xi_{t+1}\|_{2} & \leq\Big(1+C_{3}\sqrt{\frac{(t+1)\log n}{n}}\Big)\|\xi_{t}\|_{2}+C_{3}\sqrt{\frac{(t+1)\log n}{n}}\,\|\omega_{t+1}\|_{2} \notag\\
 & \leq C_{5}\Big(1+C_{3}\sqrt{\frac{(t+1)\log n}{n}}\Big)\sqrt{\frac{t^{3}\log n}{n}}+C_{3}\sqrt{\frac{(t+1)\log n}{n}}\left(1+C_{5}\sqrt{\frac{t^{5}\log n}{n}}\right) \notag\\
 & \leq C_{5}\sqrt{\frac{(t+1)\log n}{n}}\left(t+\frac{C_{3}}{C_{5}}+2C_{3}\sqrt{\frac{t^{5}\log n}{n}}\right) \notag\\
 & \leq C_{5}\sqrt{\frac{(t+1)\log n}{n}}(t+1)=C_{5}\sqrt{\frac{(t+1)^{3}\log n}{n}}; 
\end{align}
here, the last lines in both of the above bounds hold true since $C_{3}/C_{5}=1/2$
and $\frac{t^{5}\log n}{n}=o(1)$. This in turn justifies the validity
of the claim (\ref{eqn:spec-xi}) for the $(t+1)$-th iteration. Hence, by induction, we have established (\ref{eqn:spec-xi})
for all $t$ obeying $\frac{t^{5}\log n}{n}=o(1)$.

Armed with \eqref{eqn:spec-xi}, we can invoke \eqref{eq:omegat-decomp-spec} again to derive
\begin{align*}
\|\omega_t\|_2 
\notag \geq \Big\|\sum_{k = 1}^{t-1} \beta_{t-1}^k\psi_k\Big\|_2 - \|\xi_{t-1}\|_2 
&\stackrel{(\textrm{i})}{\geq} 
\Big(1 - O\Big(\sqrt{\frac{t\log n}{n}}\Big)\Big) \|\beta_{t-1}\|_2 - \|\xi_{t-1}\|_2 \\
&\geq \Big(1 - C_3 \sqrt{\frac{t\log n}{n}}\Big) \|\omega_{t-1}\|_2 - C_{5}\sqrt{\frac{t^{3}\log n}{n}} 
\end{align*}
for some large enough constant $C_3>0$. Repeat the argument in \eqref{eq:omega-t-recursion} to yield
\[
	\|\omega_{t}\|_{2}\geq 1 -C_{5}\sqrt{\frac{t^{5}\log n}{n}} .
\]
This taken collectively with \eqref{eqn:spec-xi} finishes the proof of inequality \eqref{eqn:spec-finale-3}.

\paragraph{Proof of inequality~\eqref{eqn:spec-finale-2}.}

To streamline the presentation of our proof, let us first make note of the following result, 
the proof of which is deferred to the end of this section: 
%
\begin{align}
\label{eqn:spect-beta-ni}
	\big\| \big[\beta_t^{1},\beta_t^{2},\cdots, \beta_t^{t-1} \big] \big\|_2 &\lesssim \sqrt{\frac{t^5\log n}{n}}.
\end{align}
%
With this result, \eqref{eqn:spec-xi} and \eqref{eqn:spec-finale-3} in mind, we are ready to prove \eqref{eqn:spec-finale-2}. 
First, it follows from \eqref{eq:omegat-decomp-spec} that 
\begin{align*}
\big\|\omega_{t}-\psi_{t-1}\big\|_{2} & \leq\big\|(\beta_{t-1}^{t-1}-1)\psi_{t-1}\big\|_{2}+\Big\|\sum_{k=1}^{t-2}\beta_{t-1}^{k}\psi_{k}\Big\|_{2}+\|\xi_{t-1}\|_{2}\\
 & \leq\bigg(1+O\Big(\sqrt{\frac{t\log n}{n}}\Big)\bigg)\Big(\big|\beta_{t-1}^{t-1}-1\big|+\big\|\big[\beta_{t-1}^{1},\cdots,\beta_{t-1}^{t-2}\big]\big\|_{2}+\|\xi_{t-1}\|_{2}\Big),\\
 & \leq\bigg(1+O\Big(\sqrt{\frac{t\log n}{n}}\Big)\bigg)\Big(\big|\|\omega_{t-1}\|_{2}-1\big|+2\big\|\big[\beta_{t-1}^{1},\cdots,\beta_{t-1}^{t-2}\big]\big\|_{2}+\|\xi_{t-1}\|_{2}\Big),
\end{align*}
where the second line results from the properties \eqref{eqn:brahms} and \eqref{eqn:simple-rm},
and the last line is valid since 
\[
\beta_{t-1}^{t-1}=\big\langle\omega_{t-1},y_{t-1}\big\rangle=\frac{\omega_{t-1}^{\top}(I-V_{t-2}V_{t-2}^{\top})\omega_{t-1}}{\|(I-V_{t-2}V_{t-2}^{\top})\omega_{t-1}\|_{2}}\geq0
\]
\[
\Longrightarrow\quad\big|\beta_{t-1}^{t-1}-1\big|=\big||\beta_{t-1}^{t-1}|-1\big|\leq\big|\|\beta_{t-1}\|_{2}-1\big|+\big\|\big[\beta_{t-1}^{1},\cdots,\beta_{t-1}^{t-2}\big]\big\|_{2}=\big|\|\omega_{t-1}\|_{2}-1\big|+\big\|\big[\beta_{t-1}^{1},\cdots,\beta_{t-1}^{t-2}\big]\big\|_{2}.
\]
Taking this collectively with \eqref{eqn:spec-xi},  \eqref{eqn:spect-beta-ni} and \eqref{eqn:spec-finale-3} yields the first part of the advertised bound \eqref{eqn:spec-finale-2}.

Regarding the second part of \eqref{eqn:spec-finale-2}, reorganizing the expression \eqref{eqn:spect-seq-1} of $y_{t}$ gives 
\begin{align*}
	y_t 
	&= \frac{\omega_t - V_{t-1}V_{t-1}^{\top}\omega_t}{\big\|\omega_t - V_{t-1}V_{t-1}^{\top}\omega_t\big\|_2}
	= \omega_t 
	+ \bigg(\frac{1-\big\|\omega_t - V_{t-1}V_{t-1}^{\top}\omega_t\big\|_2}{\big\|\omega_t - V_{t-1}V_{t-1}^{\top}\omega_t\big\|_2}\bigg)\omega_t 
	- \frac{V_{t-1}V_{t-1}^{\top}\omega_t}{\big\|\omega_t - V_{t-1}V_{t-1}^{\top}\omega_t\big\|_2}.
	%
\end{align*}
In view of relations \eqref{eqn:spect-beta-ni} and \eqref{eqn:spec-finale-3}, we can deduce that
\begin{align*}
	\big\|V_{t-1}V_{t-1}^{\top}\omega_t\big\|_2 &= \big\|V_{t-1}^{\top}\omega_t\big\|_2 = \big\|\big[\beta_{t}^{1},\cdots,\beta_{t}^{t-1}\big]\big\|_{2}
	\lesssim 
	\sqrt{\frac{t^5\log n}{n}}
	 \\
	\big\|\omega_t - V_{t-1}V_{t-1}^{\top}\omega_t\big\|_2 
	&\leq \|\omega_t\|_2 + \big\|V_{t-1}^{\top}\omega_t \big\|_2 
	= 1+ O\Big(\sqrt{\frac{t^5\log n}{n}}\Big) ,\\
	%
	\big\|\omega_t - V_{t-1}V_{t-1}^{\top}\omega_t\big\|_2 
	&\geq \|\omega_t\|_2 - \big\|V_{t-1}^{\top}\omega_t \big\|_2 
	\geq 1- O\Big(\sqrt{\frac{t^5\log n}{n}}\Big) .
\end{align*}
Taking the preceding bounds collectively and using \eqref{eqn:spec-finale-3} once again, we immediately reach
\begin{align*}
	\big\| y_t - \omega_t \big\|_2 \leq O\Big(\sqrt{\frac{t^5\log n}{n}}\Big) ,
\end{align*}
thus validating the second part of inequality~\eqref{eqn:spec-finale-2}.

\paragraph{Proof of inequality~\eqref{eq:sum-of-square-ci}.}

It follows from \eqref{eq:v-truncated-sum-bound135} and the triangle inequality that
\begin{align*}
\bigg\|\widehat{v}^{\star}-\sum_{i=1}^{\frac{C_{v}\log n}{\widetilde{\lambda}-1}}c_{i}y_{i}\bigg\|_{2} & \leq\bigg\|\widehat{v}^{\star}-\sum_{i=1}^{\frac{C_{v}\log n}{\widetilde{\lambda}-1}}c_{i}\omega_{i}\bigg\|_{2}+\bigg\|\sum_{i=1}^{\frac{C_{v}\log n}{\widetilde{\lambda}-1}}c_{i}(\omega_{i}-y_{i})\bigg\|_{2}\\
 & \lesssim\frac{1}{(\widetilde{\lambda}-1)n}+\bigg(\sum_{i=1}^{\frac{C_{v}\log n}{\widetilde{\lambda}-1}}|c_{i}|^{2}\bigg)^{\frac{1}{2}}\bigg(\sum_{i=1}^{\frac{C_{v}\log n}{\widetilde{\lambda}-1}}\big\|\omega_{i}-y_{i}\big\|_{2}^{2}\bigg)^{\frac{1}{2}}\\
 & \lesssim\frac{1}{(\widetilde{\lambda}-1)n}+\bigg(\sum_{i=1}^{\frac{C_{v}\log n}{\widetilde{\lambda}-1}}|c_{i}|^{2}\bigg)^{\frac{1}{2}}\bigg(\sum_{i=1}^{\frac{C_{v}\log n}{\widetilde{\lambda}-1}}\frac{i^{5}\log n}{n}\bigg)^{\frac{1}{2}}\\
 & \lesssim\frac{1}{(\lambda-1)n}+\bigg(\sum_{i=1}^{\frac{C_{v}\log n}{\widetilde{\lambda}-1}}|c_{i}|^{2}\bigg)^{\frac{1}{2}}
	\sqrt{\frac{\log^{7}n}{(\lambda-1)^{6}n}},
\end{align*}
where the penultimate inequality invokes \eqref{eqn:spec-finale-2}, and the last line results from \eqref{eq:minyu-lambda-tilde-minus-1-bound}. 
This combined with the orthonormality of $\{y_{i}\}$ implies that
\begin{align*}
\bigg(\sum_{i=1}^{\frac{C_{v}\log n}{\widetilde{\lambda}-1}}(c_{i})^{2}\bigg)^{1/2} & =\bigg\|\sum_{i=1}^{\frac{C_{v}\log n}{\widetilde{\lambda}-1}}c_{i}y_{i}\bigg\|_{2}\leq\big\|\widehat{v}^{\star}\big\|_{2}+\bigg\|\widehat{v}^{\star}-\sum_{i=1}^{\frac{C_{v}\log n}{\widetilde{\lambda}-1}}c_{i}y_{i}\bigg\|_{2}\leq1+O\Bigg(\bigg(\sum_{i=1}^{\frac{C_{v}\log n}{\widetilde{\lambda}-1}}|c_{i}|^{2}\bigg)^{\frac{1}{2}}\sqrt{\frac{\log^{7}n}{(\lambda-1)^{6}n}}\Bigg)\\
 & \leq1+\frac{1}{2}\bigg(\sum_{i=1}^{\frac{C_{v}\log n}{\widetilde{\lambda}-1}}|c_{i}|^{2}\bigg)^{\frac{1}{2}},
\end{align*}
where the last line is valid as long as $n(\lambda-1)^{6}/\log^{7}n$
is sufficiently large. Rearranging terms, we are left with $\sum_{i=1}^{\frac{C_{v}\log n}{\widetilde{\lambda}-1}}(c_{i})^{2}\leq4$
as claimed.

\paragraph{Proof of inequality \eqref{eqn:spect-beta-ni}.}

Finally, we finish the proof by establishing inequality~\eqref{eqn:spect-beta-ni}. 
From the definition of $\beta_t$ and \eqref{eq:omegat-decomp-spec}, one can derive a recursive relation as follows: 
\begin{align}
\notag\big\|\big[\beta_{t}^{1},\ldots,\beta_{t}^{t-1}\big]\big\|_{2} & =\|V_{t-1}^{\top}\omega_{t}\|_{2}=\bigg\| V_{t-1}^{\top}\Big(\sum_{k=1}^{t-1}\beta_{t-1}^{k}\psi_{k}+\xi_{t-1}\Big)\bigg\|_{2}\\
\notag & \le\Big\|\sum_{k=1}^{t-2}\beta_{t-1}^{k}\psi_{k}\Big\|_{2}+|\beta_{t-1}^{t-1}|\cdot\big\| V_{t-1}^{\top}\psi_{t-1}\big\|_{2}+\big\|\xi_{t-1}\big\|_{2}\\
\notag
 & \le\Big(1+C_{4}\sqrt{\frac{t\log n}{n}}\Big)\big\|\big[\beta_{t-1}^{1},\ldots,\beta_{t-1}^{t-2}\big]\big\|_{2}+\|\omega_{t-1}\|_{2} \cdot \big\| V_{t-1}^{\top}\psi_{t-1}\big\|_{2}+\big\|\xi_{t-1}\big\|_{2}\\
 & \le\Big(1+C_{4}\sqrt{\frac{t\log n}{n}}\Big)\big\|\big[\beta_{t-1}^{1},\ldots,\beta_{t-1}^{t-2}\big]\big\|_{2}+C_4\|\omega_{t-1}\|_{2}\sqrt{\frac{t\log n}{n}}+\big\|\xi_{t-1}\big\|_{2}
\label{eqn:intermission}
\end{align}
for some large enough constant $C_{4}>0$. 
Here, the penultimate inequality uses \eqref{eqn:spect-brahms} and the fact $|\beta_{t-1}^{t-1}|\leq \|\beta_{t-1}\|_2=\|\omega_{t-1}\|_2$,  
while the last inequality would be guaranteed if we could establish the following result:  
\begin{align}
\label{eqn:second-simple-rm}
	\|V_{t-1}^{\top}\psi_{t-1}\|_2 
	\lesssim \sqrt{\frac{t\log n}{n}}. 
\end{align}
We shall assume the validity of \eqref{eqn:second-simple-rm} for the moment, and return to prove it shortly. 
Taking \eqref{eqn:intermission} together with \eqref{eqn:spec-xi} yields
\begin{align}
\big\|\big[\beta_{t}^{1},\ldots,\beta_{t}^{t-1}\big]\big\|_{2} & \le\Big(1+C_{4}\sqrt{\frac{t\log n}{n}}\Big)\big\|\big[\beta_{t-1}^{1},\ldots,\beta_{t-1}^{t-2}\big]\big\|_{2}+C_{6}\sqrt{\frac{t^{3}\log n}{n}} 
	\label{eqn:intermission-2}
\end{align}
for some sufficiently large constant $C_6>0$.

We then claim that for all $t$ obeying $\frac{t^{3}\log n}{n}=o(1)$, 
\begin{equation}
	\big\|\big[\beta_{t}^{1},\ldots,\beta_{t}^{t-1}\big]\big\|_{2} \leq C_7\sqrt{\frac{t^5 \log n}{n}}
	\label{eq:claim-beta-ub}	
\end{equation}
holds for some large enough constant $C_7>0$. 
Regarding the base case,  we observe that
\begin{align}
\label{eqn:spect-inti-beta}
	|\beta_2^1| = \big| \inprod{\omega_2}{y_1} \big| =  \big| v^{\star\top} W \vstar \big| \leq C_7 \sqrt{\frac{\log n}{n}} 
\end{align}
with probability at least $1 - O(n^{-12})$, provided that $C_7>0$ is large enough.  
Assuming that \eqref{eq:claim-beta-ub} is valid for the $(t-1)$-th iteration, we further have
\begin{align*}
\notag\big\|\big[\beta_{t}^{1},\ldots,\beta_{t}^{t-1}\big]\big\|_{2} & \le C_{7}\Big(1+C_{4}\sqrt{\frac{t\log n}{n}}\Big)\sqrt{\frac{(t-1)^{5}\log n}{n}}+C_{6}\sqrt{\frac{t^{3}\log n}{n}}\\
 & =C_{7}\sqrt{\frac{t^{3}\log n}{n}}\left\{ \Big(1+C_{4}\sqrt{\frac{t\log n}{n}}\Big)\left(t-1\right)+\frac{C_{6}}{C_{7}}\right\} \\
 & \leq C_{7}\sqrt{\frac{t^{3}\log n}{n}}\cdot\left\{ t-1+C_{4}\sqrt{\frac{t^{3}\log n}{n}}+\frac{C_{6}}{C_{7}}\right\} \leq C_{7}\sqrt{\frac{t^{5}\log n}{n}},
\end{align*}
where the last inequality holds true as long as $C_{7}\geq2C_{6}$
and $\frac{t^{3}\log n}{n}=o(1)$. 
This justifies the claim \eqref{eq:claim-beta-ub} for the $t$-th iteration. The standard induction argument then establishes \eqref{eq:claim-beta-ub} for all $t$ obeying $\frac{t^{3}\log n}{n}=o(1)$.

We now come back to prove \eqref{eqn:second-simple-rm}. Towards this, we first note that: by construction, $\psi_{t-1}$ is independent of $V_{t-1}$. To justify this, recall that it has been established in the last paragraph of Section~\ref{sec:pf-distribution} that:
$\psi_{t-1}$ follows a Gaussian distribution $\mathcal{N}(0,\frac{1}{n}I_n)$ no matter what value the sequence $\{y_k\}_{1\leq k \leq t-1}$ takes;
therefore, in view of the definition of statistical independence, $\psi_{t-1}$ is independent of $\{y_k\}_{1\leq k \leq t-1}$ and hence $V_{t-1}$ (as $V_{t-1}$ is obtained by simply concatenating $y_1,\ldots,y_{t-1}$).
Therefore, $V_{t-1}^{\top}\psi_{t-1}$ is essentially $\mathcal{N}(0,\frac{1}{n}I_{t-1})$, 
and hence standard Gaussian concentration results \citep[Chapter 4.4]{vershynin2018high} imply that
\[
	\mathbb{P}\Big( \big\| V_{t-1}^{\top}\psi_{t-1} \big\|_2 \geq 5\sqrt{\frac{t\log n}{n}} \Big) \leq O(n^{-11})
\]
as claimed. This concludes the proof.

\subsection{Proof of Lemma~\ref{lem:alt-x1}}
\label{sec:proof-eqn:alt-x1}
Repeating the proof of Lemma~\ref{lem:linear-combination-power-method-AMP}, 
we can show that each $W^s\widetilde{v}$ is a linear combination of $\{u_{s+1}, \ldots, u_{2s+1}\}.$ 
Taking this together with the decomposition~\eqref{eqn:x1-decmp-violin} and Lemma~\ref{lem:linear-combination-power-method-AMP} 
reveals that  $x_{1}$ can be expressed as 
\begin{align}
\label{eqn:basic-exp-x1}
	x_1 = \sum_{i = 1}^{2s+1} b_i\widehat{y}_i, \qquad \text{with } b_i = \inprod{x_1}{\widehat{y}_i},
\end{align}
given that $\{\widehat{y}_i\}_{i=1}^{2s+1}$ are orthonormal and span the subspace containing $\{\omega_i\}_{i=1}^{s}$ and $\{u_i\}_{i=s+1}^{2s+1}.$
%

Next, we move on to show that $\ltwo{[b_{s+1},\ldots, b_{2s+1}]}$ is small. 
More specifically, recall from  Lemma~\ref{lem:linear-combination-power-method-AMP} that 
\[
\sum_{i=0}^{s-1}a_{i}W^{i}v^{\star}\in\mathsf{span}\{\omega_{1},\cdots,\omega_{s}\}=\mathsf{span}\big\{\widehat{y}_{1},\cdots,\widehat{y}_{s}\big\},
\]
and hence by virtue of \eqref{eqn:x1-decmp-violin}, 
\begin{align}
\notag \big\|[b_{s+1},\cdots,b_{2s+1}]\big\|_{2} & =\Big\| x_{1}-\sum_{i=1}^{s}b_{i}\widehat{y}_{i}\Big\|_{2}\overset{\mathrm{(i)}}{\leq}\Big\| x_{1}-\sum_{i=0}^{s-1}a_{i}W^{i}v^{\star}\Big\|_{2}=\big\| a_{s}W^{s}\widetilde{v}\big\|_{2}\overset{\mathrm{(ii)}}{\lesssim}\frac{\|W\|^{s}n^{11.5}}{\lambda_{\max}^{s}}\\
 & \asymp\bigg(1-\frac{\lambda_{\max}-\|W\|}{\lambda_{\max}}\bigg)^{s}n^{11.5}\lesssim\frac{1}{n},
\end{align}
where (i) follows since $\sum_{i=1}^{s}b_{i}\widehat{y}_{i}$ is the
Euclidean projection of $x_{1}$ onto $\mathsf{span}\big\{\widehat{y}_{1},\cdots,\widehat{y}_{s}\big\}$ while $\sum_{i=0}^{s-1}a_{i}W^{i}v^{\star} \in \mathsf{span}\big\{\widehat{y}_{1},\cdots,\widehat{y}_{s}\big\}$; 
(ii) makes use of (\ref{eqn:power-error}), and the last inequality
invokes (\ref{eq:lambda-1-gap-non-asymptotic-UCB}) and (\ref{eq:lambda-1-non-asymptotic-UCB}) 
and is valid if $s\geq\frac{C_{v}\lambda^2 \log n}{(\lambda-1)^{2}}$ for some
sufficiently large constant $C_{v}>0$. 
Moreover, putting expressions~\eqref{eqn:basic-exp-x1} and \eqref{eqn:spectral-expansion} together yields
\begin{align}
\notag \big\|[b_{1},\cdots b_{s}]-[c_{1},\cdots,c_{s}]\big\|_{2} & =\Big\|\sum_{i=1}^{s}(b_{i}-c_{i})\widehat{y}_{i}\Big\|_{2}\leq\Big\|\sum_{i=1}^{s}(b_{i}-c_{i})\widehat{y}_{i}+\sum_{i=s+1}^{2s+1}b_{i}\widehat{y}_{i}\Big\|_{2}\\
\notag & \leq\big\| x_{1}-\widehat{v}^{\star}\big\|_{2}+\Big\|\widehat{v}^{\star}-\sum_{i=1}^{s}c_{i}\widehat{y}_{i}\Big\|_{2}\\
 & \lesssim\frac{1}{n^{12}}+\frac{\log^{3.5}n}{\sqrt{(\lambda-1)^{6}n}}\asymp\frac{\log^{3.5}n}{\sqrt{(\lambda-1)^{6}n}}
\end{align}
with probability at least $1-O(n^{-11})$. 
In light of the above two relations, we can further derive 
\begin{align*}
	\Big\|\sum_{i = 1}^{2\power+1} b_i\widehat{\psi}_i - \sum_{i = 1}^{\power} c_i\psi_i\Big\|_2 
	& \leq \Big\|\sum_{i = 1}^{s} b_i\widehat{\psi}_i - \sum_{i = 1}^{\power} c_i\psi_i\Big\|_2 
	+ \Big\|\sum_{i = s+1}^{2s+1} b_i\widehat{\psi}_i \Big\|_2 \\
	& = \Big\|\sum_{i = 1}^{s} b_i{\psi}_i - \sum_{i = 1}^{\power} c_i\psi_i\Big\|_2 
	+ \Big\|\sum_{i = s+1}^{2s+1} b_i\widehat{\psi}_i \Big\|_2 \\	
	&\leq \Big(1 + O\Big(\sqrt{\frac{t\log n}{n}}\Big)\Big) \Big(\big\|[b_{1},\cdots b_{s}]-[c_{1},\cdots,c_{s}]\big\|_{2} + \big\|[b_{s+1},\ldots,b_{2s+1}]\big\|_{2}\Big)  	\\
	&\lesssim \frac{\log^{3.5} n}{\sqrt{(\lambda - 1)^6n}} 
\end{align*}
with probability at least $1 - O(n^{-11})$, 
where the penultimate line applies the concentration result~\eqref{eqn:spect-brahms}. 
Hence, taking this collectively with \eqref{eqn:power-error} and Lemma~\ref{lem:spec}, we can demonstrate that
\begin{align}
\Bigg\| x_{1}-c_{1}v^{\star}-\frac{1}{\widetilde{\lambda}}\sum_{i=1}^{2s+1}b_{i}\widehat{\psi}_{i}\Bigg\|_{2} & \leq\Bigg\| x_{1}-c_{1}v^{\star}-\frac{1}{\widetilde{\lambda}}\sum_{i=1}^{\power}c_{i}\psi_{i}\Bigg\|_{2}+\frac{1}{\widetilde{\lambda}}\Big\|\sum_{i=1}^{2\power+1}b_{i}\widehat{\psi}_{i}-\sum_{i=1}^{\power}c_{i}\psi_{i}\Big\|_{2} \notag\\
 & \leq\Bigg\|\widehat{v}^{\star}-c_{1}v^{\star}-\frac{1}{\widetilde{\lambda}}\sum_{i=1}^{\power}c_{i}\psi_{i}\Bigg\|_{2}+\big\| x_{1}-\widehat{v}^{\star}\big\|_{2}+O\bigg(\frac{\log^{3.5}n}{\sqrt{(\lambda-1)^{6}n}}\bigg) \notag\\
 & \lesssim\frac{\log^{3.5}n}{\sqrt{(\lambda-1)^{6}n}}+\frac{1}{n^{12}}+\frac{\log^{3.5}n}{\sqrt{(\lambda-1)^{6}n}}\asymp\frac{\log^{3.5}n}{\sqrt{(\lambda-1)^{6}n}} .
	\label{eq:x1-approx-UB}
\end{align}

Finally, it results from \eqref{eq:lambda-max-correct-bound} and
\eqref{eqn:spect-brahms} that 
\begin{align*}
\bigg\|\frac{1}{\lambda}\sum_{i=1}^{2s+1}b_{i}\widehat{\psi}_{i}-\frac{1}{\widetilde{\lambda}}\sum_{i=1}^{2s+1}b_{i}\widehat{\psi}_{i}\bigg\|_{2} & \lesssim\frac{|\lambda-\widetilde{\lambda}|}{\lambda^{2}}\bigg\|\sum_{i=1}^{2s+1}b_{i}\widehat{\psi}_{i}\bigg\|\lesssim\bigg(1+O\Big(\sqrt{\frac{s}{n}}\Big)\bigg)|\lambda-\widetilde{\lambda}|\cdot\big\|[b_{1},\cdots,b_{2s+1}]\big\|_{2}\\
 & \lesssim\sqrt{\frac{\log n}{n(\lambda-1)^{7}}},
\end{align*}
where we also use the fact that $\big\|[b_{1},\cdots,b_{2s+1}]\big\|_{2}\asymp1$
(a direct consequence of \eqref{eqn:basic-exp-x1} and the orthonormality
of $\{\widehat{y}_{i}\}$). 
This  together with \eqref{eq:x1-approx-UB} and the triangle inequality immediately concludes the proof.

\section{$\mathbb{Z}_2$ synchronization: Proof of Theorem~\ref{thm:Z2}}
\label{sec:pf-thm-Z2}

With the denoising functions selected as in \eqref{eqn:eta-z2-new}, we first point out that 
\begin{align}
	\|\beta_{t}\|_2 = \ltwo{\eta_{t}(x_{t})} = 1 ,
	\qquad t\geq 1 
\end{align}
throughout the execution of AMP. 
This basic fact helps simplify the analysis, 
as there is no need to control the related quantity $\Delta_{\beta, t}$ (see \eqref{eqn:delta-beta-general}) given that $\|\beta_{t-1}\|_2$ is fixed.
As a result, this section focuses attention on characterizing the dynamics of $\alpha_{t}.$

\paragraph{Induction hypotheses.}

The proof of Theorem~\ref{thm:Z2} is built upon Theorem~\ref{thm:recursion-spectral} as well as the analysis framework laid out in  
Theorem~\ref{thm:main} (or Corollary~\ref{cor:recursion-spectral}). 
The proof is inductive in nature; 
more specifically, we aim to show, by induction, that for every $t$ obeying~\eqref{eqn:z2-decomposition}, 
the AMP iterates $\{x_t\}$ satisfy the desired decomposition \eqref{eqn:z2-decomposition} in Theorem~\ref{thm:Z2} 
while satisfying the following properties: 
\begin{subequations}
\label{Z2-induction}
\begin{align}
\big(1+o(1)\big)\lambda &\geq \alpha_t \ge \big(1+o(1)\big)\sqrt{\lambda^2 - 1} 
	\label{eq:Z2-induction-alphat}\\
	\| \xi_{t-1}\|_2 & \leq C_1	\sqrt{\frac{(t+s)\log n}{(\lambda - 1)^3 n}}  + 
	C_1 \lt( 1 - \frac{1}{40}(\lambda - 1) \rt) ^{t-1}  \frac{\log^{3.5} n}{\sqrt{(\lambda - 1)^9 n}}  \eqqcolon S_t
\label{eqn:Z2-induction-st}
\end{align}
for some large enough constant $C_1>0$. 
\end{subequations}
\red{Given that $s\asymp \frac{\log n}{(\lambda-1)^2}$, we find it helpful to note    
\begin{align}
\label{eqn:crude-st}
\notag	S_t 
	\leq 
	\underset{\eqqcolon \,\widetilde{S}_t}{\underbrace{ C_1	\sqrt{\frac{(t+s)\log n}{(\lambda - 1)^3 n}}  + C_1   \frac{\log^{3.5} n}{\sqrt{(\lambda - 1)^9 n}} }} 
	& \leq C_1	\sqrt{\frac{t\log n}{(\lambda - 1)^3 n}} + C_1 \sqrt{\frac{s\log n}{(\lambda - 1)^3 n}}
	+ C_1 \frac{\log^{3.5} n}{\sqrt{(\lambda - 1)^9 n}} \\
	& \lesssim \sqrt{\frac{t\log n}{(\lambda-1)^3n}} + \sqrt{\frac{\log^7 n}{(\lambda-1)^9 n}}, 
\end{align}
where the first line uses the basic inequality that $\sqrt{a+b} \leq \sqrt{a}+\sqrt{b}$ for any positive numbers $a, b$; for the second line, we make use of the inequality $\sqrt{\frac{s\log n}{(\lambda-1)^3n}} \lesssim \sqrt{\frac{\log^7 n}{(\lambda-1)^9n}}$ that holds under the condition $s \asymp \frac{\log n}{(\lambda - 1)^2}.$ 
}
%
%

We first verify these hypotheses for the base case. 
In view of Theorem~\ref{thm:recursion-spectral}, the spectral initialization $x_1$ (defined in \eqref{eqn:Z2-initialization})
admits the decomposition~\eqref{eqn:z2-decomposition} and satisfies 
\begin{align}
	\label{eq:init-alpha-beta-Z2}
	\alpha_1 = \sqrt{\lambda^2 - 1} , \qquad \ltwo{\beta_{0}} = 1, \qquad
	\|\xi_{0}\|_2 \lesssim \frac{\log^{3.5} n}{\sqrt{(\lambda - 1)^9n}}.
\end{align}
This validates the induction hypotheses \eqref{Z2-induction} for the base case with $t = 1$. 
In order to carry out the induction argument, 
we shall --- throughout the rest of the proof --- assume that the induction hypotheses \eqref{Z2-induction} hold true for every iteration $k\leq t$,
and attempt to show their validity for the $(t+1)$-th iteration.

\paragraph{Organization of the proof. }

The proof is organized as follows.
Section~\ref{sec:prelim-z2} collects a couple of preliminary facts (e.g., basic concentration inequalities, derivatives of the denoising function, and tight estimates of $\pi_t$ and $\gamma_t$) that will be used throughout the induction argument. 
Section~\ref{sec:z2-key} develops upper bounds on several key quantities (e.g., $A_t, B_t, D_t$) that underlie our analysis framework in Theorem~\ref{thm:main} and Corollary~\ref{cor:recursion-spectral}. 
The main recursion is established in Section~\ref{sec:xi-z2}; specifically, Section~\ref{sec:z2-xi-t} is devoted to establishing the bound for $\ltwo{\xi_t}$, Section~\ref{sec:z2-delta-alpha} studies the size of $\Delta_{\alpha, t}$, while Section~\ref{sec:main-recursion-z2} is dedicated to the analysis of $\alpha_{t}$.


\subsection{Preliminary facts}
\label{sec:prelim-z2}

Before embarking on the main proof of Theorem~\ref{thm:Z2}, let us gather some preliminary facts  that shall be used multiple times throughout the proof.

\subsubsection{Basic concentration results}
\label{sec:basic-concentration-buble}

We begin by stating some concentration results that follow directly from the results in Section~\ref{sec:Gaussian-concentration}. 
Recall that the $\phi_k$'s are i.i.d.~drawn from $\phi_{k} \stackrel{\text{i.i.d}}{\sim} \mathcal{N}(0, \frac{1}{n}I_n)$, 
and for every $x\in \real^n$ we denote by $|x|_{(i)}$ its $i$-th largest entry in magnitude.  
In the statement of Lemma~\ref{lem:Gauss}, we mention some convex set $\mathcal{E}$, which we shall select as follows. 
For any fixed $1\leq t < n-2s$ and $1\leq \tau\leq n$, let us define the following set:  
\begin{align}
	&\notag \mathcal{E}_\tau \defn 
	\left\{
	\{\phi_k\}:
	\max_{-2s\leq k\leq t-1} \|\phi_k\|_2  < 1+ C_5\sqrt{\frac{\log \frac{n}{\delta}}{n}}\right\} 
	\bigcap \left\{\{\phi_k\}: \sup_{a \in \mathcal{S}^{2s+t-1}} \Big\|\sum_{k = -2s}^{t-1} a_k\phi_k\Big\|_2 < 1 + C_5\sqrt{\frac{(t+s)\log \frac{n}{\delta} }{n}} \right\} \\
	&\hspace{2cm} \bigcap 
	\left\{\{\phi_k\}:  \sup_{a = [a_k]_{-2s\leq k< t} \in \mathcal{S}^{2s+t-1}}  \sum_{i = 1}^{\tau} \Big|\sum_{k = -2s}^{t-1} a_k\phi_k\Big|_{(i)}^2 
	<  \frac{C_5(t + s+ \tau)\log \frac{n}{\delta}}{n} \right\}  \label{eq:eps-set}
\end{align}
for some large enough constant $C_5>0$. 
It is easily seen that $\mathcal{E}_{\tau}$ is a convex set with respect to $(\phi_{-2s},\ldots,\phi_{t-1})$. 
Additionally, Lemma~\ref{lem:brahms-lemma} together with the union bound reveals that $\{\mathcal{E}_\tau\}$ is a set of high-probability events: 
\begin{align}
\label{eqn:eps-interset}
	\mprob(\{\phi_k\} \in \mathcal{E} ) \geq 1 -  \delta, \qquad \text{with }
	\mathcal{E} \defn \bigcap_{\tau = 1}^n \mathcal{E}_{\tau}  
\end{align}

In addition, Lemma~\ref{lem:Gauss} and Corollary~\ref{cor:Gauss} entail 
bounding the expected difference between a function $f$ and its projection onto $\mathcal{E}$ (see \eqref{eqn:gauss-lipschitz-B-proj}). 
Here, we state a simple result that leads to a useful bound in this regard. 
Specifically, denote $\Phi \defn \sqrt{n}(\phi_{-2s},\ldots,\phi_{t-1})$, 
and consider any given function $f: \real^{n\times (2s+t)}\to \real$ obeying 
\begin{equation}
	|f(\Phi)| \lesssim  n^{100} \Big(\max_k \|\phi_k\|_2\Big)^{100} .
	\label{eq:f-Phi-poly}
\end{equation}
Denoting by $\mathcal{P}_{\mathcal{E}}(\cdot)$ the Euclidean projection onto the set 
$\mathcal{E}$ and taking $\delta \asymp n^{-300}$, 
we assert that 
\begin{align}
\label{eqn:brahms-conc}
	 \myE\big[\big|f(\Phi) - f(\mathcal{P}_{\mathcal{E}}(\Phi)) \big|\big] 
	\lesssim n^{-100}.
\end{align}
In light of this result, we shall choose the set $\mathcal{E}$ with $\delta \asymp n^{-300}$ throughout the rest of this section.

\begin{proof}[Proof of inequality~\eqref{eqn:brahms-conc}]
We divide into two cases depending on the value of $\max_k \|\phi_k\|_2$, namely, 
\begin{align*}
	\mathbb{E}\big[ \big|f(\Phi) - f(\mathcal{P}_{\mathcal{E}}(\Phi)) \big|\big] 
&
= \mathbb{E}\Big[\big|f(\Phi) - f(\mathcal{P}_{\mathcal{E}}(\Phi)) \big| \ind\lt(\mathcal{E}^{\mathrm{c}}\rt)\Big]
\lesssim \mathbb{E}\lt[n^{100} \Big(\max_k \|\phi_k\|_2\Big)^{100} \ind\lt(\mathcal{E}^{\mathrm{c}}\rt)\rt] \\
&\lesssim \mathbb{E}\lt[n^{100} \Big(\max_k \|\phi_k\|_2\Big)^{100} \ind\lt(\mathcal{E}^{\mathrm{c}}\rt)\ind\bigg(\max_k \|\phi_k\|_2 \le 1+C_5\sqrt{\frac{\log \frac{n}{\delta}}{n}}\bigg)\rt] \\
&\qquad+ \mathbb{E}\lt[n^{100} \Big(\max_k \|\phi_k\|_2\Big)^{100} \ind\lt(\mathcal{E}^{\mathrm{c}}\rt) \ind\bigg(\max_k \|\phi_k\|_2 > 1+C_5\sqrt{\frac{\log \frac{n}{\delta}}{n}}\bigg)\rt].
\end{align*} 
First, it is easily seen from \eqref{eqn:eps-interset} that 
\begin{align*}
	\myE\lt[n^{100} \Big(\max_k \|\phi_k\|_2\Big)^{100} \ind\lt(\mathcal{E}^{\mathrm{c}}\rt)\ind\Big(\max_k \|\phi_k\|_2 \le 1+C_5\sqrt{\frac{\log \frac{n}{\delta}}{n}}\Big)\rt] \lesssim n^{200}\delta, 
\end{align*}
provided that $\log\frac{1}{\delta} \lesssim \log n$. 
In addition,  one can deduce that
\begin{align*}
	&  n^{100} \sum_k \myE\lt[\|\phi_k\|_2^{100} \ind\lt(\mathcal{E}^{\mathrm{c}}\rt) \ind\Big(\max_k \|\phi_k\|_2 > 1+C_5\sqrt{\frac{\log \frac{n}{\delta}}{n}}\Big)\rt] \\
	&\qquad \lesssim n^{100} \sum_k \myE\lt[\|\phi_k\|_2^{100} \ind\lt(\mathcal{E}^{\mathrm{c}}\rt) \ind\Big(\|\phi_k\|_2 \le 1+C_5\sqrt{\frac{\log \frac{n}{\delta}}{n}}\Big)\ind\Big(\max_k \|\phi_k\|_2 > 1+C_5\sqrt{\frac{\log \frac{n}{\delta}}{n}}\Big)\rt] \\
	&\qquad \qquad+ n^{100} \sum_k \myE\lt[\|\phi_k\|_2^{100}  \ind\Big(\|\phi_k\|_2 > 1+C_5\sqrt{\frac{\log \frac{n}{\delta}}{n}}\Big)\rt] \\
	&\qquad \lesssim n^{100} \sum_k \lt(1+ C_5\sqrt{\frac{\log \frac{n}{\delta}}{n}}\rt)^{100} \delta + n^{100} \sum_k \int_{C_5\sqrt{\frac{\log \frac{n}{\delta}}{n}}}^{\infty} (1+x)^{100} \exp(-\frac{nx^2}{2})\dx \le n^{200} \delta + n^{-100}, 
\end{align*}
provided that $\log\frac{1}{\delta} \lesssim \log n$ and that $C_5$ is large enough. 
	Putting these two cases together and choosing $\delta \asymp n^{-300}$ finish the proof. 
\end{proof}

\subsubsection{Properties about the denoising function $\eta_t$} \label{sec:property-eta-z2}
Recall that the denoising function is
\[
	\eta_t(x) = \gamma_t \tanh(\pi_t x) 
	\qquad \text{with } \pi_t = \sqrt{n(\|x_t\|_2^2-1)} \text{ and } \gamma_t = \lt\|\tanh\lt(\pi_tx_t\rt)\rt\|_2^{-1}.
\]	
In this subsection, we single out several useful properties related to $\eta_t(\cdot)$.

\paragraph{Tight estimates of $\pi_t$ and $\gamma_t$. } 
Given that $\eta_t(\cdot)$ involves two quantities $\pi_t$ and $\gamma_t$, 
we first develop tight bounds on the sizes of them in the following, 
which are legitimate under the induction hypotheses \eqref{Z2-induction}. The proof is deferred to  Section~\ref{sec:pf-z2-pi}.
\begin{lems} 
\label{lem:pit-gammat}
Under the induction hypotheses \eqref{Z2-induction}, we have
\begin{subequations}
\label{eq:parameters}
\begin{align}
\label{eqn:don}
	\pi_t &= \bigg(1 + O\bigg( \frac{S_t}{\alpha^2_t} \bigg) \bigg)\alpha_t\sqrt{n} = \big(1+o(1)\big) \alpha_t \sqrt{n} \\
\label{eqn:giovanni}
\gamma_t^{-2} &=
\left(1+O\bigg(\frac{S_{t}}{\alpha_{t}}+\frac{S_{t}}{\alpha_{t}^{3}}\bigg)\right)n\int\tanh\big(\alpha_{t}(\alpha_{t}+x)\big)\varphi(\mathrm{d}x)\asymp\alpha_{t}^{2}n
\end{align}
\end{subequations}
with probability exceeding $1-O(n^{-11})$. 
\end{lems}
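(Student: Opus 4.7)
My plan is to establish the two estimates in \eqref{eq:parameters} separately, by combining the decomposition
\[
x_t = \alpha_t v^\star + \sum_{k=-2s}^{t-1} \beta_{t-1}^k \phi_k + \xi_{t-1}
\]
(with $\|\beta_{t-1}\|_2 = 1$) from Theorem~\ref{thm:recursion-spectral}, the concentration machinery of Section~\ref{sec:Gaussian-concentration}, and the induction hypotheses~\eqref{Z2-induction}.

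\textbf{Estimate of $\pi_t$.} I would expand $\|x_t\|_2^2$ by squaring the decomposition. The diagonal contribution $\alpha_t^2\|v^\star\|_2^2 = \alpha_t^2$ gives the signal part. Inequality~\eqref{eqn:long} of Lemma~\ref{lem:brahms-lemma}, applied to the unit vector $\beta_{t-1}$, yields $\|\sum_k \beta_{t-1}^k \phi_k\|_2 = 1 + O(\sqrt{(t+s)\log n/n})$, so this term contributes $1 + O(\sqrt{(t+s)\log n/n})$. The cross term $2\alpha_t\langle v^\star, \sum_k \beta_{t-1}^k\phi_k\rangle$ is $O(\alpha_t\sqrt{\log n/n})$ via Gaussian tails combined with a net argument over admissible $\beta_{t-1}$ (Corollary~\ref{cor:Gauss}). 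Terms involving $\xi_{t-1}$ are bounded by $O(\alpha_t S_t + S_t^2)$ using Cauchy--Schwarz, $\|\xi_{t-1}\|_2 \le S_t$, and the bound~\eqref{eqn:long}. Because $S_t$ (by definition) absorbs $\sqrt{(t+s)\log n/n}$, we get $\|x_t\|_2^2 - 1 = \alpha_t^2\bigl(1 + O(S_t/\alpha_t^2)\bigr)$, and \eqref{eqn:don} follows upon taking a square root.

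\textbf{Estimate of $\gamma_t^{-2}$.} I would analyze $\gamma_t^{-2} = \sum_i \tanh^2(\pi_t x_{t,i})$ in three steps. First, use the Lipschitz property of $y\mapsto\tanh^2(y)$ together with \eqref{eqn:don} to replace $(\pi_t, x_t)$ by $(\alpha_t\sqrt{n}, v_t)$, where $v_t = \alpha_t v^\star + \sum_k \beta_{t-1}^k\phi_k$; the total replacement error is controlled by $|\pi_t-\alpha_t\sqrt{n}|\cdot\|x_t\|_2 + \alpha_t\sqrt{n}\,\|\xi_{t-1}\|_2$. Second, apply Corollary~\ref{cor:Gauss} to the map $\Phi = \sqrt{n}(\phi_{-2s},\ldots,\phi_{t-1})\mapsto \tfrac{1}{n}\sum_i \tanh^2(\alpha_t\sqrt{n}\,v_{t,i})$, which is $O(\alpha_t)$-Lipschitz in $\Phi$, using the convex event $\mathcal{E}$ of Section~\ref{sec:basic-concentration-buble} to dispose of the projection term via \eqref{eqn:brahms-conc}; the result is concentration around the Gaussian expectation
\[
\int \tanh^2\bigl(\alpha_t^2 + \alpha_t x\bigr)\,\varphi(\mathrm{d}x).
\]
Third, invoke the Nishimori identity $\int \tanh^2(\alpha_t^2 + \alpha_t x)\varphi(\mathrm{d}x) = \int \tanh(\alpha_t^2 + \alpha_t x)\varphi(\mathrm{d}x)$, which holds because the mean $\alpha_t^2$ equals the variance $\alpha_t^2$ of the Gaussian argument (the Bayes-optimality relation for the $\mathbb{Z}_2$ channel). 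The order estimate $\gamma_t^{-2}\asymp\alpha_t^2 n$ then follows by Taylor-expanding $\tanh(\alpha_t(\alpha_t+x)) = \alpha_t^2 + \alpha_t x + O(\alpha_t^3(1+|x|^3))$ and using $\alpha_t = O(1)$.

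\textbf{Main obstacles.} The chief difficulty lies in matching the two distinct error orders $S_t/\alpha_t$ and $S_t/\alpha_t^3$ in \eqref{eqn:giovanni}. These different powers arise from how the approximation $\pi_t \approx \alpha_t\sqrt{n}$ propagates through $\tanh$ and $\tanh'$: a naive Taylor error scales like $\sqrt{n}\,S_t$ times an $\alpha_t$-dependent derivative bound, and, after dividing by the normalization $\gamma_t^{-2}\asymp\alpha_t^2 n$, yields relative errors of order $S_t/\alpha_t$ from the $\tanh$-level expansion and $S_t/\alpha_t^3$ from the second-order term in the replacement of $\pi_t$ near $\tanh'$. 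A secondary difficulty is the statistical coupling between $\beta_{t-1}$ and $\{\phi_k\}$, which compels uniform concentration over admissible unit coefficient vectors rather than at a fixed $\beta$; Corollary~\ref{cor:Gauss} is tailored for exactly this situation and can be applied after verifying the polynomial growth condition~\eqref{eq:f-grad-bound-poly} for the relevant functions of $\Phi$.
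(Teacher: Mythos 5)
Your overall route coincides with the paper's proof. For \eqref{eqn:don} the paper performs the same expansion of $\|x_t\|_2^2$, using Lemma~\ref{lem:brahms-lemma} for the noise term, Cauchy--Schwarz (rather than a net) for the cross term $\langle \vstar,\sum_k\beta_{t-1}^k\phi_k\rangle$, and the induction bound on $\|\xi_{t-1}\|_2$; for \eqref{eqn:giovanni} it likewise (i) passes from $x_t$ to $v_t$ and from $\pi_t$ to $\alpha_t\sqrt n$ via Lipschitz continuity of $\tanh^2$, (ii) applies Corollary~\ref{cor:Gauss} uniformly over a parameter set containing $(\alpha_t,\beta_{t-1},\pi_t)$ to cope with their dependence on $\{\phi_k\}$, and (iii) invokes the identity $\int\tanh^2(\alpha^2+\alpha x)\varphi(\dx)=\int\tanh(\alpha^2+\alpha x)\varphi(\dx)$, which you restate as the Nishimori relation and the paper cites from Deshpande et al. So the architecture matches; however, one step as you state it would fail.

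The problematic step is your last one: the expansion $\tanh(\alpha_t^2+\alpha_t x)=\alpha_t^2+\alpha_t x+O(\alpha_t^3(1+|x|^3))$ does not give $\int\tanh\big(\alpha_t(\alpha_t+x)\big)\varphi(\dx)\asymp\alpha_t^2$ over the full admissible range of $\alpha_t$. Under \eqref{Z2-induction}, $\alpha_t$ can be of constant size, up to $(1+o(1))\lambda\le 1.2(1+o(1))$, and there the cubic remainder is not dominated by the main term: at $\alpha_t=1.2$ one has $\tfrac13\,\mathbb{E}\big|\alpha_t^2+\alpha_t G\big|^3\approx 3>\alpha_t^2=1.44$ for $G\sim\mathcal{N}(0,1)$, so the Taylor lower bound is vacuous. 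A separate argument is needed for the constant-order regime; the paper obtains the lower bound $\gtrsim\alpha_t^2$ for all $\alpha_t\lesssim 1$ by pairing $x=\alpha_t\pm z$ (whose $\tanh$ contributions are jointly nonnegative), restricting to $x\in[0,2\alpha_t]$, and using that $\tanh'$ is decreasing, while the matching upper bound $\le 2\alpha_t^2+\alpha_t^4$ follows from a second-order Taylor bound with $|\tanh''|\le 1$. A secondary, fixable slip: your replacement-error bound $|\pi_t-\alpha_t\sqrt n|\,\|x_t\|_2+\alpha_t\sqrt n\,\|\xi_{t-1}\|_2$ for the sum $\sum_i\tanh^2(\cdot)$ omits $\ell_1$-versus-$\ell_2$ factors of $\sqrt n$; the correct orders are $nS_t/\alpha_t$ and $\alpha_t nS_t$, which, after dividing by $\gamma_t^{-2}\asymp\alpha_t^2 n$, are precisely the two relative errors $S_t/\alpha_t^3$ and $S_t/\alpha_t$ appearing in \eqref{eqn:giovanni}, so the stated conclusion is unaffected once the bookkeeping is corrected.
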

%

\paragraph{Bounds on derivatives and gradients.} 
Next, we look at the derivatives and gradients of the denoising function. 
As can be straightforwardly seen,   
the function $\eta_t(x) \defn \gamma_t\tanh(\pi_tx)$ is smooth everywhere, whose first three derivatives are given by
\begin{gather}
\begin{aligned}
\label{eqn:super-basic}
	\eta_{t}^{\prime}(x) &= \gamma_t\pi_t\big(1-\tanh^2(\pi_tx)\big) \\
	 \eta_{t}^{\second}(x) &= -2\gamma_t\pi_t^2\tanh(\pi_tx)\big(1-\tanh^2(\pi_tx)\big)\\ 
	\eta_{t}^{(\third)}(x) &= -2\gamma_t\pi_t^3\big(1-\tanh^2(\pi_tx)\big)\big(1-3\tanh^2(\pi_tx)\big)
\end{aligned}
\end{gather}
for any $x\in \real$. Combining the identities with \eqref{eq:parameters} and the fact $|\tanh(x)| \leq 1$, we can easily validate that 
\begin{gather}
\label{eqn:chocolate}
\begin{aligned}
	&|\eta_t(x)| \lesssim \frac{1}{\alpha_t\sqrt{n}} , \qquad\qquad
	&&|\eta_t^{\prime} (x) | \lesssim 1 \eqqcolon \rho, \\
	& |\eta_t^{\second} (x)| \lesssim \alpha_t\sqrt{n} \lesssim \sqrt{n} \eqqcolon \rho_1, 
	\qquad  && |\eta_{t}^{(\third)} (x)| \lesssim \alpha_t^2n \lesssim n \eqqcolon \rho_2.
	\end{aligned}
\end{gather}

Next, let us consider any given  vectors  $\mu =[\mu^k]_{-2s\leq k \leq t-1} \in \mathcal{S}^{t+2s-1}$,  $\beta =[\beta^k]_{-2s\leq k \leq t-1} \in \mathcal{S}^{t+2s-1}$,
 and any given $\alpha \in \real$ obeying $\lambda \geq \alpha \geq \sqrt{\lambda^2 - 1}$ (note that, for the moment, we shall treat them as fixed parameters independent of $\{\phi_{k}\}$).  
We shall also define 
\begin{align*}
	\eta_{t}^{(i)}\big(v(\alpha,\beta)\big) \defn \eta_{t}^{(i)}\Big(\alpha \vstar + \sum_{k = -2s}^{t-1} \beta^k\phi_k\Big) \in \real^n, 
	\quad
	\text{with }v(\alpha,\beta) \defn \alpha\vstar + \sum_{k = -2s}^{t-1} \beta^k\phi_k, 
\end{align*}
where the superscript $i$ denotes the $i$-th derivative (computed in an entrywise manner). 
In what follows, we collect several elementary results that are useful for our main proof. 
\begin{subequations}
\label{eqn:-z2-basic-derivatives}
\begin{align}
	\Big\|\nabla_{\phi_j} \Big\langle \sum_{k = -2s}^{t-1} \mu^k\phi_k, a\Big\rangle\Big\|_2 &\le |\mu^j| \cdot \|a\|_2, \qquad 
	&&\text{for any given } a\in\real^n \label{mu-phi}\\
	\Big\|\nabla_{\phi_j} \big\langle \eta_t^{(s)}\big(v(\alpha,\beta)\big), a\big\rangle\Big\|_2 &\le |\beta^{j}|\cdot \big\|\eta_t^{(s+1)}\big(v(\alpha,\beta)\big) \circ a \big\|_2, \qquad 
	&&\text{for any given } a\in\real^n \label{eta-phi}\\
	\Big\|\nabla_{\mu} \Big\langle \sum_{k = -2s}^{t-1} \mu^k\phi_k, a\Big\rangle\Big\|_2 &\le \|a\|_2\sum_{k = -2s}^{t-1} \|\phi_k\|_2, \qquad 
	&&\text{for any given } a\in\real^n \label{mu-mu}\\
	\Big\|\nabla_{\mu, \beta} \Big( \sum_{k = -2s}^{t-1} \mu^k\beta^k \Big) \Big\|_2 &\le \|\mu\|_2 + \|\beta\|_2 = 2,  \label{mu-beta}\\
	\Big\|\nabla_{\beta} \Big\langle \eta_t^{(s)}\big(v(\alpha,\beta)\big), a\Big\rangle\Big\|_2 &\le \|a\|_2\cdot \big\|\eta_t^{(s+1)}\big(v(\alpha,\beta)\big) \big\|_2\cdot\sum_{k=-2s}^{t-1}\|\phi_k\|_2, \qquad 
	&&\text{for any given } a\in\real^n.\label{eta-beta}
\end{align}
\end{subequations}
The proofs of these results are fairly elementary and are hence omitted for the sake of brevity.


\subsubsection{Proof of tight estimates of $\pi_t$ and $\gamma_t$ (Lemma~\ref{lem:pit-gammat})}
\label{sec:pf-z2-pi}

\paragraph{Bounding quantity $\pi_{t}$.}

In view of Lemma~\ref{lem:brahms-lemma} and the fact that $v^{\star\top}\big[\phi_{1},\cdots,\phi_{t-1}\big]\sim \mathcal{N}(0,\frac{1}{n}I_{t-1})$, we have 
\begin{align*}
\lt\|\sum_{k = -2s}^{t-1} \beta_{t-1}^k\phi_k\rt\|_2 &= 1 + O\lt(\sqrt{\frac{(t+s)\log n}{n}}\rt), \\
\Big|\Big\langle\vstar,\sum_{k=-2s}^{t-1}\beta_{t-1}^{k}\phi_{k}\Big\rangle\Big|
	&=\Big|\Big\langle v^{\star\top}\big[\phi_{-2s},\cdots,\phi_{t-1}\big],\big[\beta_{t-1}^{-2s},\cdots\beta_{t-1}^{t-1}\big]\Big\rangle\Big|\leq\Big\|v^{\star \top}\big[\phi_{-2s},\cdots,\phi_{t-1}\big]\Big\|_{2}\|\beta_{t-1}\|_{2} \\
	&\lesssim\sqrt{\frac{(t+s)\log n}{n}}
\end{align*}
with probability exceeding $1-O(n^{-11})$, where we recall that $\|\beta_{t-1}\|_{2}=1$.  
As a result, recalling the induction hypothesis that $|\alpha_t|\leq \lambda \lesssim 1$ (see \eqref{eq:Z2-induction-alphat}), we arrive at
\[
	\Big\|\alpha_{t}\vstar+\sum_{k=-2s}^{t-1}\beta_{t-1}^{k}\phi_{k}\Big\|_{2}^{2}=\alpha_{t}^{2}+2\alpha_{t}\Big\langle\vstar,\sum_{k=-2s}^{t-1}\beta_{t-1}^{k}\phi_{k}\Big\rangle+\Big\|\sum_{k=-2s}^{t-1}\beta_{t-1}^{k}\phi_{k}\Big\|_{2}^{2}=\alpha_{t}^{2}+1+O\Big(\sqrt{\frac{(t+s)\log n}{n}}\Big)
	\lesssim 1.
\]
Invoke the other induction hypothesis \eqref{eqn:Z2-induction-st} and the condition $s\asymp \frac{\log n}{(\lambda-1)^2}$ to obtain 
\begin{align*}
\|x_t\|_2^2 
&= \Big\|\alpha_t \vstar + \sum_{k = -2s}^{t-1} \beta_{t-1}^k\phi_k + \xi_{t-1}\Big\|_2^2
= \Big\|\alpha_{t}\vstar+\sum_{k=-2s}^{t-1}\beta_{t-1}^{k}\phi_{k}\Big\|_{2}^{2} 
+ 2 \Big\langle \xi_{t-1}, \alpha_t \vstar + \sum_{k = -2s}^{t-1} \beta_{t-1}^k\phi_k\Big\rangle  + \ltwo{\xi_{t-1}}^2\\
	&= \alpha_t^2 + 1 + O\Big(\sqrt{\frac{(t+s)\log n}{n}}\Big) 
	+ O\bigg( \big\| \xi_{t-1} \big\|_2  \Big\| \alpha_t \vstar + \sum_{k = -2s}^{t-1} \beta_{t-1}^k\phi_k\Big\|_2 \bigg)
	+ \ltwo{\xi_{t-1}}^2\\
&=  \alpha_t^2 + 1 +  O\Big(\sqrt{\frac{(t+s)\log n}{n}}\Big) + O\lt(\ltwo{\xi_{t-1}}\rt) = \alpha_t^2 + 1 +  O(S_t).
\end{align*}
Therefore, we can conclude that 
\begin{align}
\pi_t = \sqrt{n(\|x_t\|_2^2-1)} 
&= \alpha_t\sqrt{n} \lt(1 + O\lt(\frac{S_t}{\alpha^2_t}\rt)\rt),
	\label{eq:gamma-part1}
\end{align}
where we use the induction hypothesis \eqref{Z2-induction}.   
This establishes the advertised relation~\eqref{eqn:don} about $\pi_{t}$.

\paragraph{Bounding quantity $\gamma_t$.} 
Before proceeding, we find it helpful to first establish a connection between 
$\|\tanh\lt(\pi_tx_t\rt)\|_2$ and $\|\tanh(\pi_tv_t)\|_2$, where we recall that $x_{t} = v_{t} + \xi_{t-1}$. 
Recognizing that $|\tanh(x)| \leq 1$ and $|\tanh'(x)| \leq 1$, we can guarantee that, for each $1\leq i\leq n$, 
\begin{align*}
	\big|\tanh\lt(\pi_tx_{t,i}\rt) - \tanh\lt(\pi_tv_{t,i}\rt)\big| 
\leq \pi_t |{\xi}_{t-1,i}|  \lesssim \alpha_t\sqrt{n} \cdot |{\xi}_{t-1,i}|, 
\end{align*}
where the last inequality follows from~\eqref{eq:gamma-part1}.
By virtue of the the induction hypothesis \eqref{eqn:Z2-induction-st},
we can obtain
\begin{align}
\notag \lt| \big\|\tanh\lt(\pi_tx_t\rt)\big\|_2^2 - \big\|\tanh\lt(\pi_tv_t\rt)\big\|_2^2\rt| 
&\leq \big\|\tanh\lt(\pi_tx_t\rt) - \tanh\lt(\pi_tv_t\rt)\big\|_2 \cdot \big\|\tanh\lt(\pi_tx_t\rt)+\tanh\lt(\pi_tv_t\rt)\big\|_2\\
&\lesssim \alpha_tn\|\xi_{t-1}\|_2 \lesssim \alpha_tnS_t .
\label{eq:gamma-part2}
\end{align}

The above relation \eqref{eq:gamma-part2}
allows us to turn attention to the quantity $\lt\|\tanh\lt(\pi_tv_t\rt)\rt\|_2^2$, 
towards which we would like to invoke Lemma~\ref{lem:Gauss} 
to control the following quantity 
\begin{align*}
\big\|\tanh\lt(\pi_tv_t\rt)\big\|_2^2 - n\int \tanh^2\lt(\frac{\pi_t}{\sqrt{n}}\lt(\alpha_t+ x\rt)\rt) \varphi(\dx). 
\end{align*} 
Given that for any coordinate $1\leq i\leq n$, one has $\sqrt{n}\vstar_{i} \in \{+1, -1\}$ and hence (due to symmetry)
\begin{align*}
	\int \tanh^2 \lt(\pi_t\lt(\alpha_t \vstar_i + \frac{x}{\sqrt{n}}\rt)\rt) \varphi(\dx)
	=
	\int \tanh^2 \lt(\frac{\pi_t}{\sqrt{n}}\lt(\alpha_t + x\rt)\rt) \varphi(\dx), 
\end{align*}
we are motivated to look at the following function 
\begin{align*}
	f_{\theta}(\Phi) \defn \big\| \tanh(\pi v) \big\|_2^2 - \int\bigg\|\tanh\lt(\pi \lt(\alpha \vstar + \frac{1}{\sqrt{n}}x\rt) \rt)\bigg\|_2^2\varphi_n(\dx)
	\qquad
	\text{where } v \defn \alpha \vstar + \sum_{k = -2s}^{t-1} \beta^k\phi_k. 
\end{align*}
where we define 
$$
	\Phi= \sqrt{n} \big[ \phi_{-2s},\ldots, \phi_{t-1} \big] ,
	\qquad  
	\theta = [\alpha, \beta, \pi] \in \real^{t+2s+2} 
	\qquad \text{and} \qquad \beta = [\beta^{-2s},\cdots, \beta^{t-1}]. 
$$
Clearly, 
in order to bound $\big\| \tanh(\pi_t v_t) \big\|_2^2 - \int\big\|\tanh\big(\pi_t \big(\alpha_t \vstar + \frac{1}{\sqrt{n}}x\big) \big)\big\|_2^2\varphi_n(\dx)$, 
it suffices to develop a bound on $f_{\theta}(\Phi)$ uniformly over all $\theta$ within the following set: 
\begin{equation}
	\Theta \defn \Big\{ \theta = (\alpha,\beta,\pi) \mid \|\beta \|_2 = 1,  \sqrt{\lambda - 1} \lesssim \alpha \lesssim 1, 
	\pi \asymp  \alpha \sqrt{n} 
	\Big\}.
\end{equation}

Towards this end, observe that
\begin{align*}
	\lt\|\nabla_{\Phi} f_\theta(\Phi)\rt\|_2 
	\le \frac{2\pi\|\beta\|_2}{\sqrt{n}}\lt\|\tanh(\pi v) \circ \tanh^{\prime}(\pi v) \rt\|_2 
	\leq 2\pi\|\beta\|_2
	&\lesssim \alpha \sqrt{n}, 
\end{align*}
where we have used the facts that $\|\beta\|_2=1$, and $\pi \asymp  \alpha \sqrt{n}$. 
Additionally, it is straightforward to check that $f_{\theta}(\Phi)$ obeys $\|\nabla_{\theta} f_{\theta}(\Phi) \|_2 \lesssim n^{100} $
for all $Z\in \mathcal{E}$ and $|f_{\theta}(\Phi)| \lesssim n^{100} \big( \max_k \|\phi_k\|_2 \big)^{100}$. 
For any fixed $\theta$, it is readily seen that $\mathbb{E}[f_{\theta}(\Phi)]=0$. 
Applying Corollary~\ref{cor:Gauss} in conjunction with \eqref{eqn:brahms-conc} yields
\[
	\sup_{\theta\in \Theta} \bigg| \frac{1}{\alpha}  f_{\theta}(\Phi) \bigg| 
	\lesssim \sqrt{n(t+s)\log n}
\]
with probability at least $1-O(n^{-11})$. This in turn leads to
\begin{align}
	\left|\big\| \tanh(\pi_t v_t) \big\|_2^2 - \int\Big\|\tanh\lt(\pi_t\lt(\alpha_t v^{\star} + \frac{1}{\sqrt{n}}x\rt)\rt)\Big\|_2^2\varphi(\dx)\right| 
	\leq \alpha_t \sup_{\theta\in \Theta} \frac{1}{\alpha} \lt|
	f_{\theta}(\Phi)
	\rt| 
	\lesssim \alpha_t\sqrt{(t+s)n\log n}. \label{eq:gamma-part3}
\end{align}

Putting~\eqref{eq:gamma-part1},~\eqref{eq:gamma-part2}, and~\eqref{eq:gamma-part3} together leads to
\begin{align}
 \|\tanh(\pi_tx_t)\|_2^2 
&= n\int \tanh^2\lt(\frac{\pi_t}{\sqrt{n}}\lt(\alpha_t+ x\rt)\rt) \varphi(\dx) + O\Big(\alpha_tnS_t + \alpha_t\sqrt{(t+s)n\log n}\Big) \notag\\
	& = n\int \tanh^2\lt(\frac{\pi_t}{\sqrt{n}}\lt(\alpha_t+ x\rt)\rt) \varphi(\dx) + O\big(\alpha_tnS_t \big). 
	\label{eq:identity-tanh-pixt}
\end{align}
In view of the mean value theorem and the fact that $|(\tanh^{2})^{\prime}(w)|=|2\tanh(w)\tanh^{\prime}(w)|\leq 2$ (and hence $\tanh^{2}$ is 2-Lipschitz continuous), we have
\begin{align}
\bigg|\tanh^{2}\Big(\frac{\pi_{t}}{\sqrt{n}}(\alpha_{t}+x)\Big)-\tanh^{2}\Big(\alpha_{t}(\alpha_{t}+x)\Big)\bigg| & \leq2\left|\Big(\frac{\pi_{t}}{\sqrt{n}}-\alpha_{t}\Big)(\alpha_{t}+x)\right|\leq2\left|\frac{\pi_{t}}{\sqrt{n}}-\alpha_{t}\right|\big(\alpha_{t}+|x|\big),
	\label{eq:tanh2-diff-123}
\end{align}
which together with \eqref{eq:gamma-part1} yields
\begin{align}
 & \bigg|{\displaystyle \int}\tanh^{2}\Big(\frac{\pi_{t}}{\sqrt{n}}(\alpha_{t}+x)\Big)\varphi(\mathrm{d}x)-{\displaystyle \int}\tanh^{2}\Big(\alpha_{t}(\alpha_{t}+x)\Big)\varphi(\mathrm{d}x)\bigg| \notag\\
&\qquad \leq2\left|\frac{\pi_{t}}{\sqrt{n}}-\alpha_{t}\right|\left({\displaystyle \int}\alpha_{t}\varphi(\mathrm{d}x)+{\displaystyle \int}|x|\varphi(\mathrm{d}x)\right)
 \lesssim\left|\frac{\pi_{t}}{\sqrt{n}}-\alpha_{t}\right|
	\lesssim \frac{S_t}{\alpha_t}. 
	\label{eq:tanh2-diff-456}
\end{align}
Substitution into \eqref{eq:identity-tanh-pixt} gives
\begin{align}
	\big\|\tanh(\pi_{t}x_{t}) \big\|_{2}^{2}  
	&=n\int\tanh^{2}\big(\alpha_{t} (\alpha_{t}+x )\big)\varphi(\mathrm{d}x)+O\bigg(\alpha_{t}nS_{t} + \frac{nS_t}{\alpha_t}
	\bigg) 
	\notag\\
	& =n\int\tanh \big(\alpha_{t} (\alpha_{t}+x)\big)\varphi(\mathrm{d}x)
	+ O\left(\alpha_{t}^{2}n\bigg(\frac{S_{t}}{\alpha_{t}}+\frac{S_{t}}{\alpha_{t}^{3}}\bigg)\right) ,
\label{eqn:giovanni-2}
\end{align}
where the last line follows from \citet[Eq.~(B.4) in Appendix B.2]{deshpande2017asymptotic}.

%
%

Finally, we justify that $\int\tanh \lt(\alpha_{t}\lt(\alpha_{t}+x\rt)\rt)\varphi(\mathrm{d}x) \asymp \alpha_t^2$. 
Towards this, we make the observation that
\begin{align*}
 & \int\tanh(\alpha_{t}^{2}+\alpha_{t}x)\varphi(\mathrm{d}x)\\
 & \qquad=\int_{0}^{2\alpha_{t}}\tanh(\alpha_{t}^{2}+\alpha_{t}x)\varphi(\mathrm{d}x)+\int_{\alpha_{t}}^{\infty}\left\{ \tanh\big(\alpha_{t}^{2}+\alpha_{t}(\alpha_{t}+z)\big)+\tanh\big(\alpha_{t}^{2}+\alpha_{t}(\alpha_{t}-z)\big)\right\} \varphi(\mathrm{d}x)\\
 & \qquad\geq\int_{0}^{2\alpha_{t}}\tanh(\alpha_{t}^{2}+\alpha_{t}x)\varphi(\mathrm{d}x)\geq\int_{0}^{2\alpha_{t}}\tanh'(3\alpha_{t}^{2})(\alpha_{t}^{2}+\alpha_{t}x)\varphi(\mathrm{d}x)\\
 & \qquad\geq\Big(1-\tanh^{2}(3\alpha_{t}^{2})\Big)\Big(2\alpha_{t}^{3}+\varphi(2\alpha_{t})2\alpha_{t}^{2}\Big)\asymp\alpha_{t}^{2}, 
\end{align*}
where the first inequality follows since $\tanh\big(\alpha_{t}^{2}+\alpha_{t}(\alpha_{t}+z)\big)+\tanh\big(\alpha_{t}^{2}+\alpha_{t}(\alpha_{t}-z)\big)\geq 0$ 
for any $z\geq 0$, 
the second inequality holds since $\tanh^{\prime}(0)=0$ and $\tanh^{\prime}(w)$ is decreasing in $w$ for $w\geq 0$, 
and the last line uses $\tanh^{\prime}(w)=1-\tanh^2(w)$ and the induction hypothesis that $\alpha_t\lesssim 1$ (cf.~\eqref{eq:Z2-induction-alphat}).  
Additionally, it results from the Taylor expansion as well as the
facts $\tanh(0)=0$ and $|\tanh^{\second}(w)|\leq1$ that
\begin{align*}
\int\tanh(\alpha_{t}^{2}+\alpha_{t}x)\varphi(\mathrm{d}x) & \leq\int(\alpha_{t}^{2}+\alpha_{t}x)\varphi(\mathrm{d}x)+\frac{1}{2}\int|\alpha_{t}^{2}+\alpha_{t}x|^{2}\varphi(\mathrm{d}x)\\
 & \leq\int(\alpha_{t}^{2}+\alpha_{t}x)\varphi(\mathrm{d}x)+\int\alpha_{t}^{4}\varphi(\mathrm{d}x)+\int\alpha_{t}^{2}x^{2}\varphi(\mathrm{d}x)
  =2\alpha_{t}^{2} + \alpha_t^4.
\end{align*}
Consequently, we have justified that 
\begin{equation}
	\int\tanh(\alpha_{t}^{2}+\alpha_{t}x)\varphi(\mathrm{d}x)\asymp \alpha_t^2
	\label{eq:int-alpha2-tanh-123}
\end{equation}
given that $\alpha_t\lesssim 1$ (cf.~\eqref{eq:Z2-induction-alphat}). 
Combining this with~\eqref{eqn:giovanni-2} and the induction hypothesis that $\sqrt{\lambda - 1} \lesssim \alpha_t \lesssim 1$ (cf.~\eqref{eq:Z2-induction-alphat}), we reach
\begin{align}
\label{eqn:tanh-basic-alpha}
\big\|\tanh(\pi_{t}x_{t}) \big\|_{2}^{2} & =
	\left(1+ O\bigg(\frac{S_{t}}{\alpha_{t}}+\frac{S_{t}}{\alpha_{t}^{3}}\bigg)\right)
	n\int\tanh\big(\alpha_{t}(\alpha_{t}+x)\big)\varphi(\mathrm{d}x)\asymp n\alpha_{t}^{2} ,
\end{align}
provided that $\frac{S_{t}}{\alpha_{t}}+\frac{S_{t}}{\alpha_{t}^{3}} \ll 1$. 
This concludes the proof of Lemma~\ref{lem:pit-gammat}.


\subsection{Controlling several key quantities $A_t,B_t,D_t$}
\label{sec:z2-key}

By virtue of Theorem~\ref{thm:main} or Corollary~\ref{cor:recursion-spectral}, 
the behavior of $\alpha_{t+1}$ is governed by a couple of key quantities as defined in Assumptions~\ref{assump:A-H-eta} (except that those sums w.r.t.~$\sum_{k=1}^{t-1}$ there should be replaced with $\sum_{k=-2s}^{t-1}$ to account for spectral initialization). 
Several immediate remarks are in order. 
\begin{itemize}
	\item As alluded to previously, there is no need to bound $\Delta_{\beta,t}$ given that $\|\beta_t\|_2$ is fixed. As a result, 
		there is no need in controlling $C_t, F_{t}$ and $G_t$. 
		
	\item Given that the denoising function is smooth everywhere, we clearly have $E_t=0$. 

%
\end{itemize}
With these remarks in mind, 
the proof of Theorem~\ref{thm:Z2} largely consists of identifying sufficiently small quantities $A_{t}, B_t,  D_{t}$ such that \eqref{defi:A}, \eqref{defi:B} and \eqref{defi:D} are satisfied with high probability, which forms the main content of this subsection. 
The analysis in this subsection operates under the induction hypotheses \eqref{Z2-induction}.

%

\subsubsection{Quantity $A_t$ in \eqref{defi:A}}
\label{sec:control-A-Z2}

Recall that this part is concerned with bounding the following quantity 
\begin{align}
	 \Big\langle \sum_{k = -2s}^{t-1} \mu^k_t \phi_k, \eta_{t}(v_t)\Big\rangle - \lt\langle\eta_t^{\prime}(v_t) \rt\rangle \sum_{k = -2s}^{t-1} \mu^k_t\beta_{t-1}^k ;
	 \label{eq:target-An-expression}
\end{align}
note that the summation starts from $k=-2s$ in order to take into account spectral initialization. 
In order to analyze this quantity, we introduce
\begin{equation}
	\Phi \defn \sqrt{n}(\phi_{-2s},\ldots,\phi_{t-1}),\qquad 
	\theta \defn (\mu,\alpha,\beta, \pi, \gamma) \in \mathcal{S}^{2s+t-1} \times \real \times \mathcal{S}^{2s+t-1} \times \real \times \real, 
	\label{eq:defn-Phi-theta-Z2-A}
\end{equation}
and define the following function
\begin{align*}
	f_\theta(\Phi) \defn \Big\langle \sum_{k = -2s}^{t-1} \mu^{k} \phi_k, \eta
	\Big(\alpha \vstar + \sum_{k=-2s}^{t-1} \beta^{k} \phi_k\Big)\Big\rangle 
	- \Big\langle\eta^{\prime}\Big(\alpha \vstar + \sum_{k=-2s}^{t-1} \beta^{k} \phi_k\Big) \Big\rangle
	 \sum_{k = -2s}^{t-1} \mu^k\beta^{k},  
\end{align*}
where 
\begin{equation}
	\eta (w) \coloneqq \gamma^{-1} \tanh(\pi w) ;
	\label{eq:defn-eta-Z2-A}
\end{equation}
here, we suppress the dependency on $\alpha$, $\pi$ and $\gamma$ in the notation $\eta(\cdot)$ for simplicity.  
Clearly, the quantity \eqref{eq:target-An-expression} can be expressed as $f_\theta(\Phi)$ with $\alpha=\alpha_t$, $\mu = \mu_t, \beta = \beta_{t-1}, \pi = \pi_t$ and $\gamma = \gamma_t$; 
these parameters, however, are statistically dependent on $\Phi$. As a result, we resort to Lemma~\ref{lem:Gauss} in order to obtain a uniform control over all parameters 
within a suitable region
\begin{equation}
	\Theta \defn \Big\{ \theta = (\mu,\alpha,\beta,\pi,\gamma) \mid \|\mu \|_2 = \|\beta \|_2 = 1,  \sqrt{\lambda - 1} \lesssim \alpha \lesssim 1, 
	\pi \asymp \gamma^{-1} \asymp \alpha \sqrt{n} \Big\}.
	\label{eq:parameter-set-Z2-A}
\end{equation}
It follows immediately from the calculation in Section~\ref{sec:property-eta-z2} that, for any $\theta \in \Theta$ and any $x\in \real$, 
\begin{gather}
\label{eqn:chocolate-At-Z2}
\begin{aligned}
	&|\eta(x)| \lesssim \frac{1}{\alpha \sqrt{n}} , \qquad
	|\eta^{\prime} (x) | \lesssim 1 , \qquad
	 |\eta^{\second} (x)| \lesssim \alpha \sqrt{n}  ,
	\qquad  && |\eta^{\third} (x)| \lesssim \alpha ^2n .
	\end{aligned}
\end{gather}

Clearly, we can see that (i) $\|\theta\|_2\lesssim \sqrt{n}$ for any $\theta \in \Theta$,
(ii) $\|\nabla_{\theta} f_{\theta} (Z) \|_2 \lesssim n^{100}$ for any $Z\in \mathcal{E}$ (see \eqref{eqn:eps-interset}),   
and (iii) $|f_{\theta}(\Phi)|\lesssim n^{100}\big(\max_{k}\|\phi_{k}\|_{2}\big)^{100}$.  
Then according to \eqref{eqn:brahms-conc}, 
it would be natural to invoke Corollary~\ref{cor:Gauss} to obtain uniform control of $f_{\theta}(\Phi)$. 
The main step then boils down to bounding $\nabla_{\Phi} f_{\theta}(\Phi)$, which we accomplish in what follows.

Letting $v \defn \alpha \vstar + \sum_{k=-2s}^{t-1} \beta^{k} \phi_k$ for notational simplicity, 
we can directly bound the derivative of $f$ w.r.t.~$\Phi$ as follows:   
\begin{align*}
\big\|\nabla_{\Phi}f_{\theta}(\Phi)\big\|_{2} & \le\frac{\|\mu\|_{2}}{\sqrt{n}}\lt\|\eta_{t}(v)\rt\|_{2}+\frac{\|\beta\|_{2}}{\sqrt{n}}\Big\|\sum_{k=-2s}^{t-1}\mu^{k}\phi_{k}\circ\eta^{\prime}(v)\Big\|_{2}+\bigg(\frac{\|\beta\|_{2}}{n\sqrt{n}}\lt\|\eta^{\second}(v)\rt\|_{2}\bigg)\big(\|\mu\|_{2}\|\beta\|_{2}\big)\\
 & = \frac{1}{\sqrt{n}}\lt\|\eta_{t}(v)\rt\|_{2}+\frac{1}{\sqrt{n}}\Big\|\sum_{k=-2s}^{t-1}\mu^{k}\phi_{k}\circ\eta^{\prime}(v)\Big\|_{2}+\frac{1}{n\sqrt{n}}\lt\|\eta^{\second}(v)\rt\|_{2}\\
 & \lesssim\frac{1}{\alpha\sqrt{n}} ,
\end{align*}
where the first inequality follows from \eqref{mu-phi} and~\eqref{eta-phi}, 
the second line relies on the condition $\|\mu\|_2 = \|\beta\|_2=1$, 
and the last inequality makes use of~\eqref{eqn:chocolate}, the condition $\sqrt{\lambda - 1}\lesssim \alpha\lesssim 1$, and the fact that $\big\| \sum_{k=-2s}^{t-1}\mu^{k}\phi_{k} \big\|_2 \lesssim 1$ (see the second pair of curly brackets in \eqref{eq:eps-set}). 
Applying Corollary~\ref{cor:Gauss} then gives 
\begin{align}
\label{eqn:moonlight}
	\sup_{\theta \in \Theta} \big| \alpha f_\theta(\Phi) - \alpha \myE [f_\theta(\Phi)] \big|
	\lesssim 
	\sqrt{\frac{(t+s)\log n}{n}}
\end{align}
with probability at least $1-O(n^{-11})$.

In addition, we observe that: for any fixed parameter $\theta$, Stein's lemma reveals that
\begin{align*}
	\myE \big[f_\theta(\Phi)\big] = \myE \left[\Big\langle \sum_{k = -2s}^{t-1} \mu^{k} \phi_k, \eta_{t}\Big(\alpha \vstar + \sum_{k=-2s}^{t-1} \beta^{k} \phi_k\Big)\Big\rangle 
	- \Big\langle\eta_t^{\prime}\Big(\alpha \vstar + \sum_{k=-2s}^{t-1} \beta^{k} \phi_k\Big) \Big\rangle
	 \sum_{k = -2s}^{t-1} \mu^k\beta^{k}\right] = 0, 
\end{align*}
which together with \eqref{eqn:moonlight} gives
\begin{align}
\label{eqn:moonlight}
	\sup_{\theta \in \Theta} \Big\{ \alpha\, \big|f_\theta(\Phi)  \big| \Big\} 
	\lesssim 
	\sqrt{\frac{(t+s)\log n}{n}}
\end{align}
with probability at least $1-O(n^{-11})$. 
Consequently, it is  sufficient to take
\begin{align}
\label{eqn:Z2-At}
	A_t \asymp \frac{1}{\alpha_t}\sqrt{\frac{(t+s)\log n}{n}}.
\end{align}


\subsubsection{Quantity $B_t$ in \eqref{defi:B}}

Regarding the quantity $B_t$, we need to examine the following function 
\begin{align*}
	f_{\theta}(\Phi) \defn v^{\star \top}\eta(v),
	\qquad
	\text{with } v \defn \alpha \vstar + \sum_{k = -2s}^{t-1} \beta^k\phi_k, 
\end{align*}
where $\Phi$ and $\theta$ are defined in \eqref{eq:defn-Phi-theta-Z2-A}, and $\eta(\cdot)$ is defined in \eqref{eq:defn-eta-Z2-A}. 
Clearly, the target quantity on the left-hand side of \eqref{defi:B} 
can be viewed as $f_{\theta}(\Phi)$ with $\alpha = \alpha_t, \beta = \beta_{t-1}, \mu = \mu_t, \pi = \pi_t$ and $\gamma = \gamma_t$. 
When it comes to the convex set $\mathcal{E}$ (cf.~\eqref{eqn:eps-interset}) and the parameter set $\Theta$ (cf.~\eqref{eq:parameter-set-Z2-A}),
 it is straightforward to verify that $\|\nabla_{\theta} f_{\theta}(Z) \|_2\lesssim n^{100}$ for any $Z\in \mathcal{E}$ 
 and $| f_{\theta}(Z) | \lesssim n^{100} \big(\|Z \|_{\mathrm{F}} \big)^{100}$.  
Therefore, 
in view of \eqref{eqn:brahms-conc}, 
we shall resort to Corollary~\ref{cor:Gauss} to obtain uniform control of $f_{\theta}(\Phi)$ over all $\theta \in \Theta$.


Invoking inequality \eqref{eta-phi} yields  
\begin{align*}
	\lt\|\nabla_{\Phi} f_{\theta}(\Phi)\rt\|_2 &\le \frac{\|\beta\|_2}{\sqrt{n}}\lt\|\vstar \circ \eta^{\prime}(v)\rt\|_2 \lesssim \frac{1}{\sqrt{n}}, 
\end{align*}
where the last inequality holds since, according to~\eqref{eqn:chocolate-At-Z2}, 
\begin{align}
	\|\eta^{\prime}(v) \circ \vstar\|_2 &\lesssim \|\vstar\|_2
	 = 1.
\end{align}
Apply Corollary~\ref{cor:Gauss} to arrive at
\begin{align*}
	\sup_{\theta \in \Theta} \lt|v^{\star \top}\eta(v) - \myE[v^{\star \top}\eta(v)]\rt| &\lesssim \sqrt{\frac{(t+s)\log n}{n}} 
\end{align*}
with probability at least $1-O(n^{-11})$.
In addition, for every given $\theta$ we have
\begin{align*}
	\myE[v^{\star \top}\eta(v)] 
	&= \myE \left[v^{\star \top} \eta\Big(\alpha \vstar + \sum_{k = -2s}^{t-1} \beta^k\phi_k\Big)\right] 
	= v^{\star \top}\int\eta \lt(\alpha \vstar + \frac{1}{\sqrt{n}}x\rt)\varphi_n(\dx),
\end{align*}
where  $\varphi_n(\cdot)$ is the CDF of $\mathcal{N}(0,I_n)$.  
Therefore, in view of the definition \eqref{defi:B}, it is sufficient to set  
\begin{align}
\label{eqn:Z2-Bt}
	B_t \asymp \sqrt{\frac{(t+s)\log n}{n}}.
\end{align}

%

\subsubsection{Quantity $D_t$ in \eqref{defi:D}}

With regards to quantity $D_t$, we aim to justify that 
\begin{align}
	\Big\|\sum_{k = -2s}^{t-1} \mu_t^k\phi_k \circ \eta_{t}^{\prime}(v_t) - \frac{1}{n}\sum_{k = -2s}^{t-1} \mu_t^k\beta_{t-1}^k\eta_{t}^{\second}(v_t)\Big\|_2^2 - \kappa_t^2
	\lesssim  D_t \asymp
	\sqrt{\frac{(t+s)\log^2 n}{n}} . \label{eqn:Z2-Dt}
\end{align}
In order to prove this, let us introduce the following function 
\begin{align*}
	&f_\theta(\Phi) \defn \Big\|\sum_{k = -2s}^{t-1} \mu^k\phi_k \circ \eta^{\prime}(v) - \frac{1}{n}\sum_{k = -2s}^{t-1} \mu^k\beta^k\eta^{\second}(v)\Big\|_2^2 - \kappa^2,\\
	&\qquad\qquad \text{with } v \defn \alpha \vstar + \sum_{k = -2s}^{t-1} \beta^k\phi_k;  
\end{align*}
here,  $\Phi$ and $\theta$ are defined in \eqref{eq:defn-Phi-theta-Z2-A},  $\eta(\cdot)$ is defined in \eqref{eq:defn-eta-Z2-A}, 
whereas $\kappa$ is defined such that
\begin{align}
 & \notag\kappa^{2}\defn\max\Bigg\{\Bigg\langle\int\Big[x\eta^{\prime}\Big(\alpha\vstar+\frac{1}{\sqrt{n}}x\Big)-\frac{\ltwo{\beta}}{\sqrt{n}}\eta^{\second}\Big(\alpha\vstar+\frac{1}{\sqrt{n}}x\Big)\Big]^{2}\varphi_{n}(\dx)\Bigg\rangle,~\\
 & %
\qquad\qquad\qquad\qquad\qquad\qquad\qquad
\bigg\langle\int\Big[\eta^{\prime}\Big(\alpha\vstar+\frac{1}{\sqrt{n}}x\big)\Big]^{2}\varphi_{n}(\dx)\bigg\rangle\Bigg\} . \label{defi:kappa-Z2-D}
\end{align}
We shall also introduce the set $\mathcal{E}$ (resp.~$\Theta$) as in \eqref{eqn:eps-interset} (resp.~\eqref{eq:parameter-set-Z2-A}). 
Once again, it is easily seen that 
$\|\nabla_{\theta} f_{\theta}(Z) \|_2\lesssim n^{100}$ holds for any $Z\in \mathcal{E}$ 
 and $| f_{\theta}(Z) | \lesssim n^{100} \big(\|Z \|_{\mathrm{F}} \big)^{100}$.  
In light of \eqref{eqn:brahms-conc}, 
it is natural to apply Corollary~\ref{cor:Gauss} to obtain uniform control of $f_\theta(\Phi)$ over all $\theta \in \Theta$, 
which we detail as follows.


To begin with, we can take the derivative and use $\|\mu\|_2=\|\beta\|_2=1$ to obtain 
\begin{align}
\label{eqn:dt-mighty-five}
\notag &\lt\|\nabla_{\Phi} f_\theta(\Phi)\rt\|_2 \\
	\notag &\lesssim \frac{2}{\sqrt{n}}\lt\|\sum_{k = -2s}^{t-1} \mu^k\phi_k \circ \eta^{\prime}(v) \circ \eta^{\prime}(v)\rt\|_2 + \frac{2}{\sqrt{n}}\lt\|\sum_{k = -2s}^{t-1} \mu^k\phi_k \circ \sum_{k = -2s}^{t-1} \mu^k\phi_k \circ \eta^{\prime}(v) \circ \eta^{\second}(v)\rt\|_2 + \frac{2}{n^2\sqrt{n}}\lt\|\eta^{\second}(v) \circ \eta^{\third}(v) \rt\|_2 \\
&\qquad + \frac{2}{n\sqrt{n}}\lt\|\eta^{\prime}(v) \circ \eta^{\second}(v)\rt\|_2 + \frac{2}{n\sqrt{n}}\lt\|\sum_{k = -2s}^{t-1} \mu^k\phi_k \circ \eta^{\second}(v) \circ \eta^{\second}(v) \rt\|_2 + \frac{2}{n\sqrt{n}}\lt\|\sum_{k = -2s}^{t-1} \mu^k\phi_k \circ \eta^{\prime}(v) \circ \eta^{\third}(v) \rt\|_2,
\end{align}
as a consequence of \eqref{mu-phi} and~\eqref{eta-phi}. 
With this in place, we can further deduce that
\begin{align}
\label{eqn:scriabin-mys}
	\lt\|\nabla_{\Phi} f_\theta(\Phi)\rt\|_2 &\lesssim \sqrt{\frac{\log n}{n}}, 
\end{align}
whose proof is deferred to the end of this subsection.


Applying Corollary~\ref{cor:Gauss} then reveals that: with probability at least $1-O(n^{-11})$,  
\begin{align}
	&\sup_{\theta\in \Theta} \Big\{f_\theta(\Phi) - \Exs[f_\theta(\Phi)]\Big\}
%
	\lesssim \sqrt{\frac{(t+s)\log^2 n}{n}}. \label{eqn:Dt-concentration}
\end{align}
Taking $\theta = (\mu_t, \alpha_t,\beta_{t-1},\pi_t,\gamma_t)$
in the above inequality~\eqref{eqn:Dt-concentration} and making use of the following observation
\begin{align*}
	\Big\|\sum_{k = -2s}^{t-1} \mu_t^k\phi_k \circ \eta_{t}^{\prime}(v_t) - \frac{1}{n}\sum_{k = -2s}^{t-1} \mu_t^k\beta_{t-1}^k\eta_{t}^{\second}(v_t)\Big\|_2^2 - \kappa_t^2 - \sup_{\theta\in \Theta} \Exs[f_\theta(\Phi)]
	\leq 
	\sup_{\theta\in \Theta} \Big\{f_\theta(\Phi) - \Exs[f_\theta(\Phi)]\Big\} ,
\end{align*}
we arrive at 
\begin{align*}
	\Big\|\sum_{k = -2s}^{t-1} \mu_t^k\phi_k \circ \eta_{t}^{\prime}(v_t) - \frac{1}{n}\sum_{k = -2s}^{t-1} \mu_t^k\beta_{t-1}^k\eta_{t}^{\second}(v_t)\Big\|_2^2 - \kappa_t^2
	- 
	\sup_{\theta\in \Theta} \Exs[f_\theta(\Phi)] \lesssim   
	\sqrt{\frac{(t+s)\log^2 n}{n}}.
\end{align*}

In order to conclude the proof of \eqref{eqn:Z2-Dt}, it suffices to show that for every $\theta\in \Theta$, one has $\Exs \lt[f_{\theta}(\Phi)\rt] \le 0$.
To see this, consider any fixed $\theta$, and use $\varrho$ to denote the angle between the two unit vectors $\mu$ and $\beta$ (so that $\cos\varrho = \inprod{\mu}{\beta}$). 
Hence, one can write 
\begin{align}
	\notag &\Exs \lt[ \lt\|\sum_{k = -2s}^{t-1} \mu^k\phi_k \circ \eta^{\prime}(v) - \frac{1}{n}\sum_{k = -2s}^{t-1} \mu^k\beta^k\eta^{\second}(v)\rt\|_2^2 \rt] \\
\notag &= \Exs _{X, Y \overset{\text{i.i.d.}}{\sim} \mathcal{N}(0, I_n)}\lt[\lt\|\frac{1}{\sqrt{n}}(X\cos\varrho + Y\sin\varrho) \circ \eta^{\prime}\lt(\alpha v^{\star} + \frac{1}{\sqrt{n}}X\rt) - \frac{\cos\varrho}{n}\eta^{\second}\lt(\alpha v^{\star} + \frac{1}{\sqrt{n}}X\rt)\rt\|_2^2 \rt] \\
\notag &= \cos^2\varrho\cdot\Exs _{X \sim \mathcal{N}(0, I_n)}\lt[\lt\|\frac{1}{\sqrt{n}}X \circ \eta^{\prime}\lt(\alpha v^{\star} + \frac{1}{\sqrt{n}}X\rt) - \frac{1}{n}\eta^{\second}\lt(\alpha v^{\star} + \frac{1}{\sqrt{n}}X\rt)\rt\|_2^2 \rt] \\
\notag &\qquad+ \sin^2\varrho\cdot\Exs _{X \sim \mathcal{N}(0, I_n)}\lt[\lt\|\frac{1}{\sqrt{n}}\eta^{\prime}\lt(\alpha v^{\star} + \frac{1}{\sqrt{n}}X\rt)\rt\|_2^2 \rt] \\
&\le \kappa^2, \label{eqn:beethoven-vive}
\end{align}
where the last line follows directly from the definition of $\kappa.$ This in turn implies that $\Exs \lt[f_{\theta}(\Phi)\rt] \le 0$.

Putting the above pieces together justifies the desired inequality~\eqref{eqn:Z2-Dt}, provided that \eqref{eqn:scriabin-mys} is valid.

\paragraph{Proof of inequality~\eqref{eqn:scriabin-mys}.} 

In the sequel, let us first carry out the calculations for the dominant term --- namely, the second term of expression~\eqref{eqn:dt-mighty-five};
Note that the  $(t+s)$-th largest entry (in magnitude) of $\sum_{k = -2s}^{t-1} \mu^k\phi_k$ obeys 
\begin{subequations}
	\label{eqn:muphi-rank}
\begin{align}
	(t+s) \left| \sum_{k = -2s}^{t-1} \mu^k\phi_k\right|_{(t+1)}^2 \leq 
	\sum_{i=1}^{t+s} \left| \sum_{k = -2s}^{t-1} \mu^k\phi_k\right| _{(i)}^2 \lesssim 
	\frac{(t+s)\log n}{n},
\end{align}
which follows from the definition of the event $\mathcal{E}$ (cf.~\eqref{eqn:eps-interset}). 
This implies that
\begin{align}
	\left| \sum_{k = -2s}^{t-1} \mu^k\phi_k\right|_{(t+s)}  \leq 
	C_7\sqrt{ \frac{\log n}{n} }
\end{align}
\end{subequations}
for some large enough constant $C_7>0$. 
By virtue of \eqref{eqn:chocolate-At-Z2}, it holds that 
\begin{align*}
&\bigg\|\sum_{k = -2s}^{t-1} \mu^k\phi_k \circ \sum_{k = -2s}^{t-1} \mu^k\phi_k \circ \eta^{\prime}(v) \circ \eta^{\second}(v)\bigg\|_2 
 \lesssim \alpha\sqrt{n}\,\bigg\|\sum_{k = -2s}^{t-1} \mu^k\phi_k \circ \sum_{k = -2s}^{t-1} \mu^k\phi_k\bigg\|_2 \\
	&\qquad \lesssim \sqrt{n}\,\bigg\|\sum_{k = -2s}^{t-1} \mu^k\phi_k\bigg\|_{\infty}\bigg( \sum_{i=1}^t\Big|\sum_{k = -2s}^{t-1} \mu^k\phi_k\Big|_{(i)}^2 \bigg)^{1/2} 
	+\sqrt{n} \left|\sum_{k = -2s}^{t-1} \mu^k\phi_k\right|_{(t+1)}\bigg\|\sum_{k = -2s}^{t-1} \mu^k\phi_k\bigg\|_2 \\
	&\qquad \lesssim \sqrt{n} \sqrt{\frac{(t+s)\log n}{n}} \cdot \sqrt{\frac{(t+s)\log n}{n}} +
	\sqrt{n} \sqrt{\frac{\log n}{n}} \bigg( 1 + \sqrt{\frac{(t+s)\log n}{n}} \bigg)\\
&\qquad \lesssim \sqrt{\log n}.
\end{align*}
This leads to the desired bound for the second term of \eqref{eqn:dt-mighty-five}. 
The other terms can be bounded in a similar manner, which we omit here for brevity.

\subsection{Establishing the induction hypotheses for the next iteration}
\label{sec:xi-z2}

In this subsection, we move on to establish the induction hypotheses \eqref{Z2-induction} for the $(t+1)$-th iteration, 
in addition to controlling several intermediate quantities.  
For this purpose, Theorem~\ref{thm:main} offers a general recipe to control the residual terms $\ltwo{\xi_{t}}$ and $|\Delta_{\alpha,t}|$ 
by means of the key quantities $A_t,B_t,D_t$ that have been analyzed in Section~\ref{sec:z2-key}. 
Direct application of Theorem~\ref{thm:main} or Corollary~\ref{cor:recursion-spectral} already leads to non-asymptotic performance bounds. 
It turns out that for the problem of $\mathbb{Z}_2$ synchronization, 
we might be able to obtain tighter error bounds (i.e., $\sqrt{t/n}$ vs.~$\sqrt{t^2/n}$) 
if we slightly refine the analysis of Theorem~\ref{thm:main} by exploiting the problem-specific structure, 
which we shall detail as follows.


\subsubsection{Induction step for bounding $\ltwo{\xi_{t}}$}
\label{sec:z2-xi-t}

In this subsection, we aim to establish the induction hypothesis \eqref{eqn:Z2-induction-st} for the next iteration (namely, showing that 
$\|\xi_t\|_2\leq S_{t+1}$. %
In view of Theorem~\ref{thm:recursion-spectral} and \eqref{defi:A}, the residual term $\xi_{t}$ obeys
\begin{align}
\label{eqn:xi-z2-again}
	\|\xi_{t}\|_2 \leq \Big\langle \sum_{k = -2s}^{t-1} \mu_t^{k}\phi_k, \delta_{t}\Big\rangle - \langle\delta_{t}^{\prime}\rangle \sum_{k = -2s}^{t-1} \mu_t^{k}\beta_{t-1}^k + A_t + O\Big(\sqrt{\frac{(t+s)\log n}{n}} \Big),
\end{align} 
where $\delta_{t}$ and $\delta_{t}^{\prime}$ are defined as
\begin{align*}
	\delta_{t} &\defn \eta_{t}\Big(\alpha_t\vstar + \sum_{k = -2s}^{t-1} \beta_{t-1}^k\phi_k + \xi_{t-1}\Big) - \eta_{t}\Big(\alpha_t\vstar + \sum_{k = -2s}^{t-1} \beta_{t-1}^k\phi_k\Big), \\
 	\delta_{t}^{\prime} &\defn \eta_{t}^{\prime}\Big(\alpha_t\vstar + \sum_{k = -2s}^{t-1} \beta_{t-1}^k\phi_k + \xi_{t-1}\Big) - \eta_{t}^{\prime}\Big(\alpha_t\vstar + \sum_{k = -2s}^{t-1} \beta_{t-1}^k\phi_k\Big).
\end{align*}
We have already bounded $A_t$ in Section~\ref{sec:control-A-Z2}. As a result, it comes down to bounding $\delta_{t}$ and $\delta_{t}^{\prime}$.


As alluded to previously, we can obtain slightly tighter bounds than directly invoking Theorem~\ref{thm:main} or Corollary~\ref{cor:recursion-spectral}, 
by improving the proof of Theorem~\ref{thm:main} a little a bit with the aid of the special structure of $\mathbb{Z}_2$ synchronization. 
Specifically, recall from \eqref{eqn:chocolate} that
$$
|\eta_t(w)| \lesssim \frac{1}{\alpha_t\sqrt{n}}
\qquad 
\text{and}
\qquad |\eta_t^{\prime}(w)| \lesssim 1 \qquad \text{for any }w\in \real.
$$
These two basic bounds allow us to strengthen the \eqref{eqn:shostakovich-delta-t} and \eqref{eqn:shostakovich-delta-t-prime} as follows in the proof of Theorem~\ref{thm:main}:
\begin{subequations}
\label{eqn:delta-z2}
\begin{align}
\Big|\delta_{t}-\eta_{t}^{\prime}\Big(\alpha_{t}\vstar+\sum_{k=1}^{t-1}\beta_{t-1}^{k}\phi_{k}\Big)\circ\xi_{t-1}\Big| & \leq\rho_{1}\big|\xi_{t-1}\big|^{2},\\
\|\delta_{t}\|_{\infty} & \lesssim\frac{1}{\alpha_{t}\sqrt{n}},\\
\bigg|\delta_{t}^{\prime}-\eta_{t}^{\second}\Big(\alpha_{t}\vstar+\sum_{k=1}^{t-1}\beta_{t-1}^{k}\phi_{k}\Big)\circ\xi_{t-1}\bigg| & \leq\rho_{2}\big|\xi_{t-1}\big|^{2},\\
\|\delta_{t}^{\prime}\|_{\infty} & \lesssim1,
\end{align}
\end{subequations}
where we recall that $\Gamma = 0$ in $\mathbb{Z}_2$ synchronization (as there is no discontinuous point in $\tanh(\cdot)$).

To help further bound~\eqref{eqn:delta-z2}, we make note of some preliminary facts below. Let us introduce the following index set:
\begin{align*}
\mathcal{I} \defn \lt\{i : \bigg|\sum_{k = -2s}^{t-1} \mu_t^k\phi_{k, i}\bigg| > C_7 \sqrt{\frac{\log n}{n}}\rt\},
\end{align*}
where $C_7>0$ is a large enough constant employed in \eqref{eqn:muphi-rank}. 
By virtue of \eqref{eqn:muphi-rank}, one has 
\begin{align*}
	|\mathcal{I}|\leq t+s. 
\end{align*}
For notational simplicity, we overload the notation by introducing two vectors: 
\[
	\ind_{\mathcal{I}} \coloneqq \big[ \ind_{\mathcal{I}}(i) \big]_{1\leq i\leq n} \in \real^n
	\qquad \text{and} \qquad
	\ind_{\mathcal{I}^{\mathrm{c}}} \coloneqq \big[ \ind_{\mathcal{I}^{\mathrm{c}}}(i) \big]_{1\leq i\leq n} \in \real^n.
\]
In addition, let us define 
$$
\widehat{\xi}_{t-1} \defn \xi_{t-1} \circ \ind_{\mathcal{I}^{\mathrm{c}}}. 
$$
Based on this set of notation, we can readily derive from \eqref{eqn:delta-z2} that
\begin{subequations}
	\label{eqn:z2-delta-t}
\begin{align}
\Big|\delta_{t}-\eta_{t}^{\prime}\Big(\alpha_{t}\vstar+\sum_{k=1}^{t-1}\beta_{t-1}^{k}\phi_{k}\Big)\circ\widehat{\xi}_{t-1}\Big| & \lesssim\rho_{1}\big|\widehat{\xi}_{t-1}\big|^{2}+\frac{1}{\alpha_{t}\sqrt{n}}\ind_{\mathcal{I}},\\
\bigg|\delta_{t}^{\prime}-\eta_{t}^{\second}\Big(\alpha_{t}\vstar+\sum_{k=1}^{t-1}\beta_{t-1}^{k}\phi_{k}\Big)\circ\xi_{t-1}\bigg| & \leq\rho_{2}\big|\xi_{t-1}\big|^{2}.
\end{align}
\end{subequations}

We aim to control the right-hand side of expression~\eqref{eqn:xi-z2-again}, which boils down to bounding $\Big\langle \sum_{k = -2s}^{t-1} \mu_t^k\phi_k, \delta_{t}\Big\rangle - \langle\delta_{t}^{\prime}\rangle \sum_{k = -2s}^{t-1} \mu_t^k\beta_{t-1}^k$. 
Substitution of \eqref{eqn:z2-delta-t} into \eqref{eqn:xi-z2-again} leads to 
\begin{align}
\notag\|\xi_{t}\|_{2} & \leq\bigg\langle\bigg|\sum_{k=-2s}^{t-1}\mu_{t}^{k}\phi_{k}\bigg|,\rho_{1}\widehat{\xi}_{t-1}^{2}+\frac{1}{\alpha_{t}\sqrt{n}}\ind_{\mathcal{I}}\bigg\rangle+\bigg\langle\sum_{k=-2s}^{t-1}\mu_{t}^{k}\phi_{k},\eta_{t}^{\prime}\Big(\alpha_{t}\vstar+\sum_{k=-2s}^{t-1}\beta_{t-1}^{k}\phi_{k}\Big)\circ\widehat{\xi}_{t-1}\bigg\rangle\\
 & -\bigg\langle\eta_{t}^{\second}\Big(\alpha_{t}\vstar+\sum_{k=-2s}^{t-1}\beta_{t-1}^{k}\phi_{k}\Big)\circ\xi_{t-1}\bigg\rangle\sum_{k=-2s}^{t-1}\mu_{t}^{k}\beta_{t-1}^{k}+\lt\langle\rho_{2}\xi_{t-1}^{2}\rt\rangle\bigg|\sum_{k=-2s}^{t-1}\mu_{t}^{k}\beta_{t-1}^{k}\bigg|+A_{t}+O\Big(\sqrt{\frac{(t+s)\log n}{n}}\Big).
	\label{eqn:papageno}
\end{align}
Next, we shall control each term in \eqref{eqn:papageno} separately. 
\begin{itemize}
\item 
We begin with the first term in \eqref{eqn:papageno}. Recalling the definition of set $\mathcal{I}$ and using $\rho_1\lesssim \sqrt{n}$, we have
\begin{align*}
\Bigg\langle\bigg|\sum_{k=-2s}^{t-1}\mu_{t}^{k}\phi_{k}\bigg|,\rho_{1}\widehat{\xi}_{t-1}^{2}+\frac{1}{\alpha_{t}\sqrt{n}}\ind_{\mathcal{I}}\Bigg\rangle & \lesssim\frac{1}{\alpha_{t}\sqrt{n}}\sum_{i\in\mathcal{I}}\Big|\sum_{k=-2s}^{t-1}\mu_{t}^{k}\phi_{k,i}\Big|+ \rho_1 \sum_{i\notin\mathcal{I}}\Big|\sum_{k=-2s}^{t-1}\mu_{t}^{k}\phi_{k,i}\xi_{t-1,i}^{2}\Big|\\
 & \leq\frac{1}{\alpha_{t}\sqrt{n}}\sqrt{\big|\mathcal{I}\big|\sum_{i\in\mathcal{I}}\Big|\sum_{k=-2s}^{t-1}\mu_{t}^{k}\phi_{k,i}\Big|^{2}}+(\sqrt{n})C_{7}\sqrt{\frac{\log n}{n}}\sum_{i\notin\mathcal{I}}\xi_{t-1,i}^{2}\\
 & \lesssim\frac{1}{\alpha_{t}\sqrt{n}}\sqrt{\big|\mathcal{I}\big|\sum_{i\in\mathcal{I}}\Big|\sum_{k=-2s}^{t-1}\mu_{t}^{k}\phi_{k,i}\Big|^{2}}+ \sqrt{\log n} \,\|\xi_{t-1}\|_{2}^{2},
\end{align*}
where the second line follows from Cauchy-Schwarz and the definition
of $\mathcal{I}$. 
Recalling that $\{\phi_k\}$ fall within the set $\mathcal{E}$ (cf.~\eqref{eq:eps-set}) with high probability and using $|\mathcal{I}|\lesssim t+s$, we can further derive 
\begin{align}
\Bigg\langle \bigg| \sum_{k = -2s}^{t-1} \mu_t^k\phi_k \bigg|, \rho_1\widehat{\xi}_{t-1}^2 + \frac{1}{\alpha_t\sqrt{n}} \ind_{\mathcal{I}}\Bigg\rangle 
	&\lesssim \frac{(t+s)\sqrt{\log n}}{\alpha_tn} + \sqrt{\log n} \,\|\xi_{t-1}\|_2^2 \label{eqn:z2-part-1}
\end{align}
with probability at least $1 - O(n^{-11})$. 

\item Regarding the fourth term in \eqref{eqn:papageno}, one can use $|\mu_t^{\top}\beta_{t-1}|\leq \|\mu_t\|_2\|\beta_{t-1}\|_2=1$ and $\rho_2\lesssim n $ to get
\begin{align}
\lt\langle\rho_2\xi_{t-1}^2\rt  \rangle \bigg| \sum_{k = -2s}^{t-1} \mu_t^k\beta_{t-1}^k \bigg|
	\lesssim  \|\xi_{t-1}\|_2^2. 
\label{eqn:z2-part-2}
\end{align}
%

\item With regards to the second and third term in \eqref{eqn:papageno}, direct calculations yield  
\begin{align}
\notag &\bigg| \Big\langle \sum_{k = -2s}^{t-1} \mu_t^k\phi_k, \eta_{t}^{\prime}\Big(\alpha_t\vstar + \sum_{k = -2s}^{t-1} \beta_{t-1}^k\phi_k\Big) \circ \widehat{\xi}_{t-1}\Big\rangle - \Big\langle\eta_{t}^{\second}\Big(\alpha_t\vstar + \sum_{k = -2s}^{t-1} \beta_{t-1}^k\phi_k\Big) \circ \xi_{t-1}\Big\rangle \sum_{k = -2s}^{t-1} \mu_t^k\beta_{t-1}^k \bigg| \\
\notag &= \bigg| \Big\langle \sum_{k = -2s}^{t-1} \mu_t^k\phi_k \circ \eta_{t}^{\prime}\Big(\alpha_t\vstar + \sum_{k = -2s}^{t-1} \beta_{t-1}^k\phi_k\Big) \circ \ind_{\mathcal{I}^{\mathrm{c}}} - \frac{1}{n}\sum_{k = -2s}^{t-1} \mu_t^k\beta_{t-1}^k\eta_{t}^{\second}\Big(\alpha_t\vstar + \sum_{k = -2s}^{t-1} \beta_{t-1}^k\phi_k\Big), \xi_{t-1}\Big\rangle \bigg|  \\
\notag &\le \Big\| \sum_{k = -2s}^{t-1} \mu_t^k\phi_k \circ \eta_{t}^{\prime}\Big(\alpha_t\vstar + \sum_{k = -2s}^{t-1} \beta_{t-1}^k\phi_k\Big) - \frac{1}{n}\sum_{k = -2s}^{t-1} \mu_t^k\beta_{t-1}^k\eta_{t}^{\second}\Big(\alpha_t\vstar + \sum_{k = -2s}^{t-1} \beta_{t-1}^k\phi_k\Big)\Big\|_2 \|\xi_{t-1}\|_2 \\
\notag &\qquad+ \Big\| \sum_{k = -2s}^{t-1} \mu_t^k\phi_k \circ \eta_{t}^{\prime}\Big(\alpha_t\vstar + \sum_{k = -2s}^{t-1} \beta_{t-1}^k\phi_k\Big) \circ \ind_{\mathcal{I}}\Big\|_2 \|\xi_{t-1}\|_2 \\
	&\leq \sqrt{\kappa_t^2 + D_t}\,\|\xi_{t-1}\|_2 + O\bigg(\sqrt{\frac{(t+s)\log n}{n}}\bigg)\|\xi_{t-1}\|_2 ,
	\label{eqn:z2-part-2}
\end{align}
where we invoke the assumption~\eqref{defi:D}, the inequality \eqref{eqn:spect-brahms} and the bound~\eqref{eqn:chocolate}. 
\end{itemize}

Substituting the above bounds into inequality~\eqref{eqn:papageno} gives 
\begin{align}
\|\xi_{t}\|_2 &\le 
	\Bigg(\sqrt{\kappa_t^2 + D_t} + O\bigg(\sqrt{\frac{(t+s)\log n}{n}}\bigg)\Bigg)\|\xi_{t-1}\|_2 + O\Bigg(\sqrt{\frac{(t+s)\log n}{n}} + A_t + \frac{(t+s)\sqrt{\log n}}{\alpha_tn} + \sqrt{\log n}\,\|\xi_{t-1}\|_2^2\Bigg) \notag\\
	&\le \lt(\sqrt{\kappa_t^2 + O\bigg(\sqrt{\frac{(t+s)\log^2 n}{n}} \bigg)} + O\lt(\sqrt{\frac{(t+s)\log n}{n}} + \sqrt{\log n} \, S_t\rt)\rt)\|\xi_{t-1}\|_2 + O\lt(\sqrt{\frac{(t+s)\log n}{(\lambda- 1)n}}\rt) \notag\\
	&\le \lt( 1- \frac{1}{40}(\lambda-1) \rt)\|\xi_{t-1}\|_2 + O\lt(\sqrt{\frac{(t+s)\log n}{(\lambda - 1) n}}\rt) ; \label{eq:xit-UB-recurrence-Z2}
%
\end{align}
here, the penultimate step follows from the inequalities \eqref{eqn:Z2-At}, \eqref{eqn:Z2-Dt} and induction hypothesis \eqref{Z2-induction} for $\|\xi_{t-1}\|_2$, 
while the last line makes use of the induction hypothesis  and is valid if $\sqrt{\frac{(t+s)\log^2 n}{n}} \ll \lambda-1$ and if
\begin{equation}
	\kappa_t \leq 1 - \frac{1}{15}(\lambda - 1).
	\label{eq:kappat-UB-claim-Z2}
\end{equation}
The proof of this inequality \eqref{eq:kappat-UB-claim-Z2} is postponed to Section~\ref{sec:bound-kappat-Z2}. 
Invoking the induction the hypothesis (\ref{eqn:Z2-induction-st}) for $\|\xi_{t-1}\|_2$ 
in the above inequality \eqref{eq:xit-UB-recurrence-Z2}, we arrive at
\begin{align*}
\|\xi_{t}\|_{2} & \le\left(1-\frac{1}{40}(\lambda-1)\right)\|\xi_{t-1}\|_{2}+C_{3}\sqrt{\frac{(t+s)\log n}{(\lambda-1)n}}\\
 & \leq\left(1-\frac{1}{40}(\lambda-1)\right)\left\{ C_{1}\sqrt{\frac{(t+s)\log n}{(\lambda-1)^{3}n}}+C_{1}\left(1-\frac{1}{40}(\lambda-1)\right)^{t-1}\frac{\log^{3.5}n}{\sqrt{(\lambda-1)^{9}n}}\right\} +C_{3}\sqrt{\frac{(t+s)\log n}{(\lambda-1)n}}\\
 & =C_{1}\left(1-\frac{1}{15}(\lambda-1)\right)^{t}\frac{\log^{3.5}n}{\sqrt{(\lambda-1)^{9}n}}+\left\{ C_{1}\left(1-\frac{1}{40}(\lambda-1)\right)\sqrt{\frac{(t+s)\log n}{(\lambda-1)^{3}n}}+C_{3}(\lambda-1)\sqrt{\frac{(t+s)\log n}{(\lambda-1)^{3}n}}\right\} \\
 & \leq C_{1}\left(1-\frac{1}{15}(\lambda-1)\right)^{t}\frac{\log^{3.5}n}{\sqrt{(\lambda-1)^{9}n}}+C_{1}\sqrt{\frac{(t+s)\log n}{(\lambda-1)^{3}n}},
\end{align*}
provided that the ratio $C_{1}/C_{3}$ is sufficiently large. 
This validates the induction hypothesis (\ref{eqn:Z2-induction-st}) for $\|\xi_{t}\|_2$, 
thereby completing the induction step for $\|\xi_{t}\|_2.$

%
%
%
%
%
%
%

\subsubsection{Bounding the residual term $\Delta_{\alpha,t}$}
\label{sec:z2-delta-alpha}

Note that the denoising function in $\mathbb{Z}_2$ synchronization is smooth everywhere, 
and hence $E_t = 0$ (see \eqref{defi:E}).  
The bound \eqref{eqn:delta-alpha-general} then yields
\begin{align*}
|\Delta_{\alpha,t}| &\lesssim B_t + \lt(\rho + \rho_1\|\vstar\|_{\infty}\|\xi_{t-1}\|_2 \rt)\cdot \|\xi_{t-1}\|_2 \\
&\lesssim
	B_t + \ltwo{\xi_{t-1}}
	+
	\|\xi_{t-1}\|_2^2,
\end{align*}
where the last line follows by relation~\eqref{eqn:chocolate}. 
Recall that our induction hypothesis says $\|\xi_{t-1}\|_2 \leq S_t$, 
and that we have bounded $B_{t}$ in \eqref{eqn:Z2-Bt}.
These taken together imply that
\begin{align}
\label{eqn:beethoven-sonata}
	|\Delta_{\alpha,t}| &\lesssim  \|\xi_{t-1}\|_2 + \sqrt{\frac{(t+s)\log n}{n}} \lesssim S_t,
\end{align}
where the last inequality comes from \eqref{eqn:Z2-induction-st}.


\subsubsection{Bounding $\alpha_{t}$ and understanding state evolution}
\label{sec:main-recursion-z2}

Next, we turn to the induction step for establishing \eqref{eq:Z2-induction-alphat} and \eqref{eqn:z2-delta-alpha-final} regarding $\alpha_{t}$. 
More precisely, under the induction hypothesis~\eqref{Z2-induction} for the $t$-th iteration, 
we would like to show that \eqref{eq:Z2-induction-alphat} and \eqref{eqn:z2-delta-alpha-final} hold for the $(t+1)$-th iteration w.r.t.~$\alpha_{t+1}$. 


\begin{figure}[t]
\centering
\includegraphics[width=0.9\textwidth]{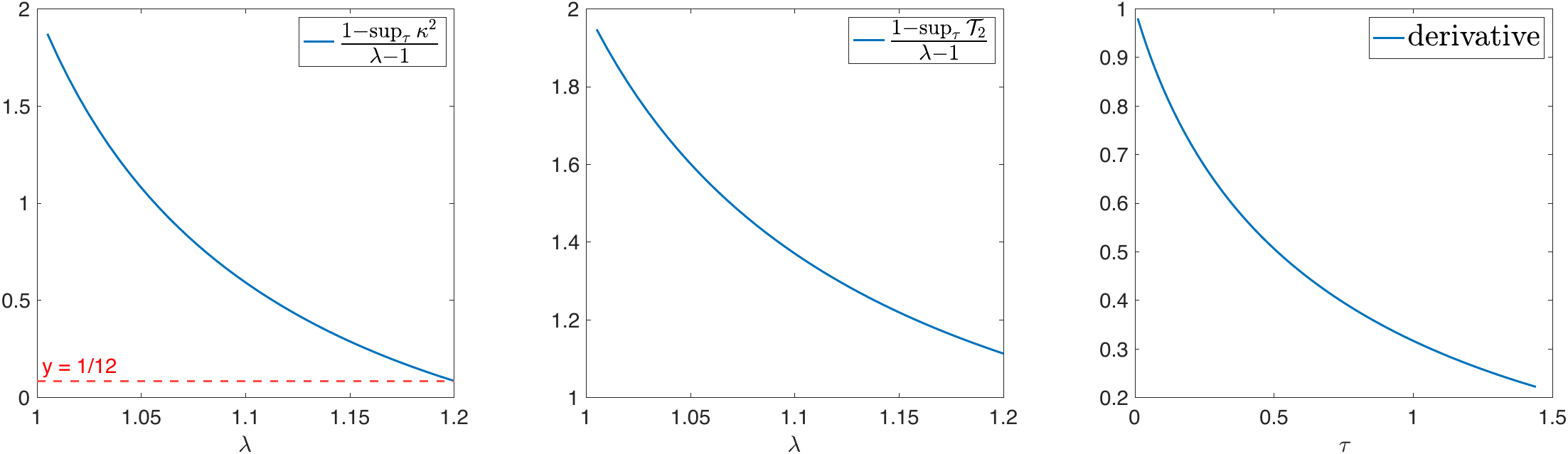}
	\caption{
	Numerical illustrations for quantities regarding $\kappa_t$ (the left panel) and $\alpha_t$ (the middle and the right panel). 
	Left panel: quantity associated with \eqref{eqn:kappa-ze-left} as a function of $\lambda$ within the range $(1,1.2]$;
	middle panel:  the quantity \eqref{eqn:middle-figure} as a function of $\lambda$ within  the range $(1,1.2]$; 
	right panel: the derivative of \eqref{eq:g-lambda-tau-defi} as a function of $\tau$ within  the range of $[0,1.44]$. 
	}
\label{fig:numerics}
\end{figure}

From the definition of $\alpha_{t+1}$ (see \eqref{eq:alpha-beta-recursion-spect}), we have
\begin{align}
	\alpha_{t+1}^2 &= \lambda^2 \langle \vstar, \eta_t(x_t)\rangle^2 = \frac{\lambda^2 \langle \vstar, \tanh(\pi_tx_t)\rangle^2}{\|\tanh(\pi_tx_t)\|_2^2}.
	\label{eq:alphat-defn-ind}
\end{align}
To understand the dynamics of $\alpha_{t}$, 
let us look at the state evolution recursion --- namely, a sequence of scalars $\{\tau_t\}$ defined recursively as follows:
\begin{subequations}
	\label{eq:SE}
\begin{align}
	\tau_{1} &= \lambda^{2}-1 \\
	\tau_{t+1} &\defn \frac{\lambda^2 \lt[\int \tanh(\tau_t + \sqrt{\tau_t}x)\varphi(\dx)\rt]^2}{\int \tanh^2(\tau_t + \sqrt{\tau_t}x)\varphi(\dx)} = \lambda^2 \int \tanh(\tau_t + \sqrt{\tau_t}x)\varphi(\dx). 
	\label{eq:SE-t-t}
\end{align}
\end{subequations}
Here, the last line comes from \citet[Appendix B.2]{deshpande2017asymptotic}. 
As it turns out, this scalar sequence \eqref{eq:SE} converges monotonically to a fixed point $\tau^{\star}$ of the recursion \eqref{eq:SE}, 
namely,   
\begin{align}
 \tau_t \nearrow \tau^{\star},
	\qquad \tau_t,\tau^{\star} \in (\lambda^2 - 1 , \lambda^2),
	\qquad\text{ where } \tau^{\star} \text{ obeys }
\tau^{\star} = \lambda^2 \int \tanh(\tau^{\star} + \sqrt{\tau^{\star}}x)\varphi(\dx).
	\label{eq:tau-t-monotone}
\end{align}
This claim can be established as follows by studying the property of the function 
\begin{equation}
	g_{\lambda}(\tau) = \frac{\lambda^2 \lt[\int \tanh(\tau + \sqrt{\tau}x)\varphi(\dx)\rt]^2}{\int \tanh^2(\tau + \sqrt{\tau}x)\varphi(\dx)}
	 = \lambda^2 \int \tanh^2(\tau + \sqrt{\tau}x)\varphi(\dx).
	 \label{eq:definition:glambda-tau}
\end{equation}
\begin{itemize}
\item[(i)] We first observe that, for any $1<\lambda \leq 1.2$, the following derivative 
\begin{align}
	\label{eq:g-lambda-tau-defi}
	\frac{1}{\lambda^2}\cdot\frac{\mathrm{d} g_{\lambda}(\tau)}{\mathrm{d}\tau} =
	\frac{\mathrm{d}}{\mathrm{d} \tau} \int \tanh(\tau + \sqrt{\tau}x)\varphi(\dx) = \int \lt(1 + \frac{x}{2\sqrt{\tau}}\rt)\lt(1 - \tanh^2(\tau + \sqrt{\tau}x)\rt)\varphi(\dx)
\end{align}
always obeys 
\begin{equation}
	\frac{\mathrm{d} g_{\lambda}(\tau)}{\mathrm{d}\tau} \in (0,\lambda^2)
	\label{eq:g-lambda-tau-01}
\end{equation}
and is decreasing in $\tau$ within the interval $\tau \in [\lambda^2-1, \lambda^2] \subseteq [0, 1.44]$ 
(given our assumption that $1<\lambda \leq 1.2$); this is numerically validated in the right panel of Figure~\ref{fig:numerics}.

\item[(ii)] Secondly, we observe that $g_{\lambda}(\tau_1)>\tau_1$, where $\tau_1 = \lambda^2 - 1$. 
	To prove this, consider the problem of estimating a Bernoulli random variable $X \sim \mathsf{Bernoulli}(1/2) \in \mathcal\{1,-1\}$ 
	based on the observation $Y=\sqrt{\tau} X + Z$, where $Z\sim \mathcal{N}(0,1)$ is independent from $X$. 
		It is well known that $\tanh(\sqrt{\tau}Y)=\tanh(\tau X+ \sqrt{\tau}Z) = \mathbb{E}[X\mid Y]$ 
		is the minimum mean square error (MMSE) estimator \citep[Appendix B.2]{deshpande2017asymptotic}. 
		In addition, the MMSE estimator $\mathbb{E}[X\mid Y]$ is known to be the projection of $Y$ onto the space of functions of $Y$, 
		and as a result, it achieves the largest correlation with $X$ among all estimators based on $Y$. 
		This implies that the estimator $\tanh(\tau X+ \sqrt{\tau}Z)$ enjoys higher correlation with $X$ compared to the other estimator 
		$\tau X+ \sqrt{\tau}Z$, thus leading to
\begin{align*}
	\frac{\lt[\int \tanh(\tau_1 + \sqrt{\tau_1}x)\varphi(\dx)\rt]^2}{\int \tanh^2(\tau_1 + \sqrt{\tau_1}x)\varphi(\dx)} 
	\geq
	\frac{\lt[\int (\tau_1 + \sqrt{\tau_1}x)\varphi(\dx)\rt]^2}{\int (\tau_1 + \sqrt{\tau_1}x)^2\varphi(\dx)} = \frac{\tau_1^2}{\tau_1^2 + \tau_1} = \frac{\lambda^2 - 1}{\lambda^2}.
\end{align*}
Given that the left-hand side of the above relation is given by $\frac{g_{\lambda}(\tau_1)}{\lambda^2}$ (cf.~\eqref{eq:SE-t-t}), we conclude that
\[
	\frac{g_{\lambda}(\tau_1)}{\lambda^2} \geq \frac{\lambda^2 - 1}{\lambda^2} 
		\qquad \Longrightarrow \qquad
		g_{\lambda}(\tau_1) \geq \lambda^2 - 1 = \tau_1. 
\]

\item[(iii)] Thirdly, it is seen that $g_{\lambda}(\lambda^2) < \lambda^2$, which follows from \eqref{eq:definition:glambda-tau} and the fact that $|\tanh(w)|<1$ $(w\in \real)$. 

\item[(iv)] The above three properties immediately reveal that:  
	\begin{itemize}
		\item[(a)] There exists a unique fixed point $\tau^{\star}$ of $g_{\lambda}(\cdot)$ within $(\lambda^2 - 1, \lambda^2)$; 
		\item[(b)] Starting from $\tau_1=\lambda^2 - 1$, $\tau_t$ is monotonically increasing in $t$ and keeps moving closer to (but remains below) $\tau^{\star}$. To see this, note that for any $\tau_t< \tau^{\star}$, one has $\tau_{t+1}=g_{\lambda}(\tau_t) \leq g_{\lambda}(\tau^{\star}) = \tau^{\star}$ and $\tau_{t+1}=g_{\lambda}(\tau_t) > \tau_t$ (as $\tau_t<\tau^{\star}$ and $\tau_1<g_{\lambda}(\tau_1)$). 
	\end{itemize}

%
\end{itemize}
%

With the state evolution sequence $\{\tau_t\}$ in place, we claim that for every $t$, it satisfies  
\begin{align}
	\alpha_{t+1}^2 = \left(1 + O\Big(\frac{\widetilde{S}_{t+1}}{(\lambda-1)^{2.5}} \Big)\right)\tau_{t+1} 
	= \big( 1 + o(1) \big) \tau_{t+1}, 
	\label{eq:SE-induction}
\end{align}
where $\widetilde{S}_t$ is defined in \eqref{eqn:crude-st}. 
If the claim \eqref{eq:SE-induction} were valid, then one could readily conclude that 
\begin{align*}
 	(1 + o(1))\lambda^2
	&\geq \alpha_{t+1}^2 = \left(1 + o(1)\right)\tau_{t+1} 
 	\geq 
 	(1 + o(1)) \tau_{1} = (1 + o(1)) (\lambda^2 - 1), \\
	\alpha_{t+1}^{2}&=\left(1+O\bigg(\sqrt{\frac{t\log n}{(\lambda-1)^{8}n}}+\frac{\log^{3.5}n}{\sqrt{(\lambda-1)^{14}n}}\bigg)\right)\tau_{t+1},
 \end{align*} 
where we have used \eqref{eq:tau-t-monotone} and the definition \eqref{eqn:crude-st} of $\widetilde{S}_t$. 
Consequently, if we can establish inequality~\eqref{eq:SE-induction}, we can finish the inductive step with respect to $\alpha_{t}$.

\paragraph{Proof of claim \eqref{eq:SE-induction}.}
We intend to accomplish this via an induction argument. 
Assuming that \eqref{eq:SE-induction} is valid for the $t$-th iteration, 
we would like to establish \eqref{eq:SE-induction} for the $(t+1)$-th iteration as well.  
Let
\begin{align}
	\varsigma_{t} &\defn \alpha_{t}^2 - \tau_{t}, \qquad t\geq 1,
\end{align}
then it is equivalent to proving that 
\begin{equation}
	|\varsigma_{t+1}| \leq \left(\frac{C_{6}\widetilde{S}_{t+1}}{(\lambda-1)^{2.5}}\right)\tau_{t+1}
	\label{eq:varsigma-t1-Z2}
\end{equation}
for some constant $C_6>0$ large enough, 
provided  that 
\begin{equation}
	|\varsigma_{t}| \leq \lt(\frac{C_{6}\widetilde{S}_{t}}{(\lambda-1)^{2.5}}\rt)\tau_{t}.
	\label{eq:varsigma-t-Z2}
\end{equation}

Towards this end, let us define the following quantity: 
\begin{align*}
\mathcal{T}_1 \defn \frac{\lambda^2 \langle \vstar, \tanh(\pi_tx_t)\rangle^2}{\|\tanh(\pi_tx_t)\|_2^2} - \frac{\lambda^2 \lt[\int \tanh(\alpha_t^2 + \alpha_tx)\varphi(\dx)\rt]^2}{\int \tanh^2(\alpha_t^2 + \alpha_tx)\varphi(\dx)}.
\end{align*}
We can then employ \eqref{eq:alphat-defn-ind} and \eqref{eq:SE} to derive
\begin{align}
\varsigma_{t+1} \defn \alpha_{t+1}^2 - \tau_{t+1} 
&= \lambda^2 \int \tanh(\alpha_t^2 + \alpha_tx)\varphi(\dx) - \lambda^2 \int \tanh(\tau_t + \sqrt{\tau_t}x)\varphi(\dx) + \mathcal{T}_1 \notag\\
&= 
	\varsigma_t\, \underbrace{\lambda^2  \int \lt(1 - \tanh^2(\tau_t + \sqrt{\tau_t}x)\rt)\lt(1 + \frac{1}{2\sqrt{\tau_t}}x\rt)\varphi(\dx)}_{=:\mathcal{T}_2}  + O\lt(\frac{\varsigma_t^2}{\tau_t^{3/2}}\rt) + \mathcal{T}_1,
	\label{eq:vartau-identity}
\end{align}
where the last identity shall be established towards the end of this subsection. 
In what follows, we shall look at $\mathcal{T}_1$ and $\mathcal{T}_2$ separately.

\begin{itemize}
\item 
To control $\mathcal{T}_{2}$, 
we first observe that $\mathcal{T}_2 \geq 0$, 
a direct consequence of \eqref{eq:g-lambda-tau-01} and \eqref{eq:g-lambda-tau-defi}. 
In addition,  we claim that, 
for any $\tau \ge \lambda^2 - 1$ and any $\lambda \in [1,1.2]$,  
\begin{align}
\label{eqn:middle}
	0\leq \mathcal{T}_2 (\lambda,\tau) \defn \lambda^2 \int \lt(1 - \tanh^2(\tau + \sqrt{\tau}x)\rt)\lt(1 + \frac{1}{2\sqrt{\tau}}x\rt)\varphi(\dx) \le 1 - (\lambda-1).
\end{align}
To see this, 
we resort to the numerical verification. To be specific,   
the middle panel of Figure~\ref{fig:numerics} plots the following quantity 
\begin{align}
\label{eqn:middle-figure}
	\frac{1 - \sup_{\tau: \lambda^2 -1 \leq \tau \leq \lambda^2} \mathcal{T}_2(\lambda, \tau)}{\lambda -1}
\end{align}
as a function of $\lambda$; it is clearly seen from  Figure~\ref{fig:numerics} that this ratio is strictly above 1 for any $\lambda \in [1,1.2].$ 
All this indicates that
\[
	0 \leq \mathcal{T}_2\leq 1 - (\lambda - 1). 
\]

\item Next, we turn attention to $\mathcal{T}_1$.  Repeating the same argument as in \eqref{eq:tanh2-diff-123} and \eqref{eq:tanh2-diff-456} 
	and recognizing that $|\tanh^{\prime}(w)|\leq 1$, we have 
\[
	\bigg|{\displaystyle \int}\tanh\Big(\frac{\pi_{t}}{\sqrt{n}}(\alpha_{t}+x)\Big)\varphi(\mathrm{d}x)-{\displaystyle \int}\tanh\big(\alpha_{t}(\alpha_{t}+x)\big)\varphi(\mathrm{d}x)\bigg|\lesssim \frac{S_t}{\alpha_t} .
\]
This taken together with the definition of $\Delta_{\alpha, t}$ (cf.~\eqref{eqn:alpha-t-genearl}),  the definition of $\alpha_{t+1}$ (cf.~\eqref{eq:alpha-beta-recursion-spect}) and the fact that $\sqrt{n}v^{\star}_i\in \{1,-1\}$ ($1\leq i\leq n$) gives 
\begin{align*}
\langle \vstar, \tanh(\pi_tx_t)\rangle &= \sqrt{n}\int \tanh\lt(\frac{\pi_t}{\sqrt{n}}(\alpha_t + x)\rt)\varphi(\dx) + O\lt(\gamma_t^{-1} \big|\Delta_{\alpha, t}\big| \rt) \\
&= \sqrt{n}\int \tanh(\alpha_t^2 + \alpha_tx)\varphi(\dx) + O\lt( \frac{S_t}{\alpha_t} \sqrt{n} + \alpha_t S_t \sqrt{n} \rt) \\
	&= \sqrt{n}\int \tanh(\alpha_t^2 + \alpha_tx)\varphi(\dx) + O\lt( \frac{S_t}{\alpha_t} \sqrt{n}  \rt), 
\end{align*}
where the second line is due to \eqref{eqn:beethoven-sonata} and \eqref{eqn:giovanni}, 
and the last line is valid since $\alpha_t\lesssim \lambda \leq 1$ (cf.~\eqref{eq:Z2-induction-alphat}). 
Additionally,  \eqref{eqn:giovanni-2} tells us that
\begin{align*}
	\|\tanh(\pi_tx_t)\|_2^2 = n\int \tanh^2(\alpha_t^2 + \alpha_tx)\varphi(\dx) + O\left(\alpha_{t}^{2}n\bigg(\frac{S_{t}}{\alpha_{t}^{3}}\bigg)\right).
\end{align*}

Taking these two relations collectively with \eqref{eqn:tanh-basic-alpha} and \eqref{eq:int-alpha2-tanh-123} ensures that
\begin{align*}
	|\mathcal{T}_1| \lesssim \lambda^2 \alpha_t^2 \lt(  \frac{S_{t}}{\alpha_{t}^{3}} \rt) \asymp \lambda^2   \frac{S_{t}}{\alpha_{t}} .
\end{align*}
\end{itemize}

Putting the above bounds together, we arrive at
\[
|\varsigma_{t+1}| \leq\big(1-(\lambda-1)\big) |\varsigma_{t}| + 
O\bigg(\frac{\varsigma_{t}^{2}}{\tau_{t}^{3/2}}\bigg) + O\lt( \lambda^2   \frac{S_{t}}{\alpha_{t}} \rt). 
\]
Given that $\tau_t$ is increasing in $t$ (see \eqref{eq:tau-t-monotone}), there exists some large enough constant $C_8>0$ such that
\begin{align*}
\frac{|\varsigma_{t+1}|}{\tau_{t+1}} & \leq\big(1-(\lambda-1)\big)\frac{|\varsigma_{t}|}{\tau_{t}}+O\bigg(\frac{\varsigma_{t}^{2}}{\tau_{t}^{5/2}}\bigg)+O\lt(\lambda^{2}\frac{S_{t}}{\alpha_{t}\tau_{t}}\rt)\\
 & \leq\big(1-(\lambda-1)\big)\frac{|\varsigma_{t}|}{\tau_{t}}+\frac{C_{8}}{\sqrt{\lambda-1}}\frac{\varsigma_{t}^{2}}{\tau_{t}^{2}}+\frac{C_{8}S_{t}}{(\lambda-1)^{1.5}}\\
 & \leq\big(1-(\lambda-1)\big)\left\{ \frac{C_{6}\widetilde{S}_{t}}{(\lambda-1)^{2.5}}\right\} +\left(C_{8}\frac{C_{6}\widetilde{S}_{t}}{(\lambda-1)^{3}}\right)\frac{C_{6}\widetilde{S}_{t}}{(\lambda-1)^{2.5}}+\frac{C_{8}\widetilde{S}_{t}}{(\lambda-1)^{1.5}}\\
 & \leq\frac{C_{6}\widetilde{S}_{t}}{(\lambda-1)^{2.5}}\leq\frac{C_{6}\widetilde{S}_{t+1}}{(\lambda-1)^{2.5}},
\end{align*}
where the second line holds since $\alpha_t^2=(1+o(1))\tau_t$ and $\tau_t\geq \lambda^2 - 1 \asymp \lambda - 1$ (cf.~\eqref{eq:tau-t-monotone}), 
the third line relies on \eqref{eq:varsigma-t-Z2} and $\tau_t\gtrsim \lambda - 1$, 
and the last line is valid provided that $\frac{\widetilde{S}_{t}}{(\lambda-1)^{4}}\ll1$. 
This in turn establishes \eqref{eq:varsigma-t1-Z2} for the $(t+1)$-th iteration. 

\paragraph{Proof of relation \eqref{eq:vartau-identity}.}
We first make the observation that
\begin{align}
 & \Big|\tanh(\alpha_{t}^{2}+\alpha_{t}x)-\tanh(\tau_{t}+\sqrt{\tau_{t}}x)-\big(1-\tanh^{2}(\tau_{t}+\sqrt{\tau_{t}}x)\big)\Big(\alpha_{t}^{2}+\alpha_{t}x-\tau_{t}-\sqrt{\tau_{t}}x\Big)\Big|\nonumber \\
 & \qquad\leq\frac{1}{2}(\alpha_{t}^{2}+\alpha_{t}x-\tau_{t}-\sqrt{\tau_{t}}x)^{2},\label{eq:tanh-diff-135}
\end{align}
which follows due to Taylor expansion and the fact that $\tanh^{\prime}(w)=1-\tanh^{2}(w)\in[0,1]$
for any $w\in\mathbb{R}$. In addition, one has
\begin{align*}
\alpha_{t}^{2}+\alpha_{t}x-\tau_{t}-\sqrt{\tau_{t}}x & =(\alpha_{t}^{2}-\tau_{t})+\frac{\alpha_{t}^{2}-\tau_{t}}{\alpha_{t}+\sqrt{\tau_{t}}}x=\varsigma_{t}\Big(1+\frac{1}{2\sqrt{\tau_{t}}}\Big)x+\varsigma_{t}\Big(\frac{1}{\alpha_{t}+\sqrt{\tau_{t}}}-\frac{1}{2\sqrt{\tau_{t}}}\Big)x\\
 & =\varsigma_{t}\Big(1+\frac{1}{2\sqrt{\tau_{t}}}\Big)x-\Big(\frac{\varsigma_{t}^{2}}{2(\alpha_{t}+\sqrt{\tau_{t}})^{2}\sqrt{\tau_{t}}}\Big)x,
\end{align*}
which further implies that
\begin{align*}
	&\qquad \Big|\big(\alpha_{t}^{2}+\alpha_{t}x-\tau_{t}-\sqrt{\tau_{t}}x\big)-\varsigma_{t}\Big(1+\frac{1}{2\sqrt{\tau_{t}}}\Big)x\Big|  \leq\frac{\varsigma_{t}^{2}}{2\tau_{t}^{3/2}}|x| \\
	&\Longrightarrow \qquad \big(\alpha_{t}^{2}+\alpha_{t}x-\tau_{t}-\sqrt{\tau_{t}}x\big)^{2}  \leq2\varsigma_{t}^{2}\Big(1+\frac{1}{2\sqrt{\tau_{t}}}\Big)^{2}x^{2}+\frac{\varsigma_{t}^{4}}{2\tau_{t}^{3}}x^{2}.
\end{align*}
Substituting the preceding two inequalities into (\ref{eq:tanh-diff-135}) yields
\begin{align*}
 & \Big|\tanh(\alpha_{t}^{2}+\alpha_{t}x)-\tanh(\tau_{t}+\sqrt{\tau_{t}}x)-\big(1-\tanh^{2}(\tau_{t}+\sqrt{\tau_{t}}x)\big)\varsigma_{t}\Big(1+\frac{1}{2\sqrt{\tau_{t}}}\Big)x\Big|\\
 & \qquad\leq\frac{\varsigma_{t}^{2}}{2\tau_{t}^{3/2}}|x|+\varsigma_{t}^{2}\Big(1+\frac{1}{2\sqrt{\tau_{t}}}\Big)^{2}x^{2}+\frac{\varsigma_{t}^{4}}{4\tau_{t}^{3}}x^{2}.
\end{align*}
Taking the integral and using the facts that $\tau_t\leq \lambda^2\lesssim 1$ (cf.~\eqref{eq:tau-t-monotone}) and the induction hypothesis $|\varsigma_t|\lesssim \tau_t$ then give
\begin{align*}
 & \left|{\displaystyle \int}\tanh(\alpha_{t}^{2}+\alpha_{t}x)\varphi(\mathrm{d}x)-{\displaystyle \int}\tanh(\tau_{t}+\sqrt{\tau_{t}}x)\varphi(\mathrm{d}x)-\varsigma_{t}\Big(1+\frac{1}{2\sqrt{\tau_{t}}}\Big){\displaystyle \int}\big(1-\tanh^{2}(\tau_{t}+\sqrt{\tau_{t}}x)\big)\varphi(\mathrm{d}x)\right|\\
 & \quad\leq{\displaystyle \int}\left\{ \frac{\varsigma_{t}^{2}}{2\tau_{t}^{3/2}}|x|+\varsigma_{t}^{2}\Big(1+\frac{1}{2\sqrt{\tau_{t}}}\Big)^{2}x^{2}+\frac{\varsigma_{t}^{4}}{4\tau_{t}^{3}}x^{2}\right\} \varphi(\mathrm{d}x)
 \lesssim\frac{\varsigma_{t}^{2}}{\tau_{t}^{3/2}}+\frac{\varsigma_{t}^{2}}{\tau_{t}}+\frac{\varsigma_{t}^{4}}{\tau_{t}^{3}} 
	\asymp \frac{\varsigma_{t}^{2}}{\tau_{t}^{3/2}}. 
\end{align*}

\subsubsection{Bounding quantity $\kappa_t$} \label{sec:bound-kappat-Z2}

Recall that the analysis in Section~\ref{sec:z2-xi-t} requires bounding  $\kappa_t$, which shall be done in this subsection with the assistance of expression \eqref{eq:SE-induction}.
First, combining~\eqref{eqn:don} and~\eqref{eqn:giovanni} leads to
\begin{align}
\gamma_t^2\pi_t^2 = \frac{(1 + o(\lambda - 1))\alpha_t^2}{\int \tanh(\alpha_t^2 + \alpha_tx)\varphi(\dx)} 
	= \frac{(1 + o(\lambda -1)) \alpha_t^2}{(1 + o(\lambda -1)) \tau_{t+1} / \lambda^2}
	\leq \frac{(1 + o(\lambda -1)) \lambda^2 \alpha_t^2}{(1 + o(\lambda -1)) \tau_{t} }
	= (1 + o(\lambda -1))\lambda^2,
	\label{eq:prod-gamma-pi-lambda}
\end{align}
provided that $\frac{\widetilde{S}_t}{(\lambda-1)^{3.5}}\ll 1$. 
Here, the second identity holds due to~\eqref{eq:SE}, \eqref{eq:varsigma-t1-Z2} and \eqref{eq:vartau-identity}, 
 the inequality is valid since $\tau_t$ is increasing in $t$ (see \eqref{eq:tau-t-monotone}),  
and the last identity comes from \eqref{eq:SE-induction}. 
Recalling the definition of $\kappa_{t}$ (cf.~\eqref{defi:kappa}) and the fact that $\ltwo{\beta_{t-1}} = 1$ gives 
\begin{align}
 \kappa_t^2 = 
\max\Bigg\{
\Bigg\langle\int\Bigg[x\eta_{t}^{\prime}\Big(\alpha_t\vstar + \frac{1}{\sqrt{n}}x\Big)  - \frac{1}{\sqrt{n}}\eta_{t}^{\second}\Big(\alpha_t\vstar &+ \frac{1}{\sqrt{n}}x\Big)\Bigg]^2 \varphi_n(\dx)\Bigg\rangle, ~
\bigg\langle
\int\Big[\eta_{t}^{\prime}\Big(\alpha_t\vstar + \frac{1}{\sqrt{n}}x\Big)\Big]^2\varphi_n(\dx)
\bigg\rangle\Bigg\}.
\label{eq:kappat-square-135}
\end{align}
In what follows, let us control each term in \eqref{eq:kappat-square-135} separately. 
\begin{itemize}
\item
To begin with, in view of the relations~\eqref{eqn:super-basic}, 
we obtain 
\begin{align}
\notag & \Bigg\langle\int\Bigg[x\eta_{t}^{\prime}\Big(\alpha_{t}\vstar+\frac{1}{\sqrt{n}}x\Big)-\frac{1}{\sqrt{n}}\eta_{t}^{\second}\Big(\alpha_{t}\vstar+\frac{1}{\sqrt{n}}x\Big)\Bigg]^{2}\varphi_{n}(\dx)\Bigg\rangle\\
	\notag & =\frac{1}{n}\int\lt[\lt(\gamma_{t}\pi_{t}x+\frac{2}{\sqrt{n}}\gamma_{t}\pi_{t}^{2}\tanh\Big(\pi_{t}\Big(\alpha_{t}\vstar+\frac{1}{\sqrt{n}}x\Big)\Big)\rt)\cdot\Big(1-\tanh^{2}\Big(\pi_{t}\Big(\alpha_{t}\vstar+\frac{1}{\sqrt{n}}x\Big)\Big)\Big)\rt]^{2}\varphi_{n}(\dx)\\
 & =\int\lt[\lt(\gamma_{t}\pi_{t}x+\frac{2}{\sqrt{n}}\gamma_{t}\pi_{t}^{2}\tanh\Big(\frac{\pi_{t}}{\sqrt{n}}\big(\alpha_{t}+x\big)\Big)\rt)\cdot\Big(1-\tanh^{2}\Big(\frac{\pi_{t}}{\sqrt{n}}\big(\alpha_{t}+x\big)\Big)\Big)\rt]^{2}\varphi(\dx), 
	\label{eq:w-inner-prod-identity}
\end{align}
where the last step follows from $\sqrt{n}\vstar_{i} \in \{+1, -1\}$ and the symmetry of $\varphi(\cdot).$
Reorganizing terms and recalling that $\pi_t = (1 + o(\lambda -1))\alpha_t\sqrt{n}$ (cf.~\eqref{eqn:don}) and $\gamma_t^2\pi_t^2 \le (1 + o(\lambda -1))\lambda^2$ (cf.~\eqref{eq:prod-gamma-pi-lambda}), we arrive at
\begin{align*}
\eqref{eq:w-inner-prod-identity} &= (1+o(\lambda -1))\gamma^2_t \pi^2_t \int \lt[\lt(x + 2\alpha_t\tanh(\alpha_t^2+\alpha_t x)\rt)\lt(1-\tanh^2(\alpha_t^2+\alpha_t x)\rt)\rt]^2 \varphi(\dx) \\
&\le (1+o(\lambda -1))\lambda^2\int \lt[\lt(x + 2\sqrt{\tau_t}\tanh(\tau_t+\sqrt{\tau_t} x)\rt)\lt(1-\tanh^2(\tau_t+\sqrt{\tau_t} x)\rt)\rt]^2 \varphi(\dx),
\end{align*}
where the last line also relies on the relation \eqref{eq:SE-induction}.
As a result, we reach
\begin{align}
	\notag &\Bigg\langle\int\Bigg[x\eta_{t}^{\prime}\big(\alpha_t\vstar + \frac{1}{\sqrt{n}}x\big)  - \frac{1}{\sqrt{n}}\eta_{t}^{\second}\big(\alpha_t\vstar + \frac{1}{\sqrt{n}}x\big)\Bigg]^2 \varphi_n(\dx)\Bigg\rangle \\
	&\qquad \leq 
	(1+o(\lambda -1))\lambda^2\int \lt[\lt(x + 2\sqrt{\tau_t}\tanh(\tau_t+\sqrt{\tau_t} x)\rt)\lt(1-\tanh^2(\tau_t+\sqrt{\tau_t} x)\rt)\rt]^2 \varphi(\dx). \label{eqn:vienna}
\end{align}

\item 
	Through similar calculations (for which we omit the details here), one can deduce that
\begin{align}
\label{eqn:Musikverein}
	&\Big\langle
\int\Big[\eta_{t}^{\prime}\Big(\alpha_t\vstar + \frac{1}{\sqrt{n}}x\big)\Big]^2\varphi_n(\dx) \Big\rangle
\le (1+o(\lambda -1)) \lambda^2\int \lt[1-\tanh^2(\tau_t+\sqrt{\tau_t} x)\rt]^2 \varphi(\dx). 
\end{align}

\end{itemize}

Finally, 
let us look at the following function:
\begin{align}
&{\kappa}^2(\lambda, \tau) \defn \lambda^2 \max \bigg\{\int \Big[\lt(x + 2\sqrt{\tau}\tanh(\tau+\sqrt{\tau} x)\rt)\lt(1-\tanh^2(\tau+\sqrt{\tau} x)\rt)\Big]^2 \varphi(\dx), \int \lt[1-\tanh^2(\tau+\sqrt{\tau} x)\rt]^2 \varphi(\dx)\bigg\}.
	\label{eq:defn-kappa2-lambda-tau}
\end{align}
For any $1 < \lambda < 1.2$, we observe that 
\begin{align}
\label{eqn:kappa-ze-left}
&\sup_{\tau: \lambda^2-1 \le \tau \le \lambda^2}\kappa(\lambda, \tau) \le 1 - \frac{\lambda-1}{12} ,
\end{align}
which has been numerically validated in the left panel of Figure~\ref{fig:numerics}. 
Thus, putting the above results together, we have demonstrated the advertised bound for $\kappa_{t}$:   
\begin{align}
\label{eqn:kappa-Z2}
\kappa_t^2 \leq (1+o(1))\sup_{\tau: \lambda^2-1 \le \tau \le \lambda^2}\kappa(\lambda, \tau)  \le 1 - \frac{\lambda-1}{15}.
\end{align}

\section{Initialization for sparse PCA}
\label{sec:init-sparse-pca}

This section is dedicated to the study of AMP with two data-driven initialization schemes that achieve non-trivial correlation with the truth, with a focus on the scenario where the SNR is above the computational limit.

\subsection{AMP with data-dependent initialization: strong SNR regime}
\label{sec:result-sparse-init}

Let us begin by considering the strong SNR regime where
\begin{align}
\label{eqn:init-1}
	\lambda\|v^{\star}\|_{\infty} \gtrsim \sqrt{\frac{k\log n}{n}}.
\end{align}
For instance, if $\|\vstar\|_\infty = O(\frac{1}{\sqrt{k}})$, 
then \eqref{eqn:init-1} imposes a constraint on $\lambda$ as $\lambda \gtrsim
 k\sqrt{\frac{\log n}{n}}$.

\paragraph{Initialization scheme \#1: diagonal maximization.}

Set $\eta_0(x_{0}) = 0$, and take
\begin{align}
	x_1 = e_{\hat{s}},\qquad\text{with }~ {\hat{s}} \defn \arg\max_i \lt|M_{ii}\rt|, 
	\label{defi:v1}
\end{align}
where $e_i\in \real^n$ denotes the $i$-th standard basis vector. 
In words, this initialization simply identifies the largest diagonal entry of $M$, and forms a standard basis vector w.r.t.~this entry. 
Given the ambiguity of the global sign (i.e., one can only hope to recover $\vstar$ up to global sign), 
we shall assume $\vstar_{\hat{s}} \ge 0$ without loss of generality.

As it turns out, in the strong SNR regime \eqref{eqn:init-1}, 
the algorithm \eqref{defi:v1} is guaranteed to find an index within the following index subset: 
\begin{align}
\label{eqn:sparse-set-s0}
	\mathcal{S}_0 \defn \Big\{s \in [n] ~\mid~ |\vstar_s| \geq \frac{1}{2} \|\vstar\|_\infty \Big\}.
\end{align}
Moreover, executing one iteration of AMP from $x_1=e_{\hat{s}}$ is able to yield a nontrivial correlation with the truth $\vstar$.  
These two facts are formally stated in the following proposition, with its proof deferred to Section~\ref{sec:pf-sparse-ini}.
\begin{props} 
\label{thm:sparse-init}
Suppose the signal strength satisfies~\eqref{eqn:init-1}. 
With probability at least $1-O(n^{-11})$, one has:
\begin{enumerate}

\item the index ${\hat{s}}$ (cf.~\eqref{defi:v1}) satisfies $\hat{s} \in \mathcal{S}_{0}$; 

\item for every $s \in \mathcal{S}_{0}$, the AMP updates~\eqref{eqn:AMP-updates} initialized with $x_1=e_{s}$   
and $\eta_0(x_0)=0$ obey 
\begin{align}
\label{eqn:vstar-s}
	\big|\big\langle v^{\star}, \eta_2(x_2) \big\rangle \big| \asymp 1. 
\end{align}

\end{enumerate}
%
\end{props}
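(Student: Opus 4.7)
The plan is to prove the two assertions essentially independently, exploiting in both cases the basic arithmetic consequence $\|\vstar\|_\infty \geq 1/\sqrt{k}$ (which follows from $\|\vstar\|_2 = 1$ and $\|\vstar\|_0 = k$) together with standard Gaussian concentration. A useful preliminary estimate is that, by Gaussian maximal inequalities and a union bound, both $\max_i |W_{ii}| \lesssim \sqrt{\log n/n}$ and $\max_j |W_{js}| \lesssim \sqrt{\log n/n}$ hold with probability at least $1-O(n^{-11})$ (the latter uniformly in $s$).

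For the first claim, decompose $M_{ii} = \lambda(\vstar_i)^2 + W_{ii}$ with $W_{ii} \sim \mathcal{N}(0, 2/n)$ independent across $i$. The strong-SNR assumption \eqref{eqn:init-1} combined with $\|\vstar\|_\infty \geq 1/\sqrt{k}$ gives $\lambda\|\vstar\|_\infty^2 \geq \lambda\|\vstar\|_\infty/\sqrt{k} \gtrsim \sqrt{\log n/n}$; taking the absolute constant in \eqref{eqn:init-1} sufficiently large, the noise floor $\max_i |W_{ii}|$ is a small fraction of $\lambda\|\vstar\|_\infty^2$. For $i^\star \in \arg\max_i |\vstar_i|$ we then have $|M_{i^\star i^\star}| \geq \tfrac{3}{4}\lambda\|\vstar\|_\infty^2$, whereas for any $i \notin \mathcal{S}_0$ (so $(\vstar_i)^2 < \tfrac14\|\vstar\|_\infty^2$) we have $|M_{ii}| < \tfrac{1}{2}\lambda\|\vstar\|_\infty^2$. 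Comparing shows $\hat{s} \in \mathcal{S}_0$.

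For the second claim, I would first trivialize the first AMP step: since $x_1 = e_s$ and $\tau_1 \asymp \sqrt{\log n/n} \ll 1$, soft-thresholding kills every coordinate other than $s$, and after renormalization $\eta_1(x_1) = e_s$. Using $\eta_0(x_0)=0$, the update~\eqref{eqn:AMP-updates-sparse} yields $x_2 = Me_s = \mu + Z$ with $\mu \defn \lambda\vstar_s\vstar$ and $Z \defn W_{\cdot,s}$ (entries independent, variance $1/n$ off-diagonal and $2/n$ on the diagonal). The target equals $\gamma_2 N$ where $N \defn \sum_i \vstar_i\mathrm{ST}_{\tau_2}(\mu_i + Z_i)$ and $\gamma_2^{-1} = \|\mathrm{ST}_{\tau_2}(\mu+Z)\|_2$. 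For the numerator, expand $\mathrm{ST}_{\tau_2}(\mu_i+Z_i) = (\mu_i + Z_i - \tau_2\mathrm{sign}(\mu_i+Z_i))\ind\{|\mu_i+Z_i|>\tau_2\}$ and write $N = \lambda\vstar_s - \mathcal{R}_{\mathrm{lost}} + \mathcal{R}_{\mathrm{noise}} - \mathcal{R}_{\mathrm{thr}}$, where the leading term uses $\sum_i \vstar_i\mu_i = \lambda\vstar_s$. The ``lost signal'' and ``threshold bias'' terms are bounded crudely by $\tau_2\|\vstar\|_1 \leq \tau_2\sqrt{k} \asymp \sqrt{k\log n/n}$, while the noise term splits as $\sum_i \vstar_i Z_i + (\text{complement of active set})$: the first summand is a one-dimensional $\mathcal{N}(0,1/n)$ up to a single-entry discrepancy and is $\lesssim \sqrt{\log n/n}$; the second, where $|\mu_i + Z_i| \leq \tau_2$ forces $|Z_i|\lesssim\tau_2$ on all of $\mathrm{supp}(\vstar)$, is bounded by $\max_j|Z_j|\cdot\|\vstar\|_1 \lesssim \sqrt{k\log n/n}$. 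Summing, $|N - \lambda\vstar_s| \lesssim \sqrt{k\log n/n}$, which is dominated by $\lambda|\vstar_s|\geq\tfrac12\lambda\|\vstar\|_\infty$ once the constant in \eqref{eqn:init-1} is large enough, so $|N| \asymp \lambda|\vstar_s|$.

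For the denominator, choose the constant in $\tau_2 = C\sqrt{\log n/n}$ so that a union bound gives $|Z_i|\leq\tau_2$ for every $i\notin\mathrm{supp}(\vstar)$; then $\mathrm{ST}_{\tau_2}(x_2)$ is supported on $\mathrm{supp}(\vstar)$ and the upper bound $\gamma_2^{-2} \leq 2\|\mu\|_2^2 + 2\|Z_{\mathrm{supp}(\vstar)}\|_2^2 \lesssim \lambda^2\vstar_s^2$ is immediate (the second term is $O(k/n) \ll \lambda^2\vstar_s^2$). The matching lower bound comes from restricting to the coordinates with $|\mu_i|\geq 4\tau_2$, on which $|x_{2,i}|-\tau_2 \geq |\mu_i|/2$; the complementary mass satisfies $\sum_{|\mu_i|<4\tau_2}\mu_i^2 \leq 4\tau_2\|\mu\|_1\leq 4\tau_2\lambda|\vstar_s|\sqrt{k}$, which is again $\ll\lambda^2\vstar_s^2$. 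Hence $\gamma_2 \asymp 1/(\lambda|\vstar_s|)$ and $|\langle\vstar,\eta_2(x_2)\rangle| = \gamma_2|N| \asymp 1$. The main technical nuisance is the borderline regime $|\mu_i|\sim\tau_2$, where the sign of $\mu_i + Z_i$ may flip relative to $\mu_i$; sidestepping an exact evaluation by absorbing this regime into $\mathcal{R}_{\mathrm{lost}}$ via $|\mathrm{ST}_{\tau_2}(w)|\leq|w|$ is what makes the bookkeeping clean, and the $\sqrt{k}$-gap between signal and error terms provided by~\eqref{eqn:init-1} is the structural ingredient that closes the argument.
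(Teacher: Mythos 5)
Your proposal is correct and follows essentially the same route as the paper's proof: both parts rest on the maximal bounds $\max_i|W_{ii}|,\max_j|(We_s)_j|\lesssim\sqrt{\log n/n}$, the inequality $\|v^{\star}\|_{\infty}\geq 1/\sqrt{k}$, and the $\sqrt{k}$-sized gap supplied by \eqref{eqn:init-1}, applied first to the diagonal comparison that places $\hat{s}$ in $\mathcal{S}_0$ and then to a direct evaluation of the soft-thresholded vector $x_2=\lambda v^{\star}_s v^{\star}+We_s$. The only difference is bookkeeping: the paper lower-bounds $\langle v^{\star},\mathrm{ST}_{\tau_2}(x_2)\rangle$ by restricting to coordinates with $|v^{\star}_i|\geq \tfrac{1}{2\sqrt{k}}$ (which carry at least $3/4$ of the $\ell_2$-mass) and upper-bounds $\|\mathrm{ST}_{\tau_2}(x_2)\|_2$, with the reverse direction free from Cauchy--Schwarz, whereas you track an additive perturbation $|N-\lambda v^{\star}_s|\lesssim\sqrt{k\log n/n}$ and prove two-sided bounds on the denominator; both arguments use identical probabilistic inputs and close in the same way.
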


\paragraph{Non-asymptotic theory of AMP when initialized by \eqref{defi:v1}.} 
Despite the statistical dependency between $\hat{s}$ and $W$, 
Proposition~\ref{thm:sparse-init} guarantees that it always comes from a fixed and small index subset. 
Consequently, basic union bounding suffices in helping us analyze  the subsequent AMP iterates. 
This is summarized in the result below;  
the proof can be found in Section~\ref{sec:pf-sparse}.
\begin{cors}
\label{cor:sparse-init-1}
If the signal strength satisfies \eqref{eqn:init-1}, 
	then with probability at least $1-O(n^{-10})$, the AMP iterates~\eqref{eqn:AMP-updates} with initialization \eqref{defi:v1} obey \eqref{eqn:sparse-decomp-dvorak} - \eqref{eqn:soccer} for $2\leq t\lesssim \frac{n\lambda^2}{\log^3 n}$, 
where $\alpha_3^{\star} \asymp \lambda$. 
\end{cors}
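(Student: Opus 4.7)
The plan is to combine the deterministic localization of $\hat{s}$ provided by Proposition~\ref{thm:sparse-init} with the fixed-initialization guarantee of Theorem~\ref{thm:sparse}, stitched together by a union bound over the (deterministic) set $\mathcal{S}_0$ defined in \eqref{eqn:sparse-set-s0}. The key observation is that for each fixed $s \in \mathcal{S}_0$ the vector $e_s$ is non-random and hence independent of $W$, placing the fixed-$s$ AMP sequence $\{x_t^{(s)}\}_{t\ge 1}$ initialized at $x_1^{(s)} = e_s$ squarely within the independence-of-initialization hypothesis of Theorem~\ref{thm:sparse}. Because each such fixed-$s$ analysis succeeds with probability at least $1 - O(n^{-11})$, a union bound over $s \in \mathcal{S}_0$ (with $|\mathcal{S}_0| \le n$), intersected with the event $\{\hat{s} \in \mathcal{S}_0\}$ from Proposition~\ref{thm:sparse-init}(i), will yield the claimed $1 - O(n^{-10})$ probability, since the executed AMP sequence must coincide with one of these fixed-$s$ sequences.

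The remaining hypothesis of Theorem~\ref{thm:sparse}, namely $\inprod{\vstar}{\eta_1(x_1)} \asymp 1$, is not directly satisfied by $x_1 = e_s$: a direct computation gives $\eta_1(e_s) = e_s$, so $\inprod{\vstar}{\eta_1(e_s)} = \vstar_s$, which is only guaranteed to be $\gtrsim \|\vstar\|_\infty$ in the strong-SNR regime \eqref{eqn:init-1} and may well tend to zero. Fortunately, Proposition~\ref{thm:sparse-init}(ii) supplies the required non-trivial correlation one step later: $|\inprod{\vstar}{\eta_2(x_2^{(s)})}| \asymp 1$ for every $s \in \mathcal{S}_0$. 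I would therefore replay the induction underlying the proof of Theorem~\ref{thm:sparse} with all time indices shifted up by one, so that the base case becomes $t=2$, the seed state-evolution value is $\alpha_3^{\star} \asymp \lambda$ (which is exactly the statement in the corollary), and the per-step verification of Assumption~\ref{assump:A-H-eta} together with the bounds on $\Delta_{\alpha,t}$ and $\|\xi_t\|_2$ then proceeds verbatim, yielding the valid range $2 \le t \lesssim n\lambda^2/\log^3 n$.

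The main obstacle is verifying that the shifted induction can be seeded cleanly. Because the Onsager correction couples consecutive iterates, one cannot simply ``restart'' AMP at $t=2$ and treat $x_2^{(s)} = M e_s$ as a fresh $W$-independent input; $x_2^{(s)}$ genuinely depends on $W$. Instead, I would retain the honest initialization $x_1^{(s)} = e_s$ throughout and apply Theorem~\ref{thm:recursion} to the full sequence, verifying by direct calculation that $x_2^{(s)} = \lambda \vstar_s \vstar + W e_s$ admits the base-case decomposition \eqref{eqn:xt-decomposition} with $\alpha_2 = \lambda \vstar_s$, $\beta_1^1 = \|\eta_1(e_s)\|_2 = 1$, and a residual arising only from the entrywise-variance discrepancy of $W e_s$, which is controllable via the auxiliary-Gaussian construction \eqref{eqn:zeta-k}. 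Once this base case is in place, the recursive control developed in the proof of Theorem~\ref{thm:sparse} applies without further modification, with all state-evolution quantities simply time-shifted by one.
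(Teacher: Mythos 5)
Your high-level route is the paper's route: run a fixed-initialization AMP sequence from $e_s$ for every $s\in\mathcal{S}_0$, invoke Theorem~\ref{thm:sparse}'s machinery for each, take a union bound over $|\mathcal{S}_0|\le n$, and intersect with the event $\{\hat{s}\in\mathcal{S}_0\}$ from Proposition~\ref{thm:sparse-init}; this is exactly how the paper obtains the $1-O(n^{-10})$ guarantee. You also correctly flag that $\langle \vstar,\eta_1(e_s)\rangle=\vstar_s$ need not be $\asymp 1$ and that the correlation only materializes at the second step via Proposition~\ref{thm:sparse-init}(ii).

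However, there is a genuine gap at the junction you dismiss with ``the recursive control developed in the proof of Theorem~\ref{thm:sparse} applies without further modification.'' The induction hypothesis \eqref{eqn:sparse-induction} driving that recursion is $\alpha_t\asymp\lambda$, and it is exactly this hypothesis that underlies the normalization estimate $\gamma_t\asymp\lambda^{-1}$ in \eqref{eqn:gamma-t-evolution} and all of the bounds on $A_t,B_t,D_t,\kappa_t,E_t$, which are proved uniformly only over parameter sets with $\alpha\asymp\gamma^{-1}\asymp\lambda$. At your base case one has $\alpha_2=\lambda\vstar_s$, which under \eqref{eqn:init-1} and \eqref{eqn:sparse-set-s0} is only guaranteed to be $\gtrsim \lambda/\sqrt{k}$ (equivalently $\gtrsim\sqrt{k\log n/n}$), not $\asymp\lambda$; correspondingly $\gamma_2\asymp 1/(\lambda\vstar_s)$ can be of order $\sqrt{k}/\lambda$. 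So the very first inductive step, from $x_2$ to $x_3$, cannot be run ``verbatim'': one must establish by a separate argument that $x_3=\alpha_3\vstar+\beta_2^1\phi_1+\beta_2^2\phi_2+\xi_2$ with $\alpha_3=\lambda v^{\star\top}\eta_2(x_2)\asymp\lambda$ and $\|\xi_2\|_2\lesssim\sqrt{\log n/n}$. This is the content of the paper's Lemma~\ref{lem:sparse-ini-1-ini}, whose proof uses structure specific to this step: the residual $\xi_1$ is proportional to $e_s$ (a single coordinate), so $\delta_2$ has only one nonzero entry; $|\xi_{1,s}|\lesssim\sqrt{\log n/n}$; $\gamma_2^{-1}\gtrsim\lambda\vstar_s$ with $\vstar_s\gtrsim 1/\sqrt{k}$; and the bound \eqref{eqn:sparse-At} is re-examined to confirm it only needs $\alpha_2\lesssim\lambda$. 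Only after this bespoke step is the ``time-shifted'' induction of Theorem~\ref{thm:sparse} (with base case at $t=3$ and seed $\alpha_3^\star\asymp\lambda$) actually applicable; your proposal as written does not supply this bridge.
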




\subsection{AMP with data-dependent initialization:  weak signal regime}
\label{sec:result-sparse-init-weak}

We now move on to the following regime that violates  the condition~\eqref{eqn:init-1}: 
\begin{align}
\label{eqn:init-2}
	\lambda \gtrsim \frac{k}{\sqrt{n}}
~~\text{ and }~~\|v^{\star}\|_{\infty} = o\Big(\sqrt{\frac{\log n}{k}}\Big).
\end{align}
It is noteworthy that $\lambda$ cannot be further reduced, 
as a computational barrier has been widely conjectured that asserts that no polynomial algorithm can achieve consistent estimation if $\lambda = o(k/{\sqrt{n}})$ 
\citep{berthet2013computational,cai2015optimal,wang2016statistical,hopkins2017power}.

\paragraph{Initialization scheme \#2.}  
Before describing our next initialization scheme, 
we give two remarks below.
\begin{itemize}
	\item As shown in the prior literature, there exists a computationally feasible algorithm that allows one to find an estimate $\widehat{v}_{\mathsf{oracle}}$ 
		that obeys $\|\widehat{v}_{\mathsf{oracle}}\|_2=1$ and 
		\begin{equation}
			\big|\big\langle \widehat{v}_{\mathsf{oracle}}, \vstar \big\rangle \big| \asymp 1 
			\label{eq:correlation-oracle}
		\end{equation}
		with probability exceeding $1-O(n^{-10})$, 
		as long as $\lambda \gtrsim k / \sqrt{n}$ in the model \eqref{eqn:wigner-sparse}. 
		An example of this kind is the one based on covariance thresholding studied in \citet{deshpande2014sparse,krauthgamer2015semidefinite}.\footnote{While \citet{deshpande2014sparse} focused primarily on the spiked Wishart model, it is fairly easy to transfer the Wigner model \eqref{eqn:wigner-sparse} into the model therein, by using a simple Gaussian lifting trick to asymmetrize $M$.} 
		In what follows, we shall call this algorithm as an oracle algorithm. 

	\item The estimate returned by the above oracle algorithm, however, exhibits complicated statistical dependency on $W$, 
		thus precluding us from directly invoking our AMP analysis framework. 
\end{itemize}
\noindent
In light of the above observations, we propose an initialization scheme based on sample splitting, 
which repeats the following steps for $N \asymp \log n$ rounds. In each round $j$:  
\begin{itemize}
	\item[1)] Randomly sample an index subset $\mathcal{I}_{j}$, independent of $M$, with mean size $|\mathcal{I}_{j}|=np$ (each $i \in [n]$ is included in $\mathcal{I}_{j}$ with probability $p$)
		and partition $M$ into four independent blocks, namely, $M_{\mathcal{I}_{j}, \mathcal{I}_{j}}$,  
$M_{\mathcal{I}_{j}^{c}, \mathcal{I}_{j}}$, 
$M_{\mathcal{I}_{j}, \mathcal{I}_{j}^{c}}$,  
$M_{\mathcal{I}_{j}^{c}, \mathcal{I}_{j}^{c}}$. 
	Here and below,  $M_{\mathcal{I}, \mathcal{J}}$ denotes the submatrix of $M$ with rows (resp.~columns) coming from those with indices in $\mathcal{I}$ (resp.~$\mathcal{J}$). 

	\item[2)] Apply the oracle algorithm mentioned above with a little follow-up step to obtain a unit-norm estimate $x^j \in \real^{|\mathcal{I}_j^{\mathrm{c}}|}$ (see   Algorithm~\ref{alg:split} for details).

	\item[3)] Run AMP on a smaller-dimensional (but independent) submatrix $M_{\mathcal{I}_{j}^{c}, \mathcal{I}_{j}^c}$;  the size of $\mathcal{I}_{j}$ is chosen to be $o(n)$, 
		so that the efficiency of the AMP will not degrade much. 

\end{itemize} 
Finally, we select an index set $\mathcal{I}_{\widehat{j}}$ based on the following criterion:  
\begin{align*}
	\widehat{j} \defn \argmax_{1\leq j \leq N} ~\Big\{ x^{j\top} M_{\mathcal{I}_j^{c},\mathcal{I}_j^{c}} x^{j}\Big\}. 
\end{align*}
In other words, we pick an index set such that its initial estimate has the largest correlation with the complement diagonal block. 
The fact that $x^j$ is statistically independent from  $W_{\mathcal{I}_{\widehat{j}}^{c}, \mathcal{I}_{\widehat{j}}^{c}}$ 
plays a crucial role in the subsequent analysis. 
The whole initialization scheme is summarized in Algorithm~\ref{alg:split}.

We are then positioned to derive some key properties of the above initialization scheme. 
For ease of exposition, let us define an index subset 
\begin{align}
	\mathcal{S}_1 \defn \bigg\{j \in N : \frac{\big\langle v_{\mathcal{I}_{j}^{c}}^{\star}, x^j \big\rangle}{\big\|v_{\mathcal{I}_{j}^{c}}^{\star}\big\|_2} \asymp 1\bigg\}. 
\end{align}
%
We immediately make note of the following property, whose proof is provided in Section~\ref{sec:pf-prop-sparse-split}.

\begin{props} 
\label{prop:sparse-split}
Consider the regime \eqref{eqn:init-2} with $k \gg \log n$, and set $p = C_p \frac{\log n}{k}$ for some large constant $C_p$.  
%
With probability at least $1 - O(n^{-10})$, the vector  $v^{\widehat{j}}$ computed in \eqref{eqn:vj} --- with $\widehat{j}$ chosen in \eqref{eqn:sparse-choose-j} --- satisfies 
\begin{align} 
\label{eq:spec-init}
v^{\widehat{j}} \defn M_{\mathcal{I}_{\widehat{j}}^{c}, \mathcal{I}_{\widehat{j}}}v_{\mathcal{I}_{\widehat{j}}} = \alpha_1 v_{\mathcal{I}_{\widehat{j}}^{c}}^{\star} + \phi_0
	\qquad \text{with }
\alpha_1 \asymp \lambda\sqrt{p}, 
\end{align}
where $\phi_0 \sim \mathcal{N}(0, \frac{1}{n}I_{|\mathcal{I}^c_{\widehat{j}}|})$ is independent from $W_{\mathcal{I}_{\widehat{j}}^{c}, \mathcal{I}_{\widehat{j}}^{c}}$ conditional on $\mathcal{I}_{\widehat{j}}$.
Moreover, one has $\widehat{j} \in \mathcal{S}_{1}$ with probability at least $1 - O(n^{-10}).$
\end{props}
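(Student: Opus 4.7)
The plan is to exploit the sample splitting in Algorithm~\ref{alg:split} so that the oracle estimate $v_{\mathcal{I}_j}$, which is computed from the block $M_{\mathcal{I}_j,\mathcal{I}_j}$, is decoupled from the random entries that generate the Gaussian residual $\phi_0$. Since $\mathcal{I}_j$ is drawn independently of $M$, the three submatrices $W_{\mathcal{I}_j,\mathcal{I}_j}$, $W_{\mathcal{I}_j^c,\mathcal{I}_j}$, and $W_{\mathcal{I}_j^c,\mathcal{I}_j^c}$ share no entries and are therefore mutually independent conditional on $\mathcal{I}_j$. Consequently the oracle output $v_{\mathcal{I}_j}$, being a measurable function of $M_{\mathcal{I}_j,\mathcal{I}_j}$ alone, is independent of both $W_{\mathcal{I}_j^c,\mathcal{I}_j}$ and $W_{\mathcal{I}_j^c,\mathcal{I}_j^c}$ given $\mathcal{I}_j$. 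Expanding
\begin{align*}
v^j \;\defn\; M_{\mathcal{I}_j^c,\mathcal{I}_j}\, v_{\mathcal{I}_j} \;=\; \lambda\,\big\langle v^\star_{\mathcal{I}_j},\, v_{\mathcal{I}_j}\big\rangle\, v^\star_{\mathcal{I}_j^c} \;+\; W_{\mathcal{I}_j^c,\mathcal{I}_j}\, v_{\mathcal{I}_j},
\end{align*}
identifying $\alpha_1 = \lambda\langle v^\star_{\mathcal{I}_j},v_{\mathcal{I}_j}\rangle$ and $\phi_0 = W_{\mathcal{I}_j^c,\mathcal{I}_j} v_{\mathcal{I}_j}$, and using $\|v_{\mathcal{I}_j}\|_2 = 1$, standard Gaussian calculus yields $\phi_0 \mid \mathcal{I}_j, v_{\mathcal{I}_j} \sim \mathcal{N}\big(0,\tfrac{1}{n}I_{|\mathcal{I}_j^c|}\big)$; since this conditional law does not depend on $v_{\mathcal{I}_j}$, the marginal law of $\phi_0$ given $\mathcal{I}_j$ is the same Gaussian, and $\phi_0$ is independent of $W_{\mathcal{I}_j^c,\mathcal{I}_j^c}$ given $\mathcal{I}_j$. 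This already delivers the distributional part of the proposition for any fixed $j\in[N]$.

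For the magnitude $\alpha_1\asymp\lambda\sqrt{p}$ and the membership $\widehat{j}\in\mathcal{S}_1$, I first quantify the restriction of $v^\star$ to $\mathcal{I}_j$. Since $\|v^\star\|_\infty^2 = o(\log n/k)$ and $\|v^\star\|_2 = 1$ with $\|v^\star\|_0 = k$, Bernstein's inequality applied to the independent Bernoulli weights yields $\|v^\star_{\mathcal{I}_j}\|_2^2 = p\,(1+o(1))$ with probability at least $1-O(n^{-11})$, thanks to $kp = C_p\log n \gg 1$. Conditional on this event, $M_{\mathcal{I}_j,\mathcal{I}_j}$ is itself a sparse spiked Wigner matrix of dimension $np$ whose normalized spike $\widetilde{v} = v^\star_{\mathcal{I}_j}/\|v^\star_{\mathcal{I}_j}\|_2$ has sparsity at most $kp$, and whose effective SNR after the appropriate rescaling still satisfies the condition required by the covariance-thresholding oracle of \cite{deshpande2014sparse,krauthgamer2015semidefinite}. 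The oracle then outputs $v_{\mathcal{I}_j}$ with $|\langle v_{\mathcal{I}_j},\widetilde{v}\rangle|\asymp 1$, so that $j\in\mathcal{S}_1$ and $\langle v^\star_{\mathcal{I}_j},v_{\mathcal{I}_j}\rangle\asymp\sqrt{p}$. A union bound over the $N\asymp\log n$ independent splits shows that, with probability at least $1-O(n^{-10})$, every $j\in[N]$ lies in $\mathcal{S}_1$; in particular $\widehat{j}\in\mathcal{S}_1$ regardless of how the selection rule breaks ties, and $\alpha_1 = \lambda\langle v^\star_{\mathcal{I}_{\widehat{j}}}, v_{\mathcal{I}_{\widehat{j}}}\rangle \asymp \lambda\sqrt{p}$.

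The main obstacle is the transfer of the oracle's guarantee from the ambient $n\times n$ spiked model --- where it works under $\lambda\gtrsim k/\sqrt{n}$ --- to the random subblock $M_{\mathcal{I}_j,\mathcal{I}_j}$. One must verify carefully that the effective triple $(n_{\mathrm{eff}},k_{\mathrm{eff}},\lambda_{\mathrm{eff}}) = (np,\,kp,\,\lambda p)$ still meets the precise threshold stated in \cite{deshpande2014sparse,krauthgamer2015semidefinite} after rescaling to unit-variance-per-entry form, with the computation exploiting the assumption $\lambda \gtrsim k/\sqrt{n}$ together with $p = C_p\log n/k$; this step controls what choice of the constant $C_p$ is admissible. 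A secondary technicality is that $\widehat{j}$ is a data-dependent index, so to apply the conditional independence derived in paragraph one at $\widehat{j}$, the decomposition $v^j = \alpha_1^j v^\star_{\mathcal{I}_j^c} + \phi_0^j$ must be established simultaneously for every $j\in[N]$ before the maximization $\widehat{j} = \argmax_j x^{j\top}M_{\mathcal{I}_j^c,\mathcal{I}_j^c}x^j$ is taken; since all $j\in[N]$ are good on the high-probability event above, no comparison between good and bad splits is required, which considerably simplifies the selection step.
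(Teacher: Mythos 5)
Your decomposition $v^j=\lambda\langle v^{\star}_{\mathcal{I}_j},v_{\mathcal{I}_j}\rangle v^{\star}_{\mathcal{I}_j^c}+W_{\mathcal{I}_j^c,\mathcal{I}_j}v_{\mathcal{I}_j}$ and the conditional-independence argument for a fixed $j$ are correct and coincide with the paper's first step. The gap lies in the probability bookkeeping for a single round. You claim $\|v^{\star}_{\mathcal{I}_j}\|_2^2=p(1+o(1))$ with probability $1-O(n^{-11})$, but under the paper's only assumption $\|v^{\star}\|_{\infty}=o(\sqrt{\log n/k})$ this cannot hold at that confidence level: Bernstein with failure probability $n^{-11}$ gives a deviation of order $\|v^{\star}\|_{\infty}\sqrt{p\log n}+\|v^{\star}\|_{\infty}^2\log n$, which is only $o(p\sqrt{\log n})$, not $o(p)$. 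Concretely, if the $\ell_2$ mass of $v^{\star}$ is carried by roughly $k\log\log n/\log n$ "heavy" coordinates of size near the $\ell_\infty$ bound, the expected number of heavy coordinates captured by $\mathcal{I}_j$ is only $\Theta(\log\log n)$, and with probability about $(\log n)^{-\Theta(C_p)}\gg n^{-11}$ the sample misses essentially all of them, so $\|v^{\star}_{\mathcal{I}_j}\|_2^2\ll p$ and the effective SNR condition for the oracle on $M_{\mathcal{I}_j,\mathcal{I}_j}$ fails. This is exactly why the paper only asserts the per-round event (good sampled mass, oracle success, hence $j\in\mathcal{S}_1$ and $\langle v^{\star}_{\mathcal{I}_j},v_{\mathcal{I}_j}\rangle\asymp\sqrt{p}$) with probability $1-\delta$ for a small \emph{constant} $\delta$, the randomness coming from the sampling process itself.

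Because each round is good only with constant probability, your union-bound conclusion that \emph{every} $j\in[N]$ lies in $\mathcal{S}_1$ — which would make the selection rule \eqref{eqn:sparse-choose-j} irrelevant — does not follow; the repetition only guarantees that \emph{at least one} round is good with probability $1-\delta^{N}=1-O(n^{-10})$. The missing piece is the selection argument: one must show that the data-driven index $\widehat{j}=\argmax_j x^{j\top}M_{\mathcal{I}_j^{c},\mathcal{I}_j^{c}}x^{j}$ cannot land on a bad round. The paper does this by writing, for each $j$, $x^{j\top}M_{\mathcal{I}_j^{c},\mathcal{I}_j^{c}}x^{j}=\lambda\langle v^{\star}_{\mathcal{I}_j^c},x^j\rangle^{2}+\varepsilon_j$ with $\varepsilon_j\sim\mathcal{N}(0,1/n)$ (by the very independence you established), so a good round has statistic $\gtrsim\lambda(1-p)\gtrsim k/\sqrt{n}\gg\sqrt{\log n/n}$ while a round with vanishing correlation has statistic $\ll\lambda$, and maximality forces $\widehat{j}\in\mathcal{S}_1$ and $\alpha_1\asymp\lambda\sqrt{p}$. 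Without this comparison step (or a genuinely high-probability per-round guarantee, which the assumptions do not supply), the claims $\widehat{j}\in\mathcal{S}_1$ and $\alpha_1\asymp\lambda\sqrt{p}$ are unproven. Your flagged verification of the oracle threshold on the subblock is fine in spirit and matches the paper's check $\lambda\|v^{\star}_{\mathcal{I}}\|_2^{2}\gtrsim\|v^{\star}_{\mathcal{I}}\|_0/\sqrt{n}$, but it too is only available on the constant-probability sampling event.
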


\paragraph{Non-asymptotic theory of AMP as initialized in Algorithm~\ref{alg:split}.}

As revealed by Proposition~\ref{prop:sparse-split}, the aforementioned initialization scheme provides an almost independent estimate that enjoys  non-vanishing correlation with the truth. 
We can then execute the AMP update rule \eqref{eqn:AMP-updates-sparse} on the submatrix $M_{\mathcal{I}_{\widehat{j}}^{c}, \mathcal{I}_{\widehat{j}}^{c}}$ 
in order to obtain an estimate $x_t$ for the subvector of $\vstar$ from the index subset $\mathcal{I}_{\widehat{j}}^{c}$; 
details are summarized in Algorithm~\ref{alg:split}. 
With this in mind, our theory developed so far readily leads to finite-sample characterizations of this estimate $x_t$. 
More specifically, Theorem~\ref{thm:sparse} together with some basic union bounds reveals that with probability at least $1 - O(n^{-10})$, 
the estimate $x_t$ returned by Algorithm~\ref{alg:split} satisfies
\begin{align}
\label{eqn:decomp-1-sparse45}
	x_{t+1,i} &= \alpha_{t+1} \vstar_{i} 
	+ \sum_{j = 1}^{t} \beta_{t}^j\phi_{j, i} 
	+ \xi_{t, i}, \qquad \text{for all } i \in \mathcal{I}^c_{\widehat{j}} , 
\end{align}
where again the $\phi_j$'s are i.i.d.~drawn from $\mathcal{N}(0, \frac{1}{n} I_{|\mathcal{I}^c_{\widehat{j}}|})$, with the coefficients $\beta_{t}$, $\alpha_{t+1}$ and $\|\xi_{t}\|_2$ satisfying the predictions of Theorem~\ref{thm:sparse} (except that $\alpha_{t+1}$ should be rescaled by $\sqrt{1-p}$ to account for the reduced signal size). 
%

\begin{remark}
	The careful reader might remark that Algorithm~\ref{alg:split} only returns an estimate over the index subset $\mathcal{I}_{\widehat{j}}^{c}$. 
	One still needs to estimate the remaining entries of $\vstar$. 
	To do so, we can simply rerun the algorithm to generate different sampling sets, in the hope of producing another estimate that covers the remaining subvector (which is likely to happen given that $\mathcal{I}_{\widehat{j}}$ is vanishingly small). 
	The two AMP outputs can then be merged easily to estimate the whole vector $\vstar$. 
	Details are omitted here as they are not the focus of the current paper. 

\end{remark}

\begin{algorithm}[t]
\DontPrintSemicolon
{\bf Input:} data matrix $M$; an oracle algorithm as described in \eqref{eq:correlation-oracle}; $p \asymp \frac{\log n}{k}$; $\tau_1 \asymp \sqrt{\frac{\log n}{n}}$.\\

{\bf Initialization:}
\begin{enumerate}
\item Set $N \asymp \log n$. For every $j \in [N]$, sample an index subset $\mathcal{I}_j \subset [n]$ such that each
 $i \in [n]$ belongs to $\mathcal{I}_{j}$ independently with probability $p$. 

\item For each $j\in [N]$, partition $M$ into four sub-matrices $M_{\mathcal{I}_j, \mathcal{I}_j}, M_{\mathcal{I}_j, \mathcal{I}_j^{c}}, M_{\mathcal{I}_j^{c}, \mathcal{I}_j}$ and $M_{\mathcal{I}_j^{c}, \mathcal{I}_j^{c}}$. 
Run the oracle algorithm to obtain a unit-norm estimate $v_{\mathcal{I}_j}$ for $v^{\star}_{\mathcal{I}_j}$ --- the subvector of $\vstar$ in the index set $\mathcal{I}_j$ --- based on $M_{\mathcal{I}_j, \mathcal{I}_j}$,
which satisfies $\langle v^{\star}_{\mathcal{I}}, v_{\mathcal{I}}\rangle \asymp \big\|v^{\star}_{\mathcal{I}}\big\|_2$ with high probability.
Compute
\begin{align}
\label{eqn:vj}
	v^j &\defn M_{\mathcal{I}_j^{c}, \mathcal{I}_j}\cdot v_{\mathcal{I}_j}, \\
\label{eqn:sparse-init-2-j}
	x^j &\defn \frac{\mathsf{ST}_{\tau_1}(v^{j})}{\ltwo{\mathsf{ST}_{\tau_1}(v^{j})}} .
\end{align}

\item Compute 
\begin{align}
\label{eqn:sparse-choose-j}
	\widehat{j} \defn \argmax_{1\leq j \leq N} ~\Big\{ x^{j\top} M_{\mathcal{I}_j^{c},\mathcal{I}_j^{c}} x^{j}\Big\}
	\qquad \text{and} \qquad
	x_1 = x^{\widehat{j}}. 
\end{align}

\end{enumerate}

	{\bf AMP:} run AMP \eqref{eqn:AMP-updates-sparse} on $M_{\mathcal{I}_{\widehat{j}}^{c}, \mathcal{I}_{\widehat{j}}^{c}}$ with initialization $x_1$ and  $\eta_0(x_{0}) = 0$ to obtain $x_{t} \in \real^{|\mathcal{I}_{\widehat{j}}^c|}$.



\caption{AMP with sample-split initialization.}
 \label{alg:split}
\end{algorithm}
\medskip


\section{Sparse PCA: Proofs of Theorem~\ref{thm:sparse} and Corollary~\ref{cor:sparse-init-1}}
\label{sec:pf-sparse}

Akin to the problem of $\mathbb{Z}_{2}$ synchronization, 
we always have $\|\beta_{t-1}\|_2 = \ltwo{\eta_{t-1}(x_{t-1})} = 1$ 
given our choices of the denoising functions~\eqref{eqn:eta-sparse}. 
As a result, we shall focus attention on tracking $\alpha_{t}.$

The proofs of Theorem~\ref{thm:sparse} and Corollary~\ref{cor:sparse-init-1} mainly follow from Theorem~\ref{thm:main}, 
with the assistance of an induction argument. 
Specifically, our induction hypotheses for the $t$-th iteration are
\begin{align}
\label{eqn:sparse-induction}
	\alpha_{t} \asymp \lambda \qquad \text{and} \qquad \ltwo{\xi_{t-1}} \lesssim 
	\sqrt{\frac{(t-1)\log^3 n+ k}{n}}.
\end{align}
In what follows, we shall assume the induction hypotheses \eqref{eqn:sparse-induction}  are valid for the $t$-th iteration, 
and demonstrate their validity for the $(t+1)$-th iteration; the base case will be validated in Section~\ref{sec:recursion}. 
The only difference between Theorem~\ref{thm:sparse} and Corollary~\ref{cor:sparse-init-1} lies in the initialization step which 
is detailed in Section~\ref{sec:pf-initialization-sparse}. 


\subsection{Preliminary facts}
\label{sec:prelim-sparse}

Before delving into the details of the main proof, we collect several preliminary results that shall be used repeatedly throughout this section.

\subsubsection{Properties about the denoising functions}
Recall that we adopt the following denoising functions:  for any $x\in \real^n$, 
\begin{align}
	\eta_{t}(x) \defn \gamma_t \cdot \mathrm{sign}(x) \circ (|x| - \tau_t 1)_{+} 
	\qquad 
	\text{where }\gamma_t \defn \big\|\mathrm{sign}(x_t) \circ (|x_t| - \tau_t 1)_{+}\big\|_2^{-1}
	~\text{ and }~\tau_t \asymp \sqrt{\frac{\log n}{n}}, 
\end{align}
Here and throughout, 
we abuse the notation to use it in an entrywise manner when applied to vectors, i.e.,   
\begin{equation}
	\mathrm{sign}(x)=\big[\mathrm{sign}(x_i) \big]_{1\leq i\leq n} 
	\qquad \text{and} \qquad (|x| - \tau 1)_{+} = \big[ (|x_i|-\tau )_+ \big]_{1\leq i\leq n}. 
\end{equation}
for any $x=[x_i]_{1\leq i\leq n}\in \real^n$. 
The entrywise derivative of $\eta_{t}(x)$ w.r.t.~$x$ is given by
\begin{equation}
	\eta_{t}^{\prime}(x) = \gamma_t \mathds{1}(|x| > \tau_t 1) \defn \gamma_t \big[ \mathds{1}(|x_i| > \tau_t) \big]_{1\leq i\leq n}.    
\end{equation}
Here, $\eta_{t}^{\prime}(w)$ is well-defined for all differentiable points $w\in \real$, with its value for the non-differentiable points (i.e., $w=\pm\tau_t$) taken to be 0; 
this works for our purpose given that the non-differentiable part are accounted for separately in Theorem~\ref{thm:main}.

Next, consider a set of parameters $\mu = [\mu^j]_{1\leq j\leq t-1} \in \mathcal{S}^{t-2}$, $\alpha \in \real$ and $\beta = [\beta^j]_{1\leq j\leq t-1} \in \real^{t-1}$ independent of $\{\phi_j\}$, 
and define the following vector (which is a function of $\alpha$ and $\beta$): 
\[
	v = \alpha \vstar + \sum_{j = 1}^{t-1} \beta^j\phi_j \in \real^n.
\]
We also define, for any positive numbers $\tau,\gamma \in \real$,  
\begin{equation}
	\eta(x; \tau) \defn \gamma \cdot \mathrm{sign}(x) \circ (|x| - \tau 1)_{+}. 
\end{equation}
Elementary calculations together with $\|\mu\|_2=\|\beta\|_2=1$ yield 
\begin{subequations}
\label{eqn:sparse-derivative}
\begin{align}
\Big\|\nabla_{\phi_j} \Big\langle \sum_{j = 1}^{t-1} \mu^j\phi_j, a\Big\rangle\Big\|_2 &\le |\mu^j|\|a\|_2,\qquad 
	&&\text{for any given } a \in \mathbb{R}^n \\
	\Big\|\nabla_{\phi_j} \big\langle \eta(v; \tau), a\big\rangle\big\|_2 &\le |\beta^{j}|\|\eta^{\prime}(v; \tau) \circ a\|_2\qquad &&\text{for any given } a \in \mathbb{R}^n \\
	\Big\|\nabla_{\mu} \Big\langle \sum_{j = 1}^{t-1} \mu^j\phi_j, a\Big\rangle\Big\|_2 &\le \|a\|_2\sum_{j = 1}^{t-1} \|\phi_j\|_2\qquad &&\text{for any given } a \in \mathbb{R}^n \\
	\Big\|\nabla_{\mu, \beta} \Big( \sum_{j = 1}^{t-1} \mu^j\beta^j \Big)\Big\|_2 &\le 2 \\
	\Big\|\nabla_{\alpha, \beta, \tau} \big\langle \eta(v; \tau), a\big\rangle\Big\|_2 &\le \|a\|_2\|\eta^{\prime}(v; \tau)\|_2
	\Big( 1+ \sum_j\|\phi_j\|_2 \Big) \qquad &&\text{for any given } a \in \mathbb{R}^n. 
\end{align}
\end{subequations}


\subsubsection{Basic concentration results}

Next, we collect some basic concentration results. 
Similar to \eqref{eq:eps-set} in Section~\ref{sec:basic-concentration-buble}, 
we define  
\begin{align}
	&\notag \mathcal{E}_s \defn 
	\left\{ \{\phi_j\} : \max_{1\leq j\leq t-1} \|\phi_j\|_2  < 1+ C_5\sqrt{\frac{\log \frac{n}{\delta}}{n}}\right\} 
	\bigcap \left\{ \{\phi_j\} : \sup_{a \in \mathcal{S}^{t-2}} \Big\|\sum_{j = 1}^{t-1} a_k\phi_j\Big\|_2 < 1 + C_5 \sqrt{\frac{t\log \frac{n}{\delta}}{n}} \right\} \\
	&\hspace{2cm} \bigcap 
	\left\{\{\phi_j\} : \sup_{a \in \mathcal{S}^{t-2}}\sum_{i = 1}^s \Big|\sum_{j = 1}^{t-1} a_k\phi_j\Big|_{(i)}^2 < \frac{C_5(t + s)\log \frac{n}{\delta}}{n}\right\} 
	\label{eq:eps-set-2}
\end{align}
for some sufficiently large constant $C_5>0$. 
As discussed in~\eqref{eqn:eps-interset}, the convex set $\mathcal{E}_{s}$ satisfies 
\begin{align*}
	\mprob\lt(\{\phi_j\} \in\bigcap_{s = 1}^n \mathcal{E}_s \rt) \geq 1 -  \delta. 
\end{align*}
In addition, let us introduce an additional collection of convex sets: for any $1\leq s\leq n$, 
\begin{align}
\label{eqn:set-E-tilde}
	\widetilde{\mathcal{E}}_s \defn \lt\{\{\phi_j\} : \|\Phi_{s, :}\|_2 \leq  C_5 \sqrt{(t-1)\log \frac{n}{\delta}}\rt\},
\end{align}
where $\Phi_{s, :}$ denotes the $s$-th row of matrix $\Phi = \sqrt{n}[\phi_1,\cdots,\phi_{t-1}] \in \real^{n \times (t-1)}$.  
Standard Gaussian concentration results \citep[Chapter 4.4]{vershynin2018high} reveal that $\{\phi_j\}$ falls within $\bigcap_{s = 1}^n \widetilde{\mathcal{E}}_s$ 
with probability at least $1-\delta$, provided that $C_5$ is large enough. 
As a result, it is readily seen that
\begin{align}
\label{eqn:eps-interset-sparse}
\mprob( \{\phi_j\} \in \mathcal{E} ) \geq 1 - 2 \delta,
	\qquad \text{for }\mathcal{E} \defn \bigcap_{s = 1}^n \big( \mathcal{E}_s \cap \widetilde{\mathcal{E}}_s \big). 
\end{align}
Throughout the rest of the proof, we shall take $\delta$ to be sufficiently small, say, $\delta \asymp n^{-300}$ (similar to Section~\ref{sec:basic-concentration-buble}).

\subsubsection{Bounding the size of $\eta^{\prime}_t$}
As studied in the case of $\mathbb{Z}_{2}$ synchronization (see \eqref{eqn:muphi-rank}), 
we know that conditional on the event $\mathcal{E}$, 
the $t$-th largest entry of $\sum_{j = 1}^{t-1} \beta^j \phi_j$ for an arbitrary unit vector $\beta=[\beta^1,\cdots,\beta^{t-1}]\in \mathcal{S}^{t-2}$ obeys
\begin{align*}
	t \bigg| \sum_{j = 1}^{t-1} \beta^j\phi_j\bigg|_{(t)}^2 \leq 
	\sum_{i=1}^t \bigg|\sum_{j = 1}^{t-1} \beta^j\phi_j\bigg|_{(i)}^2 \lesssim 
	\frac{(t + t)\log n}{n} \asymp \frac{t\log n}{n},
\end{align*}
where the last inequality uses the definition of $\mathcal{E}$. 
It therefore implies that for every $l \geq t$, one has 
\begin{align*}
	\Big|\sum_{j = 1}^{t-1} \beta^j\phi_j\Big|_{(l)} 
	\leq \bigg| \sum_{j = 1}^{t-1} \beta^j\phi_j\bigg|_{(t)}\lesssim \sqrt{\frac{\log n}{n}} 
\end{align*}
with probability at least $1-O(n^{-11})$. 
Now consider the vector $v \defn \alpha_t\vstar + \sum_{j = 1}^{t-1} \beta^j\phi_j$.
Since $|v^{\star}|_{(k +1)} = 0$ (given that $v^{\star}$ is $k$-sparse) and that $\tau_{t}\geq C_3\sqrt{\frac{\log n}{n}}$ 
for some constant $C_3>0$ large enough,
 we can show that
\begin{align}
\label{eqn:gradient-sparse}
	\big|\eta_t^{\prime}(v)\big|_{(i)}
	= \big| |v|-\tau 1 \big|_{(i)} 
	= 0,\qquad\text{for }i > k + t 
\end{align}
with probability exceeding $1-O(n^{-11})$. 
As a direct consequence of \eqref{eqn:gradient-sparse}, for any vector $a \in \real^n$ one has  
\begin{align}
\label{eqn:sparse-eta-a-prod}
	\lt\|\eta_t^{\prime}(v) \circ a\rt\|_2 = \gamma_t \sqrt{ \sum_{i=1}^{k+t} |a|_{(i)}^2 } \lesssim \lambda^{-1}\sqrt{ \sum_{i=1}^{k+t} |a|_{(i)}^2 },
\end{align}
where the last relation comes from~\eqref{eqn:gamma-t-evolution}. 
%


\subsection{Tight estimate of $\gamma_t$}
\label{sec:tight-estimate-gamma-sparse}

In this subsection, 
our goal is to show that under the induction hypotheses \eqref{eqn:sparse-induction}, we have
%
%
\begin{align}
\label{eqn:gamma-t-evolution}
\gamma_t \defn \big\|\mathrm{sign}(x_t)(|x_t| - \tau_t 1)_{+} \big\|_2^{-1} 
	=  \big\|(|x_t| - \tau_t 1)_{+} \big\|_2^{-1}
	\asymp \lambda^{-1},
\end{align}
which would then imply that (see Assumption~\ref{assump:eta}) 
\begin{align}
\label{eqn:rho-sparse}
	\rho = \lambda^{-1} \qquad\text{and}\qquad \rho_1 = 0.
\end{align}

In order to show this, we resort to Corollary~\ref{cor:Gauss}. 
Let us define
\begin{align*}
	\Phi = \sqrt{n} \big[ \phi_1,\cdots,\phi_{t-1} \big]
	\qquad \text{and}\qquad
	\theta = (\alpha, \beta, \tau) \in \mathbb{R} \times \mathbb{R}^{t-1} \times \mathbb{R}
	\quad \text{with }\beta = [\beta^1,\cdots,\beta^{t-1}],
\end{align*}
and consider the following function:
\begin{align}
f_{\theta}(\Phi) \defn \big\|\mathrm{sign}(v) \circ (|v| - \tau 1)_{+}\big\|_2^2,
\qquad
\text{with } v \defn \alpha \vstar + \sum_{j = 1}^{t-1} \beta^j\phi_j. 
\label{eq:f-Theta-v-sparse-1}
\end{align}
Let us also introduce the following set of parameters:
\begin{align*}
	\Theta \defn \lt\{ \theta = (\alpha, \beta, \tau) \in \mathbb{R} \times \mathbb{R}^{t-1} \times \mathbb{R} \,\Big| \, 
	\alpha \asymp \lambda, \|\beta\|_2=1, C_5 \sqrt{\frac{\log n}{n}}\leq \tau \asymp \sqrt{\frac{\log n}{n}}\rt \}
\end{align*}
for some large enough constant $C_5>0$. 
Consequently, $\gamma_t$ (cf.~\eqref{eqn:gamma-t-evolution}) 
can be viewed as $f_{\theta}(\Phi)$ with $\theta = (\alpha_t, \beta_{t-1}, \tau_t)$, 
and hence it suffices to develop a uniform bound on $f_{\theta}(\Phi)$ over all $\theta \in \Theta$.

%

It is easily seen that $|f_{\theta}(\Phi)|\lesssim n^{100} \big(\max_j \|\phi_j\|_2\big)^{100}$, 
and that $\|\nabla_{\theta} f(Z) \|_2 \lesssim n^{100}$ for all $Z\in \mathcal{E}$ (cf.~\eqref{eqn:eps-interset-sparse}). 
In addition, given that $\log\frac{1}{\delta} \asymp \log n$, it follows from \eqref{eq:f-Theta-v-sparse-1} and \eqref{eq:eps-set-2} that 
\begin{align}
\label{eqn:vbound-sparse}
	\|v\|_2 \leq |\alpha|\ltwo{\vstar} +  1 + C_5\sqrt{\frac{t\log \frac{n}{\delta}}{n}}  \asymp \lambda + 1 \asymp 1
\end{align}
over the set $\mathcal{E}$, 
where the penultimate step is valid as long as $\frac{t\log n}{n} \lesssim 1 $. 
Moreover, we observe that
\begin{align*}
\big\|(|v|-\tau1)_{+}\big\|_{2}^{2} & \leq\sum_{i:\,v_{i}^{\star}\neq0}\Big|\alpha v_{i}^{\star}+\sum_{j=1}^{t-1}\beta^{j}\phi_{j,i}\Big|^{2}+\sum_{i:\,v_{i}^{\star}=0}\bigg(\Big|\sum_{j=1}^{t-1}\beta^{j}\phi_{j,i}\Big|-\tau\bigg)^{2}\\
 & \lesssim\alpha^{2}\|v^{\star}\|_{2}^{2}+\sum_{i:\,v_{i}^{\star}\neq0}\Big|\sum_{j=1}^{t-1}\beta^{j}\phi_{j,i}\Big|^{2}+\sum_{i=1}^{n}\bigg(\Big|\sum_{j=1}^{t-1}\beta^{j}\phi_{j,i}\Big|-\tau\bigg)^{2}\\
 & \lesssim\alpha^{2}+\sum_{i=1}^{2k+t}\Big|\sum_{j=1}^{t-1}\beta^{j}\phi_{j}\Big|_{(i)}^{2}\lesssim\alpha^{2}+\frac{(k+t)\log n}{n},
\end{align*}
where the last line comes from (\ref{eqn:gradient-sparse}) and the
definition (\ref{eq:eps-set-2}) of $\mathcal{E}$. 
This in turn allows us to calculate
\begin{align*}
\lt\|\nabla_{\Phi} f_{\theta}(\Phi)\rt\|_2 &\le 2\frac{\|\beta\|_2}{\sqrt{n}}
	\big\|\mathrm{sign}(v) \circ (|v| - \tau 1)_{+} \circ \ind\lt(|v| > \tau 1\rt)\big\|_2 \le 2\frac{\big\|(|v|-\tau1)_{+}\big\|_{2}}{\sqrt{n}} 
	\lesssim \frac{ \alpha + \sqrt{\frac{(k+t)\log n}{n}} }{\sqrt{n}} .
%
\end{align*}
Therefore, Corollary~\ref{cor:Gauss} and \eqref{eqn:brahms-conc} tell us that,  with probability at least $1-O(n^{-11})$, 
%
%
\begin{align*}
\lt| \big\| (|v| - \tau 1)_{+} \big\|_2^2 - \int\Big\|\Big(\Big|\alpha \vstar + \frac{1}{\sqrt{n}}x\Big| - \tau 1\Big)_{+}\Big\|_2^2\varphi_n(\dx)\rt| 
	&\lesssim \bigg( \alpha + \sqrt{\frac{(k+t)\log n}{n}} \bigg) \sqrt{\frac{t\log n}{n}}
\end{align*}
holds simultaneously for all $\theta\in\Theta$.
This in turn implies that
%
%
%
\begin{align}
\label{eqn:summer}
	\bigg| \big\| (|v_t| - \tau_t 1)_{+} \big\|_2^2 - \int\Big\|\lt(\Big|\alpha_t\vstar + \frac{1}{\sqrt{n}}x \Big| - \tau_t 1\rt)_{+}\Big\|_2^2\varphi_n(\dx)\bigg| 
	&\lesssim \bigg( \alpha_t + \sqrt{\frac{(k+t)\log n}{n}} \bigg)  \sqrt{\frac{t\log n}{n}}. 
\end{align}

Next, let us assess the size of the quantity $\int\|(|\alpha_tv^{\star} + \frac{1}{\sqrt{n}}x| - \tau_t 1)_{+}\|_2^2\varphi_n(\dx)$. 
For those indices $i$ obeying $|\alpha_tv_i^{\star}|  \geq 2 \tau_t$, 
it is easily seen from basic Gaussian properties that 
\begin{align}
\label{eqn:tmp-integral}
\int\Big(\Big|\alpha_tv_i^{\star} + \frac{1}{\sqrt{n}}x\Big| - \tau_t\Big)_{+}^2\varphi(\dx) 
	\asymp \int_{-\sqrt{\log n}}^{\sqrt{\log n}} \big(\alpha_tv_i^{\star}\big)^2\varphi(\dx) \asymp \lt(\alpha_tv_i^{\star}\rt)^2,  
\end{align}
which together with the induction hypothesis $\alpha_t\asymp \lambda$ gives 
\begin{align}
\label{eqn:quartet1579-large}
	 \sum_{i:\, |\alpha_t v_i^{\star}|  \geq 2\tau_t \asymp \sqrt{\frac{\log n}{n}} }\int\Big(\Big|\alpha_tv_i^{\star} + \frac{1}{\sqrt{n}}x\Big| - \tau_t\Big)_{+}^2\varphi(\dx) 
	\asymp \lambda^2 \sum_{i:\, |\alpha_t v_i^{\star}|  \geq 2\tau_t \asymp \sqrt{\frac{\log n}{n}} }\lt(v_i^{\star}\rt)^2.
\end{align}
Additionally, it is observed that
\begin{align}
1\ge\sum_{i}\big(v_{i}^{\star}\big)^{2}\ind\Big(|\alpha_t v_{i}^{\star}|\geq 2\tau_t \Big) & \overset{(\mathrm{i})}{\geq}\sum_{i}\big(v_{i}^{\star}\big)^{2}\ind\Big(|v_{i}^{\star}|\geq\sqrt{\frac{1}{2k}}\Big)=1-\sum_{i}\big(v_{i}^{\star}\big)^{2}\ind\Big(0<|v_{i}^{\star}|<\sqrt{\frac{1}{2k}}\Big) \notag\\
 & \geq1-k\cdot\left(\sqrt{\frac{1}{2k}}\right)^{2}1=\frac{1}{2} ,
	\label{eqn:basics-vstar-2}
\end{align}
where (i) holds since for any $i$ with $|v_{i}^{\star}|\geq\sqrt{\frac{1}{2k}}$,
one necessarily has $|\alpha_t v_{i}^{\star}| \asymp |\lambda v_{i}^{\star}|\geq 2\tau_t \asymp \sqrt{\frac{\log n}{n}}$
as long as $\lambda \geq C_2 \sqrt{\frac{k\log n}{n}}$ for some large enough constant $C_2>0$. 
Substitution into \eqref{eqn:quartet1579-large} yields
\begin{align}
\label{eqn:quartet1579-large-135}
	 \sum_{i:\, |\alpha_t v_i^{\star}|  \geq 2\tau_t \asymp \sqrt{\frac{\log n}{n}} }\int\Big(\Big|\alpha_tv_i^{\star} + \frac{1}{\sqrt{n}}x\Big| - \tau_t\Big)_{+}^2\varphi(\dx) 
	\asymp \lambda^2 .
\end{align}
Moreover, when it comes to those indices $i$ obeying $|\alpha_tv_i^{\star}|<2\tau_t \asymp \sqrt{\frac{\log n}{n}}$, one has
\[
\sum_{i:\,|\alpha_{t}v_{i}^{\star}|<2\tau_{t}\asymp\sqrt{\frac{\log n}{n}}}\int\Big(\Big|\alpha_{t}v_{i}^{\star}+\frac{1}{\sqrt{n}}x\Big|-\tau_{t}\Big)_{+}^{2}\varphi(\mathrm{d}x)\leq\sum_{i:\,|\alpha_{t}v_{i}^{\star}|<2\tau_{t}\asymp\sqrt{\frac{\log n}{n}}}\big(\alpha_{t}v_{i}^{\star}\big)^{2}\lesssim k\cdot\frac{\log n}{n}\lesssim\lambda^{2}, 
\]
provided that $\lambda^2 \gtrsim \frac{k \log n}{n}$. 
This combined with \eqref{eqn:quartet1579-large-135} leads to
%
%
%
%
%
%
\begin{align}
\label{eqn:quartet15}
	\int \Big\| \Big(\Big|\alpha_tv^{\star} + \frac{1}{\sqrt{n}}x \Big| - \tau_t 1 \Big)_{+} \Big\|_2^2\varphi_n(\dx)
	&\asymp  \lambda^2. 
\end{align}
Taking this collectively with \eqref{eqn:summer} gives
\begin{align}
 & \bigg|\big\|(|v_{t}|-\tau_{t} 1)_{+}\big\|_{2}-\bigg(\int\Big\|\lt(\Big|\alpha_{t}\vstar+\frac{1}{\sqrt{n}}x\Big|-\tau_{t} 1\rt)_{+}\Big\|_{2}^{2}\varphi_{n}(\dx)\bigg)^{\frac{1}{2}}\bigg|\notag\\
 & \quad\quad\leq\frac{\Big|\big\|(|v_{t}|-\tau_{t} 1)_{+}\big\|_{2}^{2}-\int\big\|\lt(\big|\alpha_{t}\vstar+\frac{1}{\sqrt{n}}x\big|-\tau_{t} 1\rt)_{+}\big\|_{2}^{2}\varphi_{n}(\dx)\Big|}{\Big(\int\big\|\big(\big|\alpha_{t}\vstar+\frac{1}{\sqrt{n}}x\big|-\tau_{t} 1\big)_{+}\big\|_{2}^{2}\varphi_{n}(\dx)\Big)^{\frac{1}{2}}}
 \lesssim\frac{ \alpha_t + \sqrt{\frac{(k+t)\log n}{n}} }{\lambda}\sqrt{\frac{t\log n}{n}}.
	\label{eq:summer-twice}
\end{align}

Now in order to control $\gamma_t$, we still need to establish a connection between $\|\mathrm{sign}(x_t)\circ(|x_t| - \tau_t)_{+}\|_2$ and $\|\mathrm{sign}(v_t)\circ(|v_t| - \tau_t)_{+}\|_2.$ 
Recognizing that $x_t = v_t + \xi_{t-1}$,  
we can invoke the triangle inequality to obtain 
\begin{align}
\notag\gamma_{t}^{-1} & =\big\|(|x_{t}|-\tau_{t} 1)_{+}\big\|_{2}
 =\big\|(|v_{t}|-\tau_{t} 1)_{+}\big\|_{2}+O(\|\xi_{t-1}\big\|_{2})\\
\notag & =\sqrt{\int\Big\|\Big(\Big|\alpha_{t}v^{\star}+\frac{1}{\sqrt{n}}x\Big|-\tau_{t} 1\Big)_{+}\Big\|_{2}^{2}\varphi_{n}(\dx)}+O\bigg(\frac{1}{\lambda}\sqrt{\frac{t\log n}{n}}+\|\xi_{t-1}\|_{2}\bigg) \\
 & =\lt( 1+ O\Bigg(\frac{\alpha_t + \sqrt{\frac{(k+t)\log n}{n}}}{\lambda^2}\sqrt{\frac{t\log n}{n}}+\frac{\|\xi_{t-1}\|_{2}}{\lambda}\Bigg) \rt) 
	\sqrt{\int\Big\|\Big(\Big|\alpha_{t}v^{\star}+\frac{1}{\sqrt{n}}x\Big|-\tau_{t} 1\Big)_{+}\Big\|_{2}^{2}\varphi_{n}(\dx)}
	\asymp \lambda,
	\label{eqn:beethoven}
\end{align}
where the penultimate step follows from inequality~\eqref{eq:summer-twice}, 
and the last line makes use of \eqref{eqn:quartet15}.  
This establishes the claimed result in \eqref{eqn:gamma-t-evolution}.


\subsection{Controlling key quantities $A_t,B_t,D_t, E_t$ and $\kappa_t$}
\label{sec:control-sparse}

In order to apply Theorem~\ref{thm:main} for the sparse spiked Wigner model, 
a key step lies in bounding the multiple key quantities $A_t,\ldots,G_t$ (see \eqref{defi:A}-\eqref{defi:G}) as specified in Assumption~\ref{assump:A-H-eta}, 
which we aim to accomplish in this subsection.  Note that we do not need to bound $C_t, F_t$ and $G_t$ as they only appear in the bound on $\Delta_{\beta,t}$, which is irrelevant in this case. 
The rest of the section is dedicated to bounding $A_t,B_t,D_t, E_t$. 
Along the way, we shall also control $\kappa_t$, which is needed when calculating $D_t$.

\subsubsection{Quantity $A_t$ in \eqref{defi:A}}

Unlike the case of $\mathbb{Z}_{2}$ synchronization where the denoising functions are smooth everywhere, caution needs to be exercised when handling discontinuity points in sparse spiked Wigner models. 
Consider any given $\mu=[\mu^1,\cdots,\mu^{t-1}]$, $\alpha \in \real$, $\beta \in [\beta^1,\cdots,\beta^{t-1}]$ and $\tau,\gamma\in \real$, 
and let us take  
\begin{subequations}
\label{eq:defn-Phi-Theta-theta-sparse}
\begin{align}
	&\Phi \defn \sqrt{n}(\phi_1,\ldots,\phi_{t-1}),
	\qquad
	\theta \defn \big[ \mu, \alpha, \beta, \tau, \gamma \big] \in \mathcal{S}^{t-2} \times \real \times \mathcal{S}^{t-2} \times \real \times \real,
	  \\
	  v \defn \alpha \vstar + &\sum_{j = 1}^{t-1} \beta^j\phi_j \in \real^n
	\quad 
	\Theta \defn \lt\{ \theta = \big[ \mu, \alpha, \beta, \tau \big] \,\Big|\, 
	\alpha \asymp \gamma^{-1}\asymp \lambda, \|\mu\|_2=\|\beta\|_2=1, \tau \asymp  \sqrt{\frac{\log n}{n}}\rt\}.
\end{align}
\end{subequations}

Recall that $A_t$ consists of two parts: 
$\big\langle \sum_{j = 1}^{t-1} \mu^j\phi_j, \eta_{t}(v_t) \big\rangle$ and $\big\langle\eta_t^{\prime}\big\rangle \sum_{j = 1}^{t-1} \mu^j\beta_{t-1}^j$. 
In order to bound the first part of $A_t$, we intend to first derive a uniform control of the following function:   
\begin{align*}
	f_{\theta}(\Phi) \defn \Big\langle \sum_{j = 1}^{t-1} \mu^j\phi_j, \eta(v)\Big\rangle
\end{align*}
over all $\theta \in \Theta$, 
where we define (with its dependency on $\theta$ suppressed in the notation)
\begin{align}
	\eta(x) \coloneqq \gamma\, \mathrm{sign}(x) \circ (|x|-\tau 1)_+. 
	\label{eq:eta-notation-suppressed-sparse}
\end{align}
Towards this end, we first repeat the analysis in Section~\ref{sec:tight-estimate-gamma-sparse} (in particular, \eqref{eqn:summer} and \eqref{eqn:quartet15}) to derive
\begin{equation}
	\big\| (|v|-\tau 1)_+ \big\|_2 \asymp \lambda 
	\qquad \text{and} \qquad
	\|\eta(v)\|_2 = \gamma\, \big\|  (|v|-\tau 1)_+ \big\|_2 \asymp 1, 
	\label{eq:eta-v-norm-sparse}
\end{equation}
for any $\theta \in \Theta$. 
We can then invoke the derivative calculation in~\eqref{eqn:sparse-derivative} to arrive at 
\begin{align*}
\lt\|\nabla_{\Phi} f_{\theta}(\Phi)\rt\|_2 
	&\le \frac{\|\mu\|_2}{\sqrt{n}}\lt\|\eta(v)\rt\|_2 + \frac{\|\beta\|_2}{\sqrt{n}}\bigg\|\sum_{j = 1}^{t-1} \mu^j\phi_j \circ \eta^{\prime}(v)\bigg\|_2  \\
	&\lesssim \frac{1}{\sqrt{n}} + \frac{1}{\lambda\sqrt{n}}\Bigg(\sum_{l=1}^{k+t}\bigg|\sum_{j=1}^{t-1}\mu^{j}\phi_{j}\bigg|_{(l)}^{2}\Bigg)^{1/2} 
	\lesssim \frac{1}{\sqrt{n}}\bigg(1+\frac{1}{\lambda}\sqrt{\frac{(t+k)\log n}{n}}\bigg)\asymp\frac{1}{\sqrt{n}},  
\end{align*}
where the second inequality applies \eqref{eq:eta-v-norm-sparse} and \eqref{eqn:sparse-eta-a-prod}, 
 the third inequality invokes the property of $\mathcal{E}$ in \eqref{eq:eps-set-2}, 
 and the last relation is valid as long as $\lambda^2 \gtrsim \frac{k\log n}{n}$ and $t\lesssim \frac{\lambda^2 n}{\log n}$. 
 Additionally, it is trivially seen that $ f_{\theta}(\Phi) $  as a function of $\theta$ is $n^{100}$-Lipschitz for any given $\Phi\in \mathcal{E}$ 
 and $|f_{\theta}(\Phi) |\lesssim n^{100} \big( \max_j \|\phi_j\|_2 \big)^{100}$. 
 As a result, invoke Corollary~\ref{cor:Gauss} in conjunction with \eqref{eqn:brahms-conc} to arrive at
\begin{align}
\label{eqn:sparse-At-1}
\sup_{\theta\in \Theta}\Big|f_{\theta}(\Phi) - \mathbb{E}\lt[f_{\theta}(\Phi)\rt]\Big| 
&\lesssim \sqrt{\frac{t\log n}{n}}, 
\end{align}
with probability at least $1-O(n^{-11})$. 



%
%

Next, we move on to consider the second part of $A_t$, namely, 
\begin{align*}
\big\langle\eta_t^{\prime}(v_t)\big\rangle \cdot \sum_{j = 1}^{t-1} \mu_t^j  \beta_{t-1}^j
\qquad
\text{where }
	\lt\langle\eta_t^{\prime}(v_t)\rt\rangle 
	&= \frac{\gamma_t}{n} \sum_{i = 1}^n \ind\Big(\big|\alpha_tv_i^{\star} + \sum_j\beta_{t-1}^j\phi_{j, i}\big| > \tau_t\Big).
\end{align*}
Given that the indicator function is not Lipschitz continuous, 
we resort to Corollary~\ref{cor:Gauss-jump} to control it.  
For any given $\theta \in \Theta$, define
\begin{align}
\label{eqn:hfunction}
	h_{i, \theta}(\Phi_{i, :}) \defn \bigg| \alpha v_i^{\star} + \sum_{j=1}^{t-1}\beta^j\phi_{j, i} \bigg|,
	\qquad 1\leq i\leq n,  
\end{align}
where $\Phi_{i,:}$ denotes the $i$-th row of $\Phi$. 
Clearly, for any $\theta,\widetilde{\theta}\in \Theta,$ one can easily verify that
\[
	\big| h_{i, \theta}(\Phi_{i, :}) - h_{i, \widetilde{\theta}}(\Phi_{i, :}) \big|
	\leq n^{100} \big\|\theta - \widetilde{\theta}\|_2
\]
for any $\Phi\in \mathcal{E}$; and for any $\theta \in \Theta$ and $\tau \leq n$, one has
\[
	\mathbb{P}\Big( \tau - 400n^{-100} \leq h_{i,\theta}(\Phi_{i,:}) \leq \tau + 400 n^{-100} \Big) \lesssim n^{-1}. 
\]
Therefore, Corollary~\ref{cor:Gauss-jump} together with \eqref{eqn:brahms-conc}  reveals that with probability at least $1-O(n^{-11})$, 
\begin{align}
\sup_{\theta\in \Theta}
\lt|\sum_{i = 1}^n  \ind\lt(h_{i, \theta} > \tau\rt) - \sum_{i = 1}^n  \mathbb{P}\lt(h_{i, \theta} > \tau\rt)\rt| 
&\lesssim \sup_{\theta\in \Theta} \sqrt{\sum_{i = 1}^n \mathbb{P}\lt(h_{i, \theta} > \tau\rt)t\log n} + t\log n \notag\\
	& \lesssim \sqrt{(k + n\cdot O(n^{-11})) t\log n} + t\log n 
	\lesssim  \sqrt{t(t+k)} \log n \label{eq:sum-h-tau-sparse-UB}
\end{align}
holds simultaneously for all $\theta\in \Theta$, 
where the last inequality comes from \eqref{eqn:gradient-sparse} given that $\tau \asymp \sqrt{\frac{\log n}{n}}$. 
Recognizing that $|\mu^{\top}\beta| \leq 1$, we further have
\begin{align}
\sup_{\theta\in \Theta}
	\lt|  \mu^{\top} \beta \sum_{i = 1}^n  \ind\lt(h_{i, \theta} > \tau\rt) - \mu^{\top} \beta\sum_{i = 1}^n  \mathbb{P}\lt(h_{i, \theta} > \tau\rt)\rt| 
	&\lesssim  |\mu^{\top}\beta| \sqrt{t(t+k)\log n} \lesssim  \sqrt{t(t+k)} \log n.
	\label{eqn:sparse-At-3}
\end{align}
%


To summarize, let us decompose the quantity of interest in \eqref{defi:A} as follows:
\begin{align*}
&\Bigg|\lt\langle \sum_{j = 1}^{t-1} \mu^j\phi_j, \eta_{t}(v_t)\rt\rangle - \big\langle\eta_t^{\prime}(v_t)\big\rangle \sum_{j = 1}^{t-1} \mu^j\beta_{t-1}^j\Bigg| \\
&\qquad \qquad \leq 
\sup_{\theta\in \Theta} \big|f_{\theta}(\Phi) - \Exs [f_{\theta}(\Phi)]\big|
+
\sup_{\theta\in \Theta} \Big|\Exs [f_{\theta}(\Phi)] - \big\langle\eta^{\prime}(v)\big\rangle \cdot \sum_{j = 1}^{t-1} \mu^j  \beta^j\Big|\\
&\qquad \qquad = 
\sup_{\theta\in \Theta} \big|f_{\theta}(\Phi) - \Exs [f_{\theta}(\Phi)]\big|
+
\sup_{\theta\in \Theta} \Big|\frac{\gamma}{n}\sum_{i = 1}^n \mu^\top \beta \cdot\mathbb{P}\lt(|h_{i, \theta}| > \tau\rt) - \big\langle\eta^{\prime}(v)\big\rangle  
	\mu^{\top} \beta \Big|,
\end{align*}
where the last equality follows from Stein's lemma, that is,
\begin{align*}
 \Exs [f_{\theta}(\Phi)] = 
\mathbb{E}\lt[\lt\langle \sum_{j = 1}^{t-1} \mu^j\phi_j, \eta(v)\rt\rangle\rt] =
\Exs\lt[\lt\langle\eta^{\prime}(v)\rt\rangle \sum_{j = 1}^{t-1} \mu^j\beta^j\rt] = 
\frac{\gamma}{n}\sum_{i = 1}^n  \mu^\top \beta \cdot\mathbb{P}\lt(|h_{i, \theta}| > \tau\rt).
\end{align*}
Taking the decomposition above collectively with \eqref{eqn:sparse-At-1} and \eqref{eqn:sparse-At-3} yields 
\begin{align}
\Bigg|\lt\langle \sum_{j = 1}^{t-1} \mu^j\phi_j, \eta_{t}(v_t)\rt\rangle - \lt\langle\eta_t^{\prime}\rt\rangle \sum_{j = 1}^{t-1} \mu^j\beta_{t-1}^j\Bigg| 
&\lesssim \sqrt{\frac{t\log n}{n}} + \frac{\sqrt{t(t+k)}\log n}{n} \asymp \sqrt{\frac{t\log n}{n}}  =: A_t, \label{eqn:sparse-At}
\end{align}
where the last relation is valid under Assumption \eqref{cond:t-k}.


\subsubsection{Quantity $B_t$ in \eqref{defi:B}}
\label{sec:control-Bt-sparse}

Recall that quantity $B_{t}$ is concerned with bounding $v^{\star\top}\eta_{t}(v_t)$.  
To do so, let us again adopt the definitions of $\Phi, \theta, \Theta, v$ as in \eqref{eq:defn-Phi-Theta-theta-sparse}, 
and definte the following function parameterized by $\theta$: 
\begin{align*}
	f_{\theta}(\Phi) \defn v^{\star\top}\eta(v),
\end{align*}
with the function $\eta$ defined in \eqref{eq:eta-notation-suppressed-sparse}. 
In order to bound $v^{\star\top}\eta_{t}(v_t)$, 
we first develop a valid bound on $f_{\theta}(\Phi)$ that is valid simultaneously for all $\theta\in \Theta$.

Towards this end, consider any fixed parameter $\theta \in \Theta$, and apply \eqref{eqn:sparse-derivative} to reach
\begin{align}
\lt\|\nabla_{\Phi} f_{\theta}(\Phi)\rt\|_2 &\le 
	\frac{\|\beta\|_2}{\sqrt{n}}\lt\|v^{\star} \circ \eta^{\prime}(v)\rt\|_2 \lesssim \frac{1}{\sqrt{n}} \cdot \frac{1}{\lambda } \|\vstar\|_2
	\lesssim \sqrt{\frac{1}{n\lambda^2}},
	\label{eq:grad-f-Bt-sparse}
\end{align}
where we have used property~\eqref{eqn:sparse-eta-a-prod} as well as the fact that $\|\beta\|_2=1$. 
Additionally, it is easily seen that $\lt\|\nabla_{\theta} f_{\theta}(\Phi)\rt\|_2 \lesssim n^{100}$ for any $\Phi\in \mathcal{E}$ and 
$|f_{\theta}(\Phi)|\lesssim n^{100} \max_j \|\phi_j\|_2^{100}$. 
As a consequence,  Corollary~\ref{cor:Gauss-jump} taken together with \eqref{eqn:brahms-conc} indicates that, with probability at least $1-O(n^{-11})$, 
\begin{align}
\notag \lt|v^{\star\top}\eta_{t}(v_t) - v^{\star\top}\int\eta_t\lt(\alpha_tv^{\star} + \frac{1}{\sqrt{n}}x\rt)\varphi_n(\dx)\rt| &\le \sup_{\theta} \lt|v^{\star\top}\eta - v^{\star\top}\int\eta\lt(\alpha v^{\star} + \frac{1}{\sqrt{n}}x\rt)\varphi_n(\dx)\rt| \\
&\lesssim \sqrt{\frac{t\log n}{n\lambda^2}} =: B_t. \label{eqn:sparse-Bt}
\end{align}

\subsubsection{Bounding quantity $\kappa_t$}

This subsection develops an upper bound on the quantity $\kappa_t^2$
defined in \eqref{defi:kappa}, which is crucial in controlling $D_t$. 
From the choices of the denoising functions, $\eta_{t}^{\second}$ is well-defined and equal to $0$ except at two non-differentiable points. 
To bound $\kappa_t^2$, it is thus sufficient to control quantities 
$\langle \int[\eta_{t}^{\prime}(\alpha_t\vstar + \frac{1}{\sqrt{n}}x\big)]^2\varphi_n(\dx) \rangle$ and $\langle\int[x\eta_{t}^{\prime}\big(\alpha_t\vstar + \frac{1}{\sqrt{n}}x\big)]^2 \varphi_n(\dx)\rangle$ separately, given that $\|\beta_{t-1}\|_2=1$. 



Let us first consider the term $\langle \int[\eta_{t}^{\prime}(\alpha_t\vstar + \frac{1}{\sqrt{n}}x\big)]^2\varphi(\dx) \rangle$.
Recall our induction hypothesis $\alpha_t \asymp \lambda$ as well as our assumptions 
$\lambda \gtrsim \sqrt{\frac{k\log n}{n}}$ and $\tau_t \asymp \sqrt{\frac{\log n}{n}}$. 
We shall divide the index set $[n]$ into two parts and look at each part separately. 
For those indices $i$ obeying $v_i^{\star}\neq 0$, one has the trivial upper bound
\begin{align}
\int\ind\Big(\Big|\alpha_{t}v_{i}^{\star}+\frac{1}{\sqrt{n}}x\Big|>\tau_{t}\Big)\varphi(\dx) \leq 1. 
	\label{eq:indicator-large-sparse-123}
\end{align}
Otherwise, for those entries with $v_i^{\star}=0$, we find that 
\begin{align*}
	\int\ind\Big(\Big|\alpha_{t}v_{i}^{\star}+\frac{1}{\sqrt{n}}x\Big|>\tau_{t}\Big)\varphi(\dx)
	= 	\int\ind\Big(\Big|\frac{1}{\sqrt{n}}x\Big|>\tau_{t}\Big)\varphi(\dx)
	\leq
	2\int_{\sqrt{n}\tau_t}^{\infty} \varphi(\dx) \lesssim \frac{1}{n},
\end{align*}
provided that $\tau_t \geq 2 \sqrt{\frac{\log n}{n}}$. 
Putting these two cases together gives 
\begin{align}
\notag \lt\langle\int\Big[\eta_{t}^{\prime}\Big(\alpha_tv + \frac{\|\beta_{t-1}\|_2}{\sqrt{n}}x\Big)\Big]^2\varphi_n(\dx)\rt\rangle 
&= \gamma_t^2\lt\langle\int \ind\Big(\Big|\alpha_tv^{\star} + \frac{1}{\sqrt{n}}x\Big| > \tau_t 1\Big)\varphi_n(\dx)\rt\rangle  \\
	& \lesssim \frac{1}{n\lambda^2} \lt( k\cdot 1 + (n-k)\cdot \frac{1}{n} \rt) 
\asymp \frac{k}{n\lambda^2}. \label{eqn:kt-1}
\end{align}

Similarly, it can also be established that 
\begin{align}
\label{eqn:kt-2}
\lt\langle\int\lt[x\eta_{t}^{\prime}\lt(\alpha_tv^{\star} + \frac{1}{\sqrt{n}}x\rt)\rt]^2\varphi_n(\dx)\rt\rangle 
&= \gamma_t^2\lt\langle\int x^2\ind\lt(\lt|\alpha_tv^{\star} + \frac{1}{\sqrt{n}}x\rt| > \tau_t 1\rt)\varphi_n(\dx)\rt\rangle
\lesssim \frac{k}{n\lambda^2}. 
\end{align}
Consequently, putting the above two cases together with  the definition \eqref{defi:kappa} yields
\begin{align}
\label{eqn:sparse-kappa}
	\kappa_t^2 \lesssim \frac{k}{n\lambda^2}. 
\end{align}

\subsubsection{Quantity $D_t$ in \eqref{defi:D}}

We now turn to the analysis of $D_{t}$. 
Note that $\eta_{t}^{\second}$ is well-defined and equal to $0$ except at two non-differentiable points. 
Hence, to control $D_{t}$, it is sufficient to consider the following function:
\begin{align}
	\Big\|\sum_{j = 1}^{t-1} \mu^j_t\phi_j \circ \eta_{t}^{\prime}(v_t) \Big\|_2^2 
	&= 
	\gamma_t^2 \sum_{i = 1}^n \Big(\sum_{j = 1}^{t-1} \mu^j_t \phi_{j, i}\Big)^2 \ind\Big(\Big|\alpha_tv_i^{\star} + \sum_j\beta_{t-1}^j\phi_{j, i}\Big| > \tau_t\Big).
\end{align}
%

Setting the stage, let us define $\Phi, \theta, \Theta, v, \eta$ as in \eqref{eq:defn-Phi-Theta-theta-sparse} and \eqref{eq:eta-notation-suppressed-sparse}, 
and introduce the following functions:  
\begin{align*}
	f_{i, \theta}(\Phi_{i, :}) \defn \bigg(\sum_{j = 1}^{t-1} \mu^j\phi_{j, i}\bigg)^2,
\qquad \text{ and } \qquad
	h_{i, \theta}(\Phi_{i, :}) \defn \bigg| \alpha v_i^{\star} + \sum_{j=1}^{t-1}\beta^j\phi_{j, i} \bigg| .
\end{align*}
For every fixed $\mu\in \mathcal{S}^{t-2}$, $\sum_{j = 1}^{t-1} \mu^j\phi_{j, i}$ 
is Gaussian with mean zero and variance $1/n$; 
therefore, $f_{i,\theta} \ge 0$ is $\frac{1}{n}$-subexponential with $\mathbb{E}[f_{i,\theta}] = 1/n$ (see 
\citet[Lemma 2.7]{vershynin2018high}).  
In addition, it can be straightforwardly checked that 
(i) $\|\nabla_{\theta} f_{i, \theta}(\Phi_{i, :})\|_2\lesssim n^{100}$ for any $\Phi\in \mathcal{E}$; 
(ii) $|f_{i, \theta}(\Phi_{i, :})|\lesssim n^{100} \|\Phi\|_{\mathrm{F}}^{100}$; 
and (iii) $\mathbb{P}\big(\tau - 400n^{-100} \leq h_{i, \theta}(\Phi_{i, :}) \leq \tau + 400n^{-100}\big)\lesssim n^{-1}$ for any $\tau\in \real$ and any $\theta \in \Theta$.   
By virtue of Corollary~\ref{cor:Gauss-jump} and \eqref{eqn:brahms-conc}, 
we can readily see that, with probability at least $1-O(n^{-11})$,
\begin{align*}
	&\sup_{\theta \in \Theta}\lt|\sum_{i = 1}^n \Big(\sum_{j = 1}^{t-1} \mu^j\phi_{j, i}\Big)^2\ind\Big( \Big| \alpha v_i^{\star} + \sum_{j=1}^{t-1}\beta^j\phi_{j, i} \Big| > \tau\Big) - \mathbb{E}\bigg[\Big\|\sum_{j = 1}^{t-1} \mu^j\phi_j \circ \ind\Big( \Big| \alpha v^{\star} + \sum_{j=1}^{t-1}\beta^j\phi_{j} \Big| > \tau 1\Big)\Big\|_2^2\bigg]\rt| \\
	&\qquad \lesssim \sup_{\theta\in \Theta}\frac{1}{n}\sqrt{\sum_{i = 1}^n \mathbb{P}\lt( \Big| \alpha v_i^{\star} + \sum_{j = 1}^{t-1}\beta^j\phi_{j, i} \Big| > \tau\rt)t\log^3 n} + \frac{t\log^2 n}{n} \lesssim \sqrt{\frac{t(t+k)\log^4 n}{n^2}}
\end{align*}
holds simultaneously for all $\theta\in \Theta$, 
where the last inequality follows from the same argument as in \eqref{eq:sum-h-tau-sparse-UB}.  
Additionally, recalling that for general denoising functions, we have established relation~\eqref{eqn:beethoven-vive}. 
When specialized to the current setting, it asserts that 
\begin{align*}
	\mathbb{E}\Bigg[\Big\|\sum_{j = 1}^{t-1} \mu^j\phi_j \circ \eta^{\prime}(v)\Big\|_2^2\Bigg] 
	- 
	\max\lt\{
	\Big\langle\int\Big[x\eta^{\prime}\Big(\alpha v^{\star} + \frac{1}{\sqrt{n}}x\Big)\Big]^2\varphi_n(\dx)\Big\rangle,
	\Big\langle\int\Big[\eta^{\prime}\Big(\alpha v^{\star} + \frac{1}{\sqrt{n}}x\Big)\Big]^2\varphi_n(\dx)\Big\rangle\rt\} 
	\leq 
	0. 
\end{align*}

Putting the above bounds together, using the definition \eqref{defi:kappa} of $\kappa_t$, 
and recognizing that $(\mu_t, \alpha_t, \beta_{t-1}, \tau_t, \gamma_t)\in \Theta$, 
we can obtain
\begin{align}
 & \bigg\|\sum_{j=1}^{t-1}\mu^{j}\phi_{j}\circ\eta_{t}^{\prime}(v_{t})\bigg\|_{2}^{2}-\kappa_{t}^{2} \notag\\
 & \lesssim\gamma_{t}^{2}\sup_{\theta\in\Theta}\Bigg|\sum_{i=1}^{n}\Big(\sum_{j=1}^{t-1}\mu^{j}\phi_{j,i}\Big)^{2}\ind\Big(\Big|\alpha v_{i}^{\star}+\sum_{j=1}^{t-1}\beta^{j}\phi_{j,i}\Big|>\tau\Big)-\mathbb{E}\bigg[\Big\|\sum_{j=1}^{t-1}\mu^{j}\phi_{j}\circ\ind\Big(\Big|\alpha v^{\star}+\sum_{j=1}^{t-1}\beta^{j}\phi_{j}\Big|>\tau\Big)\Big\|_{2}^{2}\bigg]\Bigg|\notag\\
 & \quad+\sup_{\theta\in\Theta}\left\{ \mathbb{E}\Bigg[\Big\|\sum_{j=1}^{t-1}\mu^{j}\phi_{j}\circ\eta^{\prime}(v)\Big\|_{2}^{2}\Bigg]-\max\lt\{\Big\langle\int\Big[x\eta^{\prime}\Big(\alpha v^{\star}+\frac{1}{\sqrt{n}}x\Big)\Big]^{2}\varphi_{n}(\dx)\Big\rangle,\Big\langle\int\Big[\eta^{\prime}\Big(\alpha v^{\star}+\frac{1}{\sqrt{n}}x\Big)\Big]^{2}\varphi_{n}(\dx)\Big\rangle\rt\}\right\} \notag\\
 & \lesssim\gamma_{t}^{2}\sqrt{\frac{t(t+k)\log^{4}n}{n^{2}}}
	\asymp \frac{1}{\lambda^{2}}\sqrt{\frac{t(t+k)\log^{4}n}{n^{2}}}\eqqcolon D_{t},
	\label{eqn:sparse-dt}
\end{align}
where we remind the reader that $\gamma_t\asymp \lambda^{-1}$ (see \eqref{eqn:gamma-t-evolution}).

%
%
%


\subsubsection{Quantity $E_t$ in \eqref{defi:E}}

We now turn attention to quantity $E_t$, 
which requires us to work with non-differentiable points. 
Note that the denosing function $\eta'_{t}$ is only non-differentiable at two points: $-\tau_{t}$ and $\tau_{t}$. 
The goal of this subsection to prove that: with probability at least $1-O(n^{-11})$,  
\begin{align}
\label{eqn:sparse-Et}
	\sum_{m\in\{\tau_{t},-\tau_{t}\}}\sum_{i=1}^{n}\ind\bigg(\bigg|\alpha_{t}v_{i}^{\star}+\sum_{j=1}^{t-1}\beta_{t-1}^{j}\phi_{j,i}-m\bigg|\le\theta(m)\bigg)\lesssim k+t\log^3 n+n\|\xi_{t-1}\|_{2}^{2}\eqqcolon E_{t}	
\end{align}
holds for any choice $\theta(m)$ satisfying 
\begin{align*}
\sum_{m\in\{\tau_{t},-\tau_{t}\}}\sum_{i=1}^{n}\bigg|\alpha_{t}v_{i}^{\star}+\sum_{j=1}^{t-1}\beta_{t-1}^{j}\phi_{j,i}-m\bigg|^{2}\ind\bigg(\bigg|\alpha_{t}v_{i}^{\star}+\sum_{j=1}^{t-1}\beta_{t-1}^{j}\phi_{j,i}-m\bigg|\le\theta(m)\bigg)\le\|\xi_{t-1}\|_{2}^{2}.	
\end{align*}

Towards this, let us adopt the definitions of $\Phi, \theta, \Theta, v$ in \eqref{eq:defn-Phi-Theta-theta-sparse} as before, 
and generate a Gaussian random variable $z \sim \mathcal{N}(0, 1/n)$.  
As shall be seen momentarily, the following two relations hold true uniformly over all $\theta \in \Theta$ and all $\omega\in \real$ obeying $\omega\lesssim n$: 
\begin{subequations}
\begin{align}
\label{eqn:simon}
\notag &\Bigg\|\bigg|\alpha v^{\star} + \sum_{j = 1}^{t-1} \beta^j\phi_j - \tau 1 \bigg|\circ
\ind\bigg(\Big|\alpha v^{\star} + \sum_{j = 1}^{t-1} \beta^j\phi_j - \tau 1\Big| \le \omega 1\bigg)\Bigg\|_2^2 \\
&\quad \ge (n-k)\mathbb{E}\lt[\lt|z - \tau\rt|^2\ind\lt(\lt|z - \tau 1\rt| \le \omega\rt)\rt]  - \frac{\log n}{n} \cdot O\lt(\sqrt{(n-k)\mathbb{P}\lt(\lt|z - \tau\rt| \le \omega\rt)t\log n} + t\log n\rt), \\
\label{eqn:garfunkel}
& \bigg\|\ind\Big(\Big|\alpha v^{\star} + \sum_{j = 1}^{t-1} \beta^j\phi_j - \tau 1\Big| \le \omega 1\Big)\bigg\|_0 \lesssim k + (n-k) \mathbb{P}\lt(\lt|z - \tau\rt| \le \omega\rt) + t\log n.
\end{align}
\end{subequations}
Taking these two inequalities~\eqref{eqn:simon} and \eqref{eqn:garfunkel} as given for the moment (which we shall return to prove shortly), 
we proceed to justify the following claim: 
for any point $\omega\in \real$ obeying $\omega \lesssim n$ and 
\begin{align}
	\Bigg\|\bigg|\alpha_t v^{\star} + \sum_{j = 1}^{t-1} \beta^j_{t-1} \phi_j - \tau_t 1\bigg|
	\circ \ind\bigg(\Big|\alpha_t v^{\star} + \sum_{j = 1}^{t-1} \beta^j_{t-1} \phi_j - \tau_t 1 \Big| \le \omega 1 \bigg)\Bigg\|_2^2 \leq 
	\|\xi_{t-1}\|_2^2, 
	\label{eqn:sparse-Et-condition-1}
\end{align}
one necessarily satisfies 
\begin{align}
\label{eqn:sparse-Et-temp}
	\sum_{m\in\{\tau_{t},-\tau_{t}\}}\sum_{i=1}^{n}\ind\bigg(\bigg|\alpha_{t}v_{i}^{\star}+\sum_{j=1}^{t-1}\beta_{t-1}^{j}\phi_{j,i}-m\bigg|\le \omega \bigg)\lesssim k+t\log^3 n+n\|\xi_{t-1}\|_{2}^{2}.	
\end{align}
If this were valid, then one could immediately establish~\eqref{eqn:sparse-Et}, thus completing the control of $E_{t}$.

In what follows, let us prove this claim \eqref{eqn:sparse-Et-temp}. 
\begin{itemize}
\item 
	Suppose the point $\omega$ satisfies $\mathbb{P}\lt(\lt|z - \tau\rt| \le \omega\rt) \lesssim \frac{k+t\log^3 n}{n}$.
	Then in view of \eqref{eqn:garfunkel}, one has 
\begin{align*}
	\bigg\|\ind\Big(\Big|\alpha v^{\star} + \sum_{j = 1}^{t-1} \beta^j\phi_j - \tau\Big| \le \omega\Big)\bigg\|_0 \lesssim k + t\log^3 n, 
\end{align*}
which holds uniformly over all $\theta \in \Theta$. 
If this is the case for our choice $(\alpha, \beta,\tau) = (\alpha_t, \beta_{t-1}, \pm \tau_t)$, 
then we have established inequality~\eqref{eqn:sparse-Et}.

\item 

Consider now the complement case where $\omega$ satisfies
\begin{equation}
	\mathbb{P}\lt(\lt|z - \tau\rt| \leq \omega\rt) \gg \frac{k+t\log^3 n}{n}.
	\label{eq:complement-z-omega-tau-sparse}
\end{equation}
We first make note of the fact that $\omega$ needs to satisfy $\omega \geq \sqrt{8/n}$ in this case; 
otherwise one must have 
$$
		\mathbb{P}\lt(\lt|z - \tau\rt| \le \omega\rt) 
		=\mathbb{P}\lt( \tau - \omega \le  z \le \tau + \omega\rt) \le 
		\mathbb{P}\lt(z \ge \tau - \omega\rt)   \le \mathbb{P}\lt(z \geq \sqrt{\frac{2\log n}{n}}\rt) \le \frac{k+t\log^3 n}{n},
$$
which belongs to the previous case.  
Based on this simple observation, direct calculations lead to 
\begin{align*}
	& \mathbb{E}\lt[\lt|z-\tau\rt|^{2}\ind\lt(\lt|z-\tau\rt|\le\omega\rt)\rt]  =\mathbb{E}\lt[\lt|z-\tau\rt|^{2}\,\big|\,\lt|z-\tau\rt|\le\omega\rt]\mathbb{P}\lt(\lt|z-\tau\rt|\le\omega\rt)\\
 & \qquad \geq\mathbb{E}\lt[\lt|z-\tau\rt|^{2}\ind\{z\in[\tau-\omega,\tau-\omega/2]\}\,\big|\,\lt|z-\tau\rt|\le\omega\rt]\mathbb{P}\lt(\lt|z-\tau\rt|\le\omega\rt)\\
 & \qquad \geq \bigg(\frac{\omega}{2}\bigg)^{2}\mathbb{P}\Big( z\in[\tau-\omega,\tau-\omega/2]\,\big|\,\lt|z-\tau\rt|\le\omega\Big)\mathbb{P}\lt(\lt|z-\tau\rt|\le\omega\rt) \\
 & \qquad \geq \frac{\omega^2}{8} \mathbb{P}\lt(\lt|z-\tau\rt|\le\omega\rt)  \geq \frac{1}{n}\mathbb{P}\lt(\lt|z-\tau\rt|\le\omega\rt),
\end{align*}
which combined with \eqref{eqn:simon} and \eqref{eq:complement-z-omega-tau-sparse} gives
\begin{align*}
	& \bigg\|\Big|\alpha v^{\star}+\sum_{j=1}^{t-1}\beta^{j}\phi_{j}-\tau 1\Big| \circ \ind\Big(\Big|\alpha v^{\star}+\sum_{j=1}^{t-1}\beta^{j}\phi_{j}-\tau1\Big|\le\omega1\Big)\bigg\|_{2}^{2}\\
 & \qquad \geq(n-k)\mathbb{E}\lt[\lt|z-\tau\rt|^{2}\ind\lt(\lt|z-\tau1\rt|\le\omega\rt)\rt]-\frac{\log n}{n}\cdot O\lt(\sqrt{(n-k)\mathbb{P}\lt(\lt|z-\tau\rt|\le\omega\rt)t\log n}+t\log n\rt)\\
 & \qquad \geq\mathbb{P}\lt(\lt|z-\tau\rt|\le\omega\rt)-\frac{\log n}{n}\cdot O\lt(\sqrt{(n-k)\mathbb{P}\lt(\lt|z-\tau\rt|\le\omega\rt)t\log n}+t\log n\rt)\gtrsim\mathbb{P}\lt(\lt|z-\tau\rt|\le\omega\rt).
\end{align*}
Setting $\alpha=\alpha_t$, $\beta=\beta_{t-1}$ and $\tau= \pm \tau_t$ and utilizing \eqref{eqn:sparse-Et-condition-1}, 
we arrive at
\[
	\big\| \xi_{t-1}\big\|_2^2 \gtrsim \mathbb{P}\lt(\lt|z-\tau_t\rt|\le\omega\rt).
\]
Taking this and expression~\eqref{eqn:garfunkel} collectively  yields our advertised bound \eqref{eqn:sparse-Et}, given that $\theta(m)$ is trivially below $n$ with high probability.
\end{itemize}

With the above arguments in mind, 
everything comes down to establishing the inequalities~\eqref{eqn:simon} and \eqref{eqn:garfunkel}, which shall be done in the following.

\paragraph{Proof of inequality~\eqref{eqn:simon}.}

First, given any $\theta\in \Theta$ and any $\omega \lesssim n$, we find it useful to develop the following lower bound:  
\begin{align}
\label{eqn:simple-lb}
\notag &\Bigg\|\Big|\alpha_tv^{\star} + \sum_{j = 1}^{t-1} \beta^j\phi_j - \tau 1\Big|\ind\bigg(\Big|\alpha_tv^{\star} + \sum_{j = 1}^{t-1} \beta^j\phi_j - \tau 1\Big| \le \omega 1\bigg)\Bigg\|_2^2 \\
\notag &\qquad \qquad = \sum_{i = 1}^n \Big|\alpha v_i^{\star} + \sum_{j = 1}^{t-1} \beta^j\phi_{j, i} - \tau \Big|^2 
\ind\Big(\Big|\alpha v_i^{\star} + \sum_{j = 1}^{t-1} \beta^j\phi_{j, i} - \tau \Big| \le \omega\Big) \\
&\qquad \qquad \ge \sum_{i :\, v_i^{\star} = 0} \Big|\sum_{j = 1}^{t-1} \beta^j\phi_{j, i} - \tau \Big|^2\ind\Big(\Big|\sum_{j = 1}^{t-1} \beta^j\phi_{j, i} - \tau \Big| \le \omega\Big).
\end{align}
Next, we aim to further bound the right-hand side of \eqref{eqn:simple-lb} by means of Corollary~\ref{cor:Gauss-jump}. 
%
%
%

Towards this, let us define the following functions:
\begin{align*}
	f_{i, \theta}(\Phi_{i, :}) &\defn \Big(\sum_{j = 1}^{t-1} \beta^j\phi_{j, i} - \tau\Big)^2,
	\qquad
	\text{ and }
	\qquad
	h_{i,\theta}(\Phi_{i, :}) \defn \bigg| \sum_{j = 1}^{t-1} \beta^j\phi_{j, i} - \tau \bigg|.
\end{align*}
Note that for any fixed $\beta \in \mathcal{S}^{t-2}$, $\sum_{j = 1}^{t-1} \beta^j\phi_{j, i}$ follows a Gaussian distribution with variance $1/n$;   
therefore, $f_{i, \theta}(\Phi_{i, :})$ is a $\big(\tau^2 + \frac{1}{n}\big)$-subexponential random variable with mean $\mathbb{E}[f_{i, \theta}] = \tau^2 + \frac{1}{n}$. 
We make the observation that 
(i) $|f_{i, \theta}(\Phi_{i, :})|\lesssim n^{100} \max_j \|\phi_j\|_2^{100}$; 
(ii) $\|\nabla_{\theta} f_{i, \theta} (\Phi_{i, :})\|_2 \lesssim n^{100}$  for any $\Phi\in \mathcal{E}$; 
(iii) $\mathbb{P}\big( \tau - 400n^{-100}  \leq h_{i,\theta}(\Phi_{i, :}) \leq \tau + 400n^{-100} \big) \lesssim 1/n$ for any $\tau\in \real$. 
Therefore, applying Corollary~\ref{cor:Gauss-jump} together with \eqref{eqn:brahms-conc} yields: with probability exceeding $1-O(n^{-11})$, 
%
%
%
\begin{align}
\label{eqn:hawaii}
\notag &\sum_{i : v_i^{\star} = 0} \Big|\sum_{j = 1}^{t-1} \beta^j\phi_{j, i} - \tau\Big|^2\ind\Big(\Big|\sum_{j = 1}^{t-1} \beta^j\phi_{j, i} - \tau\Big| \le \omega\Big) - (n-k)\mathbb{E}\lt[\lt|z - \tau\rt|^2\ind\lt(\lt|z - \tau\rt| \le \omega\rt)\rt] \\
&\qquad \qquad \ge  - \frac{\log n}{n} \cdot O\lt(\sqrt{(n-k)\mathbb{P}\lt(\lt|z - \tau\rt| \le \omega\rt)t\log n} + t\log n\rt),
\end{align}
holds simultaneously for all $\theta \in \Theta$ and all $\omega \lesssim n$, 
where we denote $z\sim \mathcal{N}(0,1/n)$. 
Careful readers might already notice that: when applying Corollary~\ref{cor:Gauss-jump}, instead of considering the indicator function $\ind\lt(h_{i, \theta}(X_i) > \tau\rt)$ as in the original form, our result above is concerned with a different kind of indicator function $\ind\lt( h_{i, \theta}(X_i) < \tau\rt)$;  
fortunately, the proof of this version of indicator functions follows verbatim as that of Lemma~\ref{lem:Gauss-jump} and Corollary~\ref{cor:Gauss-jump}, 
and hence we omit the details here. 
Putting \eqref{eqn:simple-lb} and \eqref{eqn:hawaii} together, we have validated relation~\eqref{eqn:simon}.

\paragraph{Proof of inequality~\eqref{eqn:garfunkel}.} 

Using exactly the same analysis as above and taking $f_{i, \theta}(\Phi_{i, :}) = 1$ and $h_{i,\theta}(\Phi_{i, :}) = \big| \sum_{j = 1}^{t-1} \beta^j\phi_j - \tau \big|$, Corollary~\ref{cor:Gauss-jump} ensures that 
\begin{align*}
	\bigg\|\ind\bigg(\Big|\alpha v^{\star} + \sum_{j = 1}^{t-1} \beta^j\phi_j - \tau\Big| \le \omega\bigg)\bigg\|_0 
	&\le k + \sum_{i :\, v_i^{\star} = 0} \ind\Big(\Big|\sum_{j = 1}^{t-1} \beta^j\phi_{j, i} - \tau\Big| \le \omega\Big) \\
	&\lesssim k + \sqrt{(n-k) \mathbb{P}\lt(\lt|z - \tau\rt| \le \omega\rt)t\log n} + t\log n \\
	&\lesssim k + (n-k) \mathbb{P}\lt(\lt|z - \tau\rt| \le \omega\rt) + t\log n
\end{align*}
holds simultaneously  for all $\theta \in \Theta$ and all $\omega \lesssim n$.
This completes the proof of \eqref{eqn:garfunkel}. 




\subsection{Establishing the induction hypotheses via recursion}
\label{sec:recursion}

The goal of this subsection is to finish the induction-based proof of \eqref{eqn:sparse-induction}. 
We shall start by establishing \eqref{eqn:sparse-induction} for the $(t+1)$-th iteration, assuming that it holds for the $t$-th iteration. 
We will then return to verify the base case for the two types of initialization methods. 
Before proceeding, we remind the readers of our assumptions: 
\begin{align}
\label{eqn:don-giovani}
	\frac{t\log^3 n}{n\lambda^2} \ll 1,\qquad \frac{k\log n}{n\lambda^2} \ll 1,
\end{align}
and we shall always take $\tau_{t}$ to be on the order of $\sqrt{\frac{\log n}{n}}$ with some sufficiently large preconstant.  
In addition, 
\begin{equation}
	\label{eq:rho-sparse}
	|\eta_t^{\prime}(w)| \lesssim \frac{1}{\lambda} \eqqcolon \rho ,
	\quad
	|\eta_t^{\second}(w)| = 0 \eqqcolon \rho_1,
	\quad 
	|\eta_t^{\third}(w)| = 0 \eqqcolon \rho_2
	\qquad 
	\text{for any differentiable point }w\in \real, 
\end{equation}
where the calculation of $\rho$ has made use of \eqref{eqn:gamma-t-evolution}. 



\subsubsection{Inductive step for \eqref{eqn:sparse-induction} regarding $\xi_{t}$, $\Delta_{\alpha,t}$ and $\alpha_t$ }

Assuming the induction hypotheses~\eqref{eqn:sparse-induction} hold at $t$, 
we intend to prove their validity for $t+1.$

\paragraph{Bounding $\xi_{t}$.}
In terms of $\|\xi_{t}\|_2$, the result~\eqref{eqn:xi-t-general} of Theorem~\ref{thm:main} together with \eqref{eq:rho-sparse} and $\|\beta_t\|_2=1$ gives 
\begin{align*}
\|\xi_{t}\|_2 &\le \sqrt{\kappa_t^2 + D_t} \,\|\xi_{t-1}\|_2 \\
&\qquad+ O\lt(\sqrt{\frac{t\log n}{n}}\|\beta_{t}\|_2 + A_t + \lt(\sqrt{\frac{t}{n}}\rho_1 + \frac{\rho_2\|\beta_{t-1}\|_2}{n}\rt)\|\xi_{t-1}\|_2^2 + \rho\sqrt{\frac{E_t + t\log n}{n}}\|\xi_{t-1}\|_2 + \frac{\rho E_t\|\beta_{t-1}\|_2}{n}\rt) \\
&\le \sqrt{\kappa_t^2 + D_t} \, \|\xi_{t-1}\|_2 
+ O\lt(\sqrt{\frac{t\log n}{n}} + A_t + \sqrt{\frac{E_t + t\log n}{\lambda^2 n}}\|\xi_{t-1}\|_2 + \frac{ E_t}{\lambda n}\rt).
\end{align*}
Making use of the bounds \eqref{eqn:sparse-kappa},  \eqref{eqn:sparse-At}, \eqref{eqn:sparse-dt} and \eqref{eqn:sparse-Et}, we can further derive 
\begin{align}
\|\xi_{t}\|_2	&\le c \sqrt{\frac{k}{n\lambda^2} + \frac{\sqrt{t(t+k)}\log^2 n}{n\lambda^2}}
\|\xi_{t-1}\|_2 + O\lt(\sqrt{\frac{t\log n}{n}} + \frac{1}{\lambda}\sqrt{\frac{E_t + t\log n}{n}}\|\xi_{t-1}\|_2 + \frac{E_t}{n\lambda}\rt)  \notag\\
	&\le c^{\prime}\|\xi_{t-1}\|_2 + C_7\bigg(\sqrt{\frac{t\log n}{n}} 
	+ \sqrt{\frac{k+t \log^3 n}{n \lambda^2}}\|\xi_{t-1}\|_2 + \frac{1}{\lambda}\|\xi_{t-1}\|_2^2 + \frac{k+t\log^3 n}{n\lambda}\bigg) 
	\label{eq:xi-recurse-sparse-1}
\end{align}
for some large enough constant $C_7>0$, where we take 
$$
	c' \defn c \sqrt{\frac{k}{n\lambda^2} + \frac{\sqrt{t(t+k)}\log^2 n}{n\lambda^2}} \ll 1
$$
under the assumptions \eqref{eqn:don-giovani}. 
Supposing that $$\ltwo{\xi_{t-1}} \leq C_8 \sqrt{\frac{(t-1)\log^{3}n+k}{n}}$$ 
for some constant $C_8>0$ large enough, we can invoke \eqref{eq:xi-recurse-sparse-1} to reach
\begin{align}
\|\xi_{t}\|_{2} & \le c^{\prime}C_{8}\sqrt{\frac{(t-1)\log^{3}n+k}{n}}\\
 & \quad+C_{7}\bigg(\sqrt{\frac{t\log n}{n}}+C_{8}\sqrt{\frac{k+t\log^{3}n}{n\lambda^{2}}}\sqrt{\frac{(t-1)\log^{3}n+k}{n}}+\frac{C_{8}^{2}\big((t-1)\log^{3}n+k\big)}{\lambda n}+\frac{k+t\log^{3}n}{n\lambda}\bigg)\\
 & \leq C_{8}\sqrt{\frac{t\log^{3}n+k}{n}}
\end{align}
%
where we have made use of the relation~\eqref{eqn:don-giovani} and the condition $c'\ll 1$. 
This in turn finishes the inductive step for bounding $\|\xi_{t}\|_2$.

%

\paragraph{Bounding $\Delta_{\alpha,t}$.} 

A direct application of inequality~\eqref{eqn:delta-alpha-general} in Theorem~\ref{thm:main} together with \eqref{eq:rho-sparse} gives 
\begin{align*}
|\Delta_{\alpha,t}| \,\lesssim\, B_t + \rho \|\xi_{t-1}\|_2.
\end{align*}
%
Replacing $B_{t}$ and $\rho$ with their corresponding bounds in \eqref{eqn:sparse-Bt} and \eqref{eqn:rho-sparse}, 
and invoking \eqref{eqn:sparse-induction}, we arrive at 
\begin{align}
|\Delta_{t, \alpha}| &\lesssim \sqrt{\frac{t\log n}{n\lambda^2}} + \frac{1}{\lambda} \|\xi_{t-1}\|_2 \lesssim \sqrt{\frac{k + t\log^3 n}{n\lambda^2}}.
	\label{eq:Delta-t-UB-sparse}
\end{align}


\paragraph{Controlling $\alpha_t$.} 

Equipped with the control of $|\Delta_{t, \alpha}|$ in \eqref{eq:Delta-t-UB-sparse}, we can now prove that $\alpha_{t+1} \asymp \lambda$ for all $t \geq 1$; 
in fact, we intend to prove a stronger result, namely, if $\alpha_{t} \asymp \lambda$, 
\begin{align}
\label{eqn:stable-alphat-sparse}
	\alpha_{t+1} = \lambda + O\bigg(\sqrt{\frac{k\log n + (t+1)\log^3 n}{n}}\bigg).
\end{align}
%
Taking the definition \eqref{eqn:alpha-t-genearl} of $\alpha_{t+1}$ collectively with expressions~\eqref{eqn:beethoven} and \eqref{eq:residual-sparse} 
as well as the property $\lambda \asymp \alpha_t \gtrsim \sqrt{\frac{(k+t)\log n}{n}}$ gives 
\begin{align}
\alpha_{t+1} &= \lambda  v^{\star \top} \int \gamma_t \mathsf{ST}_{\tau_t}\left(\alpha_{t} \vstar + \frac{x}{\sqrt{n}} \right)\varphi_n(\dx) + \lambda\Delta_{\alpha,t} \notag\\
&= \frac{\lambda v^{\star \top} \int \mathsf{ST}_{\tau_t}\left(\alpha_{t} \vstar + \frac{x}{\sqrt{n}} \right)\varphi_n(\dx)}{\sqrt{\int\Big\|\mathsf{ST}_{\tau_t}\left(\alpha_{t} \vstar + \frac{x}{\sqrt{n}} \right)\Big\|_{2}^{2}\varphi_{n}(\dx)} + O\Big(\sqrt{\frac{t\log n}{n\lambda^{2}}}+ \|\xi_{t-1}\|_2 \Big)} + \lambda\Delta_{\alpha,t} \notag\\
&= \lt( 1+ O\Bigg(\frac{1}{\lambda}\sqrt{\frac{t\log n}{n}}+\frac{\|\xi_{t-1}\|_{2}}{\lambda}\Bigg) \rt)
\frac{\lambda v^{\star \top} \int \mathsf{ST}_{\tau_t}\left(\alpha_{t} \vstar + \frac{x}{\sqrt{n}} \right)\varphi_n(\dx)}{\sqrt{\int\Big\|\mathsf{ST}_{\tau_t}\left(\alpha_{t} \vstar + \frac{x}{\sqrt{n}} \right)\Big\|_{2}^{2}\varphi_{n}(\dx)} } + O\Big(\sqrt{\frac{k + t\log^3 n}{n}}\Big). 
\label{eqn:sparse-alpha-recursion}
\end{align}


In order to bound \eqref{eqn:sparse-alpha-recursion}, 
we first observe that for every $\alpha$ obeying $ \sqrt{\frac{(k+t) \log n}{n}}\lesssim \alpha \lesssim 1$ and $\tau_{t}\asymp \sqrt{\frac{\log n}{n}}$, it holds that 
\begin{align*}
	\left\|\mathsf{ST}_{\tau_t}\left(\alpha \vstar + \frac{x}{\sqrt{n}} \right) - \alpha \vstar\right\|_2 &\le \left\|\mathsf{ST}_{\tau_t}\left(\alpha \vstar \right) - \alpha \vstar\right\|_2 + \left\|\mathsf{ST}_{\tau_t}\left(\alpha \vstar + \frac{x}{\sqrt{n}} \right) - \mathsf{ST}_{\tau_t}\left(\alpha \vstar \right)\right\|_2 \\
	&\le \left\|\tau_t 1 \circ \ind\left(\left|\vstar\right| > 0 \right)\right\|_2 + \left\|\frac{x}{\sqrt{n}} \circ \left(\ind\left(\left|\alpha \vstar\right| > \tau_t 1 \right) + \ind\left(\left|\alpha \vstar + \frac{x}{\sqrt{n}}\right| > \tau_t 1 \right)\right)\right\|_2, 
\end{align*}
where the last step invokes relation $\big||\alpha \vstar + \frac{x}{\sqrt{n}}| - |\alpha\vstar|\big| \leq \frac{|x|}{\sqrt{n}}.$
In addition, note that 
\begin{align*}
	\ind\left(\left|\alpha \vstar\right| > \tau_t 1 \right) + 
	\ind\left(\left|\alpha \vstar + \frac{x}{\sqrt{n}}\right| > \tau_t 1 \right)
	&\leq 
	\ind\left(\left|\alpha \vstar\right| > \tau_t 1 \right) + 
	\ind\left(\left|\alpha \vstar \right| > \frac{\tau_t}{2} 1 \right)
	+
	\ind\left(\left|\frac{x}{\sqrt{n}}\right| > \frac{\tau_t}{2} 1 \right) \\
	&\leq 2\ind(|\vstar| > 0) + \ind\left(\frac{|x|}{\sqrt{n}} > \frac{\tau_t}{2} 1 \right). 
\end{align*}
Taking the above two relations together yields 
\begin{align*}
		\left\|\mathsf{ST}_{\tau_t}\left(\alpha \vstar + \frac{x}{\sqrt{n}} \right) - \alpha \vstar\right\|_2 
		&\lesssim \left\|\left(\tau_t + \frac{|x|}{\sqrt{n}}\right) \circ \ind\left(\left|\vstar\right| > 0 \right)\right\|_2 + \left\|\frac{x}{\sqrt{n}} \circ \ind\left(\frac{|x|}{\sqrt{n}} > \frac{\tau_t}{2} 1 \right)\right\|_2,
\end{align*}
which further implies
\begin{align}
\label{eqn:cello-sonata}
	\notag \displaystyle \int \left\|\mathsf{ST}_{\tau_t}\left(\alpha \vstar + \frac{x}{\sqrt{n}} \right) - \alpha \vstar\right\|_2^2 \varphi_n(\dx) 
	&\lesssim  
	\displaystyle\int \sum_{i \in\{\vstar_i\neq 0\}} \left(\tau^2_t + \frac{x_i^2}{n}\right) \varphi(\dx)
	+
	\frac{1}{n}\sum_{i=1}^n 2\displaystyle\int^{\infty}_{2\sqrt{\log n}} x_i^2 \varphi(\dx)\\
	%
	&\lesssim \frac{k\log n}{n}.
\end{align}
In words, relation~\eqref{eqn:cello-sonata} ensures that $\mathsf{ST}_{\tau_t}\left(\alpha \vstar + \frac{x}{\sqrt{n}} \right)$ lies close to $ \alpha \vstar$.

Next, we would like to employ the above relation to show that
\begin{align}
\label{eqn:stable-inner-sparse}
	\frac{ v^{\star \top} \displaystyle\int \mathsf{ST}_{\tau_t}\left(\alpha \vstar + \frac{x}{\sqrt{n}} \right)\varphi_n(\dx)}{\sqrt{\displaystyle\int\Big\|\mathsf{ST}_{\tau_t}\left(\alpha \vstar + \frac{x}{\sqrt{n}} \right)\Big\|_{2}^{2}\varphi_{n}(\dx)}}
	=
	1 + O\Big(\frac{1}{\alpha}\sqrt{\frac{k\log n}{n}}\Big);
\end{align}
if this were true, then combining it with~\eqref{eqn:sparse-alpha-recursion}, \eqref{eq:residual-sparse} and $\alpha_t\asymp \lambda$ would justify the bound stated in \eqref{eqn:stable-alphat-sparse}. 
To prove~\eqref{eqn:stable-inner-sparse}, we find it helpful to consider the inner product between 
$\mathsf{ST}_{\tau_t}\left(\alpha \vstar + \frac{x}{\sqrt{n}} \right)$ and $ \alpha \vstar$ as follows
\begin{align}
\label{eqn:inner-product-11}
\notag	&\alpha v^{\star \top} \int \mathsf{ST}_{\tau_t}\left(\alpha \vstar + \frac{x}{\sqrt{n}} \right)\varphi_n(\dx)\\
\notag	&\quad =
	\displaystyle \int \Big\|\mathsf{ST}_{\tau_t}\left(\alpha \vstar + \frac{x}{\sqrt{n}} \right)\Big\|^2\varphi_n(\dx)
	+
	\displaystyle\int \Big(\alpha v^{\star} - \mathsf{ST}_{\tau_t}\Big(\alpha \vstar + \frac{x}{\sqrt{n}} \Big)\Big)^\top \mathsf{ST}_{\tau_t}\left(\alpha \vstar + \frac{x}{\sqrt{n}} \right)\varphi_n(\dx) \\
\notag	&\quad =
	\displaystyle\int\Big\|\mathsf{ST}_{\tau_t}\left(\alpha \vstar + \frac{x}{\sqrt{n}} \right)\Big\|_{2}^{2}\varphi_{n}(\dx)
	+
	O\left(\sqrt{\displaystyle\int \Big\|\alpha v^{\star} - \mathsf{ST}_{\tau_t}\Big(\alpha \vstar + \frac{x}{\sqrt{n}}\Big) \Big\|_2^2\varphi_n(x)} \sqrt{\displaystyle\int\Big\|\mathsf{ST}_{\tau_t}\left(\alpha \vstar + \frac{x}{\sqrt{n}} \right)\Big\|_{2}^{2}\varphi_{n}(\dx)}\right)\\
\notag	&\quad =
	\displaystyle\int\Big\|\mathsf{ST}_{\tau_t}\left(\alpha \vstar + \frac{x}{\sqrt{n}} \right)\Big\|_{2}^{2}\varphi_{n}(\dx)
	+
	O\Big( \alpha \sqrt{\frac{k\log n}{n}} + \frac{k\log n}{n} \Big) \\
	&\quad =
	\displaystyle\int\Big\|\mathsf{ST}_{\tau_t}\left(\alpha \vstar + \frac{x}{\sqrt{n}} \right)\Big\|_{2}^{2}\varphi_{n}(\dx)
	+
	O\Big( \alpha \sqrt{\frac{k\log n}{n}}  \Big),	
\end{align}
where the last line is valid since $\alpha \gtrsim \sqrt{\frac{k \log n}{n}}$, and the penultimate step uses inequality~\eqref{eqn:cello-sonata} and the following crude bound:  
\begin{align*}
	\displaystyle\int \Big\|\mathsf{ST}_{\tau_t}\Big(\alpha \vstar + \frac{x}{\sqrt{n}}\Big) \Big\|_2^2\varphi_n(x)
	\lesssim
	\displaystyle\int \Big\|\alpha v^{\star} - \mathsf{ST}_{\tau_t}\Big(\alpha \vstar + \frac{x}{\sqrt{n}}\Big) \Big\|_2^2\varphi_n(x)
	+
	\alpha^2 \leq  \frac{k\log n}{n} + \alpha^2.
\end{align*}
Similarly, we can also write 
\begin{align}
\label{eqn:big-ben}
\notag	&\alpha v^{\star \top} \int \mathsf{ST}_{\tau_t}\left(\alpha \vstar + \frac{x}{\sqrt{n}} \right)\varphi_n(\dx)\\
\notag &\quad =
	\displaystyle \int \Big\|\mathsf{ST}_{\tau_t}\left(\alpha \vstar + \frac{x}{\sqrt{n}} \right)\Big\|^2\varphi_n(\dx)
	+
	\displaystyle\int \Big(\alpha v^{\star} - \mathsf{ST}_{\tau_t}\Big(\alpha \vstar + \frac{x}{\sqrt{n}} \Big)\Big)^\top \mathsf{ST}_{\tau_t}\left(\alpha \vstar + \frac{x}{\sqrt{n}} \right)\varphi_n(\dx) \\
\notag	&\quad =
	\displaystyle\int \ltwo{\alpha\vstar}^2\varphi_{n}(\dx)
	+
	O\left(\sqrt{\displaystyle\int \Big\|\alpha v^{\star} - \mathsf{ST}_{\tau_t}\Big(\alpha \vstar + \frac{x}{\sqrt{n}}\Big) \Big\|_2^2\varphi_n(x)} \sqrt{\displaystyle\int \ltwo{\alpha\vstar}^2\varphi_{n}(\dx)}\right)\\
	&\quad =
	\alpha^2
	+
	O\Big(\alpha \sqrt{\frac{k\log n}{n}}\Big). 
\end{align}
Putting the above two relations together and recalling that $\alpha\gtrsim \sqrt{\frac{k\log n}{n}}$  yields
\begin{align}
\label{eqn:st-is-bounded}
	\displaystyle\int\Big\|\mathsf{ST}_{\tau_t}\left(\alpha \vstar + \frac{x}{\sqrt{n}} \right)\Big\|_{2}^{2}\varphi_{n}(\dx) =
	\alpha^2
	+
	O\Big(\alpha \sqrt{\frac{k\log n}{n}}\Big) \asymp \alpha^2,
\end{align}
and hence
\begin{align*}
	\displaystyle\int\Big\|\mathsf{ST}_{\tau_t}\left(\alpha \vstar + \frac{x}{\sqrt{n}} \right)\Big\|_{2}^{2}\varphi_{n}(\dx) &=
	\alpha^2 \bigg(1+ 
	O\bigg( \frac{1}{\alpha} \sqrt{\frac{k\log n}{n}}\bigg) \bigg);\\
	\alpha v^{\star \top} \int \mathsf{ST}_{\tau_t}\left(\alpha \vstar + \frac{x}{\sqrt{n}} \right)\varphi_n(\dx)
	&= \alpha^2 \bigg(1+ 
	O\bigg( \frac{1}{\alpha} \sqrt{\frac{k\log n}{n}}\bigg) \bigg).
\end{align*}
This implies \eqref{eqn:stable-inner-sparse}, thus completing the proof of \eqref{eqn:stable-alphat-sparse}. 
Consequently, we complete the inductive step.

\bigskip\noindent
It remains to verify the base case for \eqref{eqn:sparse-induction}, 
which is postponed to Section~\ref{sec:pf-initialization-sparse}.

\subsubsection{Bounding $\alpha_{t+1}-\alpha_{t+1}^{\star}$}

Another condition claimed in Theorem~\ref{thm:sparse} is the bound \eqref{eqn:se-alpha-sparse} on the difference between $\alpha_{t+1}$ and $\alpha_{t+1}^{\star}$, 
which we study in this subsection. 
In order to understand the dynamics of $\alpha_t$, it remains to understand the property of the function $f(\cdot)$ defined in \eqref{eqn:franck}.
A little algebra yields 
\begin{align*}
	\frac{\mathrm{d} f(\alpha)}{\partial {\alpha}}  
= \frac{\lambda v^{\star \top} \displaystyle \int \vstar \circ \ind\left(\Big|\alpha \vstar + \frac{x}{\sqrt{n}} \Big| > \tau_t 1 \right)\varphi_n(\dx)}{\left(\displaystyle \int\Big\|\mathsf{ST}_{\tau_t}\left(\alpha \vstar + \frac{x}{\sqrt{n}} \right)\Big\|_{2}^{2}\varphi_{n}(\dx)\right)^{1/2}} 
- \frac{\lambda \left(v^{\star \top} \displaystyle \int \mathsf{ST}_{\tau_t}\left(\alpha \vstar + \frac{x}{\sqrt{n}} \right)\varphi_n(\dx)\right)^2}{\left(\displaystyle \int\Big\|\mathsf{ST}_{\tau_t}\left(\alpha \vstar + \frac{x}{\sqrt{n}} \right)\Big\|_{2}^{2}\varphi_{n}(\dx)\right)^{3/2}} \ge 0,
\end{align*}
where the last inequality follows from the elementary relation $\mathbb{E}[X^2]\cdot\mathbb{E}[Y^2] \ge \big(\mathbb{E}[XY]\big)^2$.

In addition, we make the observation that  every $\alpha = \lambda + O\Big(\sqrt{\frac{t\log^3 n +k\log n}{n}}\Big)$ obeys 
\begin{align*}
\frac{\mathrm{d} f(\alpha)}{\partial {\alpha}}
&\leq 
\frac{\lambda\left(\displaystyle \int\Big\|\mathsf{ST}_{\tau_t}\left(\alpha \vstar + \frac{x}{\sqrt{n}} \right)\Big\|_{2}^{2}\varphi_{n}(\dx)\right)-\lambda \left(v^{\star \top} \displaystyle \int \mathsf{ST}_{\tau_t}\left(\alpha \vstar + \frac{x}{\sqrt{n}} \right)\varphi_n(\dx)\right)^2}{\left(\displaystyle \int\Big\|\mathsf{ST}_{\tau_t}\left(\alpha \vstar + \frac{x}{\sqrt{n}} \right)\Big\|_{2}^{2}\varphi_{n}(\dx)\right)^{3/2}}\\
&\leq \frac{\lambda \left(\lambda^2 + O\Big(\lambda \sqrt{\frac{k\log n + t\log^3 n}{n}}\Big)\right) - \lambda \left(\lambda - O\Big(\lambda \sqrt{\frac{k\log n + t\log^3 n}{n}}\Big)\right)^2}{\left(\lambda^2 - O\Big(\lambda \sqrt{\frac{k\log n + t\log^3 n}{n}}\Big)\right)^{3/2}} \\
&\lesssim \sqrt{\frac{k \log n + t\log^3 n}{n\lambda^2}} \le \frac{1}{2},
\end{align*}
where the second line follows from \eqref{eqn:inner-product-11} and \eqref{eqn:big-ben}, 
and the last line is valid under the assumptions \eqref{eqn:don-giovani}.

Based on the above properties, we further claim that $f(\alpha) = \alpha$  has one solution --- denoted by $\alpha^{\star}$ --- within the range $[\lambda/10, \lambda]$.  
In order to see this, recall that in \eqref{eqn:stable-inner-sparse}, we have shown that for any given  $\alpha \asymp \lambda$, 
\begin{align}
	f(\alpha) = 
	\frac{\lambda v^{\star \top} \displaystyle\int \mathsf{ST}_{\tau_t}\left(\alpha \vstar + \frac{x}{\sqrt{n}} \right)\varphi_n(\dx)}{\sqrt{\displaystyle\int\Big\|\mathsf{ST}_{\tau_t}\left(\alpha \vstar + \frac{x}{\sqrt{n}} \right)\Big\|_{2}^{2}\varphi_{n}(\dx)}}
	=
	 \bigg(1 + O\bigg(\sqrt{\frac{k \log n+t\log^3 n}{n\lambda^2}}\bigg) \bigg) \lambda,
	\label{eq:f-alpha-property}
\end{align}
where we invoke the assumption that $\frac{k \log n}{n\lambda^2} \ll 1$. In particular, by taking $\alpha = \frac{1}{10}\lambda$, we can deduce that 
\begin{align*}
	f\Big(\frac{1}{10}\lambda\Big) = \big(1 + o(1) \big) \lambda
	> \frac{1}{10} \lambda.
\end{align*}
In addition, it is easily seen that
\begin{align*}
	f(\lambda) \leq 
	\frac{\lambda \| v^{\star }\|_2  \sqrt{\displaystyle\int\Big\|\mathsf{ST}_{\tau_t}\left(\alpha \vstar + \frac{x}{\sqrt{n}} \right)\Big\|_{2}^{2}\varphi_{n}(\dx)}}{\sqrt{\displaystyle\int\Big\|\mathsf{ST}_{\tau_t}\left(\alpha \vstar + \frac{x}{\sqrt{n}} \right)\Big\|_{2}^{2}\varphi_{n}(\dx)}} = \lambda.
\end{align*}
Given that $\frac{\mathrm{d} f(\alpha)}{\partial {\alpha}} \in [0,1/2] $ for any $\alpha\in [\lambda/10, \lambda]$, 
we conclude that there exists a unique point within $[\lambda/10, \lambda]$ obeying $f({\alpha})=\alpha$.

Hence, for any $t$ such that $\alpha_t = \lambda + O\Big(\sqrt{\frac{t\log^3 n +k\log n}{n}}\Big)$ obeying
\begin{align*}
	\big|\alpha_{t} - \alpha_{t}^{\star}\big| \leq C_6 \sqrt{\frac{k \log n + t\log^3 n}{n}}
\end{align*}
for some large enough constant $C_6>0$, one can invoke $\alpha^\star_{t+1} = f(\alpha^\star_{t})$ to deduce that 
\begin{align*}
\big|\alpha_{t+1} - \alpha_{t+1}^{\star}\big| &\le \big|f(\alpha_{t}) - f(\alpha_{t}^{\star})\big| + C_7 \sqrt{\frac{k \log n + t\log^3 n}{n}} \\
&\le \frac{1}{2}\big|\alpha_{t} - \alpha_{t}^{\star}\big| + C_7 \sqrt{\frac{k \log n + t\log^3 n}{n}}
	\leq  C_6 \sqrt{\frac{k \log n + t\log^3 n}{n}} 
\end{align*}
provided that $C_6>2C_7$, 
where we recall that each $\alpha_{t}$ satisfies~\eqref{eqn:stable-alphat-sparse} shortly after initialization. 
Invoking the above relation recursively leads  to
\begin{align*}
	\big|\alpha_{t+1} - \alpha_{t+1}^{\star}\big| \lesssim \sqrt{\frac{k \log n + t\log^3 n}{n}},
\end{align*}
thus establishing the advertised bound~\eqref{eqn:se-alpha-sparse}.

\subsubsection{Initial condition for \eqref{eqn:sparse-induction} with two initialization paradigms}
\label{sec:pf-initialization-sparse}

In order to conclude the proof, 
we still need to verify whether the induction hypotheses \eqref{eqn:sparse-induction} hold at the initial stage of the algorithm. 
In what follows, we shall look at two types of initialization schemes separately.

\begin{itemize}
\item 
Suppose now that we have access to an initialization point $x_{1}$ independent of $W$ such that $\inprod{\vstar}{\eta_1(x_1)}\asymp 1$, as assumed in Theorem~\ref{thm:sparse}. 
In this case, one can see that 
\begin{align*}
	x_2 = M\eta_1(x_1) &= \big(\lambda\vstar v^{\star\top} + W \big) \eta_1(x_1)
	 = \lambda\inprod{\vstar}{\eta_1(x_1)} \cdot \vstar + W\eta_1(x_1)\\
	 &= \alpha_2 \vstar +  \phi_1 + \xi_1,
\end{align*}
where 
\begin{align*}
	\alpha_{2} & \defn \lambda\inprod{\vstar}{\eta_1(x_1)}  \asymp \lambda, \\
	\phi_1 &\defn W\eta_1(x_{1}) + \zeta_1, \\
	-\xi_1 = \zeta_1 &\defn \Big(\frac{\sqrt{2}}{2}-1\Big) \Big( \eta_1(x_{1}) ^\top W \eta_1(x_{1}) \Big) \eta_1(x_{1}).
\end{align*}
		As discussed in Lemma~\ref{lem:distribution} and in \eqref{defn:zWz}, we have $\phi_1\sim \mathcal{N}(0,\frac{1}{n}I_n)$ and $\eta_1(x_{1})^\top W \eta_1(x_{1}) \sim \mathcal{N}(0, \frac{2}{n})$, where we have used the fact that $\|\eta_1(x_1)\|_2=1$. Therefore, 
it holds that $\ltwo{\xi_1} \leq \sqrt{\frac{\log n}{n}}$ with probability at least $1 - O(n^{-11})$. 
As a result, we have established  the induction hypotheses \eqref{eqn:sparse-induction} 
for $t=2$, as required by Theorem~\ref{thm:sparse}.

\item Another type of initialization schemes considered in this paper is~\eqref{defi:v1}, which concerns Corollary~\ref{cor:sparse-init-1}.
By definition, index $\hat{s}$ is selected by maximizing the diagonal entries of $M$, resulting in statistical dependence between $x_{1}$ and $W$. 
As a consequence, Theorem~\ref{thm:main} is not directly applicable. 
To cope with the statistical dependency, let us generate an AMP sequence starting from $e_s$ for each given $s\in \mathcal{S}_{0}$. 
For each of these AMP sequences, it turns out that the initial condition for \eqref{eqn:sparse-induction} when $t=3$ is satisfied,
which is formulated in the result below. The proof of this lemma can be found in Section~\ref{sec:pf-ken-sparse-ini}. 

\begin{lems}
\label{lem:sparse-ini-1-ini}
	Consider the AMP procedure initialized with $\eta_0(x_{0}) = 0$ and $x_{1} = e_{s}$ for any given $s\in \mathcal{S}_{0}$ (cf.~\eqref{eqn:sparse-set-s0}).
With probability at least $1 - O(n^{-11})$, the iterate $x_3$ admits the following decomposition:  
\begin{align*}
	x_3 = \alpha_3 \vstar + \beta_2^1 \phi_1 + \beta_2^2 \phi_2 + \xi_2,
\end{align*}
where $\phi_1$ and $\phi_2$ are i.i.d.~drawn from $\mathcal{N}\big(0,\frac{1}{n}I_n\big)$, and 
\begin{align}
	\alpha_3 = \lambda v^{\star\top} \eta_2(x_2) \asymp \lambda
	~~\text{ and }~~
	\|\xi_2\|_2 \lesssim \sqrt{\frac{\log n}{n}}. 
\end{align}
%
\end{lems}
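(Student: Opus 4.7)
The plan is to apply Theorem~\ref{thm:recursion} directly, taking advantage of the fact that for any fixed $s \in \mathcal{S}_0$ the initialization $x_1 = e_s$ is deterministic and hence independent of $W$. The first observation is that $\eta_1(x_1) = e_s$ exactly: since $x_1$ has a single nonzero entry equal to $1$ and the threshold $\tau_1 \asymp \sqrt{\log n/n} \ll 1$, we get $\mathsf{ST}_{\tau_1}(e_s) = (1-\tau_1)e_s$, and after normalization by $\gamma_1 = (1-\tau_1)^{-1}$ we recover $\eta_1(e_s) = e_s$. Plugging this into the AMP recursion yields $x_2 = M e_s = \lambda v^\star_s v^\star + We_s$, which is precisely the decomposition predicted by Theorem~\ref{thm:recursion} at $t = 1$ with $\alpha_2 = \lambda v^\star_s$, $\beta_1^1 = 1$, $\phi_1 = We_s + \zeta_1 \sim \mathcal{N}(0, \tfrac{1}{n} I_n)$, and residual $\xi_1 = -\zeta_1 = -(\tfrac{\sqrt{2}}{2} - 1) W_{ss} e_s$. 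Standard Gaussian tail bounds for the diagonal entry $W_{ss} \sim \mathcal{N}(0, 2/n)$ then give $\ltwo{\xi_1} \lesssim \sqrt{\log n/n}$ with probability at least $1 - O(n^{-12})$.

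Next, invoking Theorem~\ref{thm:recursion} at $t = 2$ produces the decomposition
\begin{align*}
x_3 = \alpha_3 v^\star + \beta_2^1 \phi_1 + \beta_2^2 \phi_2 + \xi_2,
\end{align*}
with $\phi_1, \phi_2 \overset{\text{i.i.d.}}{\sim} \mathcal{N}(0, \tfrac{1}{n} I_n)$, $\|\beta_2\|_2 = \|\eta_2(x_2)\|_2 = 1$, and $\alpha_3 = \lambda v^{\star\top}\eta_2(x_2)$. The asymptotic $\alpha_3 \asymp \lambda$ then follows verbatim from Proposition~\ref{thm:sparse-init}(ii), which guarantees $|\langle v^\star, \eta_2(x_2) \rangle| \asymp 1$ for every $s \in \mathcal{S}_0$.

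The main obstacle is the residual bound $\ltwo{\xi_2} \lesssim \sqrt{\log n/n}$, because the bounds on $\gamma_t, \kappa_t, A_t, D_t, E_t$ developed in Section~\ref{sec:control-sparse} were derived under the induction hypothesis $\alpha_t \asymp \lambda$, which \emph{fails} here: one has $\alpha_2 = \lambda v^\star_s \asymp \lambda\|v^\star\|_\infty$, potentially much smaller than $\lambda$. To circumvent this, I would first establish the modified scaling $\gamma_2 \asymp 1/(\lambda\|v^\star\|_\infty)$ by directly computing $\|\mathsf{ST}_{\tau_2}(x_2)\|_2 \asymp \lambda|v^\star_s|$ (here the signal-dominated entries are those $i$ with $v^\star_i \neq 0$). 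Under the SNR assumption $\lambda\|v^\star\|_\infty \gtrsim \sqrt{k\log n/n}$, the analogs of \eqref{eqn:sparse-kappa}, \eqref{eqn:sparse-At}, \eqref{eqn:sparse-dt}, and \eqref{eqn:sparse-Et} can be re-derived with $\gamma_2$ replacing $\lambda^{-1}$; the same Gaussian concentration and covering arguments from Section~\ref{sec:control-sparse} go through with this revised scale, yielding $\kappa_2^2 + D_2 = o(1)$ and additive error contributions of order at most $\sqrt{\log n/n}$.

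Finally, the assembly of $\ltwo{\xi_2}$ can be sharpened by exploiting the particular structure $\xi_1 \propto e_s$. Because $\xi_1$ is supported on the single coordinate $s$ and $\eta_2$ is applied entrywise, both $\delta_2 = \eta_2(x_2) - \eta_2(v_2)$ and $\delta_2^\prime$ in formula~\eqref{eqn:xi-norm-main} are also supported on the $s$-th coordinate, with $|\delta_{2,s}| \leq \gamma_2 |W_{ss}|$. Combined with the pointwise tail bound $|\phi_{1,s}| \lesssim \sqrt{\log n/n}$, the inner product $\langle \mu_2^1 \phi_1, \delta_2\rangle$ is $O\bigl(\gamma_2 \log n/n\bigr) = O(\sqrt{\log n/n})$ under the SNR condition, and the scalar term $\langle \delta_2^\prime\rangle \mu_2^1 \beta_1^1$ is even smaller (of order $\gamma_2/n$). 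The Onsager-type quantity $\Delta_2$ is then controlled by the $A_2$ bound discussed above, contributing a further $O(\sqrt{\log n/n})$. Plugging these pieces into \eqref{eqn:xi-norm-main} delivers the claimed $\ltwo{\xi_2} \lesssim \sqrt{\log n/n}$, completing the proof.
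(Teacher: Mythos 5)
Your proposal is correct and follows essentially the same route as the paper: compute $x_2 = \lambda v_s^\star v^\star + We_s$ explicitly so that $\xi_1 \propto e_s$, invoke Proposition~\ref{thm:sparse-init} for $\alpha_3 = \lambda v^{\star\top}\eta_2(x_2) \asymp \lambda$, and bound $\|\xi_2\|_2$ through the exact formula \eqref{eqn:xi-norm-main} at $t=2$ by exploiting that $\delta_2$ and $\delta_2^\prime$ are supported on the single coordinate $s$, together with $\gamma_2 \lesssim 1/(\lambda v_s^\star)$, $|\xi_{1,s}|\lesssim \sqrt{\log n/n}$, and the $A_2 \lesssim \sqrt{\log n/n}$ bound for $\Delta_2$. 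The only divergence is your proposed re-derivation of $\kappa_2$, $D_2$, $E_2$ at the revised scale, which is superfluous — the paper never uses these quantities here, and the generic route through Theorem~\ref{thm:main} would in any case only yield an $O(\sqrt{k/n})$-type contribution from the $E_2$ term — whereas your final single-coordinate assembly is exactly the paper's argument.
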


Taking a simple union bound over all $s \in \mathcal{S}_{0}$, we conclude that with probability at least $1 - O(n^{-10})$, the initial condition \eqref{eqn:sparse-induction} 
is satisfied for $t=3$ if AMP is initialized at $e_{s}$ for any $s \in \mathcal{S}_{0}$, thus making Theorem~\ref{thm:sparse}) applicable.  
Further,  Proposition~\ref{thm:sparse-init} guarantees that  $\mprob(\hat{s} \in \mathcal{S}_0) \geq 1 - O(n^{-11}).$
Putting these together, we can guarantee that the AMP initialized at $e_{\hat{s}}$ yields the required decomposition with probability at least $1 - O(n^{-10})$, 
as claimed in Corollary~\ref{cor:sparse-init-1}. 

\end{itemize}

\noindent 
In summary, 
putting the above two initial conditions together with the previous inductive steps finishes the proof of 
Theorem~\ref{thm:sparse} and Corollary~\ref{cor:sparse-init-1}.

\subsubsection{Implications for $\ell_2$ estimation accuracy}
%

Our theory reveals the $\ell_2$ estimation accuracy of AMP, which we briefly discuss in this subsection. 
Consider the estimator $\frac{1}{\lambda}\mathsf{ST}_{\tau_t}\left(x_t \right)$ and look at its $\ell_2$ estimation error. 
The triangle inequality directly implies
\begin{align*}
 	\left\|\mathsf{ST}_{\tau_t}\left(x_t \right) - \lambda\vstar\right\|_2 
 	&\leq 
 	\left\|\mathsf{ST}_{\tau_t}\left(x_t \right) - \mathsf{ST}_{\tau_t}\left(v_t \right)\right\|_2 
 	+
 	\left\|\mathsf{ST}_{\tau_t}\left(v_t \right) - \alpha_t \vstar\right\|_2 
 	+
 	\left\|\alpha_t \vstar - \lambda \vstar\right\|_2 \\
 	&\leq \ltwo{\xi_{t-1}} + \left\|\mathsf{ST}_{\tau_t}\left(v_t \right) - \alpha_t \vstar\right\|_2  + |\alpha_t - \lambda|\\
 	&\leq \left\|\mathsf{ST}_{\tau_t}\left(v_t \right) - \alpha_t \vstar\right\|_2 + O\left(\sqrt{\frac{k\log n + t\log^3 n}{n}}\right)
 \end{align*} 
with probability at least $1-O(n^{-11})$, where the last step follows from Theorem~\ref{thm:sparse} and inequality~\eqref{eqn:stable-alphat-sparse}.

Now consider the Lipschitz function $f(x) \defn \ltwo{\mathsf{ST}_{\tau_t}(\alpha_t\vstar + x) - \alpha_t\vstar}, ~x \in \real^{n}$. 
Inequality~\eqref{eq:sup-f-beta-phi-Ef} yields 
\begin{align*}
	\Big\|\mathsf{ST}_{\tau_t}\Big(\alpha_t \vstar + \sum_{j=1}^{t-1} \beta_{t-1}^j \phi_j \Big) - \alpha_t \vstar\Big\|_2 
	-
	\Exs_{g\sim \mathcal{N}(0,\frac{1}{n}I_n)}\Big[ \left\|\mathsf{ST}_{\tau_t}\big(\alpha_t \vstar + g \big) - \alpha_t \vstar\right\|_2 \Big]
	\lesssim
	\sqrt{\frac{t\log n}{n}}
\end{align*}
holds with probability at least $1 - O(n^{-11})$. 
In addition, in view of \eqref{eqn:cello-sonata} (taking $\alpha = \alpha_t$), we can deduce 
\begin{align*}
	\Exs_{g\sim \mathcal{N}(0,\frac{1}{n}I_n)}\Big[ \left\|\mathsf{ST}_{\tau_t}\big(\alpha_t \vstar + g \big) - \alpha_t \vstar\right\|_2 \Big]
	\lesssim 
	\sqrt{\frac{k\log n}{n}}.
\end{align*}
Putting these pieces together ensures that, with probability at least $1 - O(n^{-11})$, 
\begin{align}
\label{eqn:l2-error-hp}
	\Big\|\frac{1}{\lambda}\mathsf{ST}_{\tau_t}\left(x_t \right) - \vstar\Big\|_2 
	\lesssim \sqrt{\frac{k\log n + t\log^3 n}{n\lambda^2}}.
\end{align}

\subsection{Proof of Proposition~\ref{thm:sparse-init}}
\label{sec:pf-sparse-ini}

Let us begin by considering the magnitude of $\langle v^{\star}, \eta_2(x_2)\rangle$, 
which is the focus of the claim \eqref{eqn:vstar-s}. 
From the AMP iteration~\eqref{eqn:AMP-updates}, it is seen that
\begin{align}
\label{eqn:intermezzi}
	x_2 = M\eta_1(x_1) = \Big(\lambda\vstar\big(v^{\star}\big)^{\top} + W\Big) e_s
	 = \lambda\vstar_s\vstar + We_s,
\end{align}
where we note that $\eta_{1}(x_1) = e_{s}$ as a consequence of the denoising function \eqref{eqn:eta-sparse}. 
Recognizing that each $W_{ii}$ (resp.~$W_{ij}$ with $i\neq j$) is an independent Gaussian random variable with variance $2/n$ (resp.~$1/n$), 
one can invoke standard Gaussian concentration results to obtain \citep[Chapter 2.6]{vershynin2018high}
\begin{align}
\label{eqn:simple-diagonal}
	\max_{1\leq i, j\leq n} \lt|\big(We_j\big)_{i}\rt| \leq 6 \sqrt{\frac{\log n}{n}}
\end{align}
with probability at least $1 - O(n^{-11})$. 
Therefore, if $\lambda|\vstar_s| \geq 12 \sqrt{\frac{k \log n}{n}}$,   
then it follows from \eqref{eqn:simple-diagonal} that
\begin{align*}
	|x_{2, i}| \geq |\lambda \vstar_s|\cdot  |v_i^{\star} | - \big|\big(We_j\big)_{i} \big|  &> 6 \sqrt{\frac{\log n}{n}},\qquad\text{if }|\vstar_i| \ge \frac{1}{2\sqrt{k}}; \\
	|x_{2, i}| \leq  \big|\big(We_j\big)_{i}\big|  &\leq 6 \sqrt{\frac{\log n}{n}},\qquad\text{if }\vstar_i = 0. 
\end{align*}
In the meantime, the above argument also reveals that 
\begin{align*}
	\mathrm{sign}(x_{2, i}) &= \mathrm{sign}(\vstar_i),\qquad&&\text{if }|x_{2, i}| > 6 \sqrt{\frac{\log n}{n}}
	; \\
	\big(|x_{2,i}|-\tau_2\big)_+ &= 0, \qquad &&\text{if }|x_{2, i}| \leq 6 \sqrt{\frac{\log n}{n}}.
\end{align*}

With the preceding observations in place, we can demonstrate that
\begin{align*}
\notag \Big\|\mathrm{sign}(x_2)\circ (|x_2| - \tau_2 1)_{+}\Big\|_2^2 
	&\le \sum_{i = 1}^n x_{2, i}^2\ind\Big(|x_{2, i}| \gtrsim \sqrt{\frac{\log n}{n}}\Big) 
 \lesssim \sum_{i : \vstar_i \ne 0} \Big(|\lambda\vstar_s\vstar_i| + \sqrt{\frac{\log n}{n}}\Big)^2 \\
&\lesssim \lt(\lambda\vstar_s\rt)^2 + \frac{k\log n}{n} 
\asymp \lt(\lambda\vstar_s\rt)^2 
\end{align*}
under the assumption that $\lambda|\vstar_s| \geq 12 \sqrt{\frac{k \log n}{n}}$. 
In addition, the above properties also reveal that: 
\begin{align}
\label{eqn:qianqian}
\notag \big|\langle\vstar, \mathrm{sign}(x_2)\circ (|x_2| - \tau_2 1)_{+}\rangle\big| & = 
\sum_{ i : |x_{2, i}| > 6 \sqrt{\frac{\log n}{n}}} \lt(|x_{2, i}| - \tau_2\rt)|\vstar_i| 
\geq \sum_{i : |\vstar_i| \ge \frac{1}{2\sqrt{k}}}  \lt(|x_{2, i}| - \tau_2\rt)|\vstar_i| \\
	&\geq \frac{1}{2} \sum_{i : |\vstar_i| \ge \frac{1}{2\sqrt{k}}}  |x_{2, i}|\cdot |\vstar_i|  
	\gtrsim \sum_{i : |\vstar_i| \ge \frac{1}{2\sqrt{k}}}  |\lambda\vstar_s|\, v_i^{\star 2} \gtrsim \lambda |\vstar_s|,
\end{align}
where the last line follows from the assumption $\lambda|\vstar_s| \geq C_5 \sqrt{\frac{k \log n}{n}}$ for some large enough constant $C_5>0$ (so that $|x_{2, i}| \geq 2 \tau_2$), as well as the following condition (using the $k$-sparse property of $\vstar$)
\begin{align}
\label{eqn:vstar-basic-1}
	\sum_{i : |\vstar_i| \geq \frac{1}{2\sqrt{k}}} v_i^{\star 2} = 1 - 
	\sum_{i : |\vstar_i| < \frac{1}{2\sqrt{k}}} v_i^{\star 2} 
	\geq 1 - k\cdot  \bigg( \frac{1}{2\sqrt{k}} \bigg)^2
	= \frac{3}{4}.
\end{align}
Putting the above pieces together and recalling that $\|\eta_(x_2)\|_2=1$, we conclude that 
\begin{align*}
	\big|\inprod{\vstar}{\eta_2(x_2)}\big|
	= \frac{\big|\langle\vstar, \mathrm{sign}(x_2)\circ (|x_2| - \tau_2 1)_{+}\rangle\big|}{\lt\|\mathrm{sign}(x_2)\circ (|x_2| - \tau_2 1)_{+}\rt\|_2} \asymp 1. 
\end{align*}

Next, we turn to proving the second claim of Proposition~\ref{thm:sparse-init}, 
towards which we would like to show that with probability at least $1 - O(n^{-11})$,  
\begin{align}
\label{eqn:vs-is-large}
	|\langle x_{1}, \vstar\rangle| = |\vstar_{\hat{s}}| \ge \frac{1}{2}\|v^{\star}\|_{\infty},
\end{align}
where $\hat{s} \defn \arg\max_i \lt|M_{ii}\rt|$ and $x_{1} = e_{\hat{s}}$. 
Indeed, it follows from \eqref{eqn:simple-diagonal} that
\begin{align*}
	\lambda\big(v^{\star}_{i}\big)^2 - 6\sqrt{\frac{\log n}{n}} \leq 
	\Big| \big(\lambda v^{\star}v^{\star\top} + W\big)_{ii}  \Big|
	\leq 
	\lambda\big(v^{\star}_{i}\big)^2 + 6\sqrt{\frac{\log n}{n}}
\end{align*}
with probability at least $1 - O(n^{-11})$. 
Given that $\hat{s}$ maximizes $|(\lambda v^{\star}v^{\star\top} + W)_{ii}|$ over all $i\in [n]$, we have 
\begin{align*}
\lambda\big(v^{\star}_{\hat{s}}\big)^2 + 6\sqrt{\frac{\log n}{n}} \ge 
\Big| \lt(\lambda v^{\star}v^{\star\top} + W\rt)_{\hat{s} \hat{s}} \Big|
\geq 
\Big| \big(\lambda v^{\star}v^{\star\top} + W\big)_{jj} \Big|
\geq
\lambda\big(v^{\star}_{\max}\big)^2 -6 \sqrt{\frac{\log n}{n}}, \quad \text{with } j \defn \argmax_i |\vstar_i|.
\end{align*}
In the case when $\lambda\|v^{\star}\|_{\infty} \gtrsim \sqrt{\frac{k\log n}{n}}$, 
the above inequality immediately establishes~\eqref{eqn:vs-is-large} by observing that
$
 	\|v^{\star}\|_{\infty} \geq \sqrt{\frac{\ltwo{\vstar}^{2}}{k}} = \frac{1}{\sqrt{k}}.
$
We have thus completed the proof of Proposition~\ref{thm:sparse-init}.

\subsection{Proof of Proposition~\ref{prop:sparse-split}}
\label{sec:pf-prop-sparse-split}

For notational convenience, 
we omit the index of $j$ in $\mathcal{I}_{j}$ and write $\mathcal{I}$ instead throughout this proof,
as long as it is clear from the context. 

\paragraph{Step 1: analysis for a single round.} 
Let us first state some concentration properties regarding $v^{\star}_{\mathcal{I}}$. 
By construction, $\|v^{\star}_{\mathcal{I}}\|_0$ can be viewed as the sum of $k$ independent Bernoulli random variables, each of which has mean $p$. 
The Bernstein inequality tells us that
\begin{align*}
	\Big|\big\|v^{\star}_{\mathcal{I}}\big\|_0 - kp\Big| \leq \sqrt{2kp(1-p)\log \frac{2}{\delta}} + 2\log \frac{2}{\delta}
\end{align*}
holds with probability at least $1 - \delta$.
It thus implies that 
$$
	\big\|v^{\star}_{\mathcal{I}}\big\|_0 - pk = o(pk)
$$
under the assumption that $kp \gtrsim \log n \gg \log\frac{2}{\delta}$ 
and $\delta \asymp 1$ (e.g., $\delta = \frac{1}{10}$). 
In addition, $\|v^{\star}_{\mathcal{I}}\big\|_2^2 = \sum_{i\in \text{supp}(\vstar)}  v^{\star 2}_{i} \ind(i \in \mathcal{I})$ is the sum of $k$ independent bounded random variables, with total variance bounded above by
\[
	\mathsf{Var}\big( \|v^{\star}_{\mathcal{I}}\big\|_2^2 \big) 
	\leq  p  \big\|v^{\star}_{\mathcal{I}}\big\|_{\infty}^2 \sum_{i\in \text{supp}(\vstar)}  v^{\star 2}_{i} 
	= p  \big\|v^{\star}_{\mathcal{I}}\big\|_{\infty}^2. 
\]
Invoking Bernstein's inequality again gives
\begin{align*}
	\lt|\big\|v^{\star}_{\mathcal{I}}\big\|_2^2 - p\rt| \leq \big\|v^{\star}_{\mathcal{I}}\big\|_{\infty}\sqrt{2 p\log \frac{2}{\delta}} + 2\big\|v^{\star}_{\mathcal{I}}\big\|_{\infty}^2\log \frac{2}{\delta},
\end{align*}
with probability at least $1 - \delta$.
Consequently, it guarantees that with probability $1 - \delta$, 
\begin{align}
\label{eqn:sparse-vc-norm-tmp}
	\|v^{\star}_{\mathcal{I}}\big\|_2^2 - p = o(p) ~\text{ and }~
	\|v^{\star}_{\mathcal{I}^c}\big\|_2^2 = 1 - p + o(p),
\end{align}
under the assumption that $\|v^{\star}\|_{\infty} = o\big(\sqrt{\frac{\log n}{k}}\big)$ and $p \gtrsim \frac{\log n}{k}$.
Combining the above two relations gives
\begin{align}
\label{eqn:vstar-i}
	\lambda \big\|v^{\star}_{\mathcal{I}}\big\|_2^2 ~\gtrsim~ \frac{\big\|v^{\star}_{\mathcal{I}}\big\|_0}{\sqrt{n}},
\end{align}
with the proviso that $\lambda \gtrsim k/\sqrt{n}.$


Suppose now that there exists an oracle algorithm (as in \eqref{eq:correlation-oracle}) whose returned solution $v_{\mathcal{I}} = \mathsf{Oracle}(M_{\mathcal{I}, \mathcal{I}})$ (computed solely based on $M_{\mathcal{I}, \mathcal{I}}$) satisfies 
\begin{align*}
	\langle v^{\star}_{\mathcal{I}}, v_{\mathcal{I}}\rangle \asymp \big\|v^{\star}_{\mathcal{I}}\big\|_2 \asymp \sqrt{p}
\end{align*}
with probability at least $1-\delta$ for some small constant $\delta$ (note that the randomness comes from the sampling process). 
Taking $v \defn M_{\mathcal{I}^{c}, \mathcal{I}}  v_{\mathcal{I}}$ yields 
\begin{align}
\label{eqn:x1-sparse-split}
v 
= M_{\mathcal{I}^{c}, \mathcal{I}} v_{\mathcal{I}} 
= (\lambda \vstar_{\mathcal{I}^{c}} v_{\mathcal{I}}^{\star\top} + W_{\mathcal{I}^{c}, \mathcal{I}}) v_{\mathcal{I}} 
= \alpha_1 v_{\mathcal{I}^{c}}^{\star} + \phi_0,
\end{align}
where $\alpha_1 = \lambda \langle v^{\star}_{\mathcal{I}}, v_{\mathcal{I}}\rangle \asymp \lambda\sqrt{p}$ and $\phi_0 \sim \mathcal{N}(0, \frac{\ltwo{ v_{\mathcal{I}}}^2}{n}I)$. 
Importantly, both $\alpha_1$ and $\phi_0$ are independent of $W_{\mathcal{I}^{c}, \mathcal{I}^{c}}$.
Based on the construction \eqref{eqn:sparse-choose-j} of $x_{1}$, it suffices to verify  
\begin{align}
\label{eqn:initial-const-corr}
\frac{\big\langle v_{\mathcal{I}^{c}}^{\star}, \mathsf{ST}_{\tau_1}(v) \big\rangle}{\big\|v_{\mathcal{I}^{c}}^{\star}\big\|_2\big\|\mathsf{ST}_{\tau_1}(v)\big\|_2} 
\asymp 1.
\end{align}
%

In order to validate~\eqref{eqn:initial-const-corr}, we look at the distributional of~\eqref{eqn:x1-sparse-split}.
First, given the fact that $\|\phi_0\|_{\infty} \leq 6 \sqrt{\frac{\log n}{n}}$ with probability at least $1 - O(n^{-11})$ and  
	$\alpha_1 \asymp \lambda \sqrt{p} 
		\gtrsim \sqrt{\frac{k\log n \log \frac{1}{\delta}}{n}},$
we can use \eqref{eqn:x1-sparse-split} to get 
\begin{align*}
	\mathrm{sign}\big(\mathsf{ST}_{\tau_1}(v_i) \big) = \mathrm{sign} \big(v^{\star}_i \big)
	\qquad \text{and} \qquad
	\begin{cases}
		\Big| \mathsf{ST}_{\tau_1}(v_i)\Big|\geq \frac{1}{2}\alpha_1 |v_{i}^{\star}|  \mathds{1} 
	\Big(\lambda\sqrt{p} |v_{i}^{\star}| \geq C_8 \sqrt{\frac{\log n}{n}} 1\Big) \\
		\Big| \mathsf{ST}_{\tau_1}(v_i)\Big|\leq 2\alpha_1 |v_{i}^{\star}|  \mathds{1} 
	\Big(\lambda\sqrt{p} |v_{i}^{\star}| \geq C_7 \sqrt{\frac{\log n}{n}} 1\Big)
	\end{cases}
	\qquad i\in \mathcal{I}^{c}
\end{align*}
for some suitable constants $C_7,C_8>0$. 
As a result, we can see that 
\begin{align}
	\Big\langle v_{\mathcal{I}^{c}}^{\star}, \frac{\mathsf{ST}_{\tau_1}(v)}{\ltwo{\mathsf{ST}_{\tau_1}(v)}} \Big\rangle
	\gtrsim 
	\frac{\alpha_1\sum_{i\in \mathcal{B}} v_{i}^{\star 2}}{\alpha_1 \|v^{\star}\|_2}
	\qquad
	\text{with }~\mathcal{B} \defn \Big\{i \mid i \in \mathcal{I}^{c}; |v_{i}^{\star}| \geq C_8 \frac{\sqrt{\log n/n}}{\lambda\sqrt{p}} \Big\}.
	\label{eq:ST-inner-product-v-LB}
\end{align}
%
Moreover, recalling the concentration results $\|v^{\star}_{\mathcal{I}}\big\|_2^2 - p = o(p)$ and $\big\|v^{\star}_{\mathcal{I}}\big\|_0 - pk = o(pk)$,  we find that 
\begin{align*}
\sum_{i\in\mathcal{B}^{c}\cap\mathcal{I}^{c}}v_{i}^{\star2} & =\Big\| v_{\mathcal{I}^{c}}\circ\ind\Big(v_{\mathcal{I}^{c}}<\frac{C_{8}\sqrt{\log n/n}}{\lambda\sqrt{p}}1\Big)\Big\|_{2}^{2}\leq\big(k-kp+o(kp)\big)\cdot\frac{C_{8}^{2}\log n}{\lambda^{2}pn}\\
 & \leq\big(k-kp+o(kp)\big)\cdot\frac{C_{8}^{2}\log n}{10C_{8}^{2}k\log n\log\frac{1}{\delta}}\leq\frac{1-p}{10\log(1/\delta)}(1+o(1))\leq\frac{1}{10}\ltwo{v_{\mathcal{I}^{c}}^{\star}}^{2},
\end{align*}
provided that $\lambda^2 p \geq 10C_8^2 \frac{k\log n \log \frac{1}{\delta}}{n}$.
Substitution into \eqref{eq:ST-inner-product-v-LB} gives
\[
	\Big\langle v_{\mathcal{I}^{c}}^{\star}, \frac{\mathsf{ST}_{\tau_1}(v)}{\ltwo{\mathsf{ST}_{\tau_1}(v)}} \Big\rangle
	\gtrsim 1, 
\]
which together with $\|v_{\mathcal{I}^{c}}\|_2\asymp 1$ establishes the required inequality~\eqref{eqn:initial-const-corr}.

\paragraph{Step 2: repeating the procedure for $N$ times.} 
Thus far, we have proved that inequalities~\eqref{eqn:sparse-vc-norm-tmp}  and \eqref{eqn:initial-const-corr} are satisfied  with probability $1 - \delta$, for $\delta$ being some small constant (e.g., $\delta = 1/10$); 
here, the uncertainty comes from the random sampling process.
In order to boost the success probability, we --- as detailed in Step 1 --- repeat the sampling procedure for $N \defn 10\log n/\log (1/\delta)$ times, 
outputing $N$ independent subsets $\mathcal{I}_1, \ldots, \mathcal{I}_{N} \subseteq [n]$
and $N$ corresponding estimators (denoted by $v^{j}$ for $1\leq j\leq N$). 
Then there exists at least one subset $\mathcal{I}_j$ such that, with probability at least $1 - \delta^{N} = 1 - O(n^{-10})$, 
\begin{align}
\label{eqn:tzigane}
	\|v^{\star}_{\mathcal{I}^c}\big\|_2^2 = 1 - p + o(p)
	\qquad \text{and} \qquad
	\frac{\big\langle v_{\mathcal{I}^{c}}^{\star}, \mathsf{ST}_{\tau_1}(v^{j}) \big\rangle}{\big\|v_{\mathcal{I}^{c}}^{\star}\big\|_2\big\|\mathsf{ST}_{\tau_1}(v^{j})\big\|_2} \asymp 1
\end{align}
for $v^j \defn M_{\mathcal{I}_j^{c}, \mathcal{I}_j}  v_{\mathcal{I}_j}$. 
Based on these two relations, we arrive at  
\begin{align}
\notag	\frac{\mathsf{ST}_{\tau_1}(v^j)^\top M_{\mathcal{I}_j^{c}, \mathcal{I}_j^c}  \mathsf{ST}_{\tau_1}(v^j)}{\|\mathsf{ST}_{\tau_1}(v^j)\|_2^2}
	&=
	\lambda 
	\frac{\big\langle v_{\mathcal{I}_j^{c}}^{\star}, \mathsf{ST}_{\tau_1}(v^{j}) \big\rangle^2}{\big\|\mathsf{ST}_{\tau_1}(v^{j})\big\|^2_2}
	+
	\underbrace{\frac{\mathsf{ST}_{\tau_1}(v^j)^\top W_{\mathcal{I}_j^{c}\mathcal{I}_j^{c}} \mathsf{ST}_{\tau_1}(v^j)}{\big\|\mathsf{ST}_{\tau_1}(v^j)\big\|^2_2}}_{=: \varepsilon_j}\\
\notag	&= \lambda 
	\frac{\big\langle v_{\mathcal{I}_j^{c}}^{\star}, \mathsf{ST}_{\tau_1}(v^j) \big\rangle^2}{\ltwo{v_{\mathcal{I}_j^{c}}^{\star}}^2 \big\|\mathsf{ST}_{\tau_1}(v^j)\big\|^2_2} \cdot \ltwo{v_{\mathcal{I}_j^{c}}^{\star}}^2 + \varepsilon_j \\
\notag	&\gtrsim \lambda (1 - p) + \varepsilon_j, \qquad \text{where } \varepsilon_j \sim \mathcal{N}\Big(0,\frac{1}{n}\Big)\\
	&\asymp \lambda 
\end{align}
Here, we remind the readers that since $v^{j}$ is independent of
 $W_{\mathcal{I}_j^{c}\mathcal{I}_j^{c}}$, and
$\varepsilon_{j}$ follows a Gaussian distribution $\mathcal{N}(0,\frac{1}{n}).$
We also make note of the relation that 
$\lambda(1-p) \gtrsim \frac{k(1-p)}{\sqrt{n}}  \gtrsim \frac{\log n}{\sqrt{n}}$ and 
$\max_{1\leq j\leq N} |\varepsilon_{j}| \leq 5\sqrt{\frac{\log N}{n}}$ with probability $1-O(n^{-11})$.

Therefore, if we select $\widehat{j}$ according to \eqref{eqn:sparse-choose-j} as follows:
\begin{align*}
	\widehat{j} \defn \arg\max_j ~\Bigg\{\frac{\mathsf{ST}_{\tau_1}(v^j)^\top M_{\mathcal{I}_j^{c}, \mathcal{I}_j^c}  \mathsf{ST}_{\tau_1}(v^j)}{\|\mathsf{ST}_{\tau_1}(v^j)\|_2^2}\Bigg\},
\end{align*}
then we necessarily have
\begin{align}
\label{eqn:sparse-concl}
	\frac{\big\langle v_{\mathcal{I}_{\widehat{j}}^{c}}^{\star}, \mathsf{ST}_{\tau_1}(v^{\widehat{j}}) \big\rangle}{\big\|v_{\mathcal{I}_{\widehat{j}}^{c}}^{\star}\big\|_2\big\|\mathsf{ST}_{\tau_1}(v^{\widehat{j}})\big\|_2} \asymp 1.
\end{align}
Otherwise, we would end up with  
\begin{align*}
	\frac{\mathsf{ST}_{\tau_1}(v^{\widehat{j}})^\top M_{\mathcal{I}^{c}, \mathcal{I}^c}  \mathsf{ST}_{\tau_1}(v^{\widehat{j}})}{\|\mathsf{ST}_{\tau_1}(v^{\widehat{j}})\|_2^2}
	&=
	\lambda\frac{\big\langle v_{\mathcal{I}_{\widehat{j}}^{c}}^{\star}, \mathsf{ST}_{\tau_1}(v^{\widehat{j}}) \big\rangle^2}{\big\|\mathsf{ST}_{\tau_1}(v^{\widehat{j}})\big\|^2_2} + \frac{\mathsf{ST}_{\tau_1}(v^{\widehat{j}})^\top W_{\mathcal{I}_{\widehat{j}}^{c}\mathcal{I}_{\widehat{j}}^{c}} \mathsf{ST}_{\tau_1}(v^{\widehat{j}})}{\big\|\mathsf{ST}_{\tau_1}(v^{\widehat{j}})\big\|^2_2}\\
	&\ll \lambda + \sqrt{\frac{\log n}{n}} \\
	&\lesssim
	\frac{\mathsf{ST}_{\tau_1}(v^j)^\top M_{\mathcal{I}^{c}, \mathcal{I}^c}  \mathsf{ST}_{\tau_1}(x^j_1)}{\|\mathsf{ST}_{\tau_1}(x^j_1)\|_2^2},
\end{align*}
where $j$ corresponds to the one obeying \eqref{eqn:tzigane}. 
This, however, contradicts the definition of $\widehat{j}.$
We have therefore concluded the proof of Proposition~\ref{prop:sparse-split}.




\subsection{Proof of Lemma~\ref{lem:sparse-ini-1-ini}}
\label{sec:pf-ken-sparse-ini}

In this subsection, we analyze the first three AMP iterates when initialized at $\eta_0(x_{0}) = 0$ and $x_1 = e_{s}$ 
for any given $s \in \mathcal{S}_0$, where $\mathcal{S}_0 \defn \{s \in [n] \mid |\vstar_s| \geq \frac{1}{2} \|\vstar\|_\infty \}$ (defined in expression \eqref{eqn:sparse-set-s0}). 
Without loss of generality, we shall assume $v_{s}^{\star} >0$ throughout this subsection.

\paragraph{The 2nd iterate.} It follows from the AMP update rule that
\begin{align*}
	x_2 = M\eta_1(x_1) = \Big(\lambda\vstar\big(v^{\star}\big)^{\top} + W\Big) e_{s}
	 = \lambda\vstar_s\vstar + We_{s},
\end{align*}
where we use $\eta_{1}(x_1) = e_{s}$ in view of the definition of the denoising function in \eqref{eqn:eta-sparse}. 
Recalling that in the proof of Theorem~\ref{thm:main}, we establish the decomposition~\eqref{def:dynamics} with $\phi_{j}$ and $\beta_t^j$ defined in \eqref{def:phi_k} and \eqref{eqn:eta-decomposition} respectively. 
Instantiating this to the current case, we have 
$$
		\beta_1^1 = 1
		\qquad \text{and}\qquad
		\phi_{1} = We_{s} + \Big(\frac{\sqrt{2}}{2}-1 \Big)e_{s}^{\top}We_{s} \sim \mathcal{N}\Big(0, \frac{1}{n}I_n\Big).
$$
Hence, $x_{2}$ can be expressed as 
\begin{align}
	x_2 &= \alpha_2\vstar + \phi_1 + \xi_1,
\end{align}
		where $\alpha_2 = \lambda\vstar_s \gtrsim \lambda$ obeying $|\alpha_2| \geq \|\vstar\|_{\infty}$, and $\xi_1 = (1 - \frac{\sqrt{2}}{2}) (e_{s}^{\top}We_{s}) e_{s}$.

To proceed, we find it helpful to make note of the following two properties. 
First, from expression~\eqref{eqn:qianqian}, we know that  
\begin{align}
\label{eqn:cadenza}
\gamma_2^{-1} = \big\|\mathrm{sign}(x_2)\circ (|x_2| - \tau_2 1)_{+} \big\|_2
\geq \big|\langle\vstar, \mathrm{sign}(x_2)\circ (|x_2| - \tau_2 1)_{+} \rangle\big|  \gtrsim \lambda |\vstar_s|.  
\end{align}
%
In addition, with probability at least $1 - O(n^{-11})$, one can derive from \eqref{eqn:simple-diagonal} that 
\begin{align}
\label{eqn:finale}
	|\xi_{1,s}| = \Big|\Big(1 - \frac{\sqrt{2}}{2} \Big) e_{s}^{\top}We_{s} \Big|
	\leq \max_{1\leq i\leq n} |W_{ii}| \lesssim \sqrt{\frac{\log n}{n}}. 
\end{align}

\paragraph{The 3rd iterate.} 
In view of decomposition~\eqref{def:dynamics}, we write 
\begin{align*}
	x_3 &= M\eta_2(x_2) - \langle\eta_2^{\prime}(x_2)\rangle\eta_1(x_1) 
	= \alpha_3 \vstar + \beta_2^1 \phi_1 + \beta_2^2 \phi_2 + \xi_2,
\end{align*}
where (see\eqref{eq:xi-expression})
$$
	\alpha_3 = \lambda v^{\star\top}\eta_2(x_2)
	\qquad \text{and} \qquad 
	\xi_{2} \in \mathsf{span}\{e_{s}, \eta_2(x_2)\}.
$$
Proposition~\ref{thm:sparse-init} tells us that $\alpha_3 \asymp \lambda.$

Next, we look at the size of $\ltwo{\xi_2}.$ 
To begin with, by definition, we have $\mu_2^1 = \frac{\xi_1^\top e_{s}}{\ltwo{\xi_1}} = 1$ for $\xi_1 = (1 - \frac{\sqrt{2}}{2}) (e_{s}^{\top}We_{s}) e_{s}$.
Combining this with relation~\eqref{eq:xi_bound} for $t=2$, we arrive at  
\begin{align}
\label{eqn:xi-2-vincent}
	\|\xi_2\|_2 = \langle \phi_1, \delta_{2}\rangle - \langle\delta_{2}^{\prime}\rangle 
	+ \Delta_2 
	+ O\Big(\sqrt{\frac{\log n}{n}}\Big), 
\end{align}
where $\delta_{2}$ takes the following form 
\begin{align*}
	\delta_{2} = \eta_2(\alpha_2\vstar + \phi_1 + \xi_1)
	- \eta_2(\alpha_2\vstar + \phi_1). 
\end{align*}
To control $\|\xi_2\|_2$, it then suffices to upper bound $\Delta_{2}$ as well as 
$\langle \phi_1, \delta_{2}\rangle - \langle\delta_{2}^{\prime}\rangle$.
Let us start with the quantity $\Delta_{2}$.
Recall that $|\Delta_{2}| \leq A_2 \lesssim \sqrt{\frac{2\log n}{n}}$ (in view of \eqref{eqn:sparse-At}), where
we remind the readers that the proof of \eqref{eqn:sparse-At} is built upon the assumptions $\alpha_2 \lesssim \lambda$ and $\tau_2 \asymp \sqrt{\frac{\log n}{n}}.$

We then move on to consider the quantity $\langle \phi_1, \delta_{2}\rangle - \langle\delta_{2}^{\prime}\rangle$. 
First, given that $\xi_{1}$ is along the direction of $e_{s}$, the entries of $\delta_{2}$
are all zero except for $\delta_{2,s}.$
With this observation in mind, the term of interest can be written as   
\begin{align}
\label{eqn:opera}
	\big| \langle \phi_1, \delta_{2}\rangle - \langle\delta_{2}^{\prime}\rangle \big| &= \Big| \phi_{1,s}\delta_{2, s} - \frac{1}{n}\delta_{2,s}^{\prime} \Big|
	\lesssim \sqrt{\frac{\log n}{n}} \cdot \frac{|\xi_{1,s}|}{\lambda v_{s}^{\star}} + \frac{1}{n\lambda v_{s}^{\star}}.
\end{align}
To see why the last inequality is valid, we use inequality~\eqref{eqn:cadenza} to obtain 
\begin{align*}
|\delta_{2,s}| = \big|\eta_2(\alpha_2\vstar_s + \phi_{1,s} + \xi_{1,s})
	- \eta_2(\alpha_2\vstar_{s} + \phi_{1,s})|
	\leq \gamma_2 |\xi_{1,s} \big| 
	\lesssim 
	\frac{|\xi_{1,s}|}{\lambda\vstar_s},
\end{align*}
as a result of the Lipschitz property of $\eta_t$, and in addition, 
\begin{align*}
	|\delta_{2,s}'| \leq 2 \gamma_2 \lesssim \frac{1}{\lambda\vstar_{s}}.
\end{align*}
Taking inequality~\eqref{eqn:opera} together with \eqref{eqn:finale} and recalling $\vstar_{s} \gtrsim \frac{1}{\sqrt{k}}$ (cf.~\eqref{eqn:vstar-s}) lead to 
\begin{align*}
\big| \langle \phi_1, \delta_{2}\rangle - \langle\delta_{2}^{\prime}\rangle \big| &
\lesssim \sqrt{\frac{\log n}{n}}\sqrt{\frac{k\log n}{n\lambda^2}} 
+ \frac{\sqrt{k}}{n\lambda} \lesssim \sqrt{\frac{\log n}{n}}.
\end{align*}
Substitution back into \eqref{eqn:xi-2-vincent} gives 
\begin{align*}
	\|\xi_2\|_2 \lesssim \sqrt{\frac{\log n}{n}}.
\end{align*}
We have thus established Lemma~\ref{lem:sparse-ini-1-ini}.

\section*{Acknowledgment}

This work was partially supported by NSF grants DMS 2147546/2015447 and the NSF CAREER award DMS-2143215. 
Part of this work was done while G.~Li and Y.~Wei were visiting the Simons Institute for the Theory of Computing.

\bibliographystyle{apalike}
\bibliography{../reference-amp}


\end{document}